\documentclass[11pt]{amsart}
\usepackage[utf8]{inputenc}
\usepackage[T1]{fontenc}
\usepackage{lmodern}
\usepackage{microtype}
\usepackage[leqno]{amsmath}
\usepackage{amssymb}
\usepackage{mathtools}
\usepackage{enumerate}
\usepackage{graphicx}
   \topmargin=0in
   \oddsidemargin=0in
   \evensidemargin=0in
   \textwidth=6.5in
   \textheight=8.5in

\usepackage[usenames,dvipsnames]{xcolor}
\usepackage[
colorlinks=true,linkcolor=NavyBlue,urlcolor=RoyalBlue,citecolor=PineGreen,bookmarks=true,bookmarksdepth=3,bookmarksopen=true,bookmarksopenlevel=2,unicode=true,linktocpage]{hyperref}

\numberwithin{equation}{section}

\newtheorem{theorem}{Theorem}[section]

\newtheorem{lemma}[theorem]{Lemma}
\newtheorem{proposition}[theorem]{Proposition}
\newtheorem{corollary}[theorem]{Corollary}

\newtheorem{definition}[theorem]{Definition}

\let\C\relax

\newcommand{\C}{\mathbf{C}}
\newcommand{\D}{\mathbf{D}}

\newcommand{\h}{\mathbf{H}}
\newcommand{\N}{\mathbf{N}}
\newcommand{\Z}{\mathbf{Z}}
\newcommand{\p}{\mathbf{P}}

\newcommand{\R}{\mathbf{R}}

\newcommand{\Fh}{\mathfrak {h}}
\newcommand{\Fg}{\mathfrak {g}}

\newcommand{\CC}{\mathcal {C}}
\newcommand{\CD}{\mathcal {D}}
\newcommand{\CE}{\mathcal {E}}
\newcommand{\CF}{\mathcal {F}}
\newcommand{\CI}{\mathcal {I}}

\newcommand{\CK}{\mathcal {K}}
\newcommand{\CL}{\mathcal {L}}

\newcommand{\CG}{\mathcal {G}}
\newcommand{\CH}{\mathcal {H}}

\newcommand{\SLE}{{\rm SLE}}
\newcommand{\CLE}{{\rm CLE}}

\newcommand{\dist}{\mathrm{dist}}

\newcommand{\im}{\mathrm{Im}}

\newcommand{\one}{{\bf 1}}

\newcommand{\wt}{\widetilde}
\newcommand{\wh}{\widehat}
\newcommand{\ol}{\overline}
\newcommand{\ul}{\underline}

\newcommand{\giv}{\,|\,}

\DeclarePairedDelimiter\abs{\lvert}{\rvert}

\newcommand*{\defeq}{\mathrel{\mathop:}=}

\newcommand*{\mmiddle}[1]{\mathrel{}\middle#1\mathrel{}}

\newcommand*{\sle}[1]{SLE$_{#1}$}
\newcommand*{\slek}{\sle{\kappa}}
\newcommand*{\slekp}{\sle{\kappa'}}
\newcommand*{\slekr}[1]{SLE$_{\kappa}(#1)$}
\newcommand*{\slekpr}[1]{SLE$_{\kappa'}(#1)$}
\newcommand*{\cle}[1]{CLE$_{#1}$}
\newcommand*{\clek}{\cle{\kappa}}
\newcommand*{\clekp}{\cle{\kappa'}}

\newcommand*{\rmin}{{\mathrm{in}}}
\newcommand*{\rmout}{{\mathrm{out}}}

\newcommand{\rev}{{\mathrm{rev}}}

\newcommand{\cleconfclass}[1]{{\mathfrak D}_{#1}^{\mathrm{ext},\CLE_{\kappa}}}
\newcommand{\extadjacent}{\beta_{\cap \cap}}
\newcommand{\extnested}{\beta_{\Cap}}
\newcommand{\intadjacent}{\alpha_{\cap \cap}}
\newcommand{\intnested}{\alpha_{\Cap}}

\newcommand{\outside}{{\mathrm{out}}}
\newcommand{\inside}{{\mathrm{in}}}

\newcommand{\separated}{{\mathrm{sep}}}
\newcommand{\resampled}{{\mathrm{res}}}

\parindent 0 pt
\setlength{\parskip}{0.20cm plus1mm minus1mm}

\newcommand{\markeddomain}[1]{{\mathfrak D}_{#1}}
\newcommand{\eldomain}[1]{{\mathfrak D}_{#1}^{\mathrm{ext}}}
\newcommand{\ildomain}[1]{{\mathfrak D}_{#1}^{\mathrm{int}}}

\newcommand{\mcclelaw}[1]{\p_{(#1)}^{\CLE_{\kappa}}}

\newcommand{\mcslelaw}[1]{\p_{(#1)}^{\SLE_{\kappa}}}

\newcommand{\domainpair}[1]{{\mathfrak {P}}_{#1}}

\newcommand*{\linkpatterns}[1]{\mathrm{LP}_{#1}}
\newcommand*{\Ncurves}[1]{\mathfrak{K}_{(#1)}}

\begin{document}

\title[Multiple SLEs from CLEs]{Multiple SLE$_\kappa$ from CLE$_\kappa$ for $\kappa \in (4,8)$}

\author{Valeria Ambrosio, Jason Miller, and Yizheng Yuan}

\begin{abstract}
We define multichordal CLE$_\kappa$ for $\kappa \in (4,8)$ as the conditional law of the remainder of a partially explored CLE$_\kappa$. The strands of a multichordal CLE$_\kappa$ have a random link pattern, and their law conditionally on the linking pattern is a (global) multichordal SLE$_\kappa$. The multichordal CLE$_\kappa$ are the conjectural scaling limits of FK and loop $O(n)$ models with some wiring patterns of the boundary arcs.

We also explain how CLE$_\kappa$ configurations can be locally resampled, and show that the partially explored strands can be relinked in any possible way with positive probability. We will also establish several other estimates for partially explored CLE$_\kappa$. Altogether, these relationships and results serve to provide a toolbox for studying CLE$_\kappa$ and global multiple SLE$_\kappa$.
\end{abstract}

\date{\today}
\maketitle

\setcounter{tocdepth}{1}
\tableofcontents

\section{Introduction}
\label{sec:intro}

\subsection{Overview}
\label{subsec:overview}

The Schramm-Loewner evolution (SLE) \cite{s2000sle} is the canonical model for a random conformally invariant planar curve which lives in a simply connected domain.  SLE was introduced by Schramm in 1999 as a candidate to describe the scaling limit of loop-erased random walk in two dimensions and since has been conjectured to describe the scaling limit of the interfaces which arise in models from statistical mechanics on planar lattices and such conjectures have now been proved in a number of cases \cite{s2001percolation,lsw2004lerw,ss2009dgff,s2010ising}.  The SLE curves are indexed by a parameter $\kappa \geq 0$ (and denoted by \slek{}) which determines the roughness of the curve.  An \sle{0} is a smooth curve and for $\kappa > 0$ is fractal.  It is a simple curve for $\kappa \in (0,4)$, self-intersecting but not space-filling for $\kappa \in (4,8)$, and space-filling for $\kappa \geq 8$ \cite{rs2005basic}.  Furthermore, the dimension of the range of an \slek{} curve is $1+\kappa/8$ for $\kappa \leq 8$ and $2$ for $\kappa \geq 8$ \cite{rs2005basic,b2008dimension}.  Schramm defined \slek{} in terms of the Loewner equation and it is not immediate from its definition that it in fact corresponds to a continuous curve.  This was proved by Rohde-Schramm for $\kappa \neq 8$ in \cite{rs2005basic} and by Lawler-Schramm-Werner for $\kappa = 8$ \cite{lsw2004lerw} as a consequence of the convergence of the uniform spanning tree Peano curve to \sle{8} (see also \cite{am2022sle8} for a proof of this result using only continuum methods).

We emphasize that an \slek{} curve describes the scaling limit of a single interface from a discrete lattice model.  The object which arises as the scaling limit of all of the interfaces simultaneously is the so-called \emph{conformal loop ensemble} (\clek{}) \cite{s2009cle,sw2012cle}.  The \clek{}'s are defined for $\kappa \in (8/3,8)$ and consist of a countable collection of loops, each of which locally looks like an \slek{} curve.  At the extremes $\kappa=8/3$, (resp.\ $\kappa=8$) a \clek{} consists of the empty collection of loops (resp.\ a single space-filling loop).  The \clek{}'s have the same phases as \slek{}.  In particular, the loops are simple, do not intersect each other, or the domain boundary for $\kappa \in (8/3,4]$ and are self-intersecting, intersect each other, and the domain boundary for $\kappa \in (4,8)$.  On planar lattices, convergence results towards \clek{} have now been proved in a number of cases \cite{ks2019fkising,cn2006cle,bh2019ising,lsw2004lerw}.

We remark that a number of other models have been shown to converge to SLE and CLE on random planar maps \cite{s2016hc,kmsw2019bipolar,lsw2017schnyder,gm2021saw,gm2021percolation} using the framework developed in \cite{s2016zipper,dms2021mating}.

So-called \emph{multiple \slek{}} arise when one considers the scaling limit of a discrete model with alternating boundary conditions and then jointly explores the corresponding interfaces. It has been studied by a number of authors including \cite{kl-msle,pw-msle-simple,bpw-msle-uniqueness,zhan-msle,sy-welding-msle,ahsy-welding-msle,flpw-msle}. In particular, it arises in the scaling limit in the setting where one starts out with constant boundary conditions and then creates alternating boundary conditions by partially exploring some of the associated loops.  From this perspective, it is natural that multiple \slek{} arises when one considers the continuum version of this setting, namely by starting out with a \clek{} and then partially exploring its loops.  The purpose of this article is to collect existing results from the literature to formulate these results in a systematic manner which can be used as a convenient toolbox for other works.  (Other articles in the literature which explore this topic from this perspective include \cite{msw2020nonsimple,mw2018connection,gmq2021sphere}.)

\subsection{Main results}
\label{subsec:main_results}

We now turn to state the main results of this article and begin with a few definitions.  We will first review the definition of a marked domain and with the additional structure of an interior or exterior link pattern.

\begin{definition}
\label{def:marked_domain}
Let $D \subsetneq \C$ be a simply connected domain.  Fix $N \in \N_0$ and let $\ul{x} = (x_1,\ldots,x_{2N})$ be a collection of distinct prime ends which are ordered counterclockwise.
\begin{enumerate}[(i)]
\item We call $(D;\ul{x})$ a \emph{marked domain}. Let $\markeddomain{2N}$ be the collection of marked domains with $2N$ marked points.
\item For a marked domain $(D;\ul{x}) \in \markeddomain{2N}$, we can consider planar link patterns $\beta$ on the exterior of $D$, i.e.\ partitions of the marked points into pairs $\beta = \{\{a_1,b_1\},\ldots,\{a_N,b_N\}\}$ where $\{a_1,b_1,\ldots,a_N,b_N\} = \{1,\ldots,2N\}$, $a_r<b_r$, and such that the configuration $a_r<a_s<b_r<b_s$ does not occur. That is, each pair can be linked by a path on the exterior of $D$ without any two paths intersecting. We will refer to $(D;\ul{x};\beta)$ as a link pattern decorated marked domain. We let $\eldomain{2N}$ be the collection of exterior link pattern decorated marked domains.
\item For a marked domain $(D;\ul{x}) \in \markeddomain{2N}$, we also consider planar link patterns $\alpha$ in the interior of~$D$, defined in the same way as above. We let $\ildomain{2N}$ be the collection of interior link pattern decorated marked domains.
\end{enumerate}
\end{definition}

We let $\linkpatterns{N}$ denote the set of planar link patterns for $2N$ points (consisting of $N$ links). With a slight abuse of notation, we sometimes write $b_r = \alpha(a_r)$ if $\{a_r,b_r\} \in \alpha$.

The exterior link patterns can equivalently be viewed as wiring patterns of the boundary arcs. Let us say that for each $1 \leq i \leq N$ the arcs of $\partial D$ between $x_{2i-1}$ and $x_{2i}$ are \emph{wired}; the other arcs of $\partial D$ are \emph{free}. In case $N=0$, we say the boundary is wired. Then the exterior link patterns $\beta$ can be equivalently described by a planar partition of the wired arcs; see \cite{lpw-ust}. We say that the wired arcs belonging to the same partition are wired together.

\subsubsection{Existence and uniqueness of multiple \slek{} and \clek{}}  
\label{se:main_msle}

We will now state our results regarding the existence and uniqueness of multiple \slek{} for $\kappa \in (4,8)$.  We will consider first the case in which we have a fixed interior link pattern and second the case in which the interior link pattern is not fixed.  In both cases, multiple \slek{} is described as an invariant measure under a resampling operation.  In the case that the link pattern is fixed, the resampling operation changes a single chord at a time, whereas when the resampling operation is not fixed, pairs of chords are resampled at a time.  Let us now give the formal definition of the former.

For $(D;\ul{x};\alpha) \in \ildomain{2N}$, let $\Ncurves{D;\ul{x};\alpha}$ be the set of $N$-tuples of curves $\ul{\eta}=(\gamma_1,\dots,\gamma_N)$ that connect the points in $\ul{x}$ according to $\alpha$. More precisely, for each $i=1,\ldots,N$, the points $x_{a_i},x_{b_i}$ are on the boundary of a connected component $D_i$ of $D \setminus \bigcup_{j\neq i}\gamma_j$, and the curve $\gamma_i$ connects $x_{a_i},x_{b_i}$ and is contained in $\ol{D_i}$. We let $\Ncurves{D;\ul{x}} = \bigcup_{\alpha\in\linkpatterns{N}}\Ncurves{D;\ul{x};\alpha}$. For $\ul{\eta} \in \Ncurves{D;\ul{x}}$, we denote by $\eta_j$ the chord of  $\ul{\eta}$ emanating from $x_j$ for each $j=1,\dots,2N$. That is, if $\gamma_i$ connects $x_{a_i},x_{b_i}$, then $\eta_{a_i}=\gamma_i$, $\eta_{b_i}=\gamma_i^\rev$ where $\gamma^\rev$ is the time reversal of $\gamma$. We denote by $\ul\eta_{\wh{a_i}} = \ul\eta_{\wh{b_i}}$ the tuple formed by $(\gamma_j)_{j\neq i}$. 

\begin{definition}
\label{def:interior_link_pattern_multiple_sle}
Suppose that $(D;\ul{x};\alpha) \in \ildomain{2N}$. We call a probability measure on $\Ncurves{D;\ul{x};\alpha}$ a multichordal \slek{} with interior link pattern $\alpha$ if for each $\{a_i,b_i\} \in \alpha$ its law is invariant under the operation of resampling the curve connecting $a_i,b_i$ from the law of an \slek{} in $D_i$.
\end{definition}
 
\begin{theorem}
\label{thm:multichordal_link}
The following hold for each $\kappa \in (4,8)$.
\begin{enumerate}[(i)]
\item For each $(D;\ul{x};\alpha) \in \ildomain{2N}$ there exists a unique probability  measure $\mcslelaw{D;\ul{x};\alpha}$ satisfying Definition~\ref{def:interior_link_pattern_multiple_sle}. 
\item The probability measures $(\mcslelaw{D;\ul{x};\alpha})$ depend only on the conformal class of $(D;\ul{x};\alpha)$.  That is, if $(\wt{D};\ul{\wt{x}};\alpha) \in \ildomain{2N}$ and $\varphi \colon D \to \wt{D}$ is a conformal transformation taking $x_i$ to $\wt{x}_i$ for each $1 \leq i \leq 2N$ and $\ul{\eta} \sim \mcslelaw{D;\ul{x};\alpha}$, then $\varphi(\ul{\eta}) \sim \mcslelaw{\wt{D};\ul{\wt{x}};\alpha}$.
\item The family of probability measures $(\mcslelaw{D;\ul{x};\alpha})$ is Markovian.  That is, suppose that $(D;\ul{x};\alpha) \in \ildomain{2N}$, $\ul{\eta} \sim \mcslelaw{D;\ul{x};\alpha}$. Let $i \in \{1,\ldots,2N\}$, $\tau$ a stopping time for $\CF_t = \sigma(\eta_i(s) : s \leq t)$, and let $(D_\tau; \ul{x}_\tau; \alpha_\tau)$ be the triple consisting of the component of $D \setminus \eta_i([0,\tau])$ with $\eta_i(\tau)$ on its boundary, the elements of $\ul{x}$ together with $\eta_i(\tau)$ which are on $\partial D_\tau$, and the induced interior link pattern $\alpha_\tau$.  Then given $\CF_\tau$, the conditional law of $\eta_i|_{[\tau,\infty)}$ and the chords $(\eta_j)_{j \neq i}$ lying in the component $D_\tau$ is that of $\mcslelaw{D_\tau;\ul{x}_\tau;\alpha_\tau}$.
\end{enumerate}
\end{theorem}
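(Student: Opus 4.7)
For existence, the plan is to realise $\mcslelaw{D;\ul x;\alpha}$ as the conditional law of the remaining strands after a partial exploration of a $\CLE_\kappa$ in $D$. Concretely, run a boundary-touching exploration (e.g.\ the branching $\SLE_{\kappa'}(\kappa'-6)$ process, or equivalently the $\CLE_\kappa$ exploration tree) until every marked point $x_1,\dots,x_{2N}$ has been discovered. By the $\CLE_\kappa$ exploration and Markov properties developed earlier in the paper, the unexplored strands form $N$ non-crossing chords in $D$ pairing the $x_j$ according to some random interior link pattern $\alpha'$, and conditionally on all of the other strands each individual strand is a chordal $\SLE_\kappa$ in its residual component. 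Conditioning on $\{\alpha' = \alpha\}$, which has positive probability by the relinking estimate announced in the abstract and proved later, produces a measure on $\Ncurves{D;\ul x;\alpha}$ satisfying Definition~\ref{def:interior_link_pattern_multiple_sle}.

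For uniqueness I would run a Gibbs-sampler coupling argument. Let $\mu_1,\mu_2$ be two probability measures on $\Ncurves{D;\ul x;\alpha}$ satisfying Definition~\ref{def:interior_link_pattern_multiple_sle}, and consider the Markov chain which at each step picks an index $i\in\{1,\dots,N\}$ and resamples $\gamma_i$ from chordal $\SLE_\kappa$ in the residual component $D_i$. By definition both $\mu_1$ and $\mu_2$ are stationary. Using the mutual absolute continuity of chordal $\SLE_\kappa$ laws in neighbouring residual domains (a Girsanov/boundary perturbation computation) together with an irreducibility argument based on the same relinking estimate used for existence, one sets up a coupling under which the two copies of the chain agree on the resampled chord after one step with a positive probability that is uniform over starting configurations in a suitable compact subset. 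Iterating through all $N$ chords and using stationarity forces $\mu_1=\mu_2$.

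Part (ii) then follows immediately: if $\ul\eta\sim\mcslelaw{D;\ul x;\alpha}$, the image $\varphi(\ul\eta)$ satisfies Definition~\ref{def:interior_link_pattern_multiple_sle} on $(\wt D;\ul{\wt x};\alpha)$ by the conformal invariance of chordal $\SLE_\kappa$, so uniqueness identifies its law with $\mcslelaw{\wt D;\ul{\wt x};\alpha}$. Part (iii) uses the same uniqueness principle in slightly disguised form: run $\eta_i$ up to the stopping time $\tau$ and show that the conditional law of $(\eta_i|_{[\tau,\infty)},(\eta_j)_{j\neq i,\,\eta_j\subset D_\tau})$ given $\CF_\tau$ is resampling-invariant in $(D_\tau;\ul x_\tau;\alpha_\tau)$. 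For the continuation of $\eta_i$ this uses the domain Markov property of chordal $\SLE_\kappa$ applied to the single-chord resampling characterisation of $\gamma_i$ given the other chords; for each other chord the resampling property is inherited from the original configuration because its residual component inside $D_\tau$ coincides with its residual component inside $D$. Part (i) then identifies the conditional law with $\mcslelaw{D_\tau;\ul x_\tau;\alpha_\tau}$.

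The main obstacle is the uniqueness step in part (i). In the non-simple regime $\kappa\in(4,8)$ the chords touch one another and the domain boundary, so the residual components $D_i$ are not quasidisks in general and their geometry depends delicately on the other chords. Establishing the uniform absolute-continuity bounds needed to make the coupling contract, together with irreducibility of the single-chord resampling chain on $\Ncurves{D;\ul x;\alpha}$ (i.e.\ verifying that any two configurations in the same link-pattern class can be connected by a finite number of single-chord resamplings with uniformly positive probability), is the technical heart of the matter and is where the relinking and other partial $\CLE_\kappa$ estimates of the paper are used.
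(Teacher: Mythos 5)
Your overall blueprint --- characterize the measure by resampling invariance, get existence from a partial $\CLE_\kappa$ exploration, get uniqueness from a coupling of the resampling Markov chain, then deduce (ii) and (iii) from conformal invariance of chordal $\SLE_\kappa$ and the domain Markov property --- is in the same spirit as the paper. But there are two places where the proposal has genuine gaps.

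\textbf{Existence.} You write ``run a boundary-touching exploration until every marked point $x_1,\dots,x_{2N}$ has been discovered.'' This cannot work as stated: a $\CLE_\kappa$ exploration in $D$ produces a \emph{random} collection of endpoints on the boundary of the unexplored domain, and the event that these coincide with a prescribed configuration $\ul x$ has probability zero. What partial exploration gives you directly (this is the content of Theorem~\ref{thm:cle_partially_explored} and Proposition~\ref{pr:mcle_exploration_path}) is a multichordal law on \emph{some} random marked domain with the desired exterior link pattern, not on the given one. The bulk of the paper's Section~\ref{subsec:multichordal_existence} is devoted precisely to bridging this gap: Lemmas~\ref{lem:continuity_in_points}, \ref{lem:F_event}, \ref{lem:continuity_in_points_ind}, and \ref{le:mcle_existence_one_to_all} set up Markovian explorations that move the tips of the chords toward any target configuration with uniformly controlled failure probability, and an induction in $N$ is used to break the circularity between these estimates. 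Without this step your existence argument produces a multichordal $\SLE_\kappa$ only on a random marked domain, not on the arbitrary $(D;\ul x;\alpha)$ required by the theorem. Your proposal does not mention this difficulty at all.

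\textbf{Uniqueness.} You correctly identify the single-chord Gibbs resampling chain, but you aim for a Doeblin-type coupling ``with positive probability uniform over starting configurations in a suitable compact subset,'' and you flag the uniformity as the technical heart. The paper's actual argument (Lemma~\ref{lem:unique_invariant}, adapted to the single-chord kernel) sidesteps this entirely: by convexity the invariant measures are generated by the ergodic ones, so it suffices to show that two ergodic invariant measures cannot be mutually singular. That reduction only requires a coupling that makes the two chains coincide with \emph{some} positive probability after finitely many steps --- no uniformity over starting configurations is needed. The coupling itself only uses Lemma~\ref{lem:function_positive} (positive probability of either relinking) and absolute continuity of $\SLE_\kappa$ in overlapping domains. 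So the obstacle you worry about is a non-issue once you invoke the ergodic decomposition; omitting that reduction makes your argument harder than it needs to be and, as written, incomplete, since the uniformity you assume is not supplied.

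Parts (ii) and (iii) are handled the same way in both treatments once (i) is in hand; no issue there. Note also that the paper itself derives Theorem~\ref{thm:multichordal_link}(i)--(iii) other than uniqueness as a corollary of Theorem~\ref{thm:multichordal}\eqref{it:multsle_rel}, by conditioning the exterior-link-pattern multichordal $\CLE_\kappa$ on the realized interior link pattern (using Corollary~\ref{co:hookup_positive_continuous} for positivity of that event); this is a cleaner route than the direct construction you sketch because it inherits all the continuity and Markov machinery already established at the exterior-pattern level.
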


We now give the definition of a multichordal \slek{} on an exterior link pattern decorated marked domain. This is a probability measure on $N$ curves with random interior link pattern $\alpha$, and the law of $\alpha$ depends on the exterior link pattern $\beta$.

We will see that these laws arise naturally from partially exploring strands of a \clek{}. Therefore it is natural to further sample \clek{}'s in the complementary components of the strands. We will call the resulting collection of random strands and loops a multichordal \clek{}. \emph{All CLE's in this paper are assumed to be nested.}

We first give the definitions of multichordal \slek{} in the cases $N=0,1,2$. In the case $N=0$, the multichordal \slek{} with $0$ curves is just the empty set. In the case $N=1$, the multichordal \slek{} with $1$ curve is the chordal \slek{}. In the case $N=2$, we define the multichordal \slek{} as the law of the pair of curves given the partial exploration of a \clek{} considered in \cite{msw2020nonsimple,mw2018connection}. More precisely, for $L>0$ let
\begin{equation}\label{eq:bichordal_link_probability}
 H_\kappa(L) = \frac{Y_\kappa(L)}{Y_\kappa(L)-2\cos(4\pi/\kappa)Y_\kappa(1/L)}
\end{equation}
where $Y_\kappa(L) = r(L)^{2/\kappa}(1-r(L))^{1-6/\kappa}{_2F_1}(4/\kappa,1-4/\kappa,8/\kappa;r(L))$ and $r(L) \in (0,1)$ is such that the conformal modulus of $(\h;0,1-r(L),1,\infty)$ is $L$, i.e.\ it is conformally equivalent to $((0,L)\times(0,1);0,L,L+i,i)$; see \cite{mw2018connection}. 
The bichordal \slek{} on the exterior link pattern $\extnested=\{ \{1,4\},\{2,3\}\}$ (resp.\ $\extadjacent=\{ \{1,2\},\{3,4\}\}$) is given by sampling the interior link pattern $\alpha$ so that the probability of $\intadjacent=\{ \{1,2\},\{3,4\} \}$ (resp.\ $\intnested=\{ \{1,4\},\{2,3\} \}$) is $H_\kappa(L)$ (resp.\ $H_\kappa(1/L)$) where $L$ is the conformal modulus of $(D;\ul{x})$, and then sampling a bichordal \slek{} with interior link pattern $\alpha$.

We now give the definition of the multichordal resampling kernel which selects four of the marked boundary points and resamples the pair of curves according to the bichordal \slek{} law.

\begin{definition}[multichordal resampling kernel]\label{def:resampling_kernel}
Suppose that $(D;\ul{x};\beta) \in \eldomain{2N}$ where $N\geq 2$. For $1 \le j_1 < j_2 < j_3 < j_4 \le 2N$, consider the following resampling operation: Let $\ul\gamma=(\gamma_1,\dots,\gamma_N) \in \Ncurves{D;\ul{x}}$. Let $E_{\ul{j}}$ be the event that there are exactly two curves $\gamma_{i},\gamma_{i'}$ whose endpoints are the four points in $x_{\ul{j}} = (x_{j_1},x_{j_2},x_{j_3},x_{j_4})$ and they lie on the boundary of the same connected component $D_{\ul{j}}$ of $D \setminus \bigcup_{\ell\neq i,i'} \gamma_\ell$. Let $\beta_{\ul{j}}$ be the link pattern induced by $\beta$ and $(\gamma_\ell)_{\ell\neq i,i'}$. On the event $E_{\ul{j}}$, we leave the other curves $(\gamma_\ell)_{\ell\neq i,i'}$ unchanged and resample the pair of curves connecting $x_{\ul{j}}$ according to the bichordal \slek{} law on $(D_{\ul{j}};x_{\ul{j}};\beta_{\ul{j}})$ described above. On the event $E_{\ul{j}}^c$, we leave $\ul{\gamma}$ unchanged.
\end{definition}

\begin{definition}[multichordal \slek{} and \clek{}]
\label{def:exterior_link_pattern_multiple_sle}
Suppose that $(D;\ul{x};\beta) \in \eldomain{2N}$ where $N\geq 2$.  We call a probability measure on $\Ncurves{D;\ul{x}}$ a multichordal \slek{} with exterior link pattern $\beta$ if its law is invariant under the resampling operation from Definition~\ref{def:resampling_kernel} for any choice of $1 \le j_1 < j_2 < j_3 < j_4 \le 2N$. 
We call  $(\ul\eta,\Gamma)$ a multichordal \clek{}  if $\ul\eta$ is a multichordal \slek{}  in $(D;\ul{x};\beta)$ and $\Gamma$ is given by independent \clek{}'s in each connected component of  $D\setminus \ul\eta$.
\end{definition}

We will refer to multichordal \clek{} in the cases $N=0,1,2$ as \clek{}, monochordal \clek{}, and bichordal \clek{}, respectively. We will denote a multichordal \clek{} either by $(\ul{\eta},\Gamma) \sim \mcclelaw{D;\ul{x};\beta}$ or just by $\Gamma \sim \mcclelaw{D;\ul{x};\beta}$ (which then implicitly contains the chords $\ul{\eta}$).

From now on, we will refer to the multichordal \slek{} given an exterior link pattern also as a multichordal \clek{}. This is in order to avoid confusion with the multichordal \slek{} given an interior link pattern which is usually referred to in the literature.

\begin{theorem}
\label{thm:multichordal}
The following hold for each $\kappa \in (4,8)$ and $N \ge 2$.
\begin{enumerate}[(i)]
\item For each $(D;\ul{x};\beta) \in \eldomain{2N}$ there exists a unique probability measure $\mcclelaw{D;\ul{x};\beta}$ satisfying Definition~\ref{def:exterior_link_pattern_multiple_sle}. 
\item The probability measures $(\mcclelaw{D;\ul{x};\beta})$ depend only on the conformal class of $(D;\ul{x};\beta)$.  That is, if $(\wt{D};\ul{\wt{x}};\beta) \in \eldomain{2N}$ and $\varphi \colon D \to \wt{D}$ is a conformal transformation taking $x_i$ to $\wt{x}_i$ for each $1 \leq i \leq 2N$ and $(\ul{\eta},\Gamma) \sim \mcclelaw{D;\ul{x};\beta}$, then $(\varphi(\ul{\eta}),\varphi(\Gamma)) \sim \mcclelaw{\wt{D};\ul{\wt{x}};\beta}$.
\item The family of probability measures $(\mcclelaw{D;\ul{x};\beta})$ is Markovian.  That is, suppose that $(D;\ul{x};\beta) \in \eldomain{2N}$, $(\ul{\eta},\Gamma) \sim \mcclelaw{D;\ul{x};\beta}$. Let $1 \leq i \leq 2N$, $\tau$ a stopping time for $\CF_t = (\eta_i(s) : s \leq t)$, and let $(D_\tau; \ul{x}_\tau; \beta_\tau)$ be the triple consisting of the component of $D \setminus \eta_i([0,\tau])$ with $\eta_i(\tau)$ on its boundary, the elements of $\ul{x}$ together with $\eta_i(\tau)$ which are on $\partial D_\tau$, and the induced exterior link pattern $\beta_\tau$.  Then given $\CF_\tau$, the conditional law of $\eta_i|_{[\tau,\infty)}$ and the elements of $(\eta_j)_{j \neq i}$ and $\Gamma$ lying in the component $D_\tau$ is that of $\mcclelaw{D_\tau;\ul{x}_\tau;\beta_\tau}$.
\item\label{it:multsle_rel} Suppose that $\alpha$ is any interior link pattern.  Then the $\mcclelaw{D;\ul{x};\beta}$-probability that the interior link pattern induced by $\ul{\eta} \sim \mcclelaw{D;\ul{x};\beta}$ is equal to $\alpha$ is positive and is continuous in the marked point configuration $\ul{x}$.  The conditional law of $\ul{\eta}$ given this is equal to $\mcslelaw{D;\ul{x};\alpha}$ from Theorem~\ref{thm:multichordal_link}.
\end{enumerate}
\end{theorem}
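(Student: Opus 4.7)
The plan is to construct $\mcclelaw{D;\ul x;\beta}$ via a partial exploration of $\CLE_\kappa$, prove uniqueness by a coupling argument for the resampling Markov kernel, and then deduce (ii)--(iv) from (i) together with the conformal covariance of bichordal \slek{} and the uniqueness statement of Theorem~\ref{thm:multichordal_link}(i).

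For existence, I would start from a nested $\CLE_\kappa$ in $D$ and explore the loops touching the wired arcs until the merging pattern agrees with $\beta$: concretely, one exposes loops along each wired arc $(x_{2i-1},x_{2i})$ and, whenever an exposed loop intersects another wired arc, identifies those arcs, iterating until the induced partition of wired arcs matches $\beta$. The $N=2$ case of this construction is carried out in \cite{msw2020nonsimple,mw2018connection} and gives the bichordal splitting probability $H_\kappa(L)$ of \eqref{eq:bichordal_link_probability}. The result is a configuration of $N$ chords $\ul\eta$ together with, conditionally on $\ul\eta$, independent nested $\CLE_\kappa$'s in each component of $D\setminus\ul\eta$. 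To check invariance under the kernel of Definition~\ref{def:resampling_kernel}, fix $1\le j_1<\cdots<j_4\le 2N$, condition on all chords and loops unrelated to the two chords connecting the points $x_{\ul j}$, and note that on $E_{\ul j}$ the $\CLE_\kappa$ Markov property under partial exploration reduces the remaining configuration in $D_{\ul j}$ to the $N=2$ case, which is precisely the bichordal law specified by the kernel.

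For uniqueness, given $\mu_1,\mu_2$ both satisfying Definition~\ref{def:exterior_link_pattern_multiple_sle}, I couple independent samples and apply the same schedule of resamplings to both, cycling through the $\binom{2N}{4}$ choices of indices. Since $H_\kappa(L)\in(0,1)$ for every modulus $L>0$ and each bichordal \slek{} law on a fixed subdomain is unambiguous, the two copies can be coupled to agree after a single resampling with positive conditional probability on $E_{\ul j}$; as any interior link pattern admissible with $\beta$ can be reached from any other in finitely many resamplings, a coupling-from-the-past argument gives $\mu_1=\mu_2$. Parts (ii) and (iii) then follow from uniqueness: $\varphi_*\mcclelaw{D;\ul x;\beta}$ still satisfies Definition~\ref{def:exterior_link_pattern_multiple_sle} on the image domain, hence must equal $\mcclelaw{\wt D;\ul{\wt x};\beta}$; and for the Markov property, stopping a chord at $\tau$ and using the chordal \slek{} domain Markov property show that the conditional law on $D_\tau$ given $\CF_\tau$ is still invariant under the resampling kernel in $(D_\tau;\ul x_\tau;\beta_\tau)$.

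For (iv), each admissible interior link pattern $\alpha$ has positive $\mcclelaw{D;\ul x;\beta}$-probability because finitely many bichordal resamplings (each with positive split probability $H_\kappa$ or $1-H_\kappa$) connect any two admissible $\alpha$'s, and continuity in $\ul x$ follows from continuity of $H_\kappa$ in the conformal modulus together with continuity of chordal $\slek{}$ in its endpoints. Conditioning $\mcclelaw{D;\ul x;\beta}$ on the interior link pattern being $\alpha$ yields a measure on $\Ncurves{D;\ul x;\alpha}$ invariant under each single-chord resampling of Definition~\ref{def:interior_link_pattern_multiple_sle}: resampling the pair at $x_{\ul j}$ conditional on preserving $\alpha$ is exactly the single-chord resampling in the corresponding subdomain of $D_{\ul j}$, so by Theorem~\ref{thm:multichordal_link}(i) the conditional law equals $\mcslelaw{D;\ul x;\alpha}$. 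The main technical obstacle is uniqueness, where one must ensure that the event $E_{\ul j}$ occurs often enough under the coupling so that bounded-length resampling schedules can globally synchronize all $N$ chords of the two samples; this is where quantitative estimates on the partial exploration of $\CLE_\kappa$ will be needed.
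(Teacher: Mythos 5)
Your overall plan — existence via a partial exploration of a $\CLE_\kappa$, uniqueness via a coupling argument for the resampling kernel, and (ii)–(iv) as consequences — matches the paper in broad strokes, and your uniqueness and (ii)–(iii) outlines are substantially correct (the paper works with the left/right boundaries $\ul{\gamma}$ rather than the chords directly in order to have a Borel-measurable kernel $\Phi$, and argues via the fact that two distinct ergodic $\Phi$-invariant measures must be mutually singular, but the underlying coupling idea is the one you describe). However, there is a genuine gap in your existence argument, and a related one in your treatment of the continuity part of (iv).

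Your existence step — ``explore the loops touching the wired arcs until the induced partition matches $\beta$'' — produces a multichordal configuration, but only at a \emph{random} marked point configuration $\ul{x}^*$ with a random induced link pattern $\beta^*$; it does not directly target a prescribed $(D;\ul{x};\beta)$. (This is exactly what the paper's Definition~\ref{def:partially-explored-cle} and Theorem~\ref{thm:cle_partially_explored} formalize.) To get from ``exists for some $\ul{x}$'' to ``exists for all $\ul{x}$'' requires substantial additional work: the paper develops Markovian explorations of a multichordal \clek{} (Definition~\ref{def:expl_markovian}, Lemmas~\ref{lem:multichordal_stopping_time}, \ref{lem:limit_exploration}) and a quantitative achievability estimate (Lemma~\ref{lem:continuity_in_points}) that shows one can move the marked points to a nearby target configuration with high probability, and then — because the uniform estimate on the auxiliary event $F(\delta_0)$ in Lemma~\ref{lem:F_event} presupposes existence — the circularity is broken by an induction in $N$ (Lemma~\ref{lem:continuity_in_points_ind}, Lemma~\ref{le:mcle_existence_one_to_all}, Lemma~\ref{lem:confclass_nonempty}). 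Your proposal does not address this bootstrap, and without it the existence claim for arbitrary $(D;\ul{x};\beta)$ is incomplete.

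Relatedly, in (iv) you assert that continuity in $\ul{x}$ follows from continuity of $H_\kappa$ together with continuity of chordal \slek{} in its endpoints. This is not sufficient: continuity of $\mcclelaw{D;\ul{x};\beta}$ in $\ul{x}$ is Theorem~\ref{th:continuity_mcle}, which rests on the same Markovian exploration machinery (specifically Proposition~\ref{prop:continuity_in_separated_points}) rather than any pointwise continuity of $H_\kappa$ or of single-chord \slek{}. Finally, you place the main technical obstacle in the uniqueness coupling, but the frequency of $E_{\ul{j}}$ is handled fairly directly (one chooses a schedule of indices that always makes progress, using that the bichordal splitting probability is strictly in $(0,1)$); the actual technical burden in the paper is the existence bootstrap and the continuity in the marked point configuration, neither of which your proposal supplies.
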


By Theorem~\ref{thm:multichordal}\eqref{it:multsle_rel}, the statements of Theorem~\ref{thm:multichordal_link} (except for the uniqueness) follow immediately from Theorem~\ref{thm:multichordal}. The uniqueness in Theorem~\ref{thm:multichordal_link} follows from the same argument in Section~\ref{sec:multichordal_ex_uniq} used to prove the uniqueness in Theorem~\ref{thm:multichordal}.

We note that the multichordal \clek{} are the conjectural scaling limits of FK models and loop $O(n)$ models where the boundary arcs are wired according to the wiring pattern associated with $\beta$. The conjectural asymptotic probabilities of the linking patterns in these models are stated in \cite{fpw-connection-prob}. We conjecture that the linking probabilities for multichordal \clek{} in Theorem~\ref{thm:multichordal}\eqref{it:multsle_rel} are given by the same formula as in \cite{fpw-connection-prob}.

Our next result is the continuity of the multichordal \clek{} law with respect to the marked point configuration $\ul{x}$. In order to state the result, we first discuss the topology of the space of collections of loops and chords. Due to conformal invariance, it suffices to consider the unit disc $\D$.

Suppose $\gamma_1,\gamma_2$ are curves in $\ol\D$, viewed modulo reparameterization and orientation. Then their distance is defined as
\[ d(\gamma_1,\gamma_2) = \inf_{\varphi_1,\varphi_2} \sup_{t\in[0,1]}\abs{\gamma_1(\varphi_1(t))-\gamma_2(\varphi_2(t))} \]
where the infimum is taken with respect to all parameterizations $\varphi_i$ of $\gamma_i$ on $[0,1]$, $i=1,2$. The distance between two unrooted loops, viewed modulo reparameterization and orientation, is defined in the same way (where the point $\varphi_i(0) = \varphi_i(1) \in \gamma_i$ is not prescribed as the loop is unrooted). For two collections $\CK_1,\CK_2$ of loops and curves, we define their distance
\begin{equation}\label{eq:topology_loop_ensemble}
 d(\CK_1,\CK_2) = \inf_{\sim} \sup_{\gamma_1 \sim \gamma_2} d(\gamma_1,\gamma_2) 
\end{equation}
where the infimum is over all bijections $\sim$ between $\CK_1$ and $\CK_2$ sending chords to chords and loops to loops.

\begin{theorem}\label{th:continuity_mcle}
Let $N \in \N$ and $\beta \in \linkpatterns{N}$. Suppose that $(\D;\ul{x};\beta) \in \eldomain{2N}$ and $(\ul{x}_n)$ is a sequence of marked point configurations on $\partial\D$ such that $\ul{x}_n \to \ul{x}$. Then the law of the multichordal \clek{} in $(\D;\ul{x}_n;\beta)$ converges weakly to the law of the multichordal \clek{} in $(\D;\ul{x};\beta)$ in the topology \eqref{eq:topology_loop_ensemble}.
\end{theorem}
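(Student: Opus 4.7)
The plan is a tightness-plus-uniqueness argument exploiting the characterization of $\mcclelaw{\D;\ul{x};\beta}$ as the unique resampling-invariant measure (Theorem~\ref{thm:multichordal}(i)).

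First, I would show that the family $\{\mcclelaw{\D;\ul{x}_n;\beta}\}_n$ is tight in the topology~\eqref{eq:topology_loop_ensemble}. By Theorem~\ref{thm:multichordal}\eqref{it:multsle_rel} the multichordal \clek{} decomposes as a mixture over interior link patterns $\alpha$ of the laws $\mcslelaw{\D;\ul{x}_n;\alpha}$ (augmented by independent \clek{}'s in the complementary components); conditional on the link pattern and on $N-1$ of the chords, each remaining chord is marginally a chordal \slek{} in a random subdomain. Tightness of chordal \slek{} in bounded simply connected domains, combined with tightness of \clek{} in a bounded domain with respect to~\eqref{eq:topology_loop_ensemble}, then yields the required tightness, provided no chord degenerates. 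Since for $n$ large the marked points $\ul{x}_n$ are uniformly separated on $\partial\D$, the relevant random conformal moduli stay in compact subsets of $(0,\infty)$ and degeneracy is ruled out.

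Next, fix a subsequential weak limit $\mu^*$ along some $n_k \to \infty$, and write $\mu_{n_k} \defeq \mcclelaw{\D;\ul{x}_{n_k};\beta}$. For each choice of $1 \le j_1 < j_2 < j_3 < j_4 \le 2N$, let $T_{\ul{y}}$ denote the resampling kernel at marked points $\ul{y}$ from Definition~\ref{def:resampling_kernel}. Invariance of $\mu_{n_k}$ under $T_{\ul{x}_{n_k}}$ gives
\[ \int F \, d\mu_{n_k} = \int T_{\ul{x}_{n_k}} F \, d\mu_{n_k} \]
for every bounded continuous $F$ on the loop/chord ensemble space. The left side converges to $\int F \, d\mu^*$, so the key step is to show that the right side converges to $\int T_{\ul{x}} F \, d\mu^*$. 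This reduces to two claims: (i) the event $E_{\ul{j}}$ of Definition~\ref{def:resampling_kernel} is a $\mu^*$-continuity event, which uses that each chord a.s.\ has distinct endpoint prime ends together with the continuity in $\ul{x}$ of the induced link pattern probabilities from Theorem~\ref{thm:multichordal}\eqref{it:multsle_rel}; and (ii) on $E_{\ul{j}}$, the bichordal \slek{} law on the random subdomain $(D_{\ul{j}};x_{\ul{j}};\beta_{\ul{j}})$ depends continuously on $x_{\ul{j}}$ and on the other chords, which follows from conformal invariance, continuity of $H_\kappa$ in~\eqref{eq:bichordal_link_probability}, and Carath\'eodory stability of the chordal \slek{} law under marked-point convergence. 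Uniqueness in Theorem~\ref{thm:multichordal}(i) then forces $\mu^* = \mcclelaw{\D;\ul{x};\beta}$, and since the subsequential limit is unique, the full sequence converges weakly.

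The main obstacle I expect is step (ii), specifically the Carath\'eodory convergence of the random subdomain $D_{\ul{j}}$ and its conformal modulus as $\ul{x}_{n_k} \to \ul{x}$. This requires an \slek{} boundary-separation estimate showing that the other chords $(\gamma_\ell)_{\ell \neq i,i'}$ cannot accumulate too close to the four designated marked points $x_{j_\bullet}$, uniformly in $n$; such an estimate should follow from the standard \slek{} boundary behaviour in the regime $\kappa \in (4,8)$, together with the positivity and continuity of link-pattern probabilities in Theorem~\ref{thm:multichordal}\eqref{it:multsle_rel}. With this control in hand, standard continuity of the conformal modulus and of chordal/bichordal \slek{} under marked-point perturbations deliver the continuity of the resampling kernel needed to identify $\mu^*$.
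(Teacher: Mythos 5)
Your tightness-plus-uniqueness argument is a genuinely different route from the paper's, which instead builds an almost-sure coupling: starting from a multichordal $\CLE_\kappa$ in $(\D;\ul{x};\beta)$, the authors run a Markovian exploration of the chords that, with probability tending to $1$, realises a marked domain conformally equivalent to $(\D;\ul{x}_n;\beta)$ (Proposition~\ref{prop:continuity_in_separated_points} combined with Lemma~\ref{lem:limit_exploration}), so that the pushed-forward remainder has exactly law $\mcclelaw{\D;\ul{x}_n;\beta}$ and converges a.s.\ to the original configuration. The coupling approach is significantly stronger than weak convergence and is also what the paper later uses to deduce the continuity of link-pattern probabilities.

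There are two serious gaps in your plan. First, a circularity: you invoke the continuity statement of Theorem~\ref{thm:multichordal}\eqref{it:multsle_rel} both in the tightness step and in establishing that $E_{\ul{j}}$ is a $\mu^*$-continuity event, but that continuity is precisely Corollary~\ref{co:hookup_positive_continuous}, which the paper derives \emph{from} Theorem~\ref{th:continuity_mcle}. The link pattern $\alpha(\ul{\eta})$ is a discontinuous topological functional, so its distribution is not automatically continuous under weak convergence — the paper needs the a.s.\ coupling (under which $\alpha$ is eventually constant) to obtain it, and you cannot use it before Theorem~\ref{th:continuity_mcle} is in hand. (The \emph{positivity} in Corollary~\ref{co:hookup_positive_continuous} is independent of Theorem~\ref{th:continuity_mcle}, but your argument uses the continuity.)

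Second, the key step ``$\int T_{\ul{x}_{n_k}} F \, d\mu_{n_k} \to \int T_{\ul{x}} F \, d\mu^*$'' requires the resampling kernel to be jointly weakly continuous, and this does not follow from Carath\'eodory stability of $\slek{}$ or continuity of $H_\kappa$ alone. The indicator $\one_{E_{\ul{j}}}$ is discontinuous at configurations where two chords touch at a single prime end (which governs whether the four designated marked points lie on a common complementary component), and the random subdomain $D_{\ul{j}}$ is not a continuous function of $\ul{\gamma}$ in the metric~\eqref{eq:topology_loop_ensemble} at such configurations. You would need to prove a priori that $\mu^*$ puts no mass on this discontinuity set, but $\mu^*$ is exactly the unknown subsequential limit you are trying to identify, so you cannot apply Theorem~\ref{thm:multichordal}\eqref{it:multsle_rel} to it. Similarly, tightness of the marginal chord laws (each a chordal $\slek{}$ in a \emph{random} subdomain that can become arbitrarily thin near the marked points) is asserted rather than proved; controlling the modulus of continuity uniformly needs an argument along the lines of Lemma~\ref{lem:F_event}, which in the paper is again embedded in the coupling machinery.
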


In Section~\ref{sec:tv_convergence} we will prove a continuity result of the multichordal \clek{} law in a stronger topology, namely the total variation distance. The stronger continuity result holds for sequences of marked domains converging in the Carathéodory topology, and the result is that the law of the multichordal \clek{} restricted to regions away from the non-constant segments of the boundary converges in total variation. We refer to Section~\ref{sec:tv_convergence} for the precise statements.

\begin{figure}[ht]
\begin{center}
\includegraphics[width=0.45\textwidth]{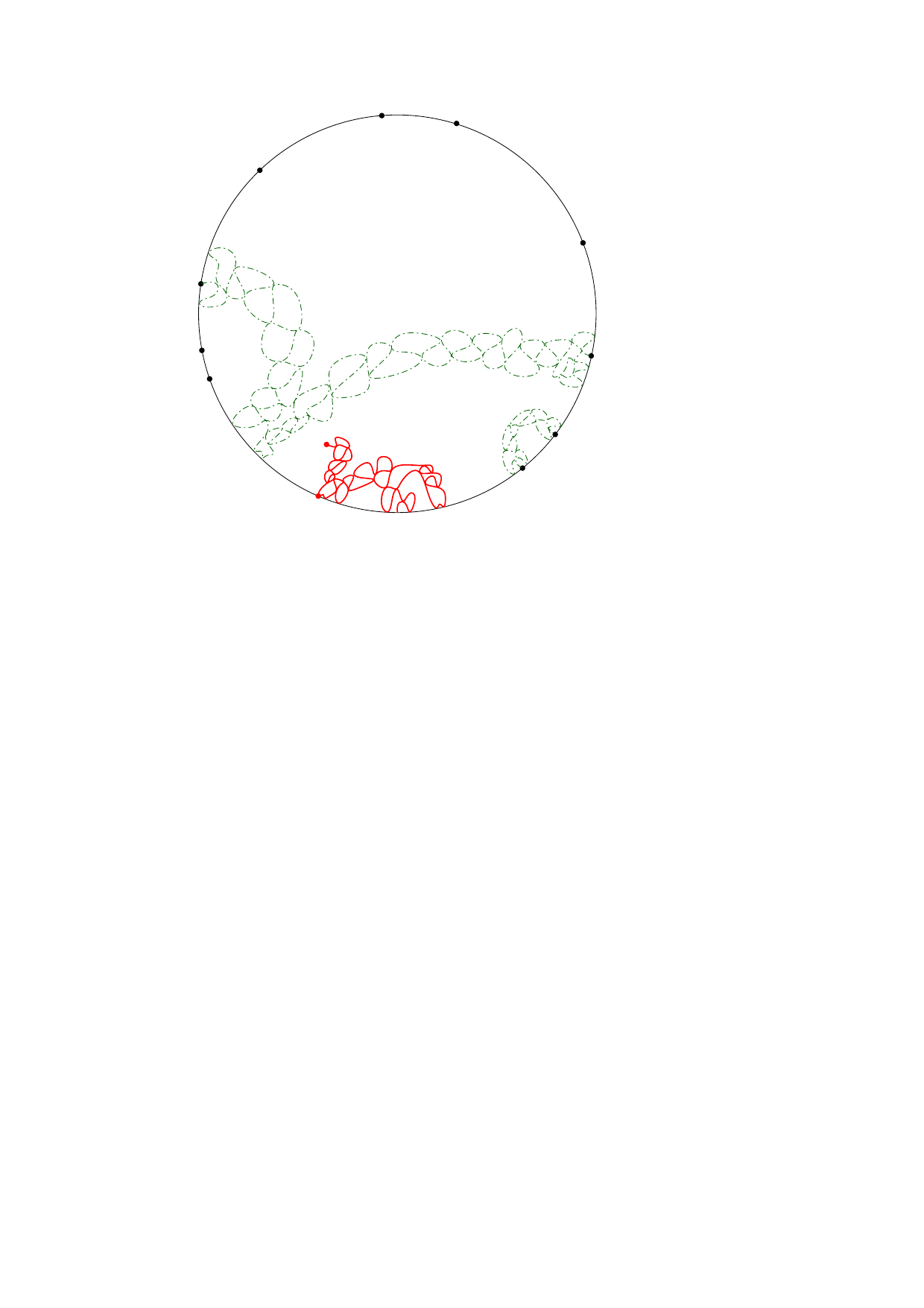}\hspace{0.05\textwidth}\includegraphics[width=0.45\textwidth]{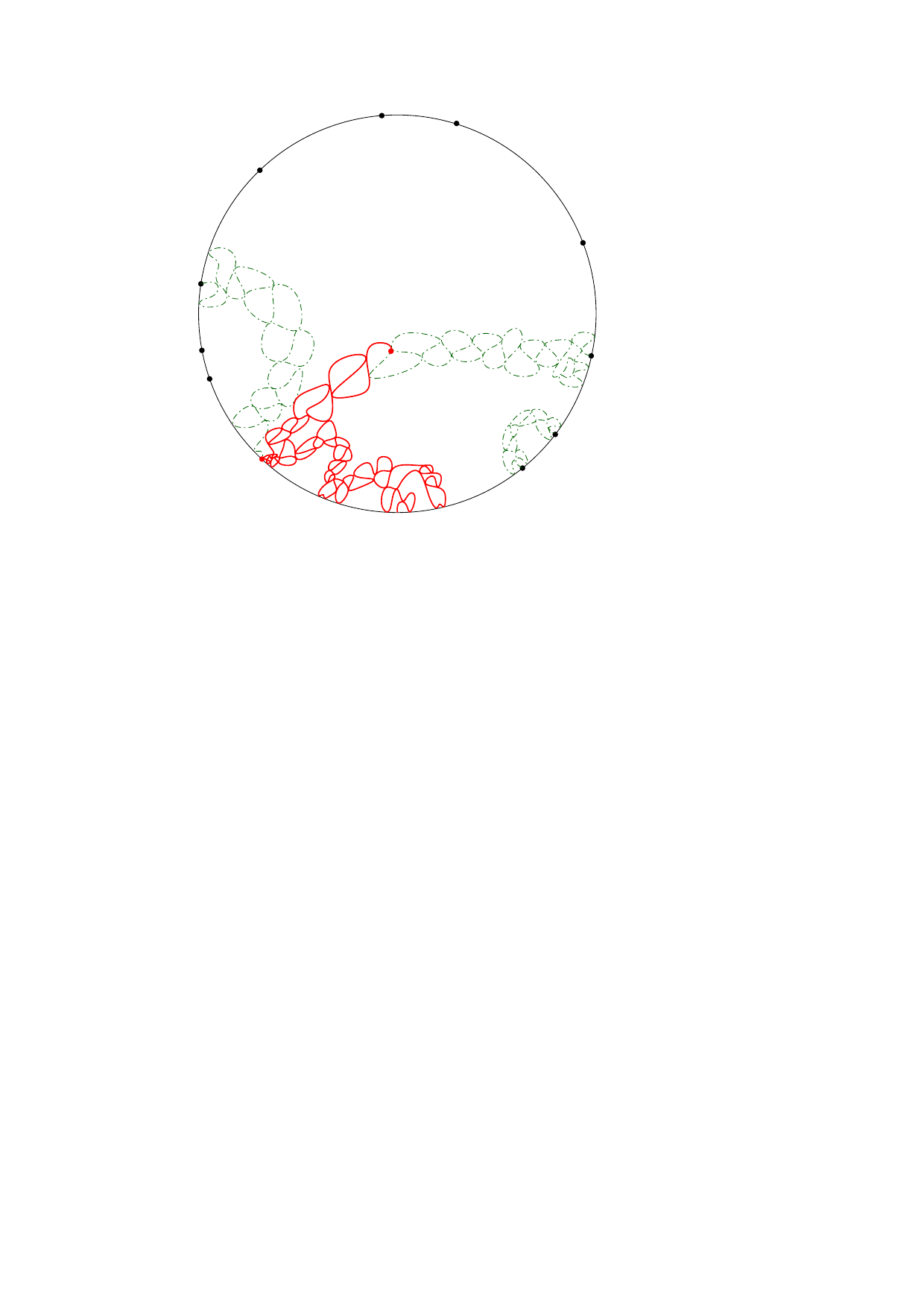}
\end{center}
\caption{A clockwise exploration path of a multichordal \clek{} in $\D$ starting from $-i$.}\label{fig:clockwise_exploration_path}
\end{figure}

Next, we generalize the definition of an exploration path of a \clek{} to the case of a multichordal \clek{}.  See Figure~\ref{fig:clockwise_exploration_path} for an illustration.

\begin{definition}[Exploration path of a multichordal \clek{}]
\label{def:mcle_exploration_path}
 Suppose $(\ul{\eta},\Gamma)$ is a multichordal \clek{}, $\kappa \in (4,8)$, in $(\D;\ul{x};\beta) \in \eldomain{2N}$. The clockwise (resp.\ counterclockwise) exploration path $\eta_{z,w}$ of $(\ul{\eta},\Gamma)$ starting from $z \in \partial\D$ targeting $w \in \ol{\D}$ is defined as follows. Let $U$ be the connected component of $\D \setminus \bigcup\ul{\eta}$ with $z$ on its boundary, let $C$ be the clockwise (resp.\ counterclockwise) arc of $\partial\D$ from $z$ until the first point $u$ that intersects one of the strands of $\eta$. Let $\Gamma_U \subseteq \Gamma$ be the collection of loops that are contained in $\ol{U}$. Let the first part of $\eta_{z,w}$ be the exploration path of $\Gamma_U$ starting from $z$ targeting $u$ that traces (portions of) loops of $\Gamma_U$ intersecting $C$ counterclockwise (resp.\ clockwise) until it separates $u$ from $w$ or reaches $u$. In the former case, $\eta_{z,w}$ continues as the exploration path of $\Gamma_U$ targeting $w$. In the latter case, $\eta_{z,w}$ continues tracing the part of the strand of $\ul{\eta}$ from $u$ towards the direction going away from $\partial\D$, and whenever it reaches the end of a strand of $\ul{\eta}$, it continues tracing the next strand of $\ul{\eta}$ according to the exterior link pattern $\beta$, and eventually branches towards $w$.
\end{definition}

In the case of \clek{}, given a stopping time for the exploration path, the remainder is given by a monochordal \slek{} in the connected component of the exploration path, and an independent \clek{} in each of the other connected components. The following proposition generalizes this to all $N \in \N$, and will be proved in Section~\ref{sec:proof_existence_multichordal}.

\begin{proposition}\label{pr:mcle_exploration_path}
 Suppose $(\ul{\eta},\Gamma)$ is a multichordal \clek{} in $(\D;\ul{x};\beta) \in \eldomain{2N}$, $N \in \N_0$. Let $\eta_{z,w}$ be an exploration path of $(\ul{\eta},\Gamma)$ starting from $z \in \partial\D$ targeting $w \in \ol{\D}$ as in Definition~\ref{def:mcle_exploration_path}. Let $\tau$ be a stopping time for $\eta_{z,w}$, and let $D_\tau$ be the connected component of $\D\setminus\eta_{z,w}([0,\tau])$ whose closure contains $w$. Let $\ul{x}_\tau$ consist of the points in $\ul{x}$ that are on $\partial D_\tau$ together with $\eta_{z,w}(\tau)$ and $\eta_{z,w}(\sigma)$ where $\sigma$ is the last time before $\tau$ when $\eta_{z,w}$ has started tracing a new loop. Let $\beta_\tau$ be the exterior link pattern induced by $\beta$ and $\eta_{z,w}|_{[0,\tau]}$. Then the conditional law of the remainder of $(\ul{\eta},\Gamma)$ given $\eta_{z,w}|_{[0,\tau]}$ is a multichordal \clek{} in $(D_\tau;\ul{x}_\tau;\beta_\tau)$.
\end{proposition}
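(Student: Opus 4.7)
The plan is to combine the Markov property of the ordinary CLE$_\kappa$ exploration path, applied inside a single complementary component of $\D\setminus\bigcup\ul{\eta}$, with the Markov property for multichordal CLE$_\kappa$ from Theorem~\ref{thm:multichordal}(iii). The argument will proceed by a decomposition of $\eta_{z,w}|_{[0,\tau]}$ into a finite sequence of phases and by iterative application of the tower property of conditional expectation.

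More concretely, $\eta_{z,w}$ alternates between \emph{exploration phases}, during which it stays in the closure of a single complementary component $U$ of $\D\setminus\bigcup\ul{\eta}$ and traces portions of loops of $\Gamma_U$, and \emph{strand phases}, during which it traces a single strand of $\ul{\eta}$ according to $\beta$. Let $0=T_0<T_1<\cdots<T_m\le \tau$ be the corresponding transition times. By the tower property it suffices to prove the Markov property for stopping times $\tau$ lying inside a single phase, provided one can iterate the statement across consecutive phases.

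Inside a strand phase, the statement follows directly from Theorem~\ref{thm:multichordal}(iii), once one notes that the additional marked point $\eta_{z,w}(\sigma)$ is measurable with respect to $\eta_{z,w}|_{[0,T_k]}$, where $T_k$ marks the start of the current strand phase. Inside an exploration phase in $U$, I would first condition on $\ul{\eta}$ and on the loops of $\Gamma$ not contained in $\ol U$; by Definition~\ref{def:exterior_link_pattern_multiple_sle}, what remains is an independent (nested) CLE$_\kappa$ inside $U$. For this CLE$_\kappa$, the exploration segment from $\eta_{z,w}(\sigma)$ enjoys the classical CLE$_\kappa$ exploration path Markov property, which yields that conditionally on $\eta_{z,w}|_{[\sigma,\tau]}$ the unexplored part of $U$ decomposes into complementary components containing independent CLE$_\kappa$'s, together with a chord from $\eta_{z,w}(\sigma)$ to $\eta_{z,w}(\tau)$ in the component containing $w$. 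Reassembling this with the unchanged data outside $U$ produces a candidate configuration on $(D_\tau;\ul{x}_\tau;\beta_\tau)$; I would then invoke uniqueness from Theorem~\ref{thm:multichordal}(i) to identify this candidate with $\mcclelaw{D_\tau;\ul{x}_\tau;\beta_\tau}$, after checking that it is invariant under the resampling kernel of Definition~\ref{def:resampling_kernel}. The latter holds because each such resampling is local and involves only four marked points of $\ul{x}_\tau$, which by construction lie in the portion of the configuration unaffected by $\eta_{z,w}|_{[0,\tau]}$.

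The main obstacle is the bookkeeping at phase transitions. One has to verify that the marked points $\ul{x}_\tau$ appearing in the proposition correspond exactly to the prescription in Definition~\ref{def:mcle_exploration_path}, that $\beta_\tau$ is the exterior link pattern naturally induced by $\beta$ together with the chord just produced by the partial exploration, and, most importantly, that the exploration-path continuation after time $\tau$ agrees in law with the exploration path of the candidate multichordal CLE$_\kappa$ on $(D_\tau;\ul{x}_\tau;\beta_\tau)$ starting from $\eta_{z,w}(\tau)$. This consistency is what enables the tower-property iteration to close the argument, and it is delicate because the rule in Definition~\ref{def:mcle_exploration_path} uses the direction (clockwise or counterclockwise) in which the path first encountered $\partial U$, which must be shown to match the direction inherited from the previous phase.
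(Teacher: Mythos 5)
Your plan diverges from the paper's proof, and there is a genuine gap in the crucial step. The paper does not decompose $\eta_{z,w}$ into phases at all: it directly verifies that the conditional law $\mu$ of the remainder given $\eta_{z,w}|_{[0,\tau]}$ is invariant under the resampling kernel of Definition~\ref{def:resampling_kernel}, and the entire content of the proof is a three-case analysis according to how many of the two new marked points $a=\eta_{z,w}(\sigma)$, $b=\eta_{z,w}(\tau)$ belong to the resampling quadruple $x_{\ul j}$. Each case is handled by conditioning on the complementary strands and applying Lemmas~\ref{le:4strands_cle} and~\ref{le:bichordal_given_loops} in the appropriate order (exploring parts of the configuration in different orders to compare conditional laws).

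Your proposal collapses precisely at this point. You write that the resampling invariance ``holds because each such resampling is local and involves only four marked points of $\ul{x}_\tau$, which by construction lie in the portion of the configuration unaffected by $\eta_{z,w}|_{[0,\tau]}$.'' This is false: the resampling quadruple $x_{\ul j}$ may include $a$ and/or $b$, and the two strands being resampled then include the unexplored continuation of the loop currently being traced by $\eta_{z,w}$ --- these are the very objects whose conditional law is changed by conditioning on $\eta_{z,w}|_{[0,\tau]}$. Showing that the conditional law of those two strands is a bichordal \clek{} is the substance of the proposition, not an automatic locality observation; it is exactly what Lemmas~\ref{le:4strands_cle} and~\ref{le:bichordal_given_loops} are designed to supply. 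A second, related problem: during an ``exploration phase'' you condition on all of $\ul\eta$ and on the loops outside $\ol U$, but this conditions on strictly more than $\eta_{z,w}|_{[0,\tau]}$, and after seeing $\eta_{z,w}|_{[\sigma,\tau]}$ the conditional law of $\ul\eta$ has changed (cf.\ Lemma~\ref{le:sle_given_cle}: the chord given the explored loops is an \slek{} in a strictly smaller domain). You cannot simply ``reassemble with the unchanged data outside $U$''; deconditioning on $\ul\eta$ is nontrivial and again requires verifying the resampling property directly, which is the step you dispatch incorrectly. The phase-transition bookkeeping you flag as ``the main obstacle'' is real but secondary; the real missing piece is the case analysis on whether $a,b\in x_{\ul j}$.
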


\subsubsection{Multichordal \clek{} from partial explorations of \clek{}}

Our next result relates partial explorations of nested \clek{} to the multichordal \clek{} considered above. In order to state this result, we first need to give a precise definition of what we mean by a partial exploration of a \clek{}.  See Figures~\ref{fig:partially_explored} and~\ref{fig:link_pattern_partially_explored} for an illustration.

\begin{figure}[ht]
\centering
\includegraphics[width=0.45\textwidth]{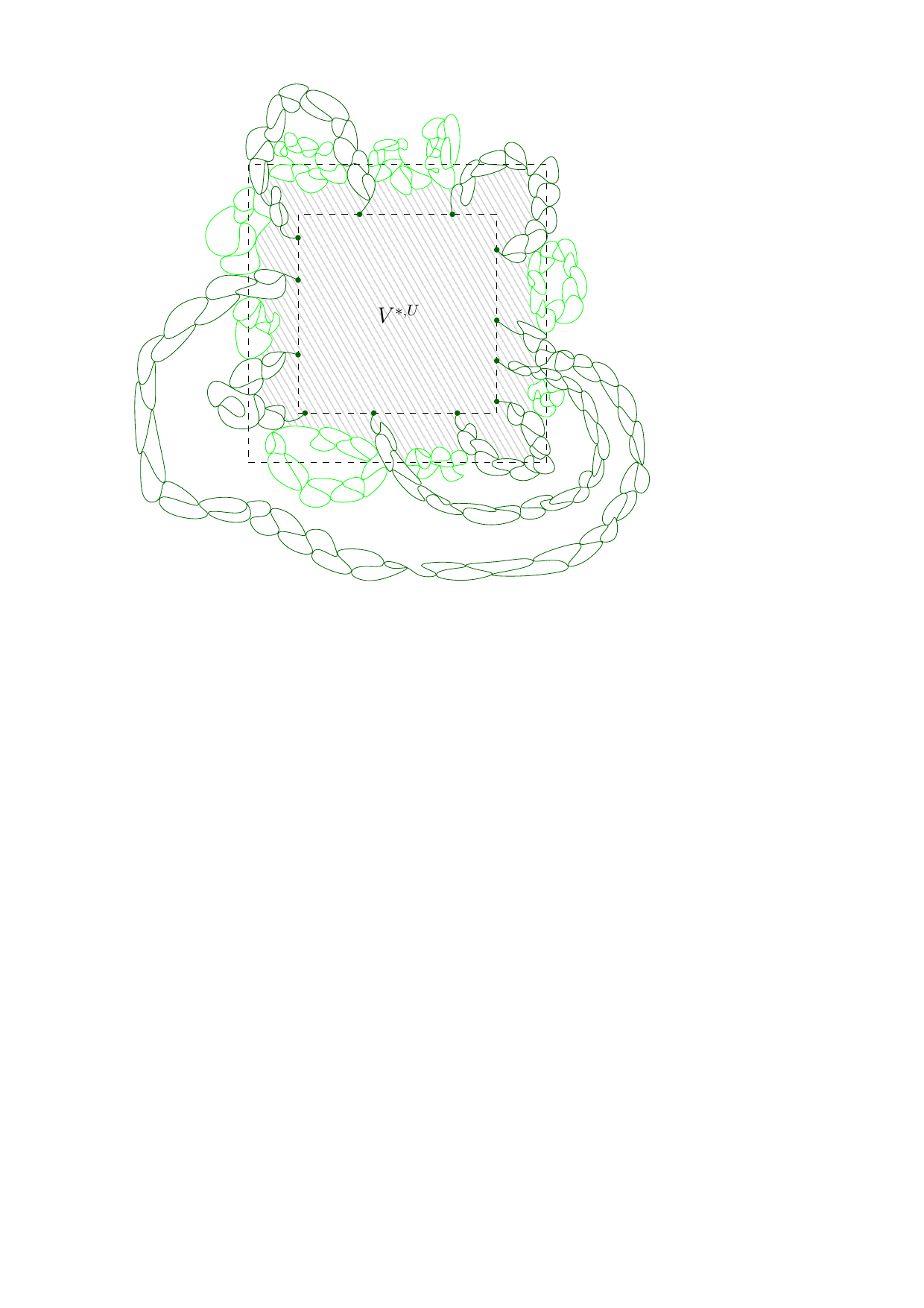}
\caption{\label{fig:partially_explored}The partially explored nested \clek{} $\Gamma_\outside^{*,V,U}$ for $U \subseteq V$ with $(U,V) \in \domainpair{D}$.  In the illustration, $V$ (resp.\ $U$) is shown as the outer (resp.\ inner) square but in practice $U,V$ are general simply connected domains in $D$.}
\end{figure}

\begin{figure}[ht]
\centering
\includegraphics[width=0.45\textwidth]{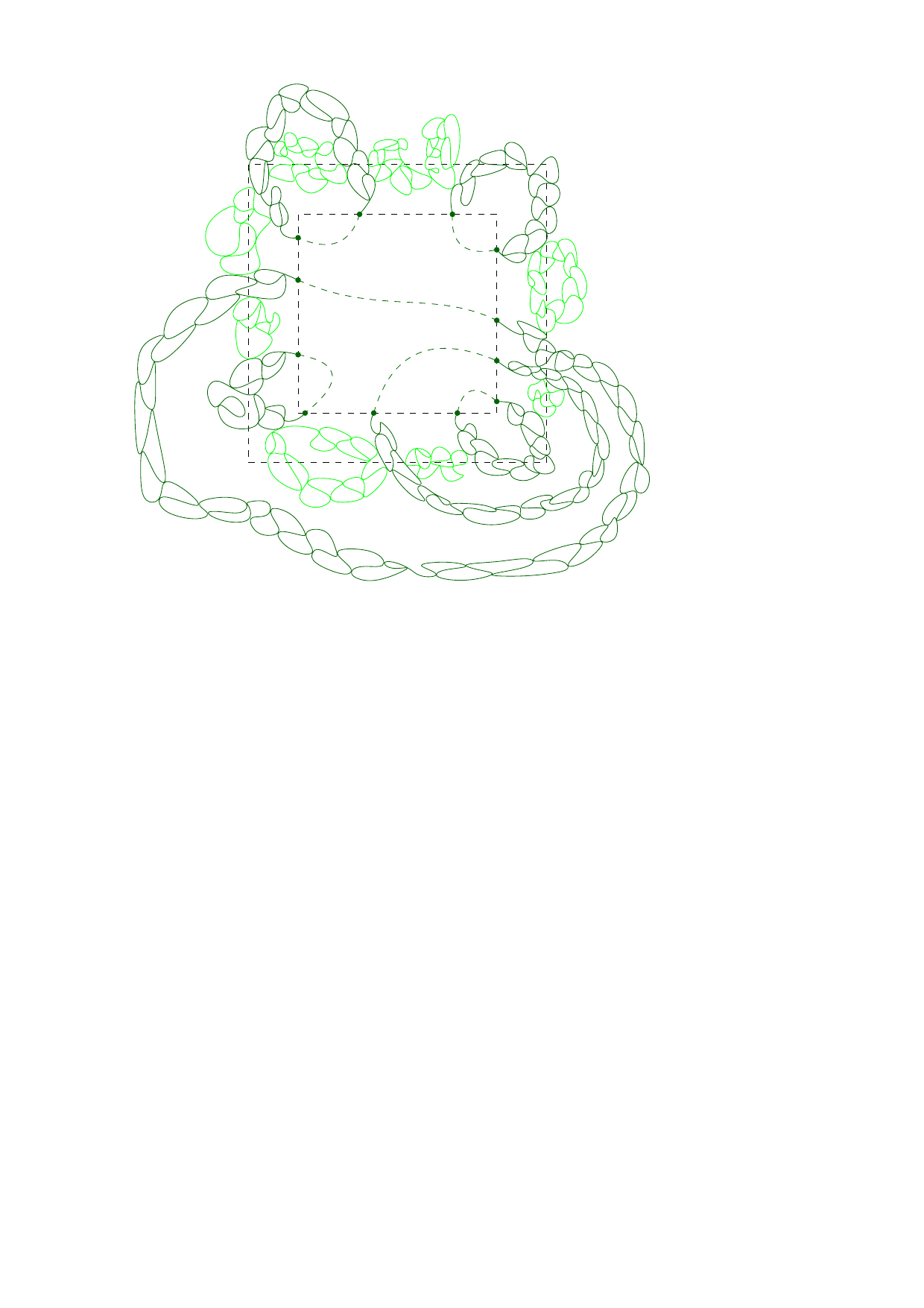}\hspace{0.05\textwidth}\includegraphics[width=0.45\textwidth]{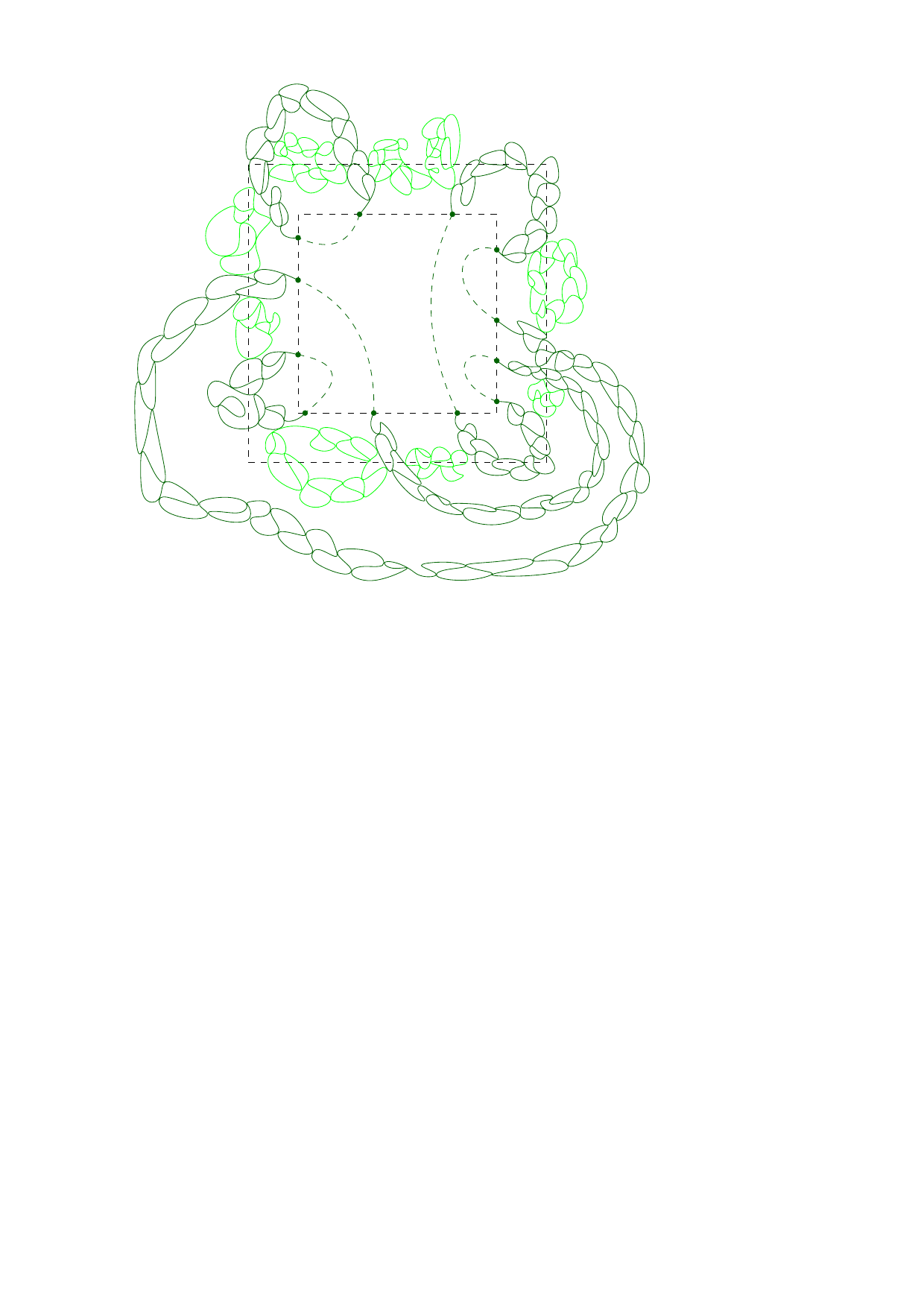}
\caption{\label{fig:link_pattern_partially_explored}Two possible interior link patterns with the given exterior link pattern coming from the partial exploration $\Gamma_\outside^{*,V,U}$ as illustrated in Figure~\ref{fig:partially_explored}.}
\end{figure}

\begin{definition}[Partially explored CLE]
\label{def:partially-explored-cle}
Let $D \subseteq \C$ be a simply connected domain and $\varphi \colon D \to \D$ be a conformal transformation.  Let $\domainpair{D}$ consist of all pairs $(U,V)$ of simply connected sub-domains $U \subseteq V \subseteq D$ with $\dist(\varphi(D\setminus V), \varphi(U)) > 0$.  (Note that $\domainpair{D}$ does not depend on the choice of $\varphi$.) 

Suppose $\Gamma$ is a nested multichordal \clek{} in $(D;\ul{x};\beta) \in \eldomain{2N}$, $N \in \N_0$, and $(U,V) \in \domainpair{D}$. We let $\Gamma_\outside^{*,V,U}$ be the collection of maximal segments of loops and strands in $\Gamma$ that intersect $D \setminus V$ and are disjoint from $U$.  We call $\Gamma_\outside^{*,V,U}$ the \emph{partial exploration} of $\Gamma$ in $D\setminus V$ until hitting $\ol{U}$. We let $V^{*,U}$ be the connected component containing $U$ after removing from $D$ all loops and strands of $\Gamma_\outside^{*,V,U}$. This defines a marked domain $(V^{*,U};\ul{x}^*;\beta^*)$ where the marked points $\ul{x}^*$ correspond to the ends of the strands in $\Gamma_\outside^{*,V,U}$ and the points in $\ul{x} \cap \partial V^{*,U}$. The strands in $\Gamma_\outside^{*,V,U}$ together with $\beta$ induce a planar link pattern $\beta^*$ on the exterior of $V^{*,U}$.  We let $\Gamma_\inside^{*,V,U}$ be the collection of loops and strands of $\Gamma$ in $V^{*,U}$, which we call the \emph{unexplored part} of $\Gamma$.
\end{definition}

\begin{theorem}
\label{thm:cle_partially_explored}
Suppose that $(D;\ul{x};\beta) \in \eldomain{2N}$, $N \in \N_0$, and $\Gamma$ is a nested multichordal \clek{} in $(D;\ul{x};\beta)$. For $(U,V) \in \domainpair{D}$, let $\Gamma_\outside^{*,V,U}$ and $(V^{*,U};\ul{x}^*;\beta^*)$ be as in Definition~\ref{def:partially-explored-cle}.  Then the conditional law of the remainder $\Gamma_\inside^{*,V,U}$ of $\Gamma_\outside^{*,V,U}$ is given by $\mcclelaw{V^{*,U};\ul{x}^*;\beta^*}$.
\end{theorem}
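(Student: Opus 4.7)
The plan is to verify that the conditional law of $\Gamma_\inside^{*,V,U}$ given $\Gamma_\outside^{*,V,U}$ satisfies the resampling-invariance characterization of Definition~\ref{def:exterior_link_pattern_multiple_sle}; the uniqueness statement of Theorem~\ref{thm:multichordal}(i) then identifies this conditional law with $\mcclelaw{V^{*,U};\ul{x}^*;\beta^*}$. By conformal invariance (Theorem~\ref{thm:multichordal}(ii)) I take $D = \D$.

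Pick any four of the marked points of $V^{*,U}$, say $x^*_{j_1}, \ldots, x^*_{j_4}$ with $1 \le j_1 < j_2 < j_3 < j_4$, and work on the event $E_{\ul j}$ of Definition~\ref{def:resampling_kernel} on which exactly two strands $\gamma_i, \gamma_{i'}$ of $\Gamma_\inside^{*,V,U}$ have endpoints $x^*_{\ul j} = (x^*_{j_1}, \ldots, x^*_{j_4})$ and lie on the boundary of a single component $D_{\ul j}$ of $V^{*,U} \setminus \bigcup_{\ell \ne i, i'} \gamma_\ell$. The key step is to further enlarge the partial exploration by additionally revealing everything of $\Gamma_\inside^{*,V,U}$ lying in $V^{*,U} \setminus D_{\ul j}$: the other strands $(\gamma_\ell)_{\ell \ne i, i'}$ of $\Gamma_\inside^{*,V,U}$ (completed to maximal \clek{}-loop segments where necessary), together with all \clek{} loops of $\Gamma_\inside^{*,V,U}$ contained in $V^{*,U} \setminus D_{\ul j}$. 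Combined with $\Gamma_\outside^{*,V,U}$, this is again a partial exploration of $\Gamma$ of the form $\Gamma_\outside^{*,V',U'}$ for a suitable $(U',V') \in \domainpair{\D}$ with $V'^{*,U'} = D_{\ul j}$ and with the only unrevealed strands being $\gamma_i, \gamma_{i'}$ carrying endpoints $x^*_{\ul j}$ and induced exterior link pattern $\beta_{\ul j}$. When the original $\Gamma$ is a pure \clek{} (i.e.\ $N = 0$), this is precisely the situation from \cite{msw2020nonsimple, mw2018connection} defining bichordal \clek{}, so that the conditional law of $(\gamma_i, \gamma_{i'})$ together with the loops it contains is the bichordal \clek{} in $(D_{\ul j}; x^*_{\ul j}; \beta_{\ul j})$. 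For general $N$, one further reveals the original strands $\ul\eta$ using the strand-by-strand Markov property of Theorem~\ref{thm:multichordal}(iii), reducing to the pure-\clek{} case in each complementary component. This yields the required resampling invariance, so $\Gamma_\inside^{*,V,U}$ satisfies Definition~\ref{def:exterior_link_pattern_multiple_sle}.

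The main obstacle is the rigorous justification of the enlargement step: the portion of $\Gamma$ that must be revealed in order to isolate $D_{\ul j}$ is typically a geometrically intricate countable union of loops and strand segments, and one must construct an admissible pair $(U',V') \in \domainpair{\D}$ whose partial exploration of $\Gamma$ yields exactly $D_{\ul j}$ as the unexplored region with the correct four boundary marked points and exterior link pattern $\beta_{\ul j}$. I would handle this by exhausting $V^{*,U} \setminus D_{\ul j}$ with an increasing sequence of open neighborhoods whose boundaries stay a positive distance from the remaining strands and from $\partial D_{\ul j}$, running the corresponding enlarged partial explorations, and passing to the limit using the Carath\'eodory/total-variation continuity of multichordal \clek{} developed in Section~\ref{sec:tv_convergence} (or the weaker weak-topology continuity of Theorem~\ref{th:continuity_mcle}). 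Care must also be taken to track the labeling of strands and the evolving link pattern throughout the enlargement.
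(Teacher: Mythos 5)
Your high-level strategy matches the paper's: show the conditional law is invariant under the resampling kernel, then invoke uniqueness (Lemma~\ref{lem:unique_multichordal}). However, the argument has two genuine gaps at exactly the point you flag as the ``main obstacle,'' and the workaround you propose does not close them.

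First, the enlarged exploration revealing $(\gamma_\ell)_{\ell\neq i,i'}$ and the loops in $V^{*,U}\setminus D_{\ul j}$ cannot be written as $\Gamma_\outside^{*,V',U'}$ for any fixed $(U',V')\in\domainpair{\D}$: the pairs $(U',V')$ in Definition~\ref{def:partially-explored-cle} are deterministic sub-domains, whereas $D_{\ul j}$ is a random region determined by $\Gamma$. Trying to approximate with deterministic exhaustions and invoking the theorem for those approximations is circular, and using the total-variation continuity from Section~\ref{sec:tv_convergence} is also circular, since the proofs there (Propositions~\ref{prop:mccle_tv_convergence_bd}--\ref{prop:mccle_tv_convergence_int}) explicitly invoke Theorem~\ref{thm:cle_partially_explored}. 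Second, and more fundamentally, the assertion that ``this is precisely the situation from \cite{msw2020nonsimple, mw2018connection} defining bichordal \clek{}'' is the crux and is left unjustified. The bichordal \clek{} is characterized via a specific sequence of exploration-path steps (Lemmas~\ref{le:4strands}, \ref{le:4strands_cle}), not by conditioning on an arbitrary collection of discovered loops and strand segments. Bridging this gap is the real work of the proof: the paper constructs a countable family of admissible explorations using \clekp{} exploration paths from rational boundary points, shows via the finite-chain property (Lemma~\ref{le:cle_finite_chain}) that one of them discovers exactly the required loops and strands (first $\CL_1$, then $\CL_2$, then the remaining loops of $\Gamma(O_\varepsilon)$, where $O_\varepsilon$ is a deterministic $\varepsilon$-neighborhood of $D\setminus V$), and then iterates Lemmas~\ref{le:sle_given_cle}, \ref{le:4strands_cle}, and \ref{le:bichordal_given_loops} to see the remainder at each step is mono- or bichordal \clekp{}; finally one sends $\varepsilon\to 0$ using Lemma~\ref{lem:bichordal_continuous}. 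Your proposal omits this construction entirely, so the central step of the argument is missing.
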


Theorem~\ref{thm:cle_partially_explored} is very useful for proving things about \clek{} because it can be used to locally relink loops. We will explore this systematically in Section~\ref{sec:resampling_tools}. In particular, we will show that given a sufficient number of nested annuli, the probability becomes very high that the geometry of the CLE is good in a large fraction of annuli so that if we resample the CLE strands in randomly chosen places inside the annulus, we can relink the loops as we like with positive probability. Interesting cases are the creation of single loops that disconnect the inside of the annulus from the outside, and the breaking up of all crossings across the annulus. We will use this extensively in later work \cite{amy2025tightness}.  We refer to Section~\ref{sec:resampling_tools} for the precise statements.

Another useful application of the partially explored \clek{} is an independence across scales result. It roughly says that there is ``sufficient independence'' between the \clek{} configurations in disjoint annuli so that any event for the \clek{} configuration in an annulus that has positive probability is extremely likely to occur in a positive fraction of nested annuli. We refer to Section~\ref{subsec:ind_across_scales} for the precise statements.

\subsection*{Acknowledgements} V.A., J.M., and Y.Y.\ were supported by ERC starting grant SPRS (804116). J.M.\ and Y.Y.\ also received support from ERC consolidator grant ARPF (Horizon Europe UKRI G120614), and Y.Y.\ in addition received support by the Royal Society.

\subsection*{Outline}

The remainder of this article is structured as follows. In Section~\ref{sec:prelim}, we will collect a number of preliminaries. We will also recall the bichordal \clek{} studied in \cite{msw2020nonsimple}. In Section~\ref{sec:multichordal_ex_uniq} we will show the existence and uniqueness of multichordal \clek{} and prove their main properties stated in the introduction. In Section~\ref{sec:strands} we will prove the independence across scales results for \clek{}. A crucial technical tool will be the \emph{separation of strands}. In Section~\ref{sec:resampling_tools} we prove our results about the resampling operation, including the creation of disconnecting loops and the breaking of loop crossings. In Section~\ref{sec:tv_convergence} we will prove the continuity of \clek{} in total variation.

\subsection*{Notation}

We denote annuli by $A(z,r_1,r_2) \defeq B(z,r_2)\setminus \ol{B(z,r_1)}$ where $r_2>r_1>0$. We write $U_1 \Subset U_2$ to denote that $U_1$ is compactly contained in $U_2$, i.e.\ $\ol{U_1}$ is compact and $\ol{U_1} \subseteq U_2$. We write $a \lesssim b$ meaning that $a \le cb$ for some constant $c$ whose value may change from line to line.

\section{Preliminaries}
\label{sec:prelim}

The purpose of this section is to collect some preliminaries.  We will start with the definition of SLE in Section~\ref{subsec:sle}.  We will then review the definition of CLE in Section~\ref{subsec:cle}.  Finally, we will collect a few results about monochordal and bichordal $\CLE$ in Section~\ref{subsec:bichordal_cle}.

\subsection{Schramm-Loewner evolution}
\label{subsec:sle}

The Schramm-Loewner evolution (\slek{}) was introduced by Schramm in \cite{s2000sle}.  The starting point for the definition of \slek{} is the chordal Loewner equation
\begin{equation}
\label{eqn:loewner_equation}
\partial_t g_t(z) = \frac{2}{g_t(z) - U_t},\quad g_0(z) = z.
\end{equation}
Here, $U \colon \R_+ \to \R$ is a continuous function and for each fixed $z \in \h$ the solution $(g_t(z))$ to~\eqref{eqn:loewner_equation} is defined up until $\tau(z) = \inf\{t \geq 0 : \im(g_t(z)) = 0\}$.  Let $K_t = \{z \in \h : \tau(z) \leq t\}$ and $\h_t = \h \setminus K_t$.  Then $g_t$ is the unique conformal transformation $\h_t \to \h$ with $|g_t(z) - z| \to 0$ as $z \to \infty$.

Suppose that $\kappa \geq 0$.  Then \slek{} in $\h$ from $0$ to $\infty$ is defined by taking $U_t = \sqrt{\kappa} B_t$ where $B$ is a standard Brownian motion.  It is not immediate from its definition that \slek{} corresponds to a continuous curve, meaning that there exists a continuous curve $\eta \colon \R_+ \to \ol{\h}$ so that for each $t \geq 0$ we have that $\h_t$ is the unbounded component of $\h \setminus \eta([0,t])$.  This was proved by Rohde-Schramm for $\kappa \neq 8$ in \cite{rs2005basic} and for $\kappa=8$ by Lawler-Schramm-Werner in \cite{lsw2003restriction} as a consequence of the convergence of the uniform spanning tree Peano curve to \sle{8} (see also \cite{am2022sle8} for a proof which makes use of only continuum methods).

The \slekr{\rho} processes are an important variant of \slek{} first introduced in \cite[Section~8.3]{lsw2003restriction} where one keeps track of extra marked points.  They are defined by solving~\eqref{eqn:loewner_equation} where the driving function $W$ is given by the solution to the SDE
\begin{equation}
\label{eqn:sle_kappa_rho}
dW_t = \sqrt{\kappa} dB_t + \sum_i \frac{\rho_i}{W_t - V_t^i} dt,\quad dV_t^i = \frac{2}{V_t^i - W_t},\quad V_0^i = z_i.
\end{equation}
The existence and uniqueness of solutions to~\eqref{eqn:sle_kappa_rho} up until the continuation threshold is reached was proved in \cite{ms2016ig1}.  The continuity of the corresponding process up until this time was proved in \cite{ms2016ig1}.  We we remark that it is also possible to consider the \slekr{\rho} processes for $\rho < -2$.  In the case that $\rho \in (\kappa/2-4,-2)$, the continuity was proved in \cite{ms2019lightcone} while for $\rho \in (-2-\kappa/2,\kappa/2-4]$ the continuity was proved in \cite{msw2017cleperc}.

\subsection{Conformal loop ensembles}
\label{subsec:cle}

The \emph{conformal loop ensembles} (\clek{}) were introduced in \cite{s2009cle,sw2012cle}.  They consist of a countable collection of loops in a simply connected domain, each of which locally looks like an \slek{}.  As mentioned earlier, the \clek{} are defined for $\kappa \in (8/3,8)$.  As we will be focusing on the case that $\kappa \in (4,8)$ in this paper, we will only describe the construction in this case.  Suppose that $D \subseteq \C$ is a simply connected domain.  Suppose that we have fixed $x \in \partial D$ and a countable dense set $(y_n)$ in $\partial D$.  For each $n \in \N$, we let $\eta_n$ be an \slekr{\kappa-6} in $D$ from $x$ to $y_n$.  We note for each $n,m$ that $\eta_m$ viewed as a process targeted at $y_n$ has the same law as $\eta_n$ up until the first time that $\eta_m$ disconnects $y_n$ from $y_m$ \cite{sw2005coordinate}.  This means that we can assume that the $(\eta_n)$ are coupled together onto a common probability space so that any finite subcollection agrees up until the first time that their target points are separated and afterwards evolve independently.  This tree of \slekr{\kappa-6}'s is the so-called \emph{exploration tree}.

Let us now describe how the loops of a \clek{} which intersect $\partial D$ are constructed out of the branches of the exploration tree.  Fix $n \in \N$, $t > 0$, let $\sigma$ (resp.\ $\tau$) be the last time before $t$ (resp.\ next time after $t$) where the right boundary of $\eta_n$ intersects $\partial D$. 
Then $\eta_n|_{[\sigma,\tau]}$ is part of a loop $\CL$.  To obtain the rest of $\CL$, suppose that $(y_{n_k})$ is a subsequence of $(y_n)$ in the counterclockwise arc of $\partial D$ from $\eta_n(\sigma)$ to $\eta_n(\tau)$ which converges to $\eta_n(\sigma)$.  Then we know that each $\eta_{n_k}$ has $\eta_n|_{[\sigma,\tau]}$ as a subarc and all of the $\eta_{n_k}$ agree up until the time they disconnect $\eta_n(\sigma)$ from their target point.  
 
 We thus take the remainder of $\CL$ to the the limit of the $\eta_{n_k}$'s up until this disconnection time.  Varying $n$ and $t$ yields a family of loops which all intersect $\partial D$ and yields the boundary intersecting loops of the \clek{}.  To generate the remaining loops, we sample independently from the same law in each of the holes whose winding number induced by the loops is $0$.  This gives us a non-nested \clek{}.  To get a nested \clek{}, we generate the loops in each of the remaining holes.

It was explained in \cite{s2009cle} that the continuity of the loops of a \clek{} follows from the continuity of the \slekr{\kappa-6} processes, which was proved in \cite{ms2016ig1}.  It was also shown in~\cite{s2009cle} that the law of the loops of a \clek{} does not depend on the choice of the root of the exploration tree as a consequence of the reversibility of the \slekr{\kappa-6} processes, which was proved in \cite{ms2016ig3}.  Finally, the local finiteness of \clek{} for $\kappa \in (4,8)$ was proved in \cite{ms2017ig4} as a consequence of the continuity of space-filling SLE.  We recall that this means that if we have a \clek{} on a bounded Jordan domain, then it is a.s.\ the case that for each $\epsilon > 0$ the number of loops which have diameter at least $\epsilon > 0$ is finite.  The \emph{gasket} of a CLE is the set of points in $\ol{D}$ that are not surrounded by any loop of the CLE. The dimension of the CLE gasket was computed in \cite{sw2005coordinate,msw2014dimension}.

The following property is proved in \cite{gmq2021sphere}.
\begin{lemma}[{see the proof of \cite[Lemma~3.2]{gmq2021sphere}}]\label{le:cle_finite_chain}
 Let $O \subseteq \C$ be an open, connected set that intersects $\partial\D$. Let $\Gamma$ be a (nested or non-nested) \clek{} in $\D$, and $\Gamma(O) \subseteq \Gamma$ the collection of loops that intersect $O$. Then a.s.\ for every loop $\CL$ in $\Gamma(O)$ there is a finite sequence of loops $\CL_1,\ldots,\CL_m$ in $\Gamma(O)$ for some $m \in \N$ such that $\CL_m = \CL$, the loop $\CL_1$ intersects $\partial\D \cap O$, and $\CL_i$ intersects $\CL_{i+1}$ in $O$ for each $i=1,\ldots,m-1$.
\end{lemma}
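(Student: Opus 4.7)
My plan is to argue by contradiction using the connectedness of $O$ together with the local finiteness of \clek{}. Define $A \subseteq \Gamma(O)$ to be the collection of loops for which a chain as in the statement exists, and let $B = \Gamma(O) \setminus A$; the goal is to show $B = \emptyset$ almost surely. Working on the a.s.\ event that $\Gamma$ is locally finite, suppose for contradiction $\CL \in B$, pick $z \in \CL \cap O$, and set
\[ F_A = (\partial \D \cap O) \cup \bigcup_{\CL' \in A}(\CL' \cap O), \qquad F_B = \bigcup_{\CL' \in B}(\CL' \cap O), \]
which are disjoint by the chain equivalence.

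The key step is to show that $\ol{F_A}$ and $\ol{F_B}$ are disjoint in the subspace topology on $O$. Suppose $p \in \ol{F_A} \cap \ol{F_B} \cap O$, and fix a neighborhood $V \Subset O$ of $p$. By local finiteness, only finitely many loops of $\Gamma$ of diameter at least $\diam(V)/4$ meet $V$; any other loop meeting $V$ is fully contained in $V \Subset O$. Combined with $p$ being a limit point of both $F_A$ and $F_B$, this lets me extract loops $\CL^A \in A$ and $\CL^B \in B$ both entering arbitrarily small discs around $p$. The crucial structural input for \clek{} with $\kappa \in (4,8)$ is that such nearby loops, in the right relative geometry, must actually intersect with positive probability, quantified via the partially explored \clek{} of Definition~\ref{def:partially-explored-cle} and Theorem~\ref{thm:cle_partially_explored}: conditioning on the loops/strands outside a small annulus around $p$, the unexplored multichordal \clek{} inside actually connects $\CL^A$ to $\CL^B$ with positive conditional probability. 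This forces the existence of $\CL^A$ and $\CL^B$ that actually meet inside $O$, placing $\CL^B$ on the chain for $\CL^A$ and contradicting $\CL^B \in B$.

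With topological separation established, I would take a continuous path $\gamma \colon [0,1] \to O$ from $z$ to a point in $\partial\D \cap O$ --- which exists because $O$ is open, path-connected, and meets $\partial \D$ --- and, whenever $\gamma$ would enter the interior of some loop $\CL^* \in \Gamma(O)$ not covered by $\ol{F_A} \cup \ol{F_B}$ (an issue only in the non-nested case), reroute $\gamma$ along $\CL^*$ itself. The resulting modified path has image contained in $\ol{F_A} \cup \ol{F_B} = \ol{F_A \cup F_B}$, so it connects $F_B$ to $F_A$ continuously while remaining in the disjoint union of two closed subsets of $O$, contradicting the connectedness of $O$. The main obstacle is the topological separation step, namely promoting ``loops approach a common point'' to ``loops genuinely intersect inside $O$''; this is the technical heart of the proof and relies on the nontrivial intersection structure of \clek{} in the nonsimple regime, for which Theorem~\ref{thm:cle_partially_explored} is the natural tool.
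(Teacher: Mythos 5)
Your proposal has the right high-level skeleton (partition $\Gamma(O)$ into $A$, $B$ by reachability and seek a topological contradiction from the connectedness of $O$), but two of the key steps do not close. First, the separation step. When $p \in \ol{F_A}\cap\ol{F_B}\cap O$ lies on two fixed loops $\CL^A\in A$, $\CL^B\in B$, the argument is fine: loop traces are closed, so $p\in\CL^A\cap\CL^B\cap O$ and the two loops touch in $O$, contradiction. But you must also rule out $p$ being an accumulation point of infinitely many loops; local finiteness forces their diameters to shrink but does not prevent the accumulation, and you do not address this case. Worse, the probabilistic step you invoke is logically unsound. You claim the unexplored multichordal \clek{} inside a small annulus ``connects $\CL^A$ to $\CL^B$ with positive conditional probability,'' and that this ``forces'' them to meet. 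It does not. The partition $A/B$ and the sets $F_A$, $F_B$ are functionals of the fixed realization $\Gamma$, and the event $\{B\neq\varnothing\}$ is already decided once $\Gamma$ is fixed; resampling the configuration inside the annulus produces a \emph{different} configuration $\Gamma^{\resampled}$, and a positive-probability statement about $\Gamma^{\resampled}$ gives no information about whether $\CL^A$ and $\CL^B$ intersect in the realization you started from. Theorem~\ref{thm:cle_partially_explored} and the resampling tools of Section~\ref{sec:resampling_tools} compare laws of different configurations; they cannot upgrade a positive-probability intersection to a deterministic one for a conditioned sample.

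Second, the connectedness step also does not close. Disjointness of the closed sets $\ol{F_A}$, $\ol{F_B}$ in $O$ does not disconnect $O$ because $\ol{F_A}\cup\ol{F_B}$ is a Lebesgue-null set, nowhere near a cover of $O$; you need a path from $z$ to $\partial\D\cap O$ whose image lies in $\ol{F_A}\cup\ol{F_B}$, and that is precisely what is missing. Your rerouting patch fails for two reasons: (i) the rerouted path can still traverse gasket points lying on no loop trace at all (this is not ``only an issue in the non-nested case'' -- the gasket is never covered by loop traces), and (ii) the loop $\CL^*$ you reroute along need not stay inside $O$, so the modified path can exit $O$ entirely. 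Either way the modified path is not guaranteed to stay in $\ol{F_A}\cup\ol{F_B}$, which is exactly the property the contradiction requires. The underlying fact you are after -- that \clek{} loops intersecting a connected region that touches the boundary form a single touching cluster anchored at $\partial\D$ -- is a deterministic structural statement, and it is more naturally extracted from the branching $\SLE_{\kappa}(\kappa-6)$ exploration tree or from the space-filling $\SLE_{\kappa}$ ordering, which encode the loop adjacency directly; the argument in \cite{gmq2021sphere} that the paper points to works from that structure rather than from a separation-plus-rerouting scheme.
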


\subsection{Monochordal and bichordal \clek{}}\label{subsec:bichordal_cle}
 
In~\cite[Section~3]{msw2020nonsimple} it is shown that the law of the remainder of a \clek{} after the partial exploration of two loops is conformally invariant and is given by the bichordal \clek{} law given above Definition~\ref{def:resampling_kernel} (in the case $N=2$) in the marked domain remaining upon the exploration. We now rephrase some results from \cite[Section~3]{msw2020nonsimple} concerning monochordal and bichordal \clek{} in a way which will be convenient for generalizing to arbitrary $N \in \N$.

Recall the definition of multichordal \clek{} from Definition~\ref{def:exterior_link_pattern_multiple_sle}. When $N=1$ the  monochordal \clek{}   $(\eta,\Gamma)$ in $(D;\ul{x}) \in \markeddomain{2}$  consists of an  \slek{} process~$\eta$ in~$D$ from $x_1$ to $x_2$ and a collection $\Gamma$ of conditionally independent \clek{}'s in each connected component of $\D\setminus\eta$. It is clear that the law of monochordal \clek{} is unique and exist for every $(D;\ul{x}) \in \markeddomain{2}$.  It also  arises when, for example given    a \clek{}  $\Gamma$ in $\D$, one discovers the branch of  the \clek{} exploration tree from a boundary point and  stops the exploration while it is tracing a boundary touching loop. If one then  starts a similar   exploration from a second point on $\partial \D$, one obtains a marked domain with $4$ prime ends. This  setup is considered in~\cite[Section~3]{msw2020nonsimple}, where  it  is shown  that the conditional law of the remaining   chords of $\Gamma$ is then the unique law satisfying a certain resampling property (different from the one we consider in this paper), hence  it is conformally invariant and depends only on the conformal class of $(D;\ul x)$ and not on the exploration. The joint law of the two chords is given by the bichordal \clek{} law defined above Definition~\ref{def:resampling_kernel}.

\begin{lemma}[{\cite[Lemma~3.2]{msw2020nonsimple}}]
\label{le:4strands}
Suppose $(\eta,\Gamma)$ is a monochordal \clek{} in $(\D;-i,i)$. Let $\eta_2$ be the clockwise exploration path of $(\eta,\Gamma)$ starting from $1$ targeting $i$ (as in Definition~\ref{def:mcle_exploration_path}), and let $\tau_2$ be a stopping time for $\eta_2$. Let $\ol{z}_2$ be the last point on the clockwise arc of $\partial\D$ from $1$ to $-i$ visited by $\eta_2|_{[0,\tau_2]}$. Let $E_1$ be the event that $\eta_2(\tau_2) \in \eta$. Let $\ol{\eta}_2$ be the path which:
\begin{itemize}
\item On $E_1$, traces $\eta$ from $\ol{z}_2$ to $-i$.
\item On $E_2 = E_1^c$, starts from $\ol{z}_2$ and traces the corresponding loop of $\Gamma$ from $\ol{z}_2$ in the clockwise direction until $\eta_2(\tau_2)$.
\end{itemize}
Let $\ol{\tau}_2$ be a stopping time for the filtration $\sigma(\eta_2|_{[0,\tau_2]}, \ol{\eta}_2(s) : s \leq t)$. Suppose that we are on the event that the points $i,-i,\ol{\eta}_2(\ol{\tau}_2),\eta_2(\tau_2)$ are four distinct boundary points of the same connected component $D$ of $\D\setminus(\eta_2([0,\tau_2])\cup\ol{\eta}_2([0,\ol{\tau}_2]))$. Then the conditional probability of $E_1$ given $\eta_2|_{[0,\tau_2]}$, $\ol{\eta}_2|_{[0,\ol{\tau}_2]}$ is a.s.\ a function $H_\kappa(\cdot)$ of the conformal modulus of $(D;-i,\ol{\eta}_2(\ol{\tau}_2),\eta_2(\tau_2),i)$ (not depending on the choice of $\ol{\tau}_2,\tau_2$).
\end{lemma}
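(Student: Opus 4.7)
The approach is to view $\eta_2|_{[0,\tau_2]}$ together with $\ol{\eta}_2|_{[0,\ol{\tau}_2]}$ as a two-stage partial exploration of the monochordal \clek{} $(\eta,\Gamma)$, identify the conditional law of the unexplored configuration inside $D$ as a bichordal \clek{}, and then read off the probability of $E_1$ from the linking formula~\eqref{eq:bichordal_link_probability}.

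First I would unpack the partial exploration. By target-independence and the Markov property of the branching \slekr{\kappa-6} tree used to construct \clek{}, after $\eta_2|_{[0,\tau_2]}$ the conditional law of the remaining configuration is again a monochordal \clek{} in the marked domain cut out by $\eta_2$, with $\eta_2(\tau_2)$ adjoined as a new marked point. The second path $\ol{\eta}_2$ then extends the exploration from the last boundary point $\ol{z}_2$: on $E_1$ it traces a segment of $\eta$ itself (which has been reached at time $\tau_2$), while on $E_2$ it traces the currently incomplete boundary-touching loop of $\Gamma$. Together the two paths constitute a partial exploration in the sense of Definition~\ref{def:partially-explored-cle}, and on the event of the statement the four prime ends $i,-i,\ol{\eta}_2(\ol{\tau}_2),\eta_2(\tau_2)$ lie on $\partial D$ with exactly two unexplored strands connecting them, surrounded by conditionally independent \clek{}'s in the complementary components.

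Next I would apply Theorem~\ref{thm:cle_partially_explored} in the monochordal ($N=1$) case, which is precisely the setting of \cite{msw2020nonsimple}, to identify the conditional law inside $D$ as $\mcclelaw{D;\ul{x};\beta}$ for some exterior link pattern $\beta$ on the four prime ends, where $\beta$ is determined by how the explored arcs together with the boundary arcs of $\partial\D$ wire the four marked points outside $D$. The two possible interior link patterns $\intadjacent$ and $\intnested$ then correspond precisely to the events $E_1$ and $E_2$: on $E_1$, the chord $\eta$ inside $D$ splits into two arcs pairing the four marked points one way, whereas on $E_2$ the remaining pieces of $\eta$ and of the traced loop pair them the opposite way. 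By the definition of bichordal \clek{} recorded above Definition~\ref{def:resampling_kernel}, the conditional probability of the interior pattern corresponding to $E_1$ equals $H_\kappa(L)$ with $L$ the conformal modulus of $(D;-i,\ol{\eta}_2(\ol{\tau}_2),\eta_2(\tau_2),i)$; in particular it depends on the pair $(D,\ul{x})$ only through the modulus, and not on the choice of stopping times $\tau_2,\ol{\tau}_2$.

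The main obstacle is the combinatorial bookkeeping: one must simultaneously check that the conditional law really is bichordal \clek{} with the correct exterior link pattern $\beta$, and that the assignment $E_1 \leftrightarrow \intadjacent$ (or $\intnested$) yields $H_\kappa(L)$ rather than $H_\kappa(1/L)$ for the precise ordering of the four points specified in the lemma. This case analysis relies on keeping track of which sides of the explored arcs contain $i$ and $-i$, and on the fact that the chord $\eta$ and any partially traced loop have endpoints on prescribed portions of $\partial D$. The conceptual content—that the conditional law is bichordal \clek{}—reduces to the target-independence and Markov properties of \slekr{\kappa-6} applied to both stages of the exploration.
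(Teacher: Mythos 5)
The lemma you are asked to prove is not proved in this paper at all; it is cited verbatim as \cite[Lemma~3.2]{msw2020nonsimple}, and the surrounding text explicitly says the statements in Section~\ref{subsec:bichordal_cle} are ``rephras[ings of] some results from \cite[Section~3]{msw2020nonsimple}.'' Your proposal therefore cannot be ``essentially the same'' as the paper's proof, since there is none; the relevant question is whether your argument could serve as a self-contained substitute, and it cannot, because it is circular.

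The circularity runs through two places. First, you invoke Theorem~\ref{thm:cle_partially_explored} to say that the conditional law in $D$ is a bichordal \clek{}. But look at the proof of Theorem~\ref{thm:cle_partially_explored} in Section~\ref{subsec:partially_explored_cles}: it proceeds ``By iteratively applying Lemmas~\ref{le:sle_given_cle},~\ref{le:4strands_cle}, and~\ref{le:bichordal_given_loops}\dots,'' and Lemma~\ref{le:4strands_cle} is nothing more than a direct restatement of Lemma~\ref{le:4strands}. You would be using the theorem to prove the lemma the theorem is built on. Second, even setting aside the dependency graph, the definition of bichordal \clek{} given above Definition~\ref{def:resampling_kernel} is \emph{made} by decreeing that the interior linking probabilities are $H_\kappa(L)$ and $H_\kappa(1/L)$; that definition packages in exactly the content of \cite[Lemma~3.2]{msw2020nonsimple}. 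So ``identify the conditional law as bichordal \clek{} and read off $H_\kappa(L)$'' is tautological: the hard mathematical content of the lemma is precisely the claim that after the two-stage exploration the conditional linking probability depends on $(D;\ul x)$ \emph{only through the conformal modulus}, and this is what your argument assumes rather than proves. A non-circular argument would have to reproduce the resampling/ergodicity argument of \cite[Section~3]{msw2020nonsimple}, which characterizes the law of the two unexplored strands as the unique measure invariant under resampling one strand given the other and then deduces the modulus dependence from conformal invariance and uniqueness.

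There is also a smaller point: your first-stage argument, that after $\eta_2|_{[0,\tau_2]}$ the remaining configuration is a monochordal \clek{} with an adjoined marked point ``by target-independence and the Markov property of the branching \slekr{\kappa-6} tree,'' is stated too loosely for what follows. The exploration path $\eta_2$ of Definition~\ref{def:mcle_exploration_path} is not a branch of the exploration tree of $\Gamma$ alone; it interacts with the chord $\eta$, and after the second stage the relevant object in $D$ has \emph{four} marked prime ends, two of which are attached to a partially traced loop rather than a branch of the tree. Passing from this picture to a bichordal \clek{} is exactly where the genuine work lies.
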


Lemma~\ref{le:4strands} is a rephrasing of the setup of \cite[Section~3.1]{msw2020nonsimple} where $\eta$ is seen as the remainder of a loop to be completed where the explored part of the loop is mapped to the clockwise portion of $\partial\D$ from $-i$ to $i$.  As in \cite[Section~3.1]{msw2020nonsimple}.  If we denote by $\CL_1$ the loop that $\eta$ is part of and by $\CL_2$ the loop containing the segments of $\eta_2$, $\ol{\eta}_2$ from  $\ol{z}_2$ to time $\tau_2$,  $\ol{\tau}_2$ respectively,  then $E_1$ corresponds to the event where $\CL_1$ and $\CL_2$ are in fact the same loop while the event $E_2$ corresponds to the event where $\CL_1$ and $\CL_2$ are separate loops.

We can also view the setup from Lemma~\ref{le:4strands} reflected across the vertical axis. In that case, the loops of $\Gamma$ on the left side of $\eta$ will play the role of the loops in the next level of nesting. Suppose that $\wt{\eta}_2$ is the counterclockwise exploration path starting from $-1$ targeting $i$, and let $\ol{\wt{\eta}}_2$ and the events $\wt{E}_1,\wt{E}_2$ defined analogously. Then Lemma~\ref{le:4strands} states that in this setting, the conditional probability of $\wt{E}_1$ given $\wt{\eta}_2|_{[0,\wt{\tau}_2]}$, $\ol{\wt{\eta}}_2|_{[0,\ol{\wt{\tau}}_2]}$ is a.s.\ given by $H_\kappa(L)$ where $L$ is the conformal modulus of $(D;i,\wt{\eta}_2(\wt{\tau}_2),\ol{\wt{\eta}}_2(\ol{\wt{\tau}}_2),-i)$.

In particular, we can phrase the statement of Lemma~\ref{le:4strands} as follows.

\begin{lemma}[{\cite[Lemma~3.1]{msw2020nonsimple}}]
\label{le:4strands_cle}
Suppose that we are either in the setup of Lemma~\ref{le:4strands} or in the setup reflected with respect to the vertical axis. On the event that the points $i,-i,\ol{\eta}_2(\ol{\tau}_2),\eta_2(\tau_2)$ (resp.\ $i,-i,\ol{\wt{\eta}}_2(\ol{\wt{\tau}}_2),\wt{\eta}_2(\wt{\tau}_2)$) are distinct boundary points of the same connected component $D$ (resp.\ $\wt{D}$) of $\D\setminus(\eta_2([0,\tau_2])\cup \ol{\eta}_2([0,\ol{\tau}_2]))$ (resp.\ $\D\setminus(\wt{\eta}_2([0,\wt{\tau}_2])\cup \ol{\wt{\eta}}_2([0,\ol{\wt{\tau}}_2]))$), the conditional law of the remainder of $(\eta,\eta_2,\Gamma\big|_D)$ (resp.\ $(\wt{\eta},\wt{\eta}_2,\Gamma\big|_{\wt{D}})$) given $\eta_2|_{[0,\tau_2]}$, $\ol{\eta}_2|_{[0,\ol{\tau}_2]}$ (resp.\ $\wt{\eta}_2|_{[0,\wt{\tau}_2]}$, $\ol{\wt{\eta}}_2|_{[0,\ol{\wt{\tau}}_2]}$) is a.s.\ given by the bichordal \clek{} law in $(D;i,-i, \ol{\eta}_2(\ol{\tau}_2),\eta_2(\tau_2);\extadjacent)$ (resp.\ $(D;i,\wt{\eta}_2(\wt{\tau}_2),\ol{\wt{\eta}}_2(\ol{\wt{\tau}}_2),-i;\extnested)$).
\end{lemma}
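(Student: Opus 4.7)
The plan is to view this lemma as a repackaging of Lemma~\ref{le:4strands} in the language of bichordal \clek{}: the function $H_\kappa$ from that lemma is by definition precisely the weight attached to interior link patterns in the bichordal \clek{} law defined above Definition~\ref{def:resampling_kernel}. What remains is to match (i) the dichotomy $E_1$ versus $E_2$ with the two possible interior link patterns, (ii) the conformal modulus appearing in Lemma~\ref{le:4strands} with the one in the bichordal \clek{} law (up to the $L\leftrightarrow 1/L$ inversion caused by cyclic relabeling of the four marked points), and (iii) the conditional law of the surrounding loops inside $D$ with the independent \clek{}'s that appear in the definition of multichordal \clek{}.

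For step (i), I would identify the interior link patterns in $D$ induced by $E_1$ and $E_2$. On $E_1$ we have $\CL_1=\CL_2$, so the unexplored part of this common loop inside $D$ consists of the two arcs of $\eta$ continuing past $\eta_2(\tau_2)=x_4$ towards $i=x_1$ and past $\ol{\eta}_2(\ol{\tau}_2)=x_3$ towards $-i=x_2$, giving interior pattern $\intnested=\{\{1,4\},\{2,3\}\}$. On $E_2$ the loops $\CL_1,\CL_2$ are distinct: the continuation of $\eta$ in $D$ is a single arc from $-i=x_2$ to $i=x_1$, while the continuation of $\CL_2$ is an arc from $\ol{\eta}_2(\ol{\tau}_2)=x_3$ to $\eta_2(\tau_2)=x_4$, yielding $\intadjacent=\{\{1,2\},\{3,4\}\}$. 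In both cases the exterior pattern is $\extadjacent$ since the already-explored portions of $\CL_1$ and $\CL_2$ wire $\{x_1,x_2\}$ and $\{x_3,x_4\}$ respectively.

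For step (ii), cyclic permutation of the four marked points by one position inverts the conformal modulus of a quadrilateral. The modulus $\wt{L}$ appearing in Lemma~\ref{le:4strands} is that of $(D;-i,\ol{\eta}_2(\ol{\tau}_2),\eta_2(\tau_2),i)=(D;x_2,x_3,x_4,x_1)$, so $\wt{L}=1/L$ where $L$ is the modulus of $(D;x_1,x_2,x_3,x_4)=(D;i,-i,\ol{\eta}_2(\ol{\tau}_2),\eta_2(\tau_2))$. Lemma~\ref{le:4strands} thus yields that the conditional probability of $E_1$ equals $H_\kappa(\wt{L})=H_\kappa(1/L)$, which by the definition above Definition~\ref{def:resampling_kernel} is exactly the probability that the bichordal \clek{} with exterior pattern $\extadjacent$ and modulus $L$ assigns to the interior pattern $\intnested$.

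For step (iii), conditional on whether $E_1$ or $E_2$ occurs, I would use the conformal Markov property of the \slekr{\kappa-6} exploration tree recalled in Section~\ref{subsec:cle} to conclude that the remaining two chords in $D$ form a multichordal \slek{} with the identified interior link pattern, and that the loops of $\Gamma$ in each connected component of $D$ cut by these chords are conditionally independent \clek{}'s. Combined with step (ii), this is exactly the bichordal \clek{} law on $(D;i,-i,\ol{\eta}_2(\ol{\tau}_2),\eta_2(\tau_2);\extadjacent)$. The reflected setup is handled identically, except that the cyclic labeling now places $x_1=i$ and $x_4=-i$ with $x_2,x_3$ between them, so the exterior pattern becomes $\extnested$ and the modulus appears as $L$ rather than $1/L$. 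The only point requiring careful bookkeeping, and what I expect to be the main obstacle, is precisely this matching of cyclic orderings of marked points so that $L\leftrightarrow 1/L$ lines up consistently with $\intadjacent\leftrightarrow\intnested$; everything else is essentially contained in Lemma~\ref{le:4strands} and the standard Markovian structure of \clek{}.
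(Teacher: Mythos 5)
The paper does not prove this lemma: it is cited as \cite[Lemma~3.1]{msw2020nonsimple}, and the surrounding text only explains the translation between the events $E_1,E_2$ and the loop identifications, plus the reflected setup. So there is no "paper's own proof'' in the usual sense; what the paper communicates is precisely the bookkeeping you carry out in your steps (i) and (ii), which are correct (matching $E_1\leftrightarrow\CL_1=\CL_2\leftrightarrow\intnested$ and the $L\leftrightarrow 1/L$ inversion under cyclic relabeling so that $H_\kappa(\wt L)=H_\kappa(1/L)$ lines up with the definition above Definition~\ref{def:resampling_kernel}).

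Where your proposal has a real gap is step (iii). You claim that, conditionally on the explored portions and on the interior link pattern, the two remaining arcs form a bichordal \slek{} with that interior pattern and the surrounding loops are independent \clek{}'s, and you attribute this to ``the conformal Markov property of the \slekr{\kappa-6} exploration tree.'' But this resampling characterization is exactly the substantive content of \cite[Lemma~3.1]{msw2020nonsimple} that the paper simply cites; it does not follow formally from Lemma~\ref{le:4strands}, which only fixes the distribution of the link pattern (the probability of $E_1$), not the conditional law of the curves given the link pattern. To verify it one has to show, via the branching structure of the exploration and the Markov property of partially explored \clek{} (cf.\ Lemma~\ref{le:sle_given_cle} and the arguments used later in Lemma~\ref{le:bichordal_given_loops} and Theorem~\ref{thm:cle_partially_explored}), that conditioning on one remaining arc makes the other a chordal \slek{} in the cut domain. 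So your plan is sound in its reductions, and is in fact doing more work than the paper itself, but as written it names rather than carries out the essential step; if your goal is a self-contained proof that does not defer to \cite{msw2020nonsimple}, step (iii) is where the proof really lives.
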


By conformal invariance in Lemma~\ref{le:4strands_cle}, is then easy to see that   the law of the bichordal \clek{} in $(\D;\ul{x};\beta) \in \eldomain{4}$ actually exists 
for \emph{any} choice of $\ul{x}$ in $\partial\D$ and $\beta\in\{\extadjacent,\extnested\}$. Indeed, suppose we are given $(\D;\ul{x}) \in \eldomain{4}$ and $\beta=\extadjacent$.  In the setting of Lemma~\ref{le:4strands_cle} there exists a unique conformal map $\varphi \colon D \to \D$ which takes $(D;i,-i,\ol{\eta}_2(\ol{\tau}_2),\eta_2(t);\extadjacent)$ to $(\D;x_1, x_2,x_3,\varphi(\eta_2(t));\extadjacent)$.  We have a positive probability that while exploring $\eta_2$ we realize the marked point configuration $(\D;\ul{x})$ before $\eta_2$ disconnects the other points (see~\cite{msw2020nonsimple}). The same applies to $\beta=\extnested$ as by definition the bichordal \clek{} in $(\D;x_1,x_2,x_3,x_4;\extnested)$ is the same as the bichordal \clek{} in $(\D;x_2,x_3,x_4,x_1;\extadjacent)$. 
This also implies that the function $H_\kappa(\cdot)$ is well-defined for all $(\D;\ul{x};\beta) \in \eldomain{4}$.

We recall also the following result which has been proved in \cite{gmq2021sphere}.

\begin{lemma}[{\cite[Lemma~3.4]{gmq2021sphere}}]
\label{le:sle_given_cle}
Suppose $(\eta,\Gamma)$ is a monochordal \clek{} in $(\D;-i,i)$.
Let $I\subseteq\partial\D$ be a connected boundary arc, and $\Gamma(I)$ the collection of loops of $\Gamma$ that intersect $I$. Then, on the event $\{\eta\cap I = \varnothing\}$, the conditional law of $\eta$ given $\Gamma(I)$ is that of an \slek{} in the connected component of $\D\setminus \bigcup\Gamma(I)$ with $-i$, $i$ on its boundary.
\end{lemma}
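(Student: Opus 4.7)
The plan is to combine the monochordal $\CLE_\kappa$ definition, the spatial Markov property of $\CLE_\kappa$ for boundary arcs, and an $\SLE_\kappa$ boundary perturbation (or imaginary geometry) identification. On the event $\{\eta\cap I=\varnothing\}$, I would first let $D_1$ be the connected component of $\D\setminus\eta$ whose boundary contains $I$. By the definition of monochordal $\CLE_\kappa$, conditional on $\eta$ the restriction of $\Gamma$ to $D_1$ is a $\CLE_\kappa$ in $D_1$, so $\Gamma(I)$ is exactly the subcollection of loops of this $\CLE_\kappa$ that touch $I$, and the loops of $\Gamma$ in the other components of $\D\setminus\eta$ are independent of $\Gamma(I)$ given $\eta$. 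I would then invoke the spatial Markov property of $\CLE_\kappa$ (loops touching $I$ can be generated by an $\SLE_\kappa(\kappa-6)$-type branch of the exploration tree and, conditionally on these loops, the remaining loops form independent $\CLE_\kappa$'s in each complementary component of $D_1\setminus\bigcup\Gamma(I)$) to reorganize the joint law of $(\eta,\Gamma)$ on $\{\eta\cap I=\varnothing\}$ in the order: first $\eta$, then $\Gamma(I)$, then the remaining loops.

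The main step is to extract the conditional law of $\eta$ given $\Gamma(I)$ via Bayes. Write $D'$ for the connected component of $\D\setminus\bigcup\Gamma(I)$ with $-i,i$ on its boundary; any admissible $\eta$ must lie in $D'$ and connect $-i$ to $i$. The joint density of $(\eta,\Gamma(I))$ against a suitable reference product measure factors (heuristically) as the $\SLE_\kappa$ density of $\eta$ in $\D$ from $-i$ to $i$, restricted by $\one_{\eta\cap I=\varnothing}$, times the $\CLE_\kappa(D_1(\eta))$ density of $\Gamma(I)$. Fixing the loop configuration $L=\Gamma(I)$ and varying $\eta$ within $D'$, the claim is equivalent to the identity that the product of these two factors is proportional (as a function of $\eta$) to the $\SLE_\kappa$ density of $\eta$ in $D'$ from $-i$ to $i$, with proportionality constant depending only on $L$. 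This matches the form of the $\SLE_\kappa$ boundary perturbation formula, which expresses the Radon--Nikodym derivative of $\SLE_\kappa$ in a subdomain $D'\subseteq\D$ (sharing endpoints) relative to $\SLE_\kappa$ in $\D$ in terms of an $\SLE_\kappa$ partition function and a Brownian-loop-measure contribution corresponding to the loops crossing $\D\setminus D'$.

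The hard part will be justifying the density manipulation rigorously in the non-simple regime $\kappa\in(4,8)$, where $\bigcup\Gamma(I)$ is a fractal set rather than a smooth boundary. A clean alternative would be to use the imaginary geometry framework: couple $(\eta,\Gamma)$ with a GFF $h$ on $\D$ so that $\eta$ is a flow/counterflow line of $h$ from $-i$ to $i$ and the loops of $\Gamma$ are built from flow/counterflow lines of $h$ in each component of $\D\setminus\eta$. Then $\Gamma(I)$ is a functional of $h$ in a neighborhood of $I$, and the Markov property of the GFF yields that, conditionally on $\Gamma(I)$, the restriction of $h$ to $D'$ is a GFF in $D'$ whose boundary data along $\partial D'\cap\bigcup\Gamma(I)$ is precisely that needed for the chord from $-i$ to $i$ to be $\SLE_\kappa$ in $D'$. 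Either approach ultimately reduces to identifying the correct boundary data on $\partial D'$, which is the technical heart of the argument.
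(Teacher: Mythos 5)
The paper does not supply its own proof of this lemma; it is imported verbatim as \cite[Lemma~3.4]{gmq2021sphere}. The paper does, however, record the proof idea: in the remark above Lemma~\ref{le:bichordal_given_loops} the authors say the argument is ``exploring parts of a \clek{} in different orders.'' Concretely, a monochordal \clek{} $(\eta,\Gamma)$ in $(\D;-i,i)$ is what remains of a plain \clek{} after partially running a branch of the \slekr{\kappa-6} exploration tree; one can instead first run the exploration so as to discover the loops $\Gamma(I)$, and only then continue toward the marked points. By the target-invariance/coordinate-change property of the \slekr{\kappa-6} exploration tree (cf.\ \cite{sw2005coordinate} and Section~\ref{subsec:cle}), both exploration orders produce the same joint law; in the second order, conditioning on $\Gamma(I)$ leaves a monochordal \clek{} in $D'$, whence $\eta$ is \slek{} in $D'$. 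Neither of your two routes is the one actually used, and both have gaps.

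Your first route (Bayes with an \slek{} boundary-perturbation/partition-function identity) fails for the reason you yourself flag, and it is fatal rather than merely technical: for $\kappa\in(4,8)$ the loops of $\Gamma(I)$ are non-simple, intersect each other and $I$, and leave $D'$ with a fractal boundary, so the boundary perturbation formula (which compares \slek{} in two domains differing by a compact hull of finite Brownian-loop mass) is simply not available, nor is there a natural reference measure on configurations $\Gamma(I)$ against which to write a density. Your second route contains a concrete false step: the claim that $\Gamma(I)$ is a functional of $h$ in a neighborhood of $I$ is not true for $\kappa\in(4,8)$. This non-locality is emphasized in the present paper: Section~\ref{sec:strands} opens by stating that ``one cannot determine the loops which intersect $U$ by observing only the field values in $U$,'' and Figure~\ref{fi:spf_ordering} exhibits two distinct \clek{} configurations compatible with identical local field data. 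The loops of $\Gamma(I)$ are determined only through the global ordering of space-filling \slekp{} strands (see Lemma~\ref{le:spf_order_cle}), which is not a local function of $h$ near $I$. You can still phrase the argument in GFF language, but only by running a counterflow-line exploration from a point of $I$, taking the discovered hull as a local set, and applying the GFF Markov property to that set---and at that point you have rediscovered the ``explore in a different order'' argument, not shortcut it. A further wrinkle worth noting: the chord $\eta$ of a monochordal \clek{} is a \emph{plain} \slek{}, whereas the natural boundary data of the GFF coupled with \clek{} is $-\lambda'+\pi\chi$, so identifying $\eta$ as the counterflow line of the conditional field requires correctly tracking the boundary data created by the exploration; this is where the ``technical heart'' you anticipate actually lies, and your sketch does not resolve it.
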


Building on these lemmas, we prove an analogue of Lemma~\ref{le:sle_given_cle} for bichordal \clek{}. This will be deduced by exploring parts of a \clek{} in different orders (which is also the argument used to prove \cite[Lemma~3.4]{gmq2021sphere}). See Figure~\ref{fi:bichordal_conditioning} for an illustration.

\begin{figure}[ht]
\centering
\includegraphics[width=0.4\textwidth]{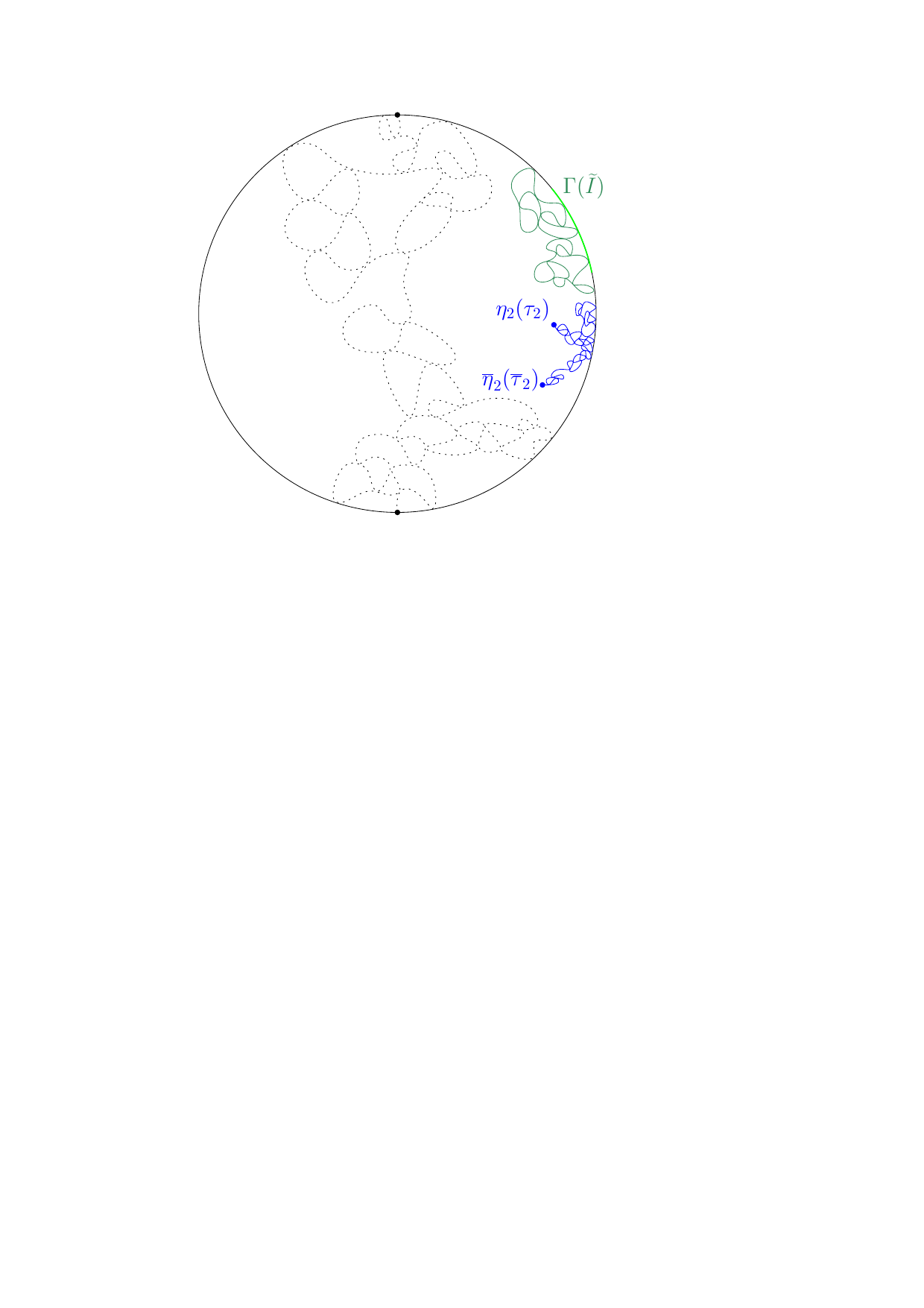}
\caption{The proof of Lemma~\ref{le:bichordal_given_loops}. The two exploration procedures produce the same result, hence the two conditional laws are the same.}
\label{fi:bichordal_conditioning}
\end{figure}

\begin{lemma}\label{le:bichordal_given_loops}
Suppose $(\eta_1,\eta_2,\Gamma)$ is a bichordal \clek{} in $(\D;\ul{x};\beta) \in \eldomain{4}$. Let $I\subseteq\partial\D$ be a connected boundary arc, and $\Gamma(I)$ the collection of loops of $\Gamma$ that intersect $I$. Then, on the event that $\eta_1,\eta_2$ do not intersect $I$ and the points in $\ul{x}$ are boundary points of the same connected component $D$ of $\D \setminus \bigcup\Gamma(I)$,  the conditional law of $(\eta_1,\eta_2,\Gamma\big|_D)$ given $\Gamma(I)$ is that of a bichordal \clek{} in $(D;\ul{x};\beta)$.
\end{lemma}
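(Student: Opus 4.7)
My plan is to adapt the proof of Lemma~\ref{le:sle_given_cle} (\cite[Lemma 3.4]{gmq2021sphere}) to the bichordal case by realizing the bichordal \clek{} as a further partial exploration of a monochordal \clek{}, and then interchanging the order in which the partial exploration and the loops hitting the relevant boundary arc are revealed. As suggested by Figure~\ref{fi:bichordal_conditioning}, the two exploration procedures produce the same joint distribution, so the corresponding conditional laws must agree.

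First, by the conformal invariance of the bichordal \clek{} (Theorem~\ref{thm:multichordal}(ii), established in \cite{msw2020nonsimple} for $N=2$), I may assume that $(\eta_1,\eta_2,\Gamma)$ is realized as in Lemma~\ref{le:4strands_cle}: start with a monochordal \clek{} $(\wh{\eta},\wh{\Gamma})$ in $(\D;-i,i)$, run the exploration paths $\wh{\eta}_2,\ol{\wh{\eta}}_2$ of Lemma~\ref{le:4strands} up to stopping times $\tau_2,\ol{\tau}_2$, let $D_*$ be the connected component of $\D$ minus these paths containing the four marked points, and conformally map $D_*\to\D$ sending the four marked points to $\ul{x}$. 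Let $\wh{I}\subseteq\partial D_*$ be the preimage of $I$ under this map.

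The two key steps are then: (i) apply Lemma~\ref{le:sle_given_cle} to the monochordal \clek{} $(\wh{\eta},\wh{\Gamma})$ and the arc $\wh{I}$, to conclude that, conditional on the collection $\wh{\Gamma}(\wh{I})$ of loops of $\wh{\Gamma}$ that hit $\wh{I}$, on the event $\{\wh{\eta}\cap\wh{I}=\varnothing\}$, the pair $(\wh{\eta},\wh{\Gamma}|_{\wh{D}_*})$ has the law of a monochordal \clek{} in the component $\wh{D}_*$ of $D_*\setminus\bigcup\wh{\Gamma}(\wh{I})$ containing the marked points; (ii) re-apply Lemma~\ref{le:4strands_cle} inside $\wh{D}_*$, noting that on the good event the exploration paths $\wh{\eta}_2,\ol{\wh{\eta}}_2$ lie in $\wh{D}_*$, to obtain a bichordal \clek{} in the (conformal image of the) component of $\wh{D}_*$ minus the partial exploration, which is exactly $(D;\ul{x};\beta)$.

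The main technical obstacle will be verifying that, on the event in the hypothesis (that $\eta_1,\eta_2$ do not intersect $I$ and $\ul{x}\subseteq\partial D$), the loops $\wh{\Gamma}(\wh{I})$ correspond bijectively to the loops $\Gamma(I)$ under the conformal map, and that the exterior link pattern of the new marked domain $(D;\ul{x})$ coincides with $\beta$. The first point requires care since loops of $\wh{\Gamma}$ partially traced by $\wh{\eta}_2,\ol{\wh{\eta}}_2$ become chord segments of the bichordal \clek{} rather than full loops of $\Gamma$; on the good event, however, these partially-traced loops do not meet $\wh{I}$, so the correspondence $\wh{\Gamma}(\wh{I})\leftrightarrow\Gamma(I)$ holds. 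The second point follows because, on the good event, the loops in $\Gamma(I)$ are contained in a neighborhood of $I$ separated from the marked points and chords, so removing them does not rewire the boundary.
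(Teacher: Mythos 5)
Your proposal is correct and is essentially the same as the paper's proof: both realize the bichordal \clek{} as the remainder of a partial exploration (via Lemma~\ref{le:4strands}/\ref{le:4strands_cle}) of a monochordal \clek{}, interchange the order in which the boundary-arc loops and the partial exploration are revealed, apply Lemma~\ref{le:sle_given_cle} at the monochordal stage, and then reapply Lemma~\ref{le:4strands_cle} inside the resulting domain.
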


\begin{proof} 
By the reflection symmetry, we can assume that $\beta = \extadjacent$. By Lemma~\ref{le:4strands_cle}, the law of a bichordal \clek{} in $(\D;\ul{x};\extadjacent) \in \eldomain{4}$ can be obtained from a suitable exploration of a monochordal \clek{}.  Thus, a first approach to obtain the law from the statement is to consider the setup of Lemma~\ref{le:4strands} for a monochordal \clek{} $(\eta_1,\Gamma)$, and explore further $\Gamma(\wt{I})$ where $\wt{I}\subseteq\partial\D$ is a boundary arc (either between $1,i$ or between $i,-i$ depending on whether $I$ is on a wired or free arc).

Alternatively, we can start with a monochordal \clek{} in $(\D; -i,i)$ and first explore $\Gamma(\wt{I})$. By Lemma~\ref{le:sle_given_cle}, the remainder of $(\eta_1,\Gamma\big|_{\wt{D}})$ given $\Gamma(\wt{I})$ is again a monochordal \clek{} in the connected component $\wt{D}$ of $\D\setminus\bigcup\Gamma(\wt{I})$ adjacent to $-i$, $i$. We can then explore $\eta_2|_{[0,\tau_2]}$, $\ol{\eta}_2|_{[0,\ol{\tau}_2]}$ as in the setup of Lemma~\ref{le:4strands} applied to $\wt D$.  By the conformal invariance of bichordal \clek{} and monochordal \clek{}, respectively,  on the event where $\eta_1,\eta_2$ do not intersect $\wt{I}$ and $i,-i,\ol{\eta}_2(\ol{\tau}_2),\eta_2(\tau_2)$ are on the same component  $\wt D'$  of $\D\setminus(\bigcup\Gamma(\wt{I})\cup\eta_2([0,\tau_2])\cup\ol{\eta}_2([0,\ol{\tau}_2]))$, the first and second procedures yield the same result. 
By  Lemma~\ref{le:4strands_cle} applied to $\wt{D}$,  the joint conditional law of the remainder of $(\eta_1, \eta_2,\Gamma\big|_{\wt D'})$ is a bichordal \clek{} in $(\wt D';i,-i, \ol{\eta}_2(\ol{\tau}_2),\eta_2(\tau_2));\extadjacent)$. Hence the claim follows.
\end{proof}

We finish our discussion of monochordal and bichordal \clek{} by stating one further lemma from \cite{msw2020nonsimple}.
 
\begin{lemma}[{\cite[Lemma 3.4]{msw2020nonsimple}}]
\label{lem:function_positive}
The function $H_\kappa(\cdot)$ is bounded away from $0$ and from $1$ on any compact subset of $(0, \infty)$. The function $H_\kappa(L)$ converges to either $0$ or $1$ as $L \to 0$ or $L\to \infty$. 
\end{lemma}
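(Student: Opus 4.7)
My plan is to derive both statements directly from the explicit formula for $H_\kappa$, exploiting the positivity of $Y_\kappa$ and the asymptotics of $r(L)$ at the endpoints.

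First I would show that $H_\kappa(L)\in(0,1)$ for every $L\in(0,\infty)$ and that $H_\kappa$ depends continuously on $L$; on any compact subset of $(0,\infty)$ this immediately gives a bound away from $0$ and $1$. For $\kappa\in(4,8)$ the hypergeometric parameters $a=4/\kappa$, $b=1-4/\kappa$, $c=8/\kappa$ all lie in $(0,1)$, so every coefficient of the power series ${_2F_1}(a,b,c;r)$ is positive. Combined with $r(L)\in(0,1)$, this gives $Y_\kappa(L)>0$ for every $L$. Since $4\pi/\kappa\in(\pi/2,\pi)$, we also have $-2\cos(4\pi/\kappa)>0$, so the denominator in the definition of $H_\kappa$ strictly exceeds $Y_\kappa(L)>0$, and therefore $H_\kappa(L)\in(0,1)$. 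Continuity is automatic once one notes that $L\mapsto r(L)$ is a smooth bijection $(0,\infty)\to(0,1)$ (coming from the continuous dependence of the modulus of $(\h;0,1-r,1,\infty)$ on $r$).

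For the endpoint behaviour I would analyse the ratio $Y_\kappa(1/L)/Y_\kappa(L)$ as $L\to\infty$. Two classical facts enter. First, the monotone dependence of the modulus on the cross-ratio gives $r(L)\to 0$ as $L\to\infty$ and $r(L)\to 1$ as $L\to 0$, and a M\"obius transformation swapping $0$ and $\infty$ in $(\h;0,1-r,1,\infty)$ identifies $r(1/L)=1-r(L)$. Second, ${_2F_1}(a,b,c;0)=1$, and by the Gauss summation formula
\[
{_2F_1}(a,b,c;1)=\frac{\Gamma(c)\Gamma(c-a-b)}{\Gamma(c-a)\Gamma(c-b)},
\]
which is finite and positive because $c-a-b=8/\kappa-1>0$ for $\kappa\in(4,8)$. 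Writing $r=r(L)\to 0$, the definition of $Y_\kappa$ then gives $Y_\kappa(L)\sim r^{2/\kappa}$ and $Y_\kappa(1/L)\sim C_\kappa r^{1-6/\kappa}$ for a constant $C_\kappa>0$, so
\[
\frac{Y_\kappa(1/L)}{Y_\kappa(L)}\sim C_\kappa\,r^{-(8/\kappa-1)}\to\infty,
\]
again using $8/\kappa-1>0$. Substituting into the formula for $H_\kappa$ yields $H_\kappa(L)\to 0$ as $L\to\infty$, and the symmetric argument $L\mapsto 1/L$ (using $r(1/L)=1-r(L)$) gives $H_\kappa(L)\to 1$ as $L\to 0$.

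The main technical point is just the verification of the boundary asymptotics $r(L)\to 0,1$ and the applicability of the Gauss summation formula; both are classical, but checking that $c-a-b>0$ is precisely where the hypothesis $\kappa<8$ enters essentially, exactly as the positivity of $-2\cos(4\pi/\kappa)$ is where $\kappa>4$ enters.
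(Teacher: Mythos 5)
Your proof is correct and takes a genuinely different route from the source the paper cites. The paper simply quotes Lemma 3.4 of \cite{msw2020nonsimple}, where the statement is established probabilistically: positivity of $H_\kappa$ there comes from showing that both link patterns $E_1$, $E_2$ occur with positive conditional probability given a partial exploration, and the boundary behaviour is read off from the degeneration of the quadrilateral. You instead work entirely from the explicit hypergeometric formula from \cite{mw2018connection}, which is a clean self-contained analytic argument; the key facts it buys you are (i) strict positivity of the hypergeometric series, (ii) $-2\cos(4\pi/\kappa)>0$ precisely because $\kappa>4$, (iii) the symmetry $r(1/L)=1-r(L)$, and (iv) the Gauss summation value at $r=1$ being finite precisely because $8/\kappa-1>0$, i.e.\ $\kappa<8$. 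Your identification of exactly where $\kappa>4$ and $\kappa<8$ enter is a nice feature that the probabilistic argument makes less visible.

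One small inaccuracy: you write that $a=4/\kappa$, $b=1-4/\kappa$, $c=8/\kappa$ ``all lie in $(0,1)$.'' In fact $c=8/\kappa\in(1,2)$ for $\kappa\in(4,8)$. This does not affect the conclusion, since positivity of the Taylor coefficients of ${_2F_1}(a,b,c;\cdot)$ requires only $a,b,c>0$, but the parenthetical claim should be corrected. Otherwise the asymptotics $Y_\kappa(L)\sim r^{2/\kappa}$ and $Y_\kappa(1/L)\sim C_\kappa r^{1-6/\kappa}$ as $r=r(L)\to0$, the exponent $-(8/\kappa-1)<0$ in the ratio, and the resulting limits $H_\kappa(L)\to0$ ($L\to\infty$) and $H_\kappa(L)\to1$ ($L\to0$) are all correct, and in fact sharpen the statement of the lemma by identifying which limit occurs at which endpoint.
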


Using the arguments above one can also show that

\begin{lemma}[{\cite{mw2018connection}}]
\label{lem:bichordal_continuous}
The law of the bichordal \clek{} is a continuous function of $(\D;\ul{x};\beta) \in \eldomain{4}$. In particular, the function $H_\kappa$ is continuous.
\end{lemma}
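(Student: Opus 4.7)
The plan is to decompose the bichordal CLE$_\kappa$ law into three pieces---the random interior link pattern $\alpha$, the pair of chords conditionally on $\alpha$, and the independent nested CLE$_\kappa$'s in the complementary components---and verify continuity of each piece with respect to $\ul{x}$. By Definition~\ref{def:exterior_link_pattern_multiple_sle} and the paragraph above it, given $(\D;\ul{x};\beta) \in \eldomain{4}$, one first samples $\alpha$ with $\p[\alpha = \intadjacent] = H_\kappa(L)$ (if $\beta = \extnested$) or $H_\kappa(1/L)$ (if $\beta = \extadjacent$), where $L$ is the conformal modulus of $(\D;\ul{x})$; then samples $(\eta_1,\eta_2)$ as a bichordal SLE$_\kappa$ with interior link pattern $\alpha$; and finally samples independent nested CLE$_\kappa$'s in the three complementary components.

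First, I would show that $H_\kappa \colon (0,\infty) \to [0,1]$ is continuous. The conformal modulus $L$ is a continuous function of $\ul{x}$ on configurations of four distinct prime ends, so it suffices to check continuity of $H_\kappa$ as a function of $L$. Using the formula \eqref{eq:bichordal_link_probability}, the parameter $r(L) \in (0,1)$ depends continuously on $L$ as the inverse of a continuous bijection coming from a standard conformal-modulus computation, and the hypergeometric factor ${}_2F_1(4/\kappa,1-4/\kappa,8/\kappa;\cdot)$ is analytic on $(0,1)$. Hence both $Y_\kappa(L)$ and $Y_\kappa(1/L)$ are continuous in $L$. Since $H_\kappa$ was defined probabilistically in Lemma~\ref{le:4strands} as a conditional probability, it takes values in $[0,1]$, and by Lemma~\ref{lem:function_positive} it is bounded away from $0$ and $1$ on compact subsets of $(0,\infty)$. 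Therefore the denominator in \eqref{eq:bichordal_link_probability} is non-zero on compact subsets, which yields continuity of $H_\kappa$.

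Second, for a fixed interior link pattern $\alpha$, the bichordal SLE$_\kappa$ in $(\D;\ul{x};\alpha)$ is characterized by Definition~\ref{def:interior_link_pattern_multiple_sle}: each chord is, conditionally on the other, a chordal SLE$_\kappa$ in the relevant complementary component. When $\ul{x}_n \to \ul{x}$ on $\partial\D$, the complementary components converge in the Carathéodory sense, and the chordal SLE$_\kappa$ law is continuous in its boundary data. The CLE$_\kappa$'s independently sampled in the three complementary components are conformally invariant and depend continuously on the domain as well. Combining the continuity of the linking probability, the bichordal SLE, and the independent CLE$_\kappa$'s yields continuity of the bichordal CLE law in the topology \eqref{eq:topology_loop_ensemble}.

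The main obstacle is passing from continuity of each of the three ingredients to joint continuity of the coupled law in the loop-ensemble topology \eqref{eq:topology_loop_ensemble}: one must verify that a small perturbation of $\ul{x}$ induces only a small Hausdorff-type perturbation of the chords and loops. This can be handled through the continuity of SLE driving processes in their initial data, combined with boundary regularity estimates to control what happens near $\partial\D$ uniformly as the marked points vary.
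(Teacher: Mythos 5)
The paper does not give a proof of this lemma: it cites \cite{mw2018connection} and remarks that the statement follows from ``the arguments above,'' meaning the exploration-based construction of bichordal \clek{} in Lemmas~\ref{le:4strands}--\ref{le:bichordal_given_loops}. The intended route there is the same one that powers Theorem~\ref{th:continuity_mcle}: the bichordal \clek{}'s at two nearby marked point configurations are both realized as conditional laws obtained by stopping the same monochordal exploration at slightly different random times, which produces an explicit coupling and hence almost sure (and in particular weak) convergence of the laws. Your proposal takes a genuinely different route --- decomposing the law into (i) the random interior link pattern $\alpha$, (ii) the pair of chords given $\alpha$, and (iii) the independent nested \clek{}'s in the complementary components --- and verifying continuity of each piece.

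Pieces (i) and (iii) are fine as you describe them: continuity of $H_\kappa$ from the closed-form expression \eqref{eq:bichordal_link_probability} together with Lemma~\ref{lem:function_positive} (which guarantees the denominator does not vanish on compacts), and conformal invariance of nested \clek{} for the complementary components. The gap is in piece (ii). You assert that because each chord is, conditionally on the other, a chordal \slek{} in a complementary component, and because chordal \slek{} is continuous in its boundary data, the joint law of $(\eta_1,\eta_2)$ given $\alpha$ is continuous in $\ul{x}$. That inference does not follow. Definition~\ref{def:interior_link_pattern_multiple_sle} characterizes the bichordal \slek{} via a resampling invariance; this gives \emph{uniqueness} of the joint law, but it does not by itself yield continuity of the family of joint laws. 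Continuity of each conditional marginal is not the same as continuity of the joint law, because the domain in which the conditional \slek{} lives is itself random and determined by the other chord, whose law is exactly what you are trying to control. To close this you would need either a tightness-plus-identification argument (show that along any sequence $\ul{x}_n\to\ul{x}$ the joint laws are tight, and that any subsequential limit still satisfies the resampling property at $\ul{x}$, then invoke uniqueness), or an explicit construction/coupling of the bichordal \slek{} that is manifestly continuous in $\ul{x}$. Both options require genuine work that your final paragraph gestures at (``continuity of SLE driving processes in their initial data'') but does not carry out; in particular, passing the conditional-law characterization to the limit, and verifying the required precompactness of laws of curves in $\ol\D$, are the heart of the matter. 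The paper's exploration route avoids all of this by coupling the two bichordal \clek{}'s directly on the same probability space, which is why the authors point to ``the arguments above'' rather than to the formula for $H_\kappa$.
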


\section{Existence and uniqueness of multichordal \clek{}}
\label{sec:multichordal_ex_uniq}

In this section we prove Theorems~\ref{thm:multichordal},~\ref{th:continuity_mcle}, and~\ref{thm:cle_partially_explored}. We note that Theorem~\ref{thm:multichordal_link} is an immediate consequence of Theorem~\ref{thm:multichordal}. We will break the proof of Theorem~\ref{thm:multichordal} into several parts. We will show in Section~\ref{subsec:multichordal_uniqueness} that the law of multichordal \clek{} is unique and conformally invariant. Then we will argue in Section~\ref{subsec:multichordal_existence} that multichordal \clek{} exists for each $(\D;\ul{x};\beta) \in \eldomain{2N}$. We will do this by generalizing the construction of bichordal \clek{}  from~\cite{msw2020nonsimple}, which was based on the exploration of a pair of boundary touching \clek{} loops, to the explorations of several loops. As a part of the proof, we will develop a type of Markovian exploration of a multichordal \clek{} and prove some properties of them. In Section~\ref{subsec:continuity_mcle} we will prove Theorem~\ref{th:continuity_mcle} along with several continuity statements for the multichordal \clek{} law. Finally, in Section~\ref{subsec:partially_explored_cles} we prove Theorem~\ref{thm:cle_partially_explored}.

\subsection{Uniqueness of multichordal \clek{}}
\label{subsec:multichordal_uniqueness}

In this subsection we prove the uniqueness of multichordal \clek{}.

\begin{lemma}
\label{lem:unique_multichordal}
Given $(\D;\ul{x};\beta) \in \eldomain{2N}$, the law of the multichordal \clek{}  in $(\D;\ul{x};\beta)$, if it exists, is unique.
\end{lemma}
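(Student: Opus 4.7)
The plan is a coupling-and-resampling argument that exploits the already-established uniqueness of bichordal $\CLE_\kappa$ from Section~\ref{subsec:bichordal_cle}. Let $\mu_1, \mu_2$ be two probability measures on $\Ncurves{\D;\ul{x}}$ satisfying Definition~\ref{def:exterior_link_pattern_multiple_sle}. Denoting the resampling kernel of Definition~\ref{def:resampling_kernel} associated to a 4-tuple $\ul{j}=(j_1,j_2,j_3,j_4)$ by $R_{\ul{j}}$, the hypothesis is that both $\mu_i$ are stationary for each $R_{\ul{j}}$, and hence for the averaged kernel $R = \binom{2N}{4}^{-1}\sum_{\ul{j}} R_{\ul{j}}$.

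The first step is to reinterpret this stationarity as a complete two-chord Gibbs-type specification. Invariance under $R_{\ul{j}}$ is equivalent to the statement that, on the event $E_{\ul{j}}$, the conditional law of the two chords joining $x_{j_1}, x_{j_2}, x_{j_3}, x_{j_4}$ given the remaining $N-2$ chords equals the (unique) bichordal $\CLE_\kappa$ law on the induced exterior-link-pattern-decorated 2-chord marked domain. Hence any multichordal $\CLE_\kappa$ measure $\mu_i$ has prescribed conditional laws for every pair of chords given the complementary configuration.

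The second step propagates this to uniqueness of the joint law. I would decompose $\mu_i = \sum_\alpha p_i^\alpha \mu_i^\alpha$ where $\alpha$ is the induced interior link pattern and $\mu_i^\alpha$ is the conditional law given $\alpha$. The bichordal flip probabilities $H_\kappa(L)$, which by Lemma~\ref{lem:function_positive} lie strictly in $(0,1)$ on compact moduli, together with the invariance under all $R_{\ul{j}}$, yield a linear system on the vectors $(p_i^\alpha)_\alpha$ which, using the connectedness of the ``flip graph'' on link patterns under the allowed bichordal moves, forces $p_1^\alpha = p_2^\alpha$ for every $\alpha$. Conditionally on $\alpha$, the moves $R_{\ul{j}}$ that preserve $\alpha$ specialize to the single-chord resamplings of Definition~\ref{def:interior_link_pattern_multiple_sle}, so $\mu_i^\alpha$ satisfies the fixed-link-pattern specification; a parallel coupling argument (this time at the single-chord level, using chordal $\SLE_\kappa$ in random components) then shows $\mu_1^\alpha = \mu_2^\alpha$. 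This parallel argument is exactly the one alluded to for Theorem~\ref{thm:multichordal_link}.

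The main technical obstacle is the coupling in the last step: one must construct a joint sampling $(\ul{\gamma}^1, \ul{\gamma}^2)$ with $\ul{\gamma}^i \sim \mu_i^\alpha$ and apply repeated single-chord resamplings so that the two samples ultimately coincide. When the ``outer'' $N-1$ chords agree in the two samples the uniqueness of chordal $\SLE_\kappa$ lets us couple the resampled chord to agree almost surely, but subsequent resamplings involving a previously synchronized chord will redraw it and must be arranged so that the ambient configuration still agrees at the moment of redraw. The resolution is to proceed inductively on the number of synchronized chords, choosing the resampling schedule so that synchronization propagates, and to use a compactness/tightness argument on the space of couplings (together with the positivity of bichordal flip probabilities from Lemma~\ref{lem:function_positive}) to pass from positive probability of agreement to equality of the laws.
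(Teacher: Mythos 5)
Your two-phase strategy (first equality of the link-pattern marginals $p_i^\alpha$, then equality of the conditional laws $\mu_i^\alpha$) is a genuinely different route from the paper's, but the first phase has a gap that I do not see how to repair in the order you propose. You claim that invariance under the kernels $R_{\ul{j}}$, combined with Lemma~\ref{lem:function_positive} and connectedness of the flip graph, yields a closed linear system on the vectors $(p_i^\alpha)_\alpha$ forcing $p_1^\alpha=p_2^\alpha$. The problem is that the bichordal flip probability $H_\kappa(L)$ depends on the conformal modulus $L$ of the domain $D_{\ul j}$ cut out by the complementary $N-2$ chords, which is a random function of the full configuration $\ul\eta$. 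Consequently the induced transition probabilities between interior link patterns, averaged under $\mu_i$, are of the form $\mathbb{E}_{\mu_i^{\alpha'}}[H_\kappa(L)]$ and depend on the conditional laws $\mu_i^{\alpha'}$ themselves, not only on the vector $(p_i^{\alpha'})_{\alpha'}$. Two different invariant measures therefore give rise to potentially different transition matrices, and there is no single linear system whose solution is being sought. To justify $T_1=T_2$ you would first need $\mu_1^\alpha=\mu_2^\alpha$ for all $\alpha$, i.e.\ your second phase, so the decomposition cannot be resolved in the stated order.

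The paper sidesteps this by never decomposing along $\alpha$. It first reduces to the left/right outer boundaries $\ul\gamma$ of the chords (so the resampling kernel has a tractable measurable state space), then invokes the ergodic decomposition for the averaged kernel $\Phi$, and proves directly that two ergodic $\Phi$-invariant measures $\mu,\wh\mu$ cannot be mutually singular. That last step is a coupling on the \emph{full} configuration: sample $\ul\gamma\sim\mu$ and $\ul{\wh\gamma}\sim\wh\mu$ independently, apply $\Phi$ to both, and use Lemma~\ref{lem:function_positive} to drive both configurations with positive probability to the linking pattern $\{\{1,2\},\ldots,\{2N-1,2N\}\}$ with each chord contained in its own small neighborhood; then exploit absolute continuity of bichordal \clek{} in its domain to couple the next resampling step so that the resampled chords in the two configurations agree with positive probability, iterating until all chords agree. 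Positive probability of coincidence then implies equality of laws via the dichotomy that two ergodic invariant measures are either identical or mutually singular; this ergodic dichotomy is the precise mechanism standing behind your vaguer ``compactness/tightness argument on the space of couplings.'' In short, the coupling picture in your final paragraph is close to what the paper actually does, but the $\alpha$-decomposition and the ``linear system'' claim cannot be justified as written, and the measurability and ergodic-decomposition details need to be supplied.
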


As a consequence of Lemma~\ref{lem:unique_multichordal}, we obtain the conformal invariance of the family of multichordal \clek{}.

\begin{lemma}\label{lem:conformal_invariant}
The probability measures $(\mcclelaw{D;\ul{x};\beta})$, if they exist, depend only on the conformal class of $(D;\ul{x};\beta) \in \eldomain{2N}$.  
\end{lemma}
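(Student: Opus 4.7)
The plan is to deduce conformal invariance as an immediate consequence of the uniqueness statement in Lemma~\ref{lem:unique_multichordal}, by showing that the resampling kernel from Definition~\ref{def:resampling_kernel} commutes with conformal pushforward. Concretely, fix $(D;\ul{x};\beta), (\wt D;\ul{\wt x};\beta) \in \eldomain{2N}$ and a conformal map $\varphi\colon D\to\wt D$ with $\varphi(x_i) = \wt x_i$. Let $(\ul\eta,\Gamma)\sim \mcclelaw{D;\ul{x};\beta}$, and let $\wt{\p}$ denote the law of the pushforward $(\varphi(\ul\eta),\varphi(\Gamma))$. The goal is to verify that $\wt{\p}$ satisfies Definition~\ref{def:exterior_link_pattern_multiple_sle} in $(\wt D;\ul{\wt x};\beta)$, and then conclude by Lemma~\ref{lem:unique_multichordal} that $\wt{\p} = \mcclelaw{\wt D;\ul{\wt x};\beta}$.

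The key step is to check that, for any quadruple $1\le j_1<j_2<j_3<j_4 \le 2N$, the resampling operation of Definition~\ref{def:resampling_kernel} is equivariant under $\varphi$. The event $E_{\ul j}$ (that exactly two of the chords have endpoints $x_{\ul j}$ and lie in the same complementary component $D_{\ul j}$) is purely topological and hence transported to its analogue in $\wt D$. On $E_{\ul j}$, the induced link pattern $\beta_{\ul j}$ on the exterior of $D_{\ul j}$ depends only on the topological configuration, and $\varphi$ maps $D_{\ul j}$ conformally to $\wt D_{\ul j}$ sending $x_{\ul j}$ to $\wt x_{\ul j}$. The bichordal \slek{} law on $(D_{\ul j};x_{\ul j};\beta_{\ul j})$ is conformally invariant: it is built from (i) the link probability $H_\kappa(L)$ which depends only on the conformal modulus $L$ of $(D_{\ul j};x_{\ul j})$ and hence is a conformal invariant, and (ii) chordal \slek{} in the complementary components (together with conditionally independent nested \clek{}'s, which are also conformally invariant by the definition recalled in Section~\ref{subsec:cle}). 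Therefore, resampling the pair of chords in $(D_{\ul j};x_{\ul j};\beta_{\ul j})$ and then pushing forward by $\varphi$ gives the same joint law as pushing forward first and then resampling in $(\wt D_{\ul j};\wt x_{\ul j};\beta_{\ul j})$. Leaving the remaining chords (and associated loops) untouched clearly commutes with $\varphi$ as well.

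Since $(\ul\eta,\Gamma)$ is invariant in law under the resampling operation on $(D;\ul x;\beta)$, the equivariance just established implies that $(\varphi(\ul\eta),\varphi(\Gamma))$ is invariant in law under the corresponding resampling operation on $(\wt D;\ul{\wt x};\beta)$ for every choice of $j_1<\cdots<j_4$. Hence $\wt{\p}$ satisfies Definition~\ref{def:exterior_link_pattern_multiple_sle}, and by Lemma~\ref{lem:unique_multichordal} applied in $(\wt D;\ul{\wt x};\beta)$ we conclude $\wt{\p} = \mcclelaw{\wt D;\ul{\wt x};\beta}$, which is exactly the claim.

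The only substantive content is the conformal invariance of the bichordal \slek{} law, but this has already been established (via Lemmas~\ref{le:4strands} and~\ref{le:4strands_cle} and the discussion following them) in Section~\ref{subsec:bichordal_cle}, so no real obstacle remains; the proof is essentially a formal transport of structure argument combined with the uniqueness lemma.
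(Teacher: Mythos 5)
Your proposal is correct and takes essentially the same approach as the paper: both proofs observe that the resampling kernel of Definition~\ref{def:resampling_kernel} is conformally equivariant (because the bichordal \slek{} law used in the kernel is conformally invariant), so the pushforward of an invariant law is invariant, and then apply the uniqueness Lemma~\ref{lem:unique_multichordal}. The paper states this in one line as ``immediate from the conformal invariance of the bichordal \clek{},'' while you spell out the equivariance check in more detail; the substance is identical.
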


\begin{proof}[Proof of Lemma~\ref{lem:conformal_invariant} given Lemma~\ref{lem:unique_multichordal}]
Assume $( D;\ul{ x};\beta) \in \eldomain{2N}$ and $\varphi \colon  D \to \D$ is a conformal transformation taking $x_i$ to ${y}_i$ for each $1 \leq i \leq 2N$. 
Assume $\mu = \mcclelaw{D;\ul{x};\beta}$ satisfies the resampling property from Definition~\ref{def:exterior_link_pattern_multiple_sle}. 
Then it is immediate from the conformal invariance of the bichordal \clek{} in Definition~\ref{def:resampling_kernel} that 
$\varphi_*\mu$ satisfies the resampling property from Definition~\ref{def:exterior_link_pattern_multiple_sle} in $(\D;\ul{y};\beta)$. Hence, by Lemma~\ref{lem:unique_multichordal}, it must agree with $\mcclelaw{\D;\ul{y};\beta}$ if the latter exists. 
\end{proof}

We turn towards proving Lemma~\ref{lem:unique_multichordal}. For $\ul{\eta} = (\gamma_1,\dots,\gamma_N) \in \Ncurves{D;\ul{x}}$, we consider the curves $\ul{\gamma} = (\gamma_1^L,\gamma_1^R,\ldots,\gamma_N^L,\gamma_N^R)$ where $\gamma_i^L$ (resp.\ $\gamma_i^R$) represents the left (resp.\ right) outer boundary of $\gamma_i$, $i=1,\ldots,N$. It suffices to show that the law of $\ul{\gamma}$ is uniquely determined by the resampling property in Definition~\ref{def:exterior_link_pattern_multiple_sle} since the law of $\gamma_i$ given $(\gamma_i^L,\gamma_i^R)$ is necessarily that of an \slekr{\kappa/2-4;\kappa/2-4} in each of the connected components bounded between $\gamma_i^L,\gamma_i^R$ \cite{ms2016ig1}.

We define a Markov kernel $\Phi$ that transitions from a configuration $\ul{\gamma}$ to a new configuration $\ul{\wt{\gamma}}$ as follows. Choose $1 \le j_1 < j_2 < j_3 < j_4 \le 2N$ uniformly at random. Let $E_{\ul{j}}$ be the event in Definition~\ref{def:resampling_kernel}.
\begin{itemize}
 \item If $E_{\ul{j}}^c$ occurs, then we set $\ul{\wt{\gamma}} = \ul{\gamma}$.
 \item If $E_{\ul{j}}$ occurs, let $\gamma_i^L,\gamma_i^R,\gamma_{i'}^L,\gamma_{i'}^R$ be the curves connecting the points in $x_{\ul{j}}$, and let $(D_{\ul{j}};x_{\ul{j}};\beta_{\ul{j}})$ be as in Definition~\ref{def:resampling_kernel}. We then set $\wt{\gamma}_\ell^q = \gamma_\ell^q$ for $\ell \neq i,i'$ and $q \in \{L,R\}$, and we let $\wt{\gamma}_i^q$, $\wt{\gamma}_{i'}^q$ for $q \in \{L,R\}$ to be the left and right boundaries of the curves obtained by sampling from the law of a bichordal \clek{} from Definition~\ref{def:resampling_kernel} in $(D_{\ul{j}};x_{\ul{j}};\beta_{\ul{j}})$.
\end{itemize}

Let $d_\mathrm{H}$ denote the Hausdorff distance in $\ol{\D}$. We consider the space of configurations $\ul{\gamma} = (\gamma_1^L,\gamma_1^R,\ldots,\gamma_N^L,\gamma_N^R)$ of curves in $(\D;\ul{x})$ such that each pair $\gamma_i^L,\gamma_i^R$ connects two marked points in $\ul{x}$ which lie on the boundary of a connected component of $\D \setminus \bigcup_{i'\neq i} \gamma_{i'}^L \cup \gamma_{i'}^R$. We endow the set of configurations $\ul{\gamma}$ with the distance
\[ d(\ul{\gamma},\ul{\wt{\gamma}}) = \sum_{q \in \{L,R\}} \sum_{i=1}^N d_\mathrm{H}(\gamma_i^q, \wt{\gamma}_i^q).\]
Then $\Phi$ is a Borel-measurable (with respect to the metric above) Markov kernel by the same argument as in~\cite[\S A.1]{msw2020nonsimple}. 

Note that if $\ul{\eta}$ is a multichordal \clek{} according to Definition~\ref{def:exterior_link_pattern_multiple_sle}, then the law of $\ul{\gamma}$ (the collection of left and right boundaries of the curves in $\ul{\eta}$) is invariant under the kernel $\Phi$. Hence, Lemma~\ref{lem:unique_multichordal} is a direct consequence of the following lemma.

\begin{lemma}
\label{lem:unique_invariant}
Given $(\D;\ul{x};\beta) \in \eldomain{2N}$, there is at most one probability measure on sets of non-pairwise crossing arcs $\ul{\gamma}$ in $(\D;\ul{x};\beta)$ which is invariant under $\Phi$.
\end{lemma}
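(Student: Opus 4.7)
The plan is a Markov chain coupling argument. Suppose $\mu_1$ and $\mu_2$ are two $\Phi$-invariant probability measures on configurations $\ul{\gamma}$ in $(\D;\ul{x};\beta)$; we want to show $\mu_1 = \mu_2$. Construct a coupling $((\ul{\gamma}^1_n,\ul{\gamma}^2_n))_{n\ge 0}$ by starting with $(\ul{\gamma}^1_0,\ul{\gamma}^2_0)\sim \mu_1\otimes\mu_2$ independent and, at each step, selecting the same $4$-tuple $1\le j_1<j_2<j_3<j_4\le 2N$ uniformly at random, independently of everything else, and applying the one-step resampling to both chains. On the event that $E_{\ul{j}}$ holds simultaneously in both configurations, we use a maximal coupling of the two conditional bichordal \clek{} laws in the (generally distinct) marked domains $(D^i_{\ul{j}};x_{\ul{j}};\beta^i_{\ul{j}})$, $i=1,2$.

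The main technical input will be the continuity of the bichordal \clek{} law in its marked domain (Lemma~\ref{lem:bichordal_continuous}). When $\ul{\gamma}^1_n$ and $\ul{\gamma}^2_n$ are close in the metric $d$, the domains $D^1_{\ul{j}}, D^2_{\ul{j}}$ are close (in the Carathéodory sense with their marked points) and the corresponding bichordal \clek{} laws are close in total variation. The maximal coupling then yields identical resampled curves with probability close to $1$; iterating and using stationarity then forces $\ul{\gamma}^1_n = \ul{\gamma}^2_n$ for all large $n$ with probability one, which, together with invariance, gives $\mu_1=\mu_2$.

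To implement the iteration, the key step is a reachability/irreducibility statement: from any pair of configurations in the support of $\mu_1\otimes\mu_2$, with positive probability a finite sequence of resampling moves (with matched $\ul{j}$'s) drives both chains into any prescribed $d$-neighbourhood of a single reference configuration $\ul{\gamma}^{\star}$ tailored to the exterior link pattern $\beta$. This exploits the positivity of bichordal \clek{} on all admissible link patterns (Lemma~\ref{lem:function_positive}), which guarantees that every locally admissible reconfiguration has positive probability under the resampling step, together with the conformal invariance and continuity lemmas to control the propagation across domains. Once both chains are sufficiently close to $\ul{\gamma}^\star$, one more resampling produces exact coincidence with positive probability via the maximal coupling, and a standard Borel--Cantelli argument makes this happen almost surely.

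The main obstacle is the reachability step. Because each application of $\Phi$ alters only two of the $N$ curves and only when the geometric event $E_{\ul{j}}$ holds, one must exhibit, for typical configurations in the support, an explicit finite sequence of $4$-tuples and corresponding resamplings that is guaranteed to have positive conditional probability of moving the chain arbitrarily close to $\ul{\gamma}^\star$, respecting the combinatorics of admissible link patterns and the conformal geometry of the intermediate sub-domains. This is the genuine piece of work and is analogous to, though combinatorially more involved than, the corresponding step in the uniqueness proof of bichordal \clek{} in \cite[Appendix]{msw2020nonsimple}; all other ingredients (coupling, maximal coupling, continuity, and stationarity-based Borel--Cantelli) are standard once this is established.
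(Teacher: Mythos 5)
Your strategy---couple two stationary chains and show positive probability of coalescence---is the same as the paper's, but the proposal has two substantive gaps. The first concerns the coupling step. You want to maximally couple the two conditional bichordal laws and invoke Lemma~\ref{lem:bichordal_continuous} to argue they are close in total variation; but that lemma only gives weak continuity, which does not control total variation distance. (Total-variation continuity of multichordal \clek{} is established later in Section~\ref{sec:tv_convergence}, but that section builds on this uniqueness lemma, so invoking it here would be circular.) The paper avoids this by exploiting mutual \emph{absolute continuity}: after both chains have been driven to the all-adjacent link pattern $\{\{1,2\},\dots,\{2N-1,2N\}\}$ with each curve pair confined to the same fixed disjoint neighborhoods $\ol{U}_\ell$, the two conditional bichordal laws in $\ol{U}_\ell$ are mutually absolutely continuous, and the resampling step can then be coupled to produce exact coincidence with positive probability. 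Your closeness-in-$d$ plus TV-continuity route is not available with the lemmas you have at this point in the paper.

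The second gap concerns the concluding argument. You identify the reachability step as ``the genuine piece of work'' but leave it a black box, and then combine it with stationarity and Borel--Cantelli to get a.s.\ eventual coalescence. A Borel--Cantelli/renewal argument requires a \emph{uniform} positive lower bound on the coalescence probability over the state space (or a regeneration scheme), which you neither state nor establish. The paper sidesteps this demand entirely: it first passes to the ergodic decomposition, so it suffices to show two ergodic invariant measures $\mu,\wh\mu$ are not mutually singular, and for that a \emph{single} positive probability of coalescence from an independent draw $\mu\otimes\wh\mu$ is enough. With this reduction, the reachability claim only needs to hold with positive probability rather than uniformly positive probability, and the paper carries it out with a concrete recipe: apply $\Phi$ at most $N-1$ times to drive both link patterns to the all-adjacent one (Lemma~\ref{lem:function_positive} ensures each flip has positive probability), and use an inductive confinement argument to place each curve pair in fixed disjoint regions. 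Without the ergodic decomposition or a uniformity estimate, and without the reachability argument itself, the proposal as written does not establish the lemma.
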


\begin{proof}
We recall that the set of invariant probability measures for a Markov kernel on a Polish space is convex and the ergodic invariant measures are the extremal points of this convex set (\cite[Chapter 6]{varadhan-probability}, for instance). Therefore it suffices to prove that two ergodic invariant measures cannot be mutually singular.

Let $\mu$ and $\wh\mu$ be two $\Phi$-invariant ergodic probability measures. Let $\ul{\gamma}$, $\ul{\wh{\gamma}}$ be sampled independently according to $\mu$, $\wh\mu$, respectively.  We apply $\Phi$ to $\gamma$ and $\ul{\wh{\gamma}}$ independently. When resampling four strands with endpoints in the same component, by Lemma~\ref{lem:function_positive} they can link up in either way with positive probability, thus there is a positive probability that after $N-1$ iterations of $\Phi$ we move from $\ul{\gamma}$ and $\ul{\wh{\gamma}}$ to two configurations $\ul{\gamma}^0$ and $\ul{\wh{\gamma}}^0$, respectively, where both $\ul{\gamma}^0$ and $\ul{\wh{\gamma}}^0$ have the linking pattern $\{\{1,2\},\{3,4\},\ldots,\{2N-1,2N\}\}$. There is also positive probability that each $\gamma_\ell^{0,q}$, $\wh{\gamma}_\ell^{0,q}$ for $q \in \{L,R\}$ is contained in some fixed $\ol{U}_\ell \subseteq \ol{\D}$, and $\ol{U}_{\ell} \cap \ol{U}_{\ell'} = \varnothing$ for $\ell \neq \ell'$ (this can be proven by an induction argument).

At the next iteration, with positive probability, $\Phi$ selects $(j_1,j_2,j_3,j_4) = (1,2,3,4)$ for both $\ul{\gamma}^0$ and $\ul{\wh{\gamma}}^0$ and does not change the linking pattern of $\ul{\gamma}^0$ and $\ul{\wh{\gamma}}^0$. On this positive probability event, by absolute continuity in the domain, we can couple the resampling step so that with positive probability $\eta^{0}_\ell = \wh\eta^{0}_\ell$ for $\ell\in\{1,2\}$, where $\eta^{0}_\ell, \wh\eta^{0}_\ell$ are the resampled paths from the bichordal \clek{} in Definition~\ref{def:resampling_kernel} conditioned on $\alpha'=\intadjacent$, and we still have $\eta^{0}_\ell, \wh\eta^{0}_\ell \subseteq \ol{U}_\ell$ for $\ell\in\{1,2\}$. So that, after the resampling, $\gamma^{0,q}_1=\wh{ \gamma}^{0,q}_1$, $\gamma^{0,q}_2=\wh{ \gamma}^{0,q}_2$, for $q\in\{L,R\}$.

Continuing this way after completing this round of iterations we eventually obtain two configurations $\ul{\gamma}^1$, $\ul{\wh{\gamma}}^1$ that coincide with positive probability. Hence $\mu$ and $\wh{\mu}$ cannot be mutually singular.
\end{proof}

\subsection{Existence of multichordal \clek{}}
\label{subsec:multichordal_existence}

We will  argue in this section that multichordal \clek{} can be constructed for \emph{all} $(\D;\ul{x};\beta) \in \eldomain{2N}$.  This is the statement of Proposition~\ref{prop:multichordal-well-def}, which is the main result of this section. 

\begin{proposition}
\label{prop:multichordal-well-def}
Let $(\D;\ul{x};\beta)\in \eldomain{2N}$. The multichordal \clek{} in $(\D;\ul{x};\beta)$ exists.
\end{proposition}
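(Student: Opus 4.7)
The plan is to construct the multichordal \clek{} as the conditional law of the remainder of a partially explored nested \clek{}, where the partial exploration is arranged to produce exactly $2N$ strands realising the prescribed exterior link pattern $\beta$. First I would set up an iterative exploration scheme. Start with a nested \clek{} in $\D$ and, following an exploration tree as described in Section~\ref{subsec:cle}, perform $N$ successive partial explorations of boundary-touching loops. At stage $k$, we already have $2(k-1)$ strands; we then trace (part of) the next loop in a Markovian way, stopping the exploration once it produces two new boundary-touching endpoints and the induced exterior link pattern is consistent with the restriction of $\beta$ to the first $2k$ points. After $N$ stages, we obtain a configuration with $2N$ strands whose exterior link pattern is $\beta$, together with the complementary \clek{}'s in the residual components.

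The central inductive step uses Lemma~\ref{le:4strands_cle}: after $k$ stages the remaining configuration in each complementary component is, conditionally, a monochordal or bichordal \clek{}. When the next pair of strands is produced inside one such component, Lemma~\ref{le:4strands_cle} identifies the joint conditional law of the two new strands, together with the existing strand bounding the component, as a bichordal \clek{} in the appropriate marked domain. Iterating, we obtain a well-defined law on $(\ul{\eta},\Gamma)$ at random marked points $\ul{X}$ lying in a (relatively open, positive-probability) neighbourhood of admissible configurations.

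To obtain the law at a \emph{prescribed} $\ul{x}$, I would condition on the event that the random endpoints $\ul{X}$ land in shrinking neighbourhoods of $\ul{x}$ and take a weak limit, arguing exactly as in the $N=2$ case from \cite{msw2020nonsimple}. The positivity of the conditioning probability follows by chaining the bichordal arguments of Section~\ref{subsec:bichordal_cle} together with Lemma~\ref{lem:function_positive} and the continuity statement Lemma~\ref{lem:bichordal_continuous}; convergence of the conditional laws uses conformal invariance of the bichordal \clek{} and continuity of $H_\kappa$.

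It remains to verify that the resulting law satisfies the resampling property of Definition~\ref{def:exterior_link_pattern_multiple_sle}. Fix indices $j_1<\cdots<j_4$ and condition on the event $E_{\ul{j}}$ that four of the strands share a common complementary component $D_{\ul{j}}$. Because the exploration can be re-ordered so that the two strands connecting $x_{\ul j}$ are the \emph{last} pair to be produced (inside $D_{\ul{j}}$), Lemma~\ref{le:4strands_cle} applied in $D_{\ul{j}}$ identifies the conditional law of these two strands given all others with the bichordal \clek{} law in $(D_{\ul{j}};x_{\ul{j}};\beta_{\ul{j}})$. This re-ordering argument is essentially the same device used in the proof of Lemma~\ref{le:bichordal_given_loops}. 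Hence the constructed law is invariant under the multichordal resampling kernel of Definition~\ref{def:resampling_kernel}, which together with the uniqueness Lemma~\ref{lem:unique_multichordal} completes the proof.

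The hard part is the last step: justifying the re-ordering of the exploration for an arbitrary quadruple $(j_1,j_2,j_3,j_4)$. A priori, the inductive construction fixes a particular order in which the $N$ strand-pairs are generated, and it is not obvious that any other consistent order produces the same law. To handle this I would introduce an auxiliary Markovian exploration in which the order of exploration of distinct loops is itself random (e.g.\ target-invariant branching, as in the classical \clek{} exploration tree), and prove by induction on $N$ that the joint law of the $2N$ strands does not depend on the exploration order. Once this order-invariance is established, the desired bichordal resampling property for any quadruple follows automatically by choosing the order that explores the other $N-2$ strand-pairs first.
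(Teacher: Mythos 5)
Your broad strategy --- build multichordal $\CLE_\kappa$ as the remainder of a partial exploration of a $\CLE_\kappa$, use Lemma~\ref{le:4strands_cle} inductively, then verify the resampling property by re-ordering the exploration --- is the same skeleton the paper uses, and your observation that re-ordering is the hard point for the resampling check is correct (the paper's Proposition~\ref{pr:mcle_exploration_path} resolves it by a case analysis on how the new marked points interact with the chosen quadruple, reducing each case to Lemmas~\ref{le:4strands_cle} and~\ref{le:bichordal_given_loops}). However, there are two genuine gaps in your route from the random marked configuration that the exploration produces to a \emph{prescribed} $\ul{x}$.

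First, the ``condition on $\ul{X}$ landing in a shrinking neighbourhood of $\ul{x}$ and take a weak limit'' step is not merely technical. You would have to show that the conditional laws converge, that the limit still lives on $\Ncurves{\D;\ul{x}}$, and --- crucially --- that the limiting law is invariant under the resampling kernel of Definition~\ref{def:resampling_kernel}. The paper avoids weak limits entirely: it builds a Markovian exploration (Definition~\ref{def:expl_markovian}) that, on a positive-probability event, produces a marked domain conformally equivalent to $(\D;\ul{x};\beta)$ \emph{exactly}, and then invokes Lemma~\ref{lem:limit_exploration} to identify the remainder as a multichordal $\CLE_\kappa$ at the target configuration. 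This sidesteps the entire issue of whether the defining invariance passes to a conditional weak limit.

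Second, and more fundamentally, there is a circularity you do not address. To move the $2N$ tips from a realisable configuration to an arbitrary $\ul{x}$ via small exploration steps, one needs the success probability of each step to be bounded below \emph{uniformly} in the current configuration of tips. But the natural lower bound (Lemma~\ref{lem:continuity_in_points}) is only valid on the event $F(\delta_0)$ of \eqref{eq:event_F_cle} that the strands stay away from the other marked points, and the probability of $F(\delta_0)$ a priori depends on the current configuration --- exactly the quantity you are trying to control. The paper breaks this circularity by an induction on $N$: it conditions on one of the chords, which reduces the problem to $N-1$ pairs, invokes the inductively-available uniform bound on $F(\delta_0)$ for $N-1$ (Lemma~\ref{lem:F_event}), and only then chooses a uniformly valid step size $\delta_1$ (Lemma~\ref{lem:continuity_in_points_ind}). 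Nothing in your proposal performs this reduction, so as written the chaining argument does not close: it proves existence on a (possibly strict) open subset of $\eldomain{2N}$ but not at an arbitrary $\ul{x}$. Making your ``chaining the bichordal arguments'' precise would essentially force you to rediscover this inductive uniformity.

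Finally, your proposed fix for the re-ordering (an auxiliary exploration with randomised order, plus an unproved claim that the joint law is order-invariant) is heavier than needed. The paper proves the resampling property directly: given $\ul{j}$, condition on all strands not emanating from $x_{\ul{j}}$ and split into the three cases of whether zero, one, or two of the new marked points lie in $x_{\ul{j}}$; each case reduces to Lemma~\ref{le:4strands_cle} or Lemma~\ref{le:bichordal_given_loops}. That is a cleaner way to formalize the re-ordering intuition than postulating a random-order exploration.
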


Until concluding the proof of Proposition~\ref{prop:multichordal-well-def}  we will temporarily denote by $\cleconfclass{2N} \subseteq \eldomain{2N}$ the set of $(D;\ul{x};\beta) \in \eldomain{2N}$ such that there exists at least one law   satisfying the Definition~\ref{def:exterior_link_pattern_multiple_sle} inside $(D;\ul{x};\beta)$.
We have shown in Lemma~\ref{lem:unique_multichordal} that for $(\D;\ul{x};\beta) \in \cleconfclass{2N}$  the law of the multichordal  \clek{} is unique. Hence for 
$(\D;\ul{x};\beta) \in \cleconfclass{2N}$  the law of the  multichordal \clek{} in  $(\D;\ul{x};\beta)$ is well-defined and we will denote it by $\mcclelaw{D;\ul{x};\beta}$.

Our goal is to show $\cleconfclass{2N} = \eldomain{2N}$. We have seen in Section~\ref{subsec:bichordal_cle} that this is true when $N=1,2$. We start in Section~\ref{subsec:markovian_expl}  by describing a \emph{Markovian way} of exploring multichordal \clek{}  and showing that the law of the remainder conditionally on such an exploration is again a  multichordal \clek{}. We will continue in  Section~\ref{subsec:achieveability_config} by showing that with large probability we can conduct  explorations that make it possible to transition from a marked domain in  $\cleconfclass{2N}$ to other neighboring domains. To conclude the proof for $N > 2$, we describe in Section~\ref{sec:proof_existence_multichordal} another partial exploration of multichordal \clek{} on a domain with $2(N-1)$ marked points that produces a multichordal \clek{} in \emph{some} domain with $2N$ marked points. This shows that the set $\cleconfclass{2N}$ is not empty. Using the results from the previous subsections, we then conclude the proof of Proposition~\ref{prop:multichordal-well-def}.

\subsubsection{Markovian explorations  of multichordal  \clek{}}\label{subsec:markovian_expl}

Recall from Section~\ref{se:main_msle} that if  $(\ul\eta,\Gamma)$ is a multichordal \clek{} in $(D;\ul x;\beta) \in \eldomain{2N}$, then we denote by $\eta_k$ the strand in  $\ul\eta$ emanating from $x_k$ for each $k=1,\dots, 2N$.
 
\begin{definition}[Markovian exploration]
\label{def:expl_markovian} 
Given $(D;\ul{x};\beta) \in \eldomain{2N}$, suppose that $(\ul\eta,\Gamma)$ is a multichordal \clek{} in $(D;\ul x;\beta)$. 
One step of the exploration is defined as follows: Pick $1 \leq k \leq 2N$, and let $\tau$ be a stopping time for $\CF_t = \sigma(\eta_k(s) : s \leq t)$.

A Markovian exploration of $\ul{\eta}$ is a collection of random times $\ul{\tau} = (\tau_1,\dots,\tau_{2N})$ such that there exists a sequence of exploration steps $(k_n,\tau^{(n)}_{k_n})_{n \in \N}$ where $1 \leq k_n \leq 2N$ is deterministic and $\tau^{(n)}_{k_n}$ is a stopping time for
\[ \CF^{(n)}_t = \sigma\left(\eta_{k_n}(s) : s \leq t,\ \eta_{k_m}\big|_{[0,\tau^{(m)}_{k_m}]} : m < n \right) \]
and such that $\tau_k$ is the limit of $(\tau^{(n)}_{k_n})_{k_n = k}$ for each $k$.
\end{definition}

\begin{lemma}\label{lem:multichordal_stopping_time}
Suppose that $(D;\ul{x};\beta) \in \cleconfclass{2N}$ and $(\ul\eta,\Gamma)$ is a multichordal \clek{} in $(D;\ul x;\beta)$. 
Let $1 \leq k \leq 2N$, and let $\tau$ be a stopping time for $\CF_t = \sigma(\eta_k(s) : s \leq t)$. Let $(D_\tau; \ul{x}_\tau; \beta_\tau)$ be the triple consisting of the component of $D \setminus \eta_k([0,\tau])$ with $\eta_k(\tau)$ on its boundary, the elements of $\ul{x}$ together with $\eta_k(\tau)$ which are on $\partial D_\tau$, and the induced interior link pattern $\beta_\tau$. Then the conditional law of the remainder of $\ul{\eta}$ given $\eta_k|_{[0,\tau]}$ is that a multichordal \clek{} in $(D_\tau;\ul{x}_\tau;\beta_\tau)$.
\end{lemma}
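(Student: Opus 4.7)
The strategy is to apply the uniqueness of the multichordal \clek{} law (Lemma~\ref{lem:unique_multichordal}) to the conditional law given $\eta_k|_{[0,\tau]}$. Let $\nu$ denote the regular conditional distribution of the collection of chords of $\ul{\eta}$ lying in $D_\tau$. If $\nu$ satisfies the resampling property of Definition~\ref{def:exterior_link_pattern_multiple_sle} in $(D_\tau;\ul{x}_\tau;\beta_\tau)$, then Lemma~\ref{lem:unique_multichordal} immediately yields $\nu = \mcclelaw{D_\tau;\ul{x}_\tau;\beta_\tau}$, which simultaneously shows $(D_\tau;\ul{x}_\tau;\beta_\tau) \in \cleconfclass{2N}$. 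So the heart of the proof is verifying this resampling invariance. Fix a quadruple selecting four marked points $y_{j_1},\ldots,y_{j_4}$ of $\ul{x}_\tau$, let $D_{\ul{j}}^\tau$ denote the component of $D_\tau$ with these four points on its boundary after removing the chords of $\nu$ outside the pair connecting them, and let $\beta_{\ul{j}}^\tau$ be the induced link pattern. We split into two cases depending on whether the tip $\eta_k(\tau)$ is among the four.

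In Case~1, where $\eta_k(\tau)$ is not selected, all four points lie in $\ul{x}$ and the chord containing $\eta_k$ belongs to the ``other'' chords (not to the pair). A direct topological inspection shows that on the corresponding event $E_{\ul{j}}^\tau$, the component $D_{\ul{j}}^\tau$ coincides with the component $D_{\ul{j}}$ of $D$ minus the other chords of the original $\ul{\eta}$, and likewise $\beta_{\ul{j}}^\tau = \beta_{\ul{j}}$. The resampling property of $\ul{\eta}$ in $(D;\ul{x};\beta)$ at this quadruple gives that the pair has bichordal \clek{} law in $(D_{\ul{j}};\ldots;\beta_{\ul{j}})$ conditional on all other chords of $\ul{\eta}$; projecting onto the coarser $\sigma$-algebra generated by $\eta_k|_{[0,\tau]}$ and the other chords of $\nu$ preserves this conditional law, since the bichordal \clek{} law depends only on the chords bounding $D_{\ul{j}}^\tau$, all of which are measurable with respect to this coarser $\sigma$-algebra.

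In Case~2, where $\eta_k(\tau)$ is selected, the chord of $\nu$ emanating from $\eta_k(\tau)$ is $\eta_k|_{[\tau,\infty)}$ and terminates at the other endpoint $x_{k^*}$ of $\eta_k$; hence $E_{\ul{j}}^\tau$ forces $x_{k^*}$ to also be among the four, and identifies the pair as $(\eta_k|_{[\tau,\infty)},\gamma_{i'})$ with $\gamma_{i'}$ connecting the remaining two points $y_{j_3},y_{j_4}$. Applying the resampling property of $\ul{\eta}$ in $(D;\ul{x};\beta)$ to the quadruple $(x_k,x_{k^*},y_{j_3},y_{j_4})$ shows that $(\eta_k,\gamma_{i'})$ has bichordal \clek{} law in $(D_{\ul{j}};\ldots;\beta_{\ul{j}})$ conditional on all other chords. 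Further conditioning on $\eta_k|_{[0,\tau]}$ and invoking the Markov property of the bichordal \clek{} under growing one of its chords up to a stopping time of its own natural filtration then yields that $(\eta_k|_{[\tau,\infty)},\gamma_{i'})$ has bichordal \clek{} law in $(D_{\ul{j}}^\tau;\ldots;\beta_{\ul{j}}^\tau)$, which is precisely the resampling property of $\nu$ in this case.

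The main obstacle is the Markov property of the bichordal \clek{} invoked in Case~2, which is not stated explicitly in Section~\ref{subsec:bichordal_cle}. We expect it to follow from the realization of the bichordal \clek{} as the conditional law of the remaining two chords of a \clek{} after a partial exploration (Lemma~\ref{le:4strands_cle} and the surrounding discussion): growing one of its chords further is itself an additional partial exploration of the underlying \clek{}, and by the domain Markov property of the \slekr{\kappa-6} branches of the \clek{} exploration tree, the conditional law of the remainder is again a bichordal \clek{} in the smaller marked domain with the same exterior link pattern.
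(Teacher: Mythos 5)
Your proof is correct and follows essentially the same route as the paper's: verify the resampling property of Definition~\ref{def:exterior_link_pattern_multiple_sle} for the conditional law, split according to whether the tip $\eta_k(\tau)$ is among the selected four marked points, and in the nontrivial case exchange the order of conditioning so that the key input becomes the Markov property of bichordal \clek{} under growing one chord, which is exactly Lemma~\ref{le:4strands_cle}. The ``obstacle'' you flag at the end is precisely the step the paper resolves by citing Lemma~\ref{le:4strands_cle} (with $\ol{\eta}_2$ tracing a partial segment of the chord up to a stopping time of the joint filtration), so your sketch of how to fill it is the intended argument.
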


\begin{proof}
We argue that the remaining law is invariant under the resampling step from Definition~\ref{def:resampling_kernel}.  
Let $j_1<j_2<j_3<j_4$ be given. 
We aim to show that  conditionally on $(\eta_i)_{i \neq j_1,j_2,j_3,j_4}$ the law of the remaining two chords is a bichordal \clek{}. 
If $k\notin\{j_1,j_2,j_3,j_4\}$, it is immediate to conclude. 

Suppose $k \in \{j_1,j_2,j_3,j_4\}$. The law we obtain by
conditioning on $\eta_k|_{[0,\tau]}$ first and then on $(\eta_i)_{i \neq j_1,j_2,j_3,j_4}$ is the same as first conditioning on $(\eta_i)_{i \neq j_1,j_2,j_3,j_4}$ and then exploring $\eta_k|_{[0,\tau]}$.
We know by definition that conditionally on all the strands $(\eta_i)_{i \neq j_1,j_2,j_3,j_4}$, the remaining law is a bichordal \clek{} in the component $D_{\ul{j}}$ containing $x_{j_1},x_{j_2},x_{j_3},x_{j_4}$. By Lemma~\ref{le:4strands_cle} the same is true when further conditioning on $\eta_{k}|_{[0,\tau]}$. 
\end{proof}

\begin{lemma}\label{lem:limit_exploration}
Suppose that $(D;\ul{x};\beta) \in \cleconfclass{2N}$ and $(\ul\eta,\Gamma)$ is a multichordal \clek{} in $(D;\ul x;\beta)$. 
Let $\ul{\tau}$ be a Markovian exploration of $\ul{\eta}$ as in Definition~\ref{def:expl_markovian}. Then the conditional law of the remainder of $\ul\eta$ given $(\eta_k\big|_{[0,\tau_k]})_{k=1,\dots,2N}$ is given by a multichordal \clek{} independently in each component of $D \setminus \bigcup_k \eta_k[0,\tau_k]$ with the marked points $\eta_k(\tau_k)$ that are on its boundary, and the induced link pattern.
\end{lemma}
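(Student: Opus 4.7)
The plan is to apply Lemma~\ref{lem:multichordal_stopping_time} iteratively along the sequence of exploration steps $(k_n,\tau^{(n)}_{k_n})_{n \in \N}$ defining the Markovian exploration, and then pass to the limit as $n\to\infty$ via upward martingale convergence. Set $\CG_n = \sigma(\eta_{k_m}|_{[0,\tau^{(m)}_{k_m}]} : m \leq n)$, so that $\CG_n \uparrow \CG_\infty = \sigma(\eta_k|_{[0,\tau_k]} : k=1,\ldots,2N)$.

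I would first prove by induction on $n$ the finite-step version of the conclusion: conditionally on $\CG_n$, the remainder of $\ul{\eta}$ is given by an independent multichordal \clek{} in each connected component of $D \setminus \bigcup_{m \leq n} \eta_{k_m}([0,\tau^{(m)}_{k_m}])$, with the induced marked points and exterior link pattern. The base case $n=0$ is the hypothesis. For the inductive step, note that the $(n{+}1)$-st exploration step takes place entirely inside the single component $D^{(n)}_c$ whose boundary contains the current tip of $\eta_{k_{n+1}}$; by the inductive hypothesis the conditional law of $\ul{\eta}$ restricted to $D^{(n)}_c$ is a multichordal \clek{}, so in particular $D^{(n)}_c$ with its induced marked points and link pattern lies in the appropriate $\cleconfclass{2 N_c}$. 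One can then apply Lemma~\ref{lem:multichordal_stopping_time} to this configuration with stopping time $\tau^{(n+1)}_{k_{n+1}}$, while the chord configurations in the other components are untouched; this gives the claim at step $n+1$.

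To pass to the limit $n\to\infty$, L\'evy's upward martingale convergence theorem implies that the regular conditional distribution of the remainder of $\ul{\eta}$ given $\CG_n$ converges almost surely to the regular conditional distribution given $\CG_\infty$. As $n\to\infty$, the explored strands grow monotonically to $\bigcup_k \eta_k([0,\tau_k])$, and the $n$-step marked domains together with their marked points and exterior link patterns converge to the limit marked domain $D \setminus \bigcup_k \eta_k([0,\tau_k])$ in the Carath\'eodory sense. I would then identify the resulting limit law with the multichordal \clek{} in the limit configuration by invoking Lemma~\ref{lem:unique_multichordal}: any accumulation point of the finite-step multichordal \clek{} laws is invariant under the bichordal resampling kernel in the limit configuration, because the bichordal kernel is continuous in the marked domain by Lemma~\ref{lem:bichordal_continuous}. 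By uniqueness, the limit must coincide with the multichordal \clek{} law in the limit configuration, whose existence in $\cleconfclass{}$ is thereby also established.

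The main obstacle is the limit step: one must show that the multichordal \clek{} laws on the shrinking $n$-step configurations actually converge, rather than merely being tight, and that the limit satisfies the resampling property so that uniqueness applies. Tightness is immediate from all strands living in $\ol{D}$, and the passage of the bichordal resampling characterization through the Carath\'eodory limit reduces to the continuity statement of Lemma~\ref{lem:bichordal_continuous}; with those two ingredients in hand the uniqueness of Lemma~\ref{lem:unique_multichordal} pins down the limit.
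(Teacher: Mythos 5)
Your proof is correct in outline and rests on the same two ingredients as the paper's: iterating Lemma~\ref{lem:multichordal_stopping_time} for the finite-step version, and combining a martingale-convergence argument with the continuity of bichordal $\CLE_\kappa$ from Lemma~\ref{lem:bichordal_continuous} to pass to the limit. Where the paper is more economical is in how it organizes the limit step. Rather than establishing a.s.\ weak convergence of the full conditional laws $\mu_n = \mathrm{law}\bigl(\text{remainder} \mid \CG_n\bigr)$ (as measures on tuples of $2N$ chords in a shrinking sequence of domains) and then showing the limit is preserved by the bichordal resampling kernel, the paper fixes $j_1<\dots<j_4$ at the outset and swaps the order of conditioning: it conditions first on $(\eta_i)_{i \neq j_1,\dots,j_4}$ and then on $\CG_n$. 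With that ordering, the quantity to which L\'evy's theorem is applied is exactly the conditional law of the two chords to be resampled, which at each finite $n$ is already a bichordal $\CLE_\kappa$ in the current domain by Lemma~\ref{lem:multichordal_stopping_time}; Lemma~\ref{lem:bichordal_continuous} then identifies the limit directly as bichordal $\CLE_\kappa$ in the limit domain, which is precisely the resampling property for $\mu_\infty$. This avoids having to prove weak convergence of the full $2N$-chord conditional measures and that invariance under a configuration-dependent kernel passes to the limit, which are the two points your write-up leaves most to the reader. A smaller remark: once you know that the conditional law given $\CG_\infty$ satisfies the bichordal resampling property, it \emph{is} a multichordal $\CLE_\kappa$ by Definition~\ref{def:exterior_link_pattern_multiple_sle}, and this also shows that the limit marked domain lies in $\cleconfclass{2N}$; invoking Lemma~\ref{lem:unique_multichordal} at the end is therefore unnecessary.
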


In particular, this lemma shows that if $W_{\ul u}$ is the event that  the resulting marked domain after the exploration is conformally equivalent to $(\D; \ul u;\beta)$ and if $(\D; \ul{x};\beta)\in\cleconfclass{2N}$ is such that $\mcclelaw{\D; \ul{x};\beta}\left[ W_{\ul u}\right]>0$ then $(\D; \ul{u};\beta)\in \cleconfclass{2N}$.

\begin{proof}
 By iteratively applying Lemma~\ref{lem:multichordal_stopping_time}, we know that this is true for any finite number of exploration steps. We want to show that the property remains true when taking limits of exploration steps.
 
 Let $j_1<j_2<j_3<j_4$ be given. We aim to show that conditionally on $(\eta_i)_{i \neq j_1,j_2,j_3,j_4}$ the law of the remaining two chords is a bichordal \clek{}. The law we obtain by conditioning on $(\eta_k\big|_{[0,\tau_k]})_{k=1,\dots,2N}$ first and then on $(\eta_i)_{i \neq j_1,j_2,j_3,j_4}$ is the same as first conditioning on $(\eta_i)_{i \neq j_1,j_2,j_3,j_4}$ and then exploring $(\eta_k\big|_{[0,\tau_k]})_{k=1,\dots,2N}$. By Lemma~\ref{lem:multichordal_stopping_time}, for each $n \in \N$ if we condition on $(\eta_i)_{i \neq j_1,j_2,j_3,j_4}$ and $(\eta_{k_m}\big|_{[0,\tau^{(m)}_{k_m}]})_{m \le n}$, then the law of the remainder is a bichordal \clek{} in the remaining component. Using the fact that $\sigma(\eta_{k_m}\big|_{[0,\tau^{(m)}_{k_m}]} : m \le n) \to \sigma(\eta_k\big|_{[0,\tau_k]} : k=1,\dots,2N)$ and the continuity of the bichordal \clek{} in Lemma~\ref{lem:bichordal_continuous} we conclude that this remains true in the limit.
\end{proof}

\subsubsection{Achieveability of marked configurations}\label{subsec:achieveability_config}

In this subsection we are going to show that with large probability we can transition from configurations in $\cleconfclass{2N}$ to nearby configurations using Markovian explorations of the \clek{} strands. We will first show the achievability result restricted to an auxiliary event that we define now.  

Suppose we have $(\D;{ \ul{x}};\beta)\in \cleconfclass{2N}$, let $(\ul{\eta},\Gamma)$ be a multichordal \clek{} in $(\D; \ul{x};\beta)$, and let $\alpha$ be the internal link pattern of $\ul{\eta}$. We denote by $\ul\eta_{\wh i}$ the tuple formed by $(\eta_j)_{j\neq i,\alpha(i)}$. For $\delta_0 > 0$, we let
\begin{equation}\label{eq:event_F_cle}
F(\delta_0) = \bigcap_{i\in\{1,\dots,2N\}} \left\{ B(x_i,\delta_0) \cap \ul\eta_{\wh i}= \varnothing \right\}.
\end{equation}
In other words, $F(\delta_0)$ is the event that for all $i$ the strand $\eta_i$ does not intersect $B(x_j,\delta_0)$ for $j \neq i,\alpha(i)$.

The following lemma will be the main input to obtain continuity.  In its statement we will show a lower bound for the probability of  the following event:
Suppose $(\ul{\eta}, \Gamma)$ is a multichordal \clek{} in $(\D; \ul{x};\beta)\in \cleconfclass{2N}$, and $(\D; \ul{u})$ is another marked  domain of $2N$ distinct prime ends.   Given a Markovian exploration  of $\ul{\eta}$ as in Definition~\ref{def:expl_markovian}, we denote by    $W_{\ul u}$   the event that  the resulting marked domain after the exploration is conformally equivalent to $(\D; \ul u;\beta)$.

\begin{lemma}\label{lem:continuity_in_points}
There exists a constant $C>0$ depending only on $N$ such that the following is true. Let $\eta$ be a multichordal \clek{} in $(\D; \ul{x};\beta)\in \cleconfclass{2N}$. Let $\delta_0, \delta_1 > 0$, and $\ul{u}^*=(u_1^*, \ldots, u_{2N}^*)$ where $u_i^* \in \partial\D \cap B(x_i,\delta_1\delta_0)$ for each $i$. 
Then there exists a Markovian exploration of $\ul{\eta}$ that stops before any of the strands exit $B(x_i,\delta_1^{1/2}\delta_0)$ such that
\[\mcclelaw{\D; \ul{x};\beta}\left[ \left({W_{\ul u^*}}\right)^c \cap F(\delta_0)\right] \leq C \delta_1^{1/2} . \]
\end{lemma}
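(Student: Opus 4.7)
The plan is to build an explicit Markovian exploration that iteratively pushes each strand's tip towards a pre-chosen target position in an appropriate conformal uniformization, so that on the success event the resulting marked domain is conformally equivalent to $(\D;\ul{u}^*;\beta)$; the failure probability on $F(\delta_0)$ is then controlled by a Brownian-motion hitting-time estimate for the SLE driving function.

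I would enumerate the strands in the order $i=1,\ldots,2N$ and construct the stopping times inductively. At step $i$, given the past exploration $\eta_1|_{[0,\tau_1]},\ldots,\eta_{i-1}|_{[0,\tau_{i-1}]}$, let $D$ be the component of $\D\setminus\bigcup_{j<i}\eta_j([0,\tau_j])$ containing the remaining strands and let $\psi\colon D\to\D$ be the conformal map normalized by sending three unexplored reference marked points (say three of $x_{i+1},\ldots,x_{2N}$, which for $\delta_1$ sufficiently small still lie on $\partial D$) to three fixed boundary positions. Choose $v_i^*\in\partial\D$ as the target image for $\psi(\eta_i(\tau_i))$, selected beforehand so that if $\psi(\eta_i(\tau_i))=v_i^*$ at every step then the final marked domain is conformally equivalent to $(\D;\ul{u}^*;\beta)$; since $|u_j^*-x_j|<\delta_1\delta_0$ for every $j$, a continuous-dependence argument for the conformal map places $v_i^*$ within distance $\lesssim\delta_1\delta_0$ of $\psi(x_i)$. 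Define $\tau_i$ to be the minimum of the first time $\psi(\eta_i(t))=v_i^*$ and the first time $\eta_i$ would exit $B(x_i,\delta_1^{1/2}\delta_0)$; the second condition enforces the ball constraint deterministically.

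The failure event $W_{\ul{u}^*}^c$ is then contained in the union over $i$ of the event that $\eta_i$ exits $B(x_i,\delta_1^{1/2}\delta_0)$ before $\psi(\eta_i(\cdot))$ reaches $v_i^*$. On $F(\delta_0)$, combining Lemma~\ref{le:sle_given_cle} with the monochordal-CLE branch construction from which the multichordal CLE is built (Lemma~\ref{le:4strands_cle} iterated) shows that conditionally on the other strands and on the loops of $\Gamma$ outside $B(x_i,\delta_0)$, the strand $\eta_i$ is a chordal SLE$_\kappa$ in a domain containing $B(x_i,\delta_0)$. For chordal SLE$_\kappa$ in its hydrodynamic uniformization, the image of the tip is a Brownian motion of variance $\kappa$ in the Loewner capacity parameter, and the standard estimate $\diam(K_t)\lesssim\sqrt{t}$ means $\eta_i$ cannot exit $B(x_i,\delta_1^{1/2}\delta_0)$ before Loewner time of order $\delta_1\delta_0^2$. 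The one-dimensional Brownian hitting-time tail $\p[\tau_r>T]\lesssim r/\sqrt{T}$, applied with $r\lesssim\delta_1\delta_0$ and $T\gtrsim\delta_1\delta_0^2$, bounds the per-strand failure probability by $\lesssim\delta_1^{1/2}$, and a union bound over $i=1,\ldots,2N$ gives the claim (with $C$ depending on $N$).

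\textbf{Main obstacle.} The principal technical difficulty is making the iterative construction self-consistent: at every step we need three reference marked points that remain unswallowed (which should follow from $\delta_0$ being small relative to the minimum spacing between the $x_j$'s), and the targets $v_i^*$ must be (i) well-defined given the past exploration, (ii) within $O(\delta_1\delta_0)$ of $\psi(x_i)$, and (iii) compatible across steps so that success at every step really does yield a configuration conformally equivalent to $(\D;\ul{u}^*;\beta)$ -- this is essentially a conformal-perturbation argument using continuous dependence of cross-ratios on marked point positions. A secondary subtlety is justifying the conditional SLE representation for $\eta_i$ on $F(\delta_0)$ after the previous strands have been partially explored; this combines the Markov property of multichordal CLE (Theorem~\ref{thm:multichordal}(iii), established earlier in this section) with Lemma~\ref{le:sle_given_cle}, and one must verify that conditioning on the prior explorations preserves the relevant absolute continuity within $B(x_i,\delta_0)$.
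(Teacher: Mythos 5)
Your plan of a single round of explorations -- one stopping time per strand, each designed to place the uniformized tip exactly on a pre-chosen target -- contains a gap that cannot be fixed without changing the structure. The difficulty is that the final marked-point positions are determined by the uniformizing map of the final unexplored domain, and the image of an already-stopped tip $\eta_i(\tau_i)$ under that final map depends on the full realization (not merely the endpoints) of the segments $\eta_{i+1}([0,\tau_{i+1}]),\ldots,\eta_{2N}([0,\tau_{2N}])$ explored afterwards. Thus, even if $\psi_i(\eta_i(\tau_i))=v_i^*$ holds at each step, the final uniformized positions are random perturbations of $\ul{u}^*$, of size $O(\delta_1\delta_0)$. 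There is no deterministic or adaptively chosen family of targets $v_i^*$ that makes the final configuration land exactly on $(\D;\ul{u}^*;\beta)$ after a single round, so your inclusion $W_{\ul{u}^*}^c\subseteq\bigcup_i\{\eta_i \text{ exits the ball before hitting } v_i^*\}$ is false: even on the intersection of the per-strand success events, $W_{\ul{u}^*}$ typically does not hold.

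The paper's proof addresses exactly this issue with an infinite iteration: one full round of small exploration steps reduces the displacement of the marked points from $\ul{u}^*$ by a factor $c_0\ll1$ (using the quantitative estimate $|g_t(x_j)-x_j|\lesssim c_0\delta_1\delta_0$), so the procedure is repeated with step size $c_0\delta_1, c_0^2\delta_1, \ldots$; the tips converge to $\ul{u}^*$ in the limit exploration (via Lemma~\ref{lem:limit_exploration}), and the per-round failure probabilities form the geometric series $\sum_m c_0^{-1/2}(c_0^{m-1}\delta_1)^{1/2}\lesssim\delta_1^{1/2}$. Your Brownian hitting-time estimate captures the correct per-step bound $\lesssim\delta_1^{1/2}$ (the paper phrases it instead in terms of the harmonic measure of the two sides of $\eta_i$ viewed from an interior point, but the scaling content is the same), but it is bounding the failure of a single round, which is a different -- and insufficient -- event. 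Two secondary remarks: the paper conditions $\eta_i$ only on the other strands $\ul{\eta}_{\wh i}$, obtaining an SLE$_\kappa$ in $D_i$ directly, rather than also conditioning on loops of $\Gamma$; and the Markov-property input at this stage is Lemma~\ref{lem:multichordal_stopping_time}, since Theorem~\ref{thm:multichordal}(iii) is not yet established for general configurations at this point of the argument.
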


\begin{proof}
We use the notation introduced before the statement. We can assume that $\delta_1 > 0$ is sufficiently small. Throughout the proof, we fix a small constant $c_0 > 0$ (depending on $N$).

We denote by  $\p_{\eta_i}[\cdot\giv \ul\eta_{\wh i}]$ the conditional law $\eta_i$ given $\ul\eta_{\wh i}$. The conditional law is an \slek{} in the connected component $D_i$ of $\D \setminus \ul\eta_{\wh i}$ containing $x_i,x_{\alpha(i)}$.

Let $W_i$ be the event that there exists some $t \in [0,c_0\delta_1\delta_0^2]$ such that $\eta_i([0,t]) \subseteq B(x_i,c_0^{1/2}\delta_1^{1/2}\delta_0)$ and $g_t(\eta_i(t)) = u_i^*$ where $g_t$ is the conformal map from $\D \setminus \eta_i([0,t])$ to $\D$ with $g_t'(0) = e^t$. We claim that, assuming $\abs{x_i-u_i^*} < \delta_1\delta_0$, 
\[
\p_{\eta_i}[W_i^c \mid \ul\eta_{\wh i}] \one_{F(\delta_0)} \lesssim c_0^{-1/2}\delta_1^{1/2} 
\]
where the implicit constant is independent of $\delta_0$.

Indeed, we can consider a conformal map from $D_i$ to $\h$. On the event $\{ B(x_i,\delta_0) \cap \ul\eta_{\wh i} = \varnothing \}$, the derivative of the conformal map is bounded within $B(x_i,\delta_0/2) \cap \D$. Consider the harmonic measure of the left (resp.\ right) side of $\eta_i$ seen from $z_i = (1-\delta_0/2)x_i$. Recall that the Loewner driving function of \slek{} is given by a Brownian motion. By Brownian scaling we conclude that, for $\delta_1$ small, the probability that the harmonic measure reaches $\delta_1$ before time $c_0\delta_1$ is at least $1-O(c_0^{-1/2}\delta_1^{1/2})$.

On the event $W_i$, we have that $\abs{g_t(x_j)-x_j} \lesssim c_0\delta_1\delta_0$ for each $j \neq i$. We denote by $\ul{x}^{(1)}=(x_1^{(1)},\dots,x_{2N}^{(1)})$ the image of the marked points configuration $\ul{x}$ under $g_t$. Note that we still have $\abs{x_j^{(1)}-u_j^*} = O(\delta_1\delta_0)$ for $j \neq i$.

By Lemma~\ref{lem:multichordal_stopping_time} the remainder of the curves is a multichordal \clek{} in $(\D;\ul{x}^{(1)};\beta)$. Therefore we can repeat this procedure for each $i=1,\dots,2N$. Each marked point will be moved by $O(c_0\delta_1\delta_0)$ while exploring $\eta_j$ for $j\neq i$. We obtain a Markovian exploration $(\tau_1,\dots,\tau_{2N})$ of $\ul{\eta}$ such that $\eta_i([0,\tau_i]) \subseteq B(x_i,c_0^{1/2}\delta_1^{1/2}\delta_0)$ and on an event $W$ with $\p[W^c \cap F(\delta_0)] \lesssim c_0^{-1/2}\delta_1^{1/2}$ we have $\abs{x_i^{(2N)}-u_i^*} = O(c_0\delta_1\delta_0)$ for each $i$ where $\ul{x}^{(2N)}=(x_1^{(2N)},\dots,x_{2N}^{(2N)})$ are the image of the tips after mapping to $\D$.

Again, by Lemma~\ref{lem:multichordal_stopping_time} the remainder of the curves is a multichordal \clek{} in $(\D;\ul{x}^{(2N)};\beta)$. Therefore we can repeat this round of explorations with $\delta_1$ replaced by $c_0\delta_1$. We obtain a sequence of Markovian explorations $(\tau_1^{(m)},\dots,\tau_{2N}^{(m)})$ for $m \in \N$ and events $W^{(m)}$ with $\p[(W^{(m)})^c \cap F(\delta_0)] \lesssim c_0^{-1/2}(c_0^{m-1}\delta_1)^{1/2}$ such that $\eta_i([0,\tau_i^{(m)}]) \subseteq B(x_i,O(c_0^{1/2}\delta_1^{1/2}\delta_0))$ and on $W^{(1)}\cap\dots\cap W^{(m)}$ we have $\abs{x_i^{(2N,m)}-u_i^*} = O(c_0^m\delta_1\delta_0)$ for each $m,i$ where $\ul{x}^{(2N,m)}=(x_1^{(2N,m)},\dots,x_{2N}^{(2N,m)})$ are the image of the tips after mapping to $\D$.

For $c_0$ sufficiently small, letting $\tau_i^{(\infty)} = \lim_{m\to\infty} \tau_i^{(m)}$, we conclude that on the event $\bigcap_{m\in\N} W^{(m)}$ we have $\lim_{m\to\infty} x_i^{(2N,m)} = u_i^*$ for each $i$, and by Lemma~\ref{lem:limit_exploration} the conditional law of the remainder of $\ul{\eta}$ given $(\eta_i[0,\tau_i^{(\infty)}])_{i=1,\dots,2N}$ is a multichordal \clek{} in $(\D;\ul{u}^*;\beta)$. That is, $\bigcap_{m\in\N} W^{(m)} \subseteq {W_{\ul u^*}}$ for this exploration, and the sum of the failure probabilities is bounded by
\[ \sum_{m\in\N} \p[(W^{(m)})^c \cap F(\delta_0)] \lesssim \sum_{m\in\N} (c_0^{m-1}\delta_1)^{1/2} \lesssim \delta_1^{1/2} . \]
\end{proof}

At this point we are only able to guarantee $\mcclelaw{\D; \ul{x};\beta}[W_{\ul{u}^*}] > 0$ when $\delta_1$ is chosen smaller than $\mcclelaw{\D; \ul{x};\beta}[F(\delta_0)]^2$ which may depend on $\ul{x}$. In order to prove existence of multichordal \clek{} for each point configuration, however, we want to pick step sizes $\delta_1$ uniformly. We will show below in Lemma~\ref{lem:F_event} that \emph{assuming} the existence of multichordal \clek{} for each point configuration, we can bound $\mcclelaw{\D; \ul{x};\beta}[F(\delta_0)]$ uniformly from below. To break the circularity, we use induction in $N$. Conditionally on one link, we can use the uniform bound on the probabilities of $F(\delta_0)$ for $N-1$ together with conformal invariance to guarantee $\mcclelaw{\D; \ul{x};\beta}[W_{\ul{u}^*}] > 0$ for a uniformly chosen step size $\delta_1 > 0$. This will allow us to choose the step size $\delta_1$ uniformly.

\begin{lemma}\label{lem:fixed_points_multichordal}
Suppose $(\ul\eta,\Gamma)$ is a multichordal $\CLE_\kappa$ in $(\D;\ul{x};\beta)\in\eldomain{2N}$. For every $z\in\ol{\D}$ a.s.\ no strand in $\ul\eta$ hits $z$ except possibly at its endpoints. 
\end{lemma}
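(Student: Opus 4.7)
Fix $z \in \ol\D$. By a union bound over the $N$ chords $\gamma_1, \ldots, \gamma_N$ of $\ul\eta$, it suffices to show that for each fixed $\ell \in \{1, \ldots, N\}$, a.s.\ $\gamma_\ell$ does not pass through $z$ except possibly at its two endpoints. When $N = 1$, the single chord is a chordal \slek{}, and the statement is the standard fact that for $\kappa \in (4,8)$, a chordal \slek{} a.s.\ does not hit any fixed point of $\ol D$ other than its endpoints; this follows from the dimension bounds on the \slek{} trace (namely $1+\kappa/8 < 2$ in the interior and $2-8/\kappa < 1$ on the boundary) together with the Loewner SDE (which rules out returning to an endpoint at interior times).

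For $N \geq 2$, the main step is the resampling property of Definition~\ref{def:exterior_link_pattern_multiple_sle}. Given any 4-tuple $\ul j = (j_1, \ldots, j_4)$ of marked indices, on the event $E_{\ul j}$ from Definition~\ref{def:resampling_kernel} the conditional law of the two chords with endpoints in $x_{\ul j}$ given the remaining chords is a bichordal \clek{} in the corresponding component $D_{\ul j}$. To handle a fixed chord $\gamma_\ell$ with endpoints $\{x_a, x_b\}$, we seek another chord $\gamma_{\ell'}$ with endpoints $\{x_c, x_d\}$ such that the sorted tuple $\ul j$ of $\{a, b, c, d\}$ satisfies $E_{\ul j}$. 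Such a $\gamma_{\ell'}$ always exists by planarity of the (random) interior link pattern $\alpha$: one may choose $\gamma_{\ell'}$ ``adjacent'' to $\gamma_\ell$, i.e., so that no other chord separates $\gamma_\ell$ from $\gamma_{\ell'}$ in $\D$. There are only finitely many 4-tuples $\ul j$ containing $\{a,b\}$, so the event $\{z \in \gamma_\ell\}$ is contained in the finite union $\bigcup_{\ul j}(\{z \in \gamma_\ell\} \cap E_{\ul j})$.

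It therefore suffices to show $\p[z \in \gamma_\ell, E_{\ul j}] = 0$ for each such $\ul j$. On $E_{\ul j}$, the pair $(\gamma_\ell, \gamma_{\ell'})$ conditional on the remaining chords is a bichordal \clek{}, which by the description above Definition~\ref{def:resampling_kernel} is a mixture of two bichordal \slek{}'s with interior link patterns $\intnested$ and $\intadjacent$. For either choice, Definition~\ref{def:interior_link_pattern_multiple_sle} applied with $N = 2$ gives that $\gamma_\ell$, conditional on its partner chord, is a chordal \slek{} in a subdomain of $D_{\ul j}$ with endpoints $x_a, x_b$. The $N = 1$ case then yields $\p[z \in \gamma_\ell \mid \text{rest}] = 0$ a.s.\ on $E_{\ul j}$ (including the subcase $z \in \{x_a, x_b\}$, via the fact that \slek{} does not return to its own endpoints at interior times), and integrating gives $\p[z \in \gamma_\ell, E_{\ul j}] = 0$. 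The main obstacle is the elementary combinatorial fact that an adjacent chord $\gamma_{\ell'}$ always exists for planar link patterns; once this is verified, the proof is a direct reduction via resampling to standard properties of chordal \slek{}.
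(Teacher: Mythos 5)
Your proof is correct and follows the same route as the paper: both reduce to the fact that, conditionally on the complementary chords, the chord passing through $z$ is a chordal \slek{} in the remaining domain, and then invoke that a chordal \slek{} for $\kappa\in(4,8)$ a.s.\ avoids any fixed point of the closed domain except at its endpoints. The paper's proof states the conditional-\slek{} fact in one sentence without derivation; you unpack it explicitly (existence of an adjacent partner chord so that the event $E_{\ul j}$ from Definition~\ref{def:resampling_kernel} holds, passing from the resampling invariance to the bichordal \clek{} law and then, by further conditioning on the partner, to a single chordal \slek{}), but the underlying argument is the same.
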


\begin{proof}
Conditionally on all the strands in $\ul\eta$ but one, the law of the remaining strand is an \slek{} in the remaining domain. Hence it does not hit fixed points.
\end{proof}

\begin{lemma}\label{lem:cle_strands_in_regions}
Suppose $(\ul\eta,\Gamma)$ is a multichordal CLE in $(\D;\ul{x};\beta)\in\eldomain{2N}$. Let $\gamma_i$, $i = 1,\dots,N$, be fixed simple curves in $\ol{\D}$, each connecting two points in $\ul{x}$, and such that $\gamma_i \cap \gamma_j = \varnothing$ for $i \neq j$. Then, for any open neighborhood $U$ of $\bigcup_i \gamma_i$, the probability that each strand of $\ul\eta$ stays inside $U$ is positive.
\end{lemma}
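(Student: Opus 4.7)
The plan is to argue by induction on $N$. The base cases are $N=0$ (vacuous) and $N=1$, where $\ul\eta$ consists of a single chordal $\slek$ chord from $x_1$ to $x_2$; in this case the conclusion is the classical $\slek$ support theorem, which follows from the full support of the driving Brownian motion together with the Rohde-Schramm continuity of \slek{}.

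For the inductive step, assume the statement for $N-1$ and fix $\epsilon>0$ small enough that the $\epsilon$-tubular neighborhoods $V_i\subseteq U$ of $\gamma_i$ are pairwise disjoint. The key observation (as used in the proof of Lemma~\ref{lem:fixed_points_multichordal}) is that within a multichordal \clek{}, conditioning on all chords except one yields that the remaining chord is a chordal $\slek$ in the corresponding connected component of the complement. With this in hand, the plan is to Markovianly explore $\eta_1$ in finitely many short Loewner-time steps, forcing it to track $\gamma_1$ inside $V_1$, and then to invoke the inductive hypothesis in each component of $\D\setminus\eta_1$. At each step of this push I will obtain a positive-probability lower bound by provisionally conditioning on all of $\eta_2,\ldots,\eta_N$, so that the conditional law of $\eta_1$ becomes a chordal $\slek$, and then applying the SLE support theorem in the appropriate sub-component; uniformity of this lower bound over configurations of the other chords holds as long as $\gamma_1$ remains inside the relevant component, which is guaranteed since $\eta_1\subseteq V_1$ keeps it a definite distance from the ranges of the other $\gamma_j$'s. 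By Lemma~\ref{lem:multichordal_stopping_time}, after each such exploration step the remainder is again a multichordal \clek{} on the updated marked domain, so the push can be iterated.

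Once $\eta_1$ has been traced essentially all the way to $x_{b_1}$ while staying inside $V_1$, Lemma~\ref{lem:limit_exploration} identifies the conditional law of the remainder in each connected component of $\D\setminus\eta_1$ as an independent multichordal \clek{} with strictly fewer chords. The remaining target curves $\gamma_2,\ldots,\gamma_N$ are distributed among these components and respect the exterior link pattern induced by $\eta_1$, so the inductive hypothesis applies in each component independently and yields positive probability that $\eta_i\subseteq V_i$ for every $i\geq 2$. Multiplying these bounds with the push-step probabilities gives the desired positive lower bound for $\{\forall i\colon \eta_i\subseteq V_i\subseteq U\}$.

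The main obstacle will be keeping the push-step lower bound uniformly positive across the finitely many exploration steps: the provisional conditioning on $\eta_2,\ldots,\eta_N$ only yields a uniform SLE-support estimate when $\gamma_1$ remains inside the corresponding sub-component, which in turn requires a definite geometric separation between $\eta_1$ and the other chords throughout the exploration. This is ensured precisely by demanding $\eta_1\subseteq V_1$ at every step, so that the conditional SLE comparison stays non-degenerate; one then integrates over the marginal of the other chords to convert the conditional bound into the unconditional positive probability needed for the iteration.
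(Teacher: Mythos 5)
Your route — Markovianly pushing $\eta_1$ step-by-step inside $V_1$, then applying the inductive hypothesis in the components of $\D\setminus\eta_1$ — is genuinely different from the paper's proof, which simply iterates the pairwise resampling kernel $\Phi$ from the proof of Lemma~\ref{lem:unique_invariant} together with the chordal \slek{} support estimate of \cite[Lemma~2.3]{mw2017intersections}, and then invokes invariance of $\mcclelaw{\D;\ul{x};\beta}$ under $\Phi$. Both are reasonable in spirit, but as written your push step has a genuine gap.

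The problem is in your uniformity claim. You assert that, conditionally on $\eta_2,\ldots,\eta_N$, the \slek{} support bound for the next exploration increment of $\eta_1$ holds ``as long as $\gamma_1$ remains inside the relevant component, which is guaranteed since $\eta_1\subseteq V_1$ keeps it a definite distance from the ranges of the other $\gamma_j$'s.'' But the relevant component is a connected component of $\D\setminus\bigcup_{j\ge 2}\eta_j$, carved out by the \emph{random} chords $\eta_j$, not by the \emph{target} curves $\gamma_j$. Requiring $\eta_1\subseteq V_1$ constrains neither where $\eta_2,\ldots,\eta_N$ lie nor whether they enter $V_1$; on a set of positive measure these chords cross $V_1$ and disconnect the tip of the explored piece of $\eta_1$ from $x_{b_1}$ inside $V_1$, making the conditional push probability zero, not uniformly bounded below. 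Consequently, ``integrating over the marginal of the other chords'' only gives $\p[\{\eta_1\text{ pushed}\}\cap\{\eta_2,\ldots,\eta_N\text{ avoid }V_1\}]$, and establishing that the second event has positive probability is essentially what the lemma asks to prove — the argument is circular.

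What the resampling approach buys is exactly the escape from this circularity: $\Phi$ does not \emph{condition} on the current configuration, it \emph{replaces} a pair of strands by a fresh bichordal \clek{} sample in the current domain. So after a single resampling the two new chords can, with positive conditional probability, be moved into disjoint tubes inside that domain (this is where \cite[Lemma~2.3]{mw2017intersections} and Lemma~\ref{lem:function_positive} enter), and one can then repeat on the next pair in the already-cleared domain. Invariance of $\mcclelaw{\D;\ul{x};\beta}$ under $\Phi$ then converts the positive transition probability $\Phi^k(\ul{\gamma},A)>0$ into $\mcclelaw{\D;\ul{x};\beta}[A]>0$. If you want to keep an exploration-based argument, you would need to explore all strands jointly (keeping each partially explored strand inside its own tube and resampling the remainder at each step), rather than conditioning on the unexplored full paths of the other chords.
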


\begin{proof}
 The proof follows directly using invariance under $\Phi$ as in the proof of Lemma~\ref{lem:unique_invariant},  and~\cite[Lemma~2.3]{mw2017intersections}.
\end{proof}

\begin{lemma}
\label{lem:F_event}
Let $N \in \N$. Assume that multichordal \clek{} exists for every $(\D; \ul{x};\beta) \in \eldomain{2N}$.

For every $\delta>0$, $q>0$ there exists $\delta_0>0$ such that the following is true. Suppose $(\D; \ul{x};\beta)\in \cleconfclass{2N}$ and $\abs{x_i-x_j} \ge \delta$ for every $i \neq j$. Then
\[ \mcclelaw{\D; \ul{x};\beta}[ F(\delta_0)^c ] < q . \]
\end{lemma}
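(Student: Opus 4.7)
The strategy is compactness together with Lemma~\ref{lem:continuity_in_points}. The set $K_\delta = \{\ul{x} \in (\partial\D)^{2N} : \abs{x_i - x_j} \ge \delta \text{ for all } i \neq j\}$ is compact, so it suffices to produce, for each $\ul{x}^* \in K_\delta$, an open neighborhood $V(\ul{x}^*) \subseteq K_\delta$ and a threshold $\delta_0(\ul{x}^*) > 0$ such that $\mcclelaw{\D;\ul{u};\beta}[F(\delta_0(\ul{x}^*))^c] < q$ for all $\ul{u} \in V(\ul{x}^*)$. Extracting a finite subcover of $K_\delta$ and taking $\delta_0$ to be the minimum of the corresponding $\delta_0(\ul{x}^{*(k)})$ then gives the conclusion, using that $F(\delta_0)^c \subseteq F(\delta_0')^c$ for $\delta_0 \le \delta_0'$.

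For the local step at $\ul{x}^* \in K_\delta$, the existence hypothesis provides $\mcclelaw{\D;\ul{x}^*;\beta}$, and Lemma~\ref{lem:fixed_points_multichordal} implies that a.s.\ no strand hits a marked point that is not an endpoint of it. Hence by monotone convergence there is $\delta_0^* \in (0,\delta/10)$ with $\mcclelaw{\D;\ul{x}^*;\beta}[F(2\delta_0^*)^c] < q/4$. I apply Lemma~\ref{lem:continuity_in_points} in $(\D;\ul{x}^*;\beta)$ at scale $\delta_0^*$: for any $\ul{u}$ with $\abs{u_i - x_i^*} < \delta_1\delta_0^*$ there is a Markovian exploration, stopping before the strands exit $B(x_i^*,\delta_1^{1/2}\delta_0^*)$, satisfying $\mcclelaw{\D;\ul{x}^*;\beta}[W_{\ul{u}}^c \cap F(\delta_0^*)] \le C\delta_1^{1/2}$. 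Choosing $\delta_1 = \delta_1(\ul{x}^*)$ so that $C\delta_1^{1/2} < q/4$ yields $\mcclelaw{\D;\ul{x}^*;\beta}[W_{\ul{u}}] \ge 1 - q/2 \ge 1/2$ (we may assume $q \le 1$).

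On $W_{\ul{u}}$, Lemma~\ref{lem:limit_exploration} identifies the conditional law of the $\varphi$-image of the remainder of $\ul\eta$ with $\mcclelaw{\D;\ul{u};\beta}$, where $\varphi$ is the conformal map from the remainder domain to $\D$ sending tips to $\ul{u}$. The event $F(\delta_0^*)^c$ under $\mcclelaw{\D;\ul{u};\beta}$ asserts that some image strand enters $B(u_j,\delta_0^*)$ for some $j \neq i,\alpha(i)$; pulling back, the corresponding original strand $\eta_i^*$ must enter $\varphi^{-1}(B(u_j,\delta_0^*))$. By further shrinking $\delta_1$ and applying a standard Koebe-type conformal distortion estimate (the $2N$ slits lie in $\bigcup_i B(x_i^*,\delta_1^{1/2}\delta_0^*)$, which for small $\delta_1$ is well-separated from $B(x_j^*,2\delta_0^*)$ in view of $2\delta_0^* < \delta/10$), one can guarantee $\varphi^{-1}(B(u_j,\delta_0^*)) \subseteq B(x_j^*,2\delta_0^*)$ for all $j$; by the same separation, the explored portion of $\eta_i^*$ itself cannot enter $B(x_j^*,2\delta_0^*)$ for $j \neq i,\alpha(i)$. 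This yields (on $W_{\ul{u}}$) the implication $\{F(\delta_0^*)^c \text{ for } \mcclelaw{\D;\ul{u};\beta}\} \Rightarrow \{F(2\delta_0^*)^c \text{ for } \mcclelaw{\D;\ul{x}^*;\beta}\}$, so
\[\mcclelaw{\D;\ul{u};\beta}[F(\delta_0^*)^c] \cdot \mcclelaw{\D;\ul{x}^*;\beta}[W_{\ul{u}}] \le \mcclelaw{\D;\ul{x}^*;\beta}[F(2\delta_0^*)^c] < q/4,\]
giving $\mcclelaw{\D;\ul{u};\beta}[F(\delta_0^*)^c] < q/2 < q$ for $\ul{u} \in V(\ul{x}^*) \defeq \{\ul{u} \in K_\delta : \abs{u_i - x_i^*} < \delta_1\delta_0^*\}$.

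The principal obstacle is the uniform conformal distortion bound $\varphi^{-1}(B(u_j,\delta_0^*)) \subseteq B(x_j^*, 2\delta_0^*)$. This is a standard Koebe distortion estimate for the $2N$ small slits created by the exploration, but the separation $\delta$ of the marked points enters crucially to make the bound hold simultaneously for all $j$ and all $\ul{u} \in V(\ul{x}^*)$. Everything else is a direct combination of Lemma~\ref{lem:continuity_in_points} with the Markov property from Lemma~\ref{lem:limit_exploration}.
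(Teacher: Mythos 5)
Your proposal is correct and takes essentially the same route as the paper: the same compactness reduction over $K_\delta$, the same local step built from Lemma~\ref{lem:fixed_points_multichordal} (to get a starting $\delta_0$ at the fixed configuration) combined with Lemma~\ref{lem:continuity_in_points} and Lemma~\ref{lem:limit_exploration} (to transfer the bound to nearby configurations via a Markovian exploration and the resulting conformal map), and the same mechanism of comparing $F$ at a slightly larger radius before mapping to $F$ at a smaller radius after mapping. The only difference is cosmetic: you make the conformal distortion step and the bookkeeping of constants ($\delta_0^*$ vs.\ $2\delta_0^*$, $q/4$ budgets) slightly more explicit, whereas the paper states $\varphi^{-1}(B(u_i,\delta_0/2)) \subseteq B(x_i,\delta_0)$ without elaboration and ends with the equivalent bound $q/(1-2q)$.
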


\begin{proof}
We argue that for each $(\D;\ul{x};\beta)\in \eldomain{2N}$ and $q > 0$ there exist $\delta_0 > 0$ and $\delta_1 > 0$ (depending on $\ul{x}$) such that
\[ \mcclelaw{\D; \ul{u};\beta}[ F(\delta_0)^c ] < q \]
for every $\ul{u} \in B(\ul{x},\delta_1)$. By the compactness of the set $\{ \ul{x} \in \partial\D^{2N} \mid \abs{x_i-x_j} \ge \delta \}$ the statement of the lemma will follow.

By Lemma~\ref{lem:fixed_points_multichordal}, there is $\delta_0 > 0$ (possibly depending on $\ul{x}$) such that $\mcclelaw{\D; \ul{x};\beta}[ F(\delta_0)^c ] < q$. Combining this with Lemma~\ref{lem:continuity_in_points}, we find $\delta_1 > 0$ such that for every $\ul{u} \in B(\ul{x},\delta_1)$ there exists a Markovian exploration $\ul{\tau}$ of $\ul{\eta}$ that stops before any of the strands $\eta_i$ exit $B(x_i,\delta_0/10)$ such that $\mcclelaw{\D; \ul{x};\beta}[ W_{\ul{u}}^c ] < 2q$. Letting $\varphi$ be the corresponding conformal map to $\D$, we have that $\varphi^{-1}(B(u_i,\delta_0/2)) \subseteq B(x_i,\delta_0)$. In particular, if we let $\wt{F}(\delta_0/2)$ be the event that $F(\delta_0/2)$ occurs for the remainder of $\ul\eta$ after mapping to $\D$, then $\wt{F}(\delta_0/2)^c \cap W_{\ul{u}} \subseteq F(\delta_0)^c$. By Lemma~\ref{lem:limit_exploration}, we have
\[ 
 \mcclelaw{\D; \ul{x};\beta}[ \wt{F}(\delta_0/2)^c \mid \CF_{\ul{\tau}} ] \,\one_{W_{\ul{u}}}
 = \mcclelaw{\D; \ul{u};\beta}[ F(\delta_0/2)^c ] \,\one_{W_{\ul{u}}} .
\]
Therefore we conclude
\[
 \mcclelaw{\D; \ul{u};\beta}[ F(\delta_0/2)^c ] 
 = \frac{\mcclelaw{\D; \ul{x};\beta}[ \wt{F}(\delta_0/2)^c \cap W_{\ul{u}} ]}{\mcclelaw{\D; \ul{x};\beta}[ W_{\ul{u}} ]}
 \le \frac{\mcclelaw{\D; \ul{x};\beta}[ F(\delta_0)^c ]}{\mcclelaw{\D; \ul{x};\beta}[ W_{\ul{u}} ]}
 \le \frac{q}{1-2q} .
\]
\end{proof}

\begin{lemma}\label{lem:continuity_in_points_ind}
Let $N \in \N$. Assume that multichordal \clek{} exists for every $(\D; \ul{x};\beta) \in \eldomain{2(N-1)}$.

For each $\delta > 0$ there exists a constant $\delta_1>0$ depending only on $\delta,N$ such that the following is true. Suppose $(\D; \ul{x};\beta)\in \cleconfclass{2N}$ and $\abs{x_i-x_j} \ge \delta$ for every $i \neq j$. There exists $p_0 > 0$ (possibly depending on $\ul{x}$) such that the following holds. Let $\eta$ be a multichordal \clek{} in $(\D; \ul{x};\beta)$. Let $\ul{u}^*=(u_1^*, \ldots, u_{2N}^*)$ where $u_i^* \in \partial\D \cap B(x_i,\delta_1)$ for each $i$. 
Then there exists a Markovian exploration of $\ul{\eta}$ that stops before any of the strands exit $B(x_i,\delta/100)$ such that
\[\mcclelaw{\D; \ul{x};\beta}\left[ {W_{\ul u^*}} \right] \geq p_0 . \]
\end{lemma}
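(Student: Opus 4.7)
The plan is to sharpen Lemma~\ref{lem:continuity_in_points} by invoking the inductive hypothesis at level $2(N-1)$ to secure a uniform (in $\ul{x}$) lower bound on the auxiliary $F(\delta_0)$-probability. Indeed, Lemma~\ref{lem:continuity_in_points} gives $\mcclelaw{\D;\ul{x};\beta}[(W_{\ul{u}^*})^c \cap F(\delta_0)] \le C\delta_1^{1/2}$; to turn this into $\mcclelaw{\D;\ul{x};\beta}[W_{\ul{u}^*}] > 0$ with $\delta_1$ depending only on $\delta,N$, one needs a uniform lower bound on $\mcclelaw{\D;\ul{x};\beta}[F(\delta_0)]$, but Lemma~\ref{lem:F_event} at level $2N$ is unavailable because its hypothesis is precisely what we aim to establish.

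First, I would use the inductive hypothesis together with Lemma~\ref{lem:F_event} at level $2(N-1)$ to fix, for the given $\delta, N$, a uniform $\delta_0 = \delta_0(\delta,N) > 0$ such that every multichordal \clek{} on a $\delta$-separated $2(N-1)$-point configuration has $F(\delta_0)^c$-probability less than $1/(4C^2)$. Second, I would reduce the level-$2N$ problem to level $2(N-1)$ by a single Markovian step: by Lemma~\ref{lem:multichordal_stopping_time}, running one chosen strand $\eta_{i_0}$ until it terminates at its partner $\eta_{\alpha(i_0)}$ yields, in each complementary component, a residual multichordal \clek{} on at most $2(N-1)$ marked points, to which the uniform $F$-bound then applies. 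Third, I would couple the full exploration in two phases: in Phase~1, explore $\eta_{i_0}$ so that the two associated endpoints $x_{i_0}, x_{\alpha(i_0)}$ reach $u_{i_0}^*, u_{\alpha(i_0)}^*$ respectively (this happens with positive, possibly $\ul{x}$-dependent probability, which is absorbed into $p_0$); in Phase~2, apply the iterative scheme from the proof of Lemma~\ref{lem:continuity_in_points} to the residual $2(N-1)$-level configuration to move the remaining $2N-2$ marked points to their targets $u_j^*$, using the uniform $F(\delta_0)$-bound from Step~1 to choose the excursion scale $\delta_1$ uniformly in $\delta, N$.

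The main obstacle will be managing the coupling between Phase~1 and Phase~2. The partial exploration of $\eta_{i_0}$ induces, via the associated conformal change, a small displacement of the remaining $2N-2$ marked points; these perturbations must lie within the uniform $\delta_1$-budget available in Phase~2. I expect this to be resolved through the same iterative halving argument that underlies the proof of Lemma~\ref{lem:continuity_in_points}: at each round the step size is halved (with the halving ratio depending only on $N$), the uniform inductive $F$-bound at level $2(N-1)$ ensures a positive success probability for each round, and the geometric series of perturbations converges and stays within the budget, yielding $\delta_1$ uniform in $\ul{x}$ and $p_0 > 0$ absorbing all remaining $\ul{x}$-dependent quantities.
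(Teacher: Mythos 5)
The proposal captures the broad strategy — break the circularity by conditioning on one full strand and invoking Lemma~\ref{lem:F_event} at level $2(N-1)$ — but it has two concrete gaps, and it misses the paper's cleaner device for handling the two distinguished endpoints.

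First, your steps 2 and 3 are incompatible. Step 2 conditions on the \emph{entire} strand $\eta_{i_0}$ (after which $x_{i_0}, x_{\alpha(i_0)}$ are no longer marked points of the residual $2(N-1)$-point domain), while Phase~1 purports to \emph{partially explore} $\eta_{i_0}$ so that its two endpoints ``reach'' $u_{i_0}^*, u_{\alpha(i_0)}^*$. You cannot both have conditioned on the full $\eta_{i_0}$ and also be exploring it. In fact the paper never explores $\eta_{2N}$ at all: by applying conformal invariance to \emph{both} the source $(\D;\ul{x};\beta)$ and the target $(\D;\ul{u}^*;\beta)$, it arranges $x_{2N-1}=u^*_{2N-1}=i$ and $x_{2N}=u^*_{2N}=-i$, and throughout the iterative exploration it uniformizes by the unique conformal map fixing $i$ and $-i$. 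The two special points are already at their targets, and the entire Markovian exploration only touches $\eta_1,\dots,\eta_{2N-2}$ and is independent of $\eta_{2N}$. Your Phase~1 is both ill-posed and unnecessary.

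Second, and more seriously, you do not restrict to an event on which $\eta_{i_0}$ is geometrically controlled. The paper introduces $E_0$: that $\eta_{2N}$ links $x_{2N}$ to $x_{2N-1}$ and stays within distance $\delta/10$ of a fixed boundary arc; its probability $p_0 = \mcclelaw{\D;\ul{x};\beta}[E_0]>0$ (by Lemma~\ref{lem:cle_strands_in_regions}) is exactly the $\ul{x}$-dependent factor. This conditioning is what makes Lemma~\ref{lem:F_event} at level $N-1$ applicable \emph{uniformly}: it guarantees that the residual domain $\wh{D}$ (the component of $\D\setminus\eta_{2N}$ containing $x_1,\dots,x_{2N-2}$) is conformally close to $\D$, so the residual marked points stay uniformly separated after uniformization, and hence $\wh{\p}[F(\delta_0)\mid\eta_{2N}]\one_{E_0}\ge p_1\one_{E_0}$ with $\delta_0, p_1$ depending only on $\delta$. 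If you condition on an arbitrary $\eta_{i_0}$ as you propose, the residual configuration can have arbitrarily close marked points, and the uniform $F$-bound at level $2(N-1)$ simply does not apply. You acknowledge an $\ul{x}$-dependent factor but attribute it to the wrong event; it should come from an $E_0$-type niceness event for $\eta_{i_0}$, not from hitting a target configuration in Phase~1.
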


\begin{proof}
By the conformal invariance of multichordal \clek{} (Lemma~\ref{lem:conformal_invariant}), we can assume that $x_{2N-1} = i$, $x_{2N} = -i$. Let $E_0$ be the event that $\eta_{2N}$ connects $x_{2N}$ and $x_{2N-1}$ and stays within distance $\delta/10$ to the counterclockwise boundary arc of $\partial\D$ from $x_{2N-1}$ to $x_{2N}$. Let $p_0 = \mcclelaw{\D; \ul{x};\beta}[E_0] > 0$. We have $p_0 > 0$ as a consequence of Lemma~\ref{lem:cle_strands_in_regions} (however at this point we do not know whether $p_0$ depends continuously on $\ul{x}$).

On the event $E_0$, conditionally on $\eta_{2N}$, the remaining curves $\ul\eta_{\wh{2N}}$ have the law of a multichordal \clek{} in $(\wh{D},\wh{\ul x},\wh{\beta})$ where $\wh{D}$ is the connected component of $\D \setminus \eta_{2N}$ with $\wh{\ul x} = (x_1,\dots,x_{2N-2})$ on its boundary and $\wh{\beta}$ is the link pattern induced by $\beta$ and $\eta_{2N}$. Let $\wh{\p}[\cdot \mid \eta_{2N}]$ denote the conditional law. By Lemma~\ref{lem:F_event} applied to $N-1$ and our induction hypothesis and the conformal invariance (Lemma~\ref{lem:conformal_invariant}), we have $\wh{\p}[F(\delta_0) \mid \eta_{2N}] \,\one_{E_0} \ge p_1 \one_{E_0}$ for some $\delta_0,p_1 > 0$ depending only on $\delta$.

Suppose now that $\delta_1 > 0$ and $\ul u^*\in \partial \D^{2N} \cap B(\ul x^*, \delta_1\delta_0)$. Modulo conformal mapping we can assume that $u_{2N-1}=i$, $u_{2N}=-i$. We can then carry out the same proof as for Lemma~\ref{lem:continuity_in_points} with the only difference that we only explore the strands $(\eta_1,\dots,\eta_{2N-2})$, and during the uniformization step we map out only the initial segments of $\eta_1,\dots,\eta_{2N-2}$ and not $\eta_{2N}$, and we always map $x_{2N-1},x_{2N}$ to $i,-i$. Note that the exploration does not depend on $\eta_{2N}$. We obtain that
\[ \wh{\p}[W_{\ul{u}^*}^c \cap F(\delta_0) \mid \eta_{2N}] \,\one_{E_0} \le C\delta_1^{1/2} . \]
Picking $\delta_1 < p_1^2/(2C)^2$ (which depends only on $\delta,N$), we get
\[ 
\mcclelaw{\D; \ul{x};\beta}\left[ {W_{\ul u^*}} \right] 
\geq \mcclelaw{\D; \ul{x};\beta}[E_0]\, \mcclelaw{\D; \ul{x};\beta}[W_{\ul{u}^*} \cap F(\delta_0) \mid E_0] 
\ge p_0(p_1 - C\delta_1^{1/2}) \ge p_0p_1/2 . 
\]
\end{proof}

\begin{lemma}\label{le:mcle_existence_one_to_all}
 Let $N \in \N$. Assume that multichordal \clek{} exists for every $(\D; \ul{x};\beta) \in \eldomain{2(N-1)}$. Then, if $(\D; \ul{y};\beta_0) \in \cleconfclass{2N}$ for some $(\D; \ul{y};\beta_0) \in \eldomain{2N}$, then $(\D; \ul{x};\beta_0) \in \cleconfclass{2N}$ for every marked point configuration $\ul{x}$ of $2N$ points.
\end{lemma}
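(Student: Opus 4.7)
The plan is to connect $\ul{y}$ to $\ul{x}$ by a continuous path of marked point configurations in $\partial\D$ preserving the counterclockwise cyclic order, and then to transition from $(\D;\ul{y};\beta_0) \in \cleconfclass{2N}$ to $(\D;\ul{x};\beta_0)$ along this path in finitely many small steps, each step furnished by Lemma~\ref{lem:continuity_in_points_ind}.

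First I would construct a continuous path $t \mapsto \ul{x}(t)$, $t \in [0,1]$, of counterclockwise-ordered $2N$-tuples of distinct points in $\partial\D$ with $\ul{x}(0) = \ul{y}$ and $\ul{x}(1) = \ul{x}$. Writing both endpoints in angular coordinates $\phi_1 < \cdots < \phi_{2N} < \phi_1 + 2\pi$ and $\theta_1 < \cdots < \theta_{2N} < \theta_1 + 2\pi$, a linear interpolation of the angles does the job and keeps them strictly ordered within a single period. Because the cyclic order is preserved throughout, $\beta_0$ remains a valid exterior link pattern on every $(\D;\ul{x}(t);\beta_0)$.

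Next, by continuity and compactness, $\delta \defeq \min_{t \in [0,1]} \min_{i \neq j} |x_i(t) - x_j(t)| > 0$. Under the hypothesis that multichordal \clek{} exists for every configuration of $2(N-1)$ points, Lemma~\ref{lem:continuity_in_points_ind} applied with this $\delta$ produces a step size $\delta_1 = \delta_1(\delta, N) > 0$ that works uniformly for every configuration with minimum pairwise distance at least $\delta$. By uniform continuity of $t \mapsto \ul{x}(t)$ on $[0,1]$, I can then pick a partition $0 = t_0 < t_1 < \cdots < t_K = 1$ with $\max_i |x_i(t_{k+1}) - x_i(t_k)| < \delta_1$ for every $k$.

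Finally I would iterate. Initially $(\D;\ul{x}(t_0);\beta_0) = (\D;\ul{y};\beta_0) \in \cleconfclass{2N}$ by hypothesis. Assuming $(\D;\ul{x}(t_k);\beta_0) \in \cleconfclass{2N}$, let $\ul\eta$ be a multichordal \clek{} sample in that domain. Lemma~\ref{lem:continuity_in_points_ind} applied with $\ul{u}^* = \ul{x}(t_{k+1})$ yields a Markovian exploration $\ul\tau$ of $\ul\eta$ satisfying $\mcclelaw{\D;\ul{x}(t_k);\beta_0}[W_{\ul{x}(t_{k+1})}] > 0$. On the event $W_{\ul{x}(t_{k+1})}$, Lemma~\ref{lem:limit_exploration} identifies the conditional law of the remainder as a multichordal \clek{} on a marked domain conformally equivalent to $(\D;\ul{x}(t_{k+1});\beta_0)$; conformal invariance of the defining resampling property (cf.\ Lemma~\ref{lem:conformal_invariant}) then pushes this forward to a multichordal \clek{} law on $(\D;\ul{x}(t_{k+1});\beta_0)$, so $(\D;\ul{x}(t_{k+1});\beta_0) \in \cleconfclass{2N}$. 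After $K$ such steps I conclude $(\D;\ul{x};\beta_0) \in \cleconfclass{2N}$.

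The main obstacle is already packaged into Lemma~\ref{lem:continuity_in_points_ind}: the crucial feature is that the step size $\delta_1$ there depends only on the minimum pairwise separation $\delta$ and on $N$, and not on the specific configuration. This uniformity along the compact path is what makes a finite chain of transitions possible; without it one could only guarantee an ever-shrinking sequence of step sizes that might fail to reach $\ul{x}$ in finitely many iterations.
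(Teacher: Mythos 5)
Your proposal is correct and takes essentially the same approach as the paper's proof: fix a uniform separation $\delta$, invoke Lemma~\ref{lem:continuity_in_points_ind} for a uniform step size $\delta_1$, and traverse a finite chain of $\delta$-separated intermediate configurations from $\ul{y}$ to $\ul{x}$ using Lemma~\ref{lem:limit_exploration} (plus conformal invariance) at each step. The only difference is that you explicitly construct the chain via angular interpolation and verify the separation lower bound by convexity, whereas the paper simply asserts that such a chain exists; your added detail is sound.
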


\begin{proof}
 Let $\delta > 0$ such that $\abs{x_i-x_j} \ge \delta$ and $\abs{y_i-y_j} \ge \delta$ for every $i \neq j$. It follows from Lemma~\ref{lem:continuity_in_points_ind} and Lemma~\ref{lem:limit_exploration} that if $(\D; \ul{y};\beta_0) \in \cleconfclass{2N}$, then $(\D; \ul{u};\beta_0) \in \cleconfclass{2N}$ for every $\ul{u} \in B(\ul{y},\delta_1)$ where $\delta_1 > 0$ depends only on $\delta$. We can find a sequence of marked point configurations, each consisting of $\delta$-separated points, so that we get from $\ul{y}$ to $\ul{x}$ in a finite number of steps. We conclude that $(\D; \ul{x};\beta_0) \in \cleconfclass{2N}$.
\end{proof}

\subsubsection{Proof of existence}
\label{sec:proof_existence_multichordal}

We now turn to proving  Proposition~\ref{prop:multichordal-well-def}. By Lemma~\ref{le:mcle_existence_one_to_all}, to show existence in Proposition~\ref{prop:multichordal-well-def}, it suffices to show that for every $N \in \N$ and exterior link pattern $\beta$ between $2N$ points, the subset of configurations $(D;\ul{x};\beta) \in \cleconfclass{2N}$ on which multichordal \clek{} exists is non-empty. 

\begin{lemma}
\label{lem:confclass_nonempty}
For every $N \in \N$ and $\beta \in \linkpatterns{N}$ there exist distinct points $\ul{x} = (x_1,\ldots,x_{2N})$ given in counterclockwise order on $\partial \D$ so that $(\D;\ul{x};\beta) \in \cleconfclass{2N}$.
\end{lemma}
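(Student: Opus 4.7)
The plan is to argue by induction on $N$, run jointly with Proposition~\ref{pr:mcle_exploration_path}. At each stage $N$ I will use as inputs the existence of multichordal \clek{} at level $N-1$ (promoted via Lemma~\ref{le:mcle_existence_one_to_all} and the inductive hypothesis to every marked configuration of $2(N-1)$ points) together with Proposition~\ref{pr:mcle_exploration_path} at level $N-1$. The base cases $N \in \{0,1,2\}$ are settled by the discussion in Section~\ref{subsec:bichordal_cle}, which in fact gives existence of multichordal \clek{} for every $(\D;\ul{x};\beta) \in \eldomain{2N}$ when $N \le 2$.

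For the inductive step, given a target pattern $\beta \in \linkpatterns{N}$ I would use the elementary fact that every non-crossing perfect matching of $\{1,\ldots,2N\}$ contains at least one adjacent pair $\{i,i+1\}$. Removing this pair yields $\beta' \in \linkpatterns{N-1}$; by the inductive hypothesis, for any marked configuration $\ul{x}'$ of $2(N-1)$ points we have a multichordal \clek{} $(\ul{\eta}',\Gamma')$ in $(\D;\ul{x}';\beta')$. Choose $\ul{x}'$ so that the arc on which the new pair is to be inserted lies on the boundary of a single complementary component $U$ of $\ul{\eta}'$ in $\D$, pick $z$ in this arc and a target $w \in U$, and consider the exploration path $\eta_{z,w}$ of $(\ul{\eta}',\Gamma')$ as in Definition~\ref{def:mcle_exploration_path}. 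By Lemma~\ref{le:cle_finite_chain} applied to $\Gamma'|_U$, this exploration a.s.\ begins by tracing a loop near $z$; I would stop at a small positive deterministic time $\tau$ before that loop closes. Then, in the notation of Proposition~\ref{pr:mcle_exploration_path}, $\eta_{z,w}(\sigma) = z$ and $\eta_{z,w}(\tau)$ are two distinct prime ends of the unexplored region $D_\tau$, cyclically adjacent on $\partial D_\tau$.

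Proposition~\ref{pr:mcle_exploration_path} at level $N-1$ then says that the conditional law of the remainder of $(\ul{\eta}',\Gamma')$ given $\eta_{z,w}|_{[0,\tau]}$ is a multichordal \clek{} in $(D_\tau;\ul{x}_\tau;\beta_\tau)$, and by construction $\ul{x}_\tau$ has $2N$ marked points while $\beta_\tau$ is obtained from $\beta'$ by inserting the new adjacent pair in the correct location, so $\beta_\tau$ agrees with $\beta$ after relabelling. This provides the required element of $\cleconfclass{2N}$. The main obstacle in the overall program is the circular dependence between the two statements at level $N$: the plan is to first establish existence at level $N$ exactly as above (which relies on Proposition~\ref{pr:mcle_exploration_path} only at level $N-1$), and then upgrade Proposition~\ref{pr:mcle_exploration_path} to level $N$ using the just-established existence together with a commutation argument in the spirit of Lemma~\ref{le:bichordal_given_loops} and the Markovian exploration framework of Section~\ref{subsec:markovian_expl}, which reduces a general stopping time of $\eta_{z,w}$ to alternating strand and loop explorations whose conditional laws are already controlled.
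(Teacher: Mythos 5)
Your proposal is correct and follows essentially the same route as the paper: remove an adjacent link from $\beta$ by planarity, apply the inductive hypothesis to get a multichordal CLE at level $N-1$, explore it slightly from a boundary point $z$ on the relevant arc via the exploration path of Definition~\ref{def:mcle_exploration_path}, and then invoke Proposition~\ref{pr:mcle_exploration_path} plus conformal invariance to land in $\cleconfclass{2N}$. One small remark: the circularity you worry about in your final paragraph is not actually present in the paper's argument — the proof of Proposition~\ref{pr:mcle_exploration_path} is uniform in $N$ and only uses that the multichordal CLE being explored exists (it reduces everything to mono- and bichordal conditionings via Lemmas~\ref{le:sle_given_cle}, \ref{le:4strands_cle}, and \ref{le:bichordal_given_loops}), so it can be proved once for all levels before Lemma~\ref{lem:confclass_nonempty} and there is no need to interleave it with the existence induction; similarly, Lemma~\ref{lem:confclass_nonempty} itself only needs a single configuration at level $N-1$ rather than the full strength of Lemma~\ref{le:mcle_existence_one_to_all}.
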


To obtain an exterior link pattern decorated marked domain $(\D;\ul{x};\beta) \in \cleconfclass{2N}$ in the proof of Lemma~\ref{lem:confclass_nonempty}, we will use the exploration path of a multichordal \clek{} in a domain with $2(N-1)$ marked points. We will prove Proposition~\ref{pr:mcle_exploration_path} and this will give us the existence of some $(\D;\ul{x};\beta) \in \cleconfclass{2N}$.

\begin{proof}[Proof of Proposition~\ref{pr:mcle_exploration_path}]
The case $N=0$ is clear. Let $N\geq 1$. Let $\tau$ be a stopping time for the exploration path $\eta_{z,w}$, and let $(D_\tau;\ul{x}_\tau;\beta_\tau)$ be the remaining domain as in the proposition statement. Let $\mu$ be the conditional law of the remainder of $\Gamma$ given $\eta_{z,w}|_{[0,\tau]}$. 
We need to argue that $\mu$ is invariant under the resampling kernel from Definition~\ref{def:resampling_kernel}.

Since $\Gamma$ is a multichordal \clek{}, by definition, conditioning on all its chords but one, the law of the remainder is given by a monochordal \clek{}, and when conditioning on all its chords but two, the law of the remainder is a bichordal \clek{}.

Suppose that $j_1,j_2,j_3,j_4 \in \{1, \dots, 2(N+1)\}$ are given, and suppose that we are on the event $E_{\ul{j}}$ that the four marked points in $x_{\ul{j}} = (x_{j_1},x_{j_2},x_{j_3},x_{j_4})$ lie on the boundary of the same connected component of $D_\tau \setminus \bigcup_{k \neq j_1,j_2,j_3,j_4} \eta_k$. Let $a,b$ be the marked points $\eta_{z,w}(\sigma)$, $\eta_{z,w}(\tau)$.  There are three possible cases:
\begin{enumerate}[(1)]
\item Both $a,b\notin x_{\ul{j}}$, i.e.\ $x_{\ul{j}}$ all belong to the original marked points $\ul{x}$. On the event $E_{\ul{j}}$ the exploration path $\eta_{z,w}$ has not linked with any of the strands emanating from the points $x_{\ul{j}}$. When conditioning on all the strands in $\Gamma$ but the ones emanating from the points $x_{\ul{j}}$, the law of the remaining strands is a bichordal \clek{}. If we further condition on $\eta_{z,w}|_{[0,\tau]}$ and the strands emanating from $a,b$ (these strands may be already included in the strands explored above), then Lemma~\ref{le:bichordal_given_loops} implies that the remaining law is still a bichordal \clek{}.

\item Both $a,b\in x_{\ul{j}}$. Let $c,d$ be the points in $x_{\ul{j}}$ that are not $a,b$. Conditioning on all the strands in $\Gamma$ but the ones emanating from $c,d$ gives a monochordal \clek{}. By Lemma~\ref{le:4strands_cle} we thus have a bichordal \clek{} when we further condition on $\eta_{z,w}|_{[0,\tau]}$.

\item Only one of $a,b$ is in $x_{\ul{j}}$, assume it is $a$. Let $c,d,e$ be the other three points in $x_{\ul{j}}$. On the event $E_{\ul{j}}$, the point $b$ is linked to some other point $f$. Then $c,d,e,f\in \ul x$, and conditioning on all the strands in $\Gamma$ but the ones emanating from $c,d,e,f$ yields a bichordal \clek{} which we denote by $\Gamma_2$. When we further condition on $\eta_{z,w}[0,\tau]$ and the strand joining $b$ and $f$, we could have equivalently obtained the same object from exploring the strand of $\Gamma_2$ from $f$ until $a$ and exploring the loops discovered by $\eta_{z,w}[0,\tau]$. Hence by Lemma~\ref{le:4strands_cle} and Lemma~\ref{le:bichordal_given_loops} the remaining law is a bichordal \clek{}.
\end{enumerate}
In all three cases we see that conditioning on all the strands emanating from the points not in $x_{\ul{j}}$, the law of the remaining strands is a bichordal \clek{}. That is, the conditional law $\mu$ is invariant under the resampling procedure in Definition~\ref{def:resampling_kernel}.
\end{proof}

\begin{proof}[Proof of Lemma~\ref{lem:confclass_nonempty}]
We proceed by induction. The case $N=1$ is clear. Suppose that for every $\beta \in \linkpatterns{N-1}$ there is a domain $(D;\ul{y};\beta) \in \cleconfclass{2(N-1)}$. Let $\beta_N \in \linkpatterns{N}$. By planarity, there exists a link in $\beta_N$ between two adjacent indices $j,j+1$. Let $\beta_{N-1} \in \linkpatterns{N-1}$ be obtained from $\beta_N$ by removing the vertices $x_{j},x_{j+1}$ and the link between them.

By the induction hypothesis, there is $(D_{N-1};\ul{x}_{N-1};\beta_{N-1}) \in \cleconfclass{2(N-1)}$. Let $z$ be a point on the boundary arc of $\partial D_{N-1}$ on which the additional link in $\beta_N$ lies. By Proposition~\ref{pr:mcle_exploration_path}, exploring $\Gamma$ from $z$ yields a random marked domain $(D_{N};\ul{x}_{N};\beta_{N}) \in \cleconfclass{2N}$ with exterior link pattern $\beta_N$. Hence by conformal invariance (Lemma~\ref{lem:conformal_invariant}) upon conformally mapping to $\D$ we obtain a domain $(\D;\ul{x};\beta_N) \in \cleconfclass{2N}$.
\end{proof}

\begin{proof}[Proof of Proposition~\ref{prop:multichordal-well-def}]
This follows from Lemma~\ref{lem:confclass_nonempty} and Lemma~\ref{le:mcle_existence_one_to_all} by an induction in $N$.
\end{proof}

\subsection{Continuity in the marked point configuration}
\label{subsec:continuity_mcle}

In this subsection we prove Theorem~\ref{th:continuity_mcle} and collect some consequences which will be useful in the following sections.

Using Lemma~\ref{lem:F_event}, we are able to get rid of the auxiliary event in the statement of the continuity  Lemma~\ref{lem:continuity_in_points} and we obtain the following.

\begin{proposition}\label{prop:continuity_in_separated_points}
Let $N \in \N$. For every $\delta>0$, $q>0$ there exists $\delta_1>0$ depending only on $\delta,q,N$ such that the following is true. Suppose $(\D; \ul{x};\beta)\in \eldomain{2N}$ and $\abs{x_i-x_j} \ge \delta$ for every $i \neq j$. Let $\ul\eta$ be a multichordal \clek{} in $(\D; \ul{x};\beta)$. 
Then, for any $\ul{u}^*=(u_1^*, \ldots, u_{2N}^*)$ where $u_i^* \in \partial\D \cap B(x_i,\delta_1)$ for each $i$, there exists a Markovian exploration of $\ul{\eta}$ that stops before any of the strands exit $B(x_i,\delta)$ such that
\[
\mcclelaw{\D; \ul{x};\beta}\left[ \left({W_{\ul u^*}}\right)^c \right] < q 
\]
where $W_{\ul{u}^*}$ is the event defined above Lemma~\ref{lem:continuity_in_points}.
\end{proposition}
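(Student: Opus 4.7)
The plan is to combine the two preceding lemmas in a direct way. Lemma~\ref{lem:continuity_in_points} already produces a Markovian exploration with small conditional failure probability on the ``good geometry'' event $F(\delta_0)$, but the size of $\delta_0$ (and hence of the allowed displacement $\delta_1\delta_0$) implicitly depends on the probability of $F(\delta_0)$, which a priori depends on $\ul{x}$. Lemma~\ref{lem:F_event}, which by now is available for every $N$ since existence of multichordal \clek{} has been proved in Proposition~\ref{prop:multichordal-well-def}, supplies exactly the missing uniform input: for $\delta$-separated configurations one can pick $\delta_0$ depending only on $\delta,q,N$ so that $\p[F(\delta_0)^c]$ is small uniformly.

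Concretely, given $\delta>0$ and $q>0$, the first step is to apply Lemma~\ref{lem:F_event} with error tolerance $q/2$ to obtain $\delta_0>0$, depending only on $\delta,q,N$, such that
\[
 \mcclelaw{\D; \ul{x};\beta}[F(\delta_0)^c] < q/2
\]
for every $(\D;\ul{x};\beta) \in \eldomain{2N}$ with $\abs{x_i-x_j}\ge \delta$. Shrinking $\delta_0$ if necessary, we may assume $\delta_0 \le \delta$; note that $F(\delta_0')\supseteq F(\delta_0)$ for $\delta_0'\le\delta_0$, so the bound is preserved.

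Next, I would invoke Lemma~\ref{lem:continuity_in_points} with this $\delta_0$ and an auxiliary parameter $\delta_1^\dagger>0$ to be chosen. It provides, for every $\ul{u}^*$ with $u_i^*\in\partial\D\cap B(x_i,\delta_1^\dagger\delta_0)$, a Markovian exploration that stays in $B(x_i,(\delta_1^\dagger)^{1/2}\delta_0)$ and satisfies
\[
 \mcclelaw{\D; \ul{x};\beta}\bigl[(W_{\ul u^*})^c\cap F(\delta_0)\bigr]\le C(\delta_1^\dagger)^{1/2}.
\]
Choose $\delta_1^\dagger$ small enough that $C(\delta_1^\dagger)^{1/2}<q/2$ and also $(\delta_1^\dagger)^{1/2}\delta_0 \le \delta$; both conditions depend only on $\delta,q,N$. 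Then set $\delta_1 \defeq \delta_1^\dagger \delta_0$, which depends only on $\delta,q,N$.

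For any $\ul{u}^*$ with $u_i^*\in B(x_i,\delta_1)$, the hypothesis of Lemma~\ref{lem:continuity_in_points} is met, the exploration is contained in $B(x_i,\delta)$ as required, and a union bound yields
\[
 \mcclelaw{\D; \ul{x};\beta}[(W_{\ul u^*})^c]
 \le \mcclelaw{\D; \ul{x};\beta}\bigl[(W_{\ul u^*})^c\cap F(\delta_0)\bigr]
 +\mcclelaw{\D; \ul{x};\beta}[F(\delta_0)^c] < q,
\]
as desired. There is no real obstacle here: the content of the proposition is simply the observation that Lemma~\ref{lem:F_event} removes the dependence on $\ul{x}$ in the auxiliary event, and the main thing to check is just the compatibility of the two scales $\delta_0,\delta_1^\dagger$ with the required containment radius $\delta$, which is handled by the monotonicity of $F(\delta_0)$ in $\delta_0$.
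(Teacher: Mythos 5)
Your proposal is correct and takes essentially the same approach as the paper, which gives a one-line proof ("combine Lemma~\ref{lem:F_event} and Lemma~\ref{lem:continuity_in_points}, using $\cleconfclass{2N}=\eldomain{2N}$"). You have simply written out the parameter bookkeeping — choosing $\delta_0$ via Lemma~\ref{lem:F_event}, then $\delta_1^\dagger$ via Lemma~\ref{lem:continuity_in_points}, and verifying the containment and the union bound — which the paper leaves implicit.
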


\begin{proof}
The proof follows immediately combining  Lemma~\ref{lem:F_event} and Lemma~\ref{lem:continuity_in_points} (and recalling that  $\cleconfclass{2N}=\eldomain{2N}$ by Proposition~\ref{prop:multichordal-well-def}).
\end{proof}

We now obtain the continuity statement in Theorem~\ref{th:continuity_mcle} as a consequence of Proposition~\ref{prop:multichordal-well-def}, Lemma~\ref{lem:limit_exploration}, and Proposition~\ref{prop:continuity_in_separated_points}.

\begin{proof}[Proof of Theorem~\ref{th:continuity_mcle}]
 Let $(\D;\ul{x};\beta) \in \eldomain{2N}$. It suffices to show that any sequence $\ul{y} \to \ul{x}$ contains a subsequence $(\ul{y}^{(n)})$ such that the laws $\mcclelaw{\D;\ul{y}^{(n)};\beta}$ converge weakly to $\mcclelaw{\D;\ul{x};\beta}$.
 
 Let $(\ul\eta,\Gamma)$ be a multichordal \clek{} in $(\D; \ul{x};\beta)$. Let $W_{\ul{y}}$ be the event defined above Lemma~\ref{lem:continuity_in_points}. By Proposition~\ref{prop:continuity_in_separated_points}, we can extract a subsequence $(\ul{y}^{(n)})$ such that there is a Markovian exploration $\ul{\tau}^{(n)}$ of $\ul{\eta}$ for each $n$ with $\ul{\tau}^{(n)} \to \ul{0}$ and such that almost surely $W_{\ul{y}^{(n)}}$ occurs for sufficiently large $n$. On the event $W_{\ul{y}^{(n)}}$ let $(\ul{\eta}^{(n)},\Gamma^{(n)})$ be the image of (the remainder of) $(\ul{\eta},\Gamma)$ to $\D$ under the corresponding conformal map $g_{\ul{\tau}^{(n)}}$. On the event $(W_{\ul{y}^{(n)}})^c$, we can define $(\ul{\eta}^{(n)},\Gamma^{(n)})$ arbitrarily as a multichordal \clek{} in $(\D;\ul{y}^{(n)};\beta)$. By Lemma~\ref{lem:limit_exploration}, the law of $(\ul{\eta}^{(n)},\Gamma^{(n)})$ is a multichordal \clek{} in $(\D;\ul{y}^{(n)};\beta)$, and $(\ul{\eta}^{(n)},\Gamma^{(n)}) \to (\ul{\eta},\Gamma)$ almost surely with respect to the distance \eqref{eq:topology_loop_ensemble}. This shows the result.
\end{proof}

As a consequence of Theorem~\ref{th:continuity_mcle}, we conclude that the probability that the strands of a multichordal \clek{} assume a given link pattern is continuous and uniformly bounded from below for all marked point configurations $\ul{x}$ when the pairwise distances between pairs of marked points are bounded away from $0$.

\begin{corollary}\label{co:hookup_positive_continuous}
 Let $N \in \N$ and $\beta \in \linkpatterns{N}$. Let $\alpha$ be the induced interior link pattern by a multichordal \clek{}. For any given $\alpha_0 \in \linkpatterns{N}$, the function that maps $\ul{x}$ to $\mcclelaw{\D; \ul{x};\beta}[ \alpha = \alpha_0 ]$ is continuous and strictly positive for any marked point configuration $\ul{x}$. The conditional law of $\ul{\eta}$ given $\alpha = \alpha_0$ is equal to $\mcslelaw{\D;\ul{x};\alpha_0}$ from Theorem~\ref{thm:multichordal_link}.
\end{corollary}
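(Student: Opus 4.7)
The plan is to prove the three assertions separately.

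For strict positivity, I would fix an arbitrary $\alpha_0 \in \linkpatterns{N}$ and choose pairwise disjoint simple curves $\gamma_1,\dots,\gamma_N$ in $\ol{\D}$ realizing $\alpha_0$, together with pairwise disjoint open neighborhoods $U_i \supset \gamma_i$ whose intersection with $\partial\D$ contains only the two marked points paired by $\gamma_i$ (this is possible since the $\gamma_i$ are pairwise disjoint and meet $\partial\D$ only at their own endpoints). Applying Lemma~\ref{lem:cle_strands_in_regions} then gives that with positive probability each strand of $\ul{\eta}$ stays inside the corresponding $U_i$, and on this event the induced interior link pattern necessarily equals $\alpha_0$.

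For continuity I would combine Theorem~\ref{th:continuity_mcle} with Skorokhod representation: given $\ul{x}^{(n)} \to \ul{x}$, couple $(\ul{\eta}^{(n)},\Gamma^{(n)}) \sim \mcclelaw{\D;\ul{x}^{(n)};\beta}$ to $(\ul{\eta},\Gamma) \sim \mcclelaw{\D;\ul{x};\beta}$ so that the configurations converge almost surely in the topology \eqref{eq:topology_loop_ensemble}. The interior link pattern is locally constant in this topology jointly with the moving marked points: for $n$ large the bijection between strands of $\ul{\eta}^{(n)}$ and $\ul{\eta}$ realizing the distance must preserve the indexing of endpoints, since the points of $\ul{x}$ are distinct. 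Hence $\one_{\alpha^{(n)}=\alpha_0}$ converges almost surely to $\one_{\alpha=\alpha_0}$, and bounded convergence finishes the argument.

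The main obstacle is the identification of the conditional law. By the uniqueness part of Theorem~\ref{thm:multichordal_link}, it suffices to verify that $\mu_{\alpha_0} \defeq \mcclelaw{\D;\ul{x};\beta}[\,\cdot\mid \alpha = \alpha_0]$ satisfies the single-chord resampling property in Definition~\ref{def:interior_link_pattern_multiple_sle}. The case $N=1$ is trivial since there is only one possible link pattern. For $N \geq 2$, fix $\{a_i,b_i\} \in \alpha_0$ and condition on $\ul\eta_{\wh{a_i}} = (\gamma_\ell)_{\ell \neq i}$. Since $D_i \subsetneq \D$, at least one other chord $\gamma_{i'}$ must lie on $\partial D_i$, and $i'$ can be selected measurably from this data. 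Letting $\ul{j}$ be the sorted four-tuple of endpoints of $\gamma_i$ and $\gamma_{i'}$, the event $E_{\ul{j}}$ of Definition~\ref{def:resampling_kernel} automatically holds. By the resampling invariance of $\mcclelaw{\D;\ul{x};\beta}$, the conditional law of $(\gamma_i,\gamma_{i'})$ given $(\gamma_\ell)_{\ell\neq i,i'}$ is the bichordal \slek{} on the induced marked domain. Further conditioning on $\alpha = \alpha_0$ pins down the restriction of the interior link pattern to these four endpoints and thereby reduces to the bichordal \slek{} with fixed interior link pattern, in which the two chords are independent chordal \slek{}s in their complementary components. A final conditioning on $\gamma_{i'}$ then yields that $\gamma_i$ is a chordal \slek{} in $D_i$, as required. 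The step demanding most attention is the claim that the bichordal \slek{} conditioned on its interior link pattern decomposes as two independent chordal \slek{}s; for $N=2$ this is essentially contained in the results from \cite{msw2020nonsimple} recalled in Section~\ref{subsec:bichordal_cle}, in particular via Lemma~\ref{le:4strands_cle} combined with Lemma~\ref{le:sle_given_cle}.
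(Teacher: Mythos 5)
Your argument is correct and runs parallel to the paper's terse proof of this corollary. For positivity, you route through Lemma~\ref{lem:cle_strands_in_regions}, whereas the paper iterates the kernel $\Phi$ directly via Lemma~\ref{lem:function_positive} as in the proof of Lemma~\ref{lem:unique_invariant}; since Lemma~\ref{lem:cle_strands_in_regions} is itself proved by that same $\Phi$-iteration, the two routes rest on the same machinery. Your Skorokhod argument for continuity and your chain of conditionings for the identification of the conditional law are correct elaborations of what the paper states in one sentence each. One wording issue worth fixing: the bichordal \slek{} with a fixed interior link pattern is \emph{not} a pair of independent chordal \slek{}'s (the complementary component in which the second chord lives depends on the first chord), it is a Gibbs measure whose one-chord conditional laws are chordal \slek{}. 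What your argument actually needs, and what your final sentence correctly applies, is only this Gibbs property, which is precisely Definition~\ref{def:interior_link_pattern_multiple_sle} specialized to $N=2$, with existence and uniqueness supplied by \cite{msw2020nonsimple}.
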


\begin{proof}
The continuity follows from Theorem~\ref{th:continuity_mcle}. It remains to argue that the probability that $\alpha = \alpha_0$ is strictly positive. By definition of multichordal \clek{}, the law of $\ul\eta$ is invariant under the kernel $\Phi$ from Subsection~\ref{subsec:multichordal_uniqueness}.  The statement then follows from Lemma~\ref{lem:function_positive} iterating the resampling procedure using $\Phi$ as in the proof of Lemma~\ref{lem:unique_invariant}. 

Finally, the conditional law of $\ul{\eta}$ given $\alpha = \alpha_0$ satisfies the resampling property in Definition~\ref{def:interior_link_pattern_multiple_sle} due to Definition~\ref{def:exterior_link_pattern_multiple_sle} and the definition of the bichordal \slek{}.
\end{proof}

Theorem~\ref{th:continuity_mcle} is also useful for giving us uniform control on the probability that the loops and chords of a multichordal \clek{} behave in a certain way. We give one such statement below.

\begin{corollary}\label{co:unif_prob_marked_points}
 Suppose $p \in (0,1)$, and we have a non-decreasing sequence of events $(E_n)$ such that each $E_n$ is an open set in the topology \eqref{eq:topology_loop_ensemble} and such that for each $(\D;\ul{x};\beta) \in \eldomain{2N}$ we have
 \[ \lim_{n \to \infty} \mcclelaw{\D;\ul{x};\beta}[E_n] > p . \]
 Then, for each $\delta > 0$ we have
 \[
  \lim_{n \to \infty} \inf_{\abs{x_i-x_j} \ge \delta} \mcclelaw{\D;\ul{x};\beta}[E_n] > p 
 \]
 where the infimum is taken over all $(\D;\ul{x};\beta) \in \eldomain{2N}$ with $\abs{x_i-x_j} \ge \delta$ for each $i\neq j$.
\end{corollary}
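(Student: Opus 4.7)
The plan is to combine the weak continuity from Theorem~\ref{th:continuity_mcle} with a standard compactness/lower semicontinuity argument. Set $f_n(\ul{x},\beta) \defeq \mcclelaw{\D;\ul{x};\beta}[E_n]$ and $f_\infty(\ul{x},\beta) \defeq \lim_{n\to\infty} f_n(\ul{x},\beta)$, where the limit exists by monotonicity of $(E_n)$. Since Theorem~\ref{th:continuity_mcle} asserts that $\mcclelaw{\D;\ul{x};\beta}$ depends weakly continuously on $\ul{x}$ in the topology \eqref{eq:topology_loop_ensemble} and each $E_n$ is open in that topology, the Portmanteau theorem gives that $\ul{x} \mapsto f_n(\ul{x},\beta)$ is lower semicontinuous for each fixed $n$ and $\beta$. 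Hence $f_\infty$ is also lower semicontinuous, being the pointwise supremum of a family of lower semicontinuous functions.

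Next, I would observe that the set $K_\delta \defeq \{\ul{x} \in (\partial\D)^{2N} : \ul{x} \text{ is ordered counterclockwise and } \abs{x_i-x_j} \geq \delta \text{ for all } i \neq j\}$ is compact and that $\linkpatterns{N}$ is finite. Therefore the lower semicontinuous function $f_\infty$ attains its infimum over $K_\delta \times \linkpatterns{N}$; by hypothesis this infimum $p^*$ satisfies $p^* > p$.

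Finally, to turn the pointwise lower bound into a uniform one, fix $\varepsilon \in (0, p^*-p)$. For each $(\ul{x}_0,\beta_0) \in K_\delta \times \linkpatterns{N}$, pick $n(\ul{x}_0,\beta_0)$ large enough that $f_{n(\ul{x}_0,\beta_0)}(\ul{x}_0,\beta_0) > p^*-\varepsilon/2$, and use the lower semicontinuity of $f_{n(\ul{x}_0,\beta_0)}(\cdot,\beta_0)$ to obtain an open neighborhood $U_{\ul{x}_0,\beta_0}$ of $\ul{x}_0$ on which $f_{n(\ul{x}_0,\beta_0)}(\cdot,\beta_0) > p^*-\varepsilon$. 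Compactness of $K_\delta \times \linkpatterns{N}$ yields a finite subcover, and letting $M$ be the maximum of the finitely many chosen indices, the monotonicity of $(E_n)$ gives $\inf_{(\ul{x},\beta) \in K_\delta \times \linkpatterns{N}} f_n(\ul{x},\beta) > p^*-\varepsilon > p$ for all $n \geq M$, as required. There is no substantive obstacle here; the argument is a direct Dini-type extraction, and the only mild point is that the infimum in the statement ranges over $\beta$ as well as $\ul{x}$, which causes no trouble because $\linkpatterns{N}$ is finite.
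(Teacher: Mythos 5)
Your proof is correct and follows essentially the same approach as the paper's, which likewise combines Theorem~\ref{th:continuity_mcle} with the Portmanteau inequality for open events (giving lower semicontinuity of $\ul{x}\mapsto \mcclelaw{\D;\ul{x};\beta}[E_n]$) and a compactness/Dini-type extraction over the $\delta$-separated configurations. The only difference is that you spell out the finite-subcover step and the auxiliary level $p^*$, whereas the paper leaves the uniformization implicit.
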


\begin{proof}
 By Theorem~\ref{th:continuity_mcle} we have
 \[\liminf_{\ul{y}\to\ul{x}} \mcclelaw{\D;\ul{y};\beta}[E_n] \geq  \mcclelaw{\D;\ul{x};\beta}[E_n] > p\] 
 for each $(\D; \ul{x}; \beta) \in \eldomain{2N}$ and sufficiently large $n$. By the compactness of the set of $\ul{x}$ with $\abs{x_i-x_j} \ge \delta$ for each $i\neq j$, the statement holds for sufficiently large $n$ uniformly for all such $\ul{x}$.
\end{proof}

\subsection{Partial explorations of \clek{}}\label{subsec:partially_explored_cles}

In this subsection we prove Theorem~\ref{thm:cle_partially_explored}. The results in this subsection are independent of the existence results in Section~\ref{subsec:multichordal_existence} and can be used as an alternative argument to show that multichordal \clek{} exists for some marked point configuration $\ul{x}$ given $N \in \N$ and $\beta \in \linkpatterns{N}$.

Before giving the proof of Theorem~\ref{thm:cle_partially_explored}  we collect an auxiliary lemma.
Recall from Section~\ref{subsec:cle} that \clek{} is a.s.\ locally finite.  As an immediate consequence of local finiteness of \clek{}  we have that the number of crossings by loops across an annulus is a.s.\ finite, hence we have the following.

\begin{lemma}
\label{lem:finitely-marked-points-marginal}
Let $(\D;\ul{x};\beta) \in \eldomain{2N}$ and let $\Gamma$ be a nested multichordal \clek{} in $(\D;\ul{x};\beta)$.  For every $(U,V) \in \domainpair{D}$, the number of marked points arising on the boundary of the domain $V^{*,U}$ is a.s.\ finite, and they are a.s.\ distinct.
\end{lemma}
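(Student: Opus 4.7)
The plan is to split the marked points on $\partial V^{*,U}$ into two types: those inherited from $\ul{x}\cap\partial V^{*,U}$, which form a subset of the distinct finite set $\ul{x}$, and the ``new'' points that arise as ends of strands of $\Gamma_\outside^{*,V,U}$ and therefore must lie on $\partial U$. Both finiteness and distinctness will then reduce to controlling the crossings of $\partial U$ by loops and chords of $\Gamma$.

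For finiteness, let $\varphi\colon D\to\D$ be the conformal map used in the definition of $\domainpair{D}$, and set $\delta=\dist(\varphi(D\setminus V),\varphi(U))>0$. Any strand of $\Gamma_\outside^{*,V,U}$ with an end on $\partial U$ corresponds to an excursion of a loop or chord out of $\overline U$ that reaches $D\setminus V$; in $\varphi$-coordinates such an excursion must cross an annular region separating $\varphi(U)$ from $\varphi(D\setminus V)$ of width at least $\delta$. By conformal invariance, $\varphi(\Gamma)$ is again a multichordal \clek{} in $\D$, and local finiteness of \clek{} forces only finitely many loops to cross this region; the $2N$ chords in $\ul{\eta}$ contribute only finitely many additional curves. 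To bound the number of excursions contributed by each fixed loop or chord, I would invoke the fact recalled immediately before the statement — that local finiteness of \clek{} implies that each loop makes only finitely many crossings of any fixed annulus — and its direct analogue for the \slek{}-type chords. Together these give that the number of strands in $\Gamma_\outside^{*,V,U}$, and hence the total number of new marked points, is almost surely finite.

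For distinctness, the plan is to peel off strands one at a time, using the fact that a multichordal \clek{} is conditionally on all but one of its chords a monochordal \clek{}, and that the loops of a (nested) \clek{} are locally \slek{}. For $\kappa\in(4,8)$, an \slek{} curve almost surely avoids any prescribed boundary point, and hence any prescribed countable set of boundary points. Applied to the almost surely finite set of crossings of $\partial U$ produced by the previously revealed strands, this shows that two distinct strands almost surely cannot share an endpoint on $\partial U$. The same ``avoids a fixed point'' property, applied inductively to successive excursions of a single loop off $\overline U$ and conditioning on an initial portion, rules out that two distinct maximal segments of the same loop share an endpoint.

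The main obstacle I anticipate is purely one of bookkeeping for distinctness: the collection of potential coincidence points is itself random, so the ``\slek{} avoids a fixed point'' bound has to be combined with the finiteness statement of the previous paragraph via a countable-union/Fubini argument to ensure that all the finitely many pairs of endpoints produced by $\Gamma_\outside^{*,V,U}$ are simultaneously distinct on an almost sure event.
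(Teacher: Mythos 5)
Your proposal is essentially the same as the paper's proof: finiteness follows from the local finiteness of \clek{} (plus the trivial observation that only finitely many chords are present), and distinctness follows from the fact that, given some already-revealed curves, the next curve is conditionally an \slek{}-type curve that a.s.\ avoids any prescribed finite set of boundary points. The one organizational difference is that the paper avoids the ``countable-union/Fubini'' bookkeeping you anticipate as the main obstacle by instead ordering the strands according to the CLE exploration tree: it reveals strands in this intrinsic measurable order, and at each step the newly traced strand a.s.\ misses the finitely many points already created. This sidesteps the need to treat the set of potential coincidence points as random, since the conditioning is built directly into the exploration. Substantively, though, both arguments rest on the same two inputs and would go through with either bookkeeping scheme.
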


\begin{proof}
We first argue for a \clek{} (i.e.\ $N=0$). By local finiteness, the number of crossings of $V \setminus U$  by loops in $\Gamma$ is finite. Hence, a.s.\ only finitely many strands are left partially unexplored by $\Gamma_\outside^{*,V,U}$.

To see that the marked points are distinct, consider the first strand of $\Gamma_\outside^{*,V,U}$ traced by the CLE exploration tree that hits $\partial U$ at some point $z$. The other strands a.s.\ do not hit $z$. Repeating this argument, we see that the marked points of $\Gamma_\outside^{*,V,U}$ are a.s.\ distinct.

For a multichordal \clek{}, we just need to argue that the same property holds for the chords $\ul\eta$ in $\Gamma$. Since each individual chord is an \slek{} given the others, it a.s.\ does not hit given points.
\end{proof}

\begin{proof}[Proof of Theorem~\ref{thm:cle_partially_explored} ]
Let us argue for the case when $\Gamma$ is a \clek{} (i.e.\ $N=0$). The case when $\Gamma$ is a general multichordal \clek{} follows from the exact same argument.

Consider the partially explored \clek{} process  $\Gamma_\outside^{*,V,U}$ in $D$, let $\beta$ be the exterior link pattern induced by the strands of  $\Gamma_\outside^{*,V,U}$, and let  $\mu$ be the conditional law of the remainder $\Gamma_\inside^{*,V,U}$ of~$\Gamma$. 
We need to show that the conditional law of the remainder $\Gamma_\inside^{*,V,U}$ of $\Gamma$ given $\Gamma_\outside^{*,V,U}$ is a multichordal \clek{} in $(V^{*,U};\ul{x};\beta)$.

To this end, we will argue that $\mu$ is invariant under the resampling kernel in Definition~\ref{def:resampling_kernel}. Let $\ul{\eta}$ be sampled according to $\mu$, and let $\ul{j} = (j_1,j_2,j_3,j_4)$ be given. Suppose that we are on the event $E_{\ul{j}}$ that the points $x_{\ul{j}}$ are on the boundary of the same connected component $D_{\ul{j}}$ of $D \setminus \bigcup_{k \notin \ul{j}} \eta_k$. We need to explain that given $\Gamma_\outside^{*,V,U}$ and $(\eta_k)_{k \notin \ul{j}}$, the conditional law of the remaining two strands is a bichordal \clek{} in $(D_{\ul{j}};x_{\ul{j}};\beta_{\ul{j}})$.

Let us first prove the following statement. For each $\varepsilon > 0$, let $O_\varepsilon$ be the $\varepsilon$-neighborhood of $D \setminus V$. Then, conditionally on $\Gamma_\outside^{*,V,U}$, $(\eta_k)_{k \notin \ul{j}}$, \emph{and the remaining loops $\Gamma(O_\varepsilon) \subseteq \Gamma$ that intersect $O_\varepsilon$}, the remainder of $\Gamma$ has the law of a bichordal \clek{}. Letting $\varepsilon \searrow 0$, we see that every loop that does not intersect $D \setminus V$ will not intersect $O_\varepsilon$ for $\varepsilon$ small enough. Therefore the original claim will follow from the continuity of the bichordal \clek{} law (Lemma~\ref{lem:bichordal_continuous}).

To conclude the proof, we now argue that there is a countable family of explorations of $\Gamma$ so that one of them discovers all and exclusively the loops and strands of $\Gamma_\outside^{*,V,U}$, $(\eta_k)_{k \notin \ul{j}}$, and $\Gamma(O_\varepsilon)$. These explorations will have the property that the conditional laws of the unexplored parts are bichordal \clek{}, which will conclude the proof.

Suppose that $\CL_1,\CL_2$ are the two strands of $\Gamma_\outside^{*,V,U}$ with endpoints in $x_{\ul{j}}$. Since $V$ is assumed to be simply connected, the set $D \setminus V$ is connected to $\partial D$. We can therefore assume (upon relabeling $\CL_1,\CL_2$) that $\CL_2$ does not separate $\CL_1$ from $\partial D$ in $D \setminus V$. By Lemma~\ref{le:cle_finite_chain} each loop and strand in $\Gamma_\outside^{*,V,U}$ can be connected to $\partial D$ through a finite number of loops and strands in $\Gamma_\outside^{*,V,U} \cup \Gamma(O_\varepsilon)$.

We can explore loops of $\Gamma$ repeatedly with \clek{} exploration paths starting from rational boundary points of the so-far explored regions within $O_\varepsilon$. When we discover a loop that intersects $\partial U$, we explore both ends until they hit $\partial U$. Then, for each end, we can choose to either stop the strand or continue exploring until it re-enters $O_\varepsilon$ and hits $\partial U$ again. (After completing such a loop we can continue exploring either on the outside or in either component on the inside of the loop.) By Lemma~\ref{le:cle_finite_chain}, there is one such exploration that discovers $\CL_1$ in finitely many steps. Given this, the remainder of the exploration is a monochordal \clek{} in the unexplored domain. By further exploring in the domain with two marked points following the same procedure, we eventually discover $\CL_2$, and we again explore $\CL_2$ from both ends until they hit $\partial U$.

By iteratively applying Lemmas~\ref{le:sle_given_cle},~\ref{le:4strands_cle}, and~\ref{le:bichordal_given_loops}, we see that the remainder of the exploration has the conditional law of a bichordal \clek{}. Finally, again applying Lemma~\ref{le:cle_finite_chain} and Lemma~\ref{le:bichordal_given_loops}, we can explore the remaining loops in $\Gamma_\outside^{*,V,U}$, $(\eta_k)_{k \notin \ul{j}}$, and $\Gamma(O_\varepsilon)$. The remainder of $\Gamma$ given the exploration is again a bichordal \clek{}. This concludes the proof.
\end{proof}

\section{Separation of strands and independence across scales}
\label{sec:strands}

In this section we will prove an independence across scales result for CLE which roughly says that if an event for the restriction of the CLE to an annulus has probability close to $1$, then it is extremely likely that it occurs in a large fraction of concentric annuli. That is, the subsequent annuli can be considered approximately independent. The challenge in making this precise is that when we partially explore a \clekp{} on the outside of a ball, the remainder is a \emph{multichordal} \clekp{} according to Theorem~\ref{thm:cle_partially_explored}. We will see that when the marked points of the partially explored \clekp{} are sufficiently separated, then we can control the probabilities of events for multichordal \clekp{} using the continuity arguments from Section~\ref{subsec:continuity_mcle}. However, it is \emph{a priori} not easy to gain control over the behavior of a multichordal \clekp{} for a generic choice of $(\D; \ul{x}; \beta) \in \eldomain{2N}$. The main tool for circumventing this will be the coupling of \clekp{} with the GFF. We will review the basics of this coupling in Section~\ref{subsec:ig} and then we will state and prove our bounds on the separation of strands in Section~\ref{subsec:separation_bounds}. We then prove the independence across scales results in Section~\ref{subsec:ind_across_scales}.

\subsection{Review of imaginary geometry}
\label{subsec:ig}

We will now review some of the basics of the coupling of SLE with the Gaussian free field (GFF) from \cite{s2016zipper,ms2016ig1,ms2017ig4} (see also \cite{dub2009gff}).  Only in this section, we will fix $\kappa \in (0,4)$ and let $\kappa' = 16/\kappa$.  We also let
\[ \lambda = \frac{\pi}{\sqrt{\kappa}},\quad \lambda' = \frac{\pi}{\sqrt{\kappa'}},\quad\text{and}\quad \chi = \frac{2}{\sqrt{\kappa}} - \frac{\sqrt{\kappa}}{2}.\]
We assume that the reader has some familiarity with the GFF; see \cite{s2007gff} for an introduction.

\subsubsection{Flow lines}\label{subsubsec:flowlines} Suppose that $h$ is a GFF on $\h$ with boundary conditions given by $-\lambda$ (resp.\ $\lambda$) on~$\R_-$ (resp.\ $\R_+$).  It is shown in \cite{s2016zipper,ms2016ig1} that there exists a unique coupling of $h$ with an \slek{} curve~$\eta$ from~$0$ to $\infty$ so that the following is true.  Let $(g_t)$ be the Loewner flow for $\eta$, $W$ the associated driving function, and let $f_t = g_t - W_t$ be the centered Loewner flow.  Then for every a.s.\ finite stopping time $\tau$ for $\eta$ we have that $\eta([0,\tau])$ is a local set for $h$ and
\[ h \circ f_\tau^{-1} - \chi \arg (f_\tau^{-1})' \stackrel{d}{=} h.\]
Moreover, in this coupling we have that $\eta$ is a.s.\ determined by $h$ and we refer to $\eta$ as the \emph{flow line} of $h$ from $0$ to $\infty$.

More generally, suppose that we have fixed points
\begin{equation}
\label{eqn:boundary_points}
-\infty = x_{\ell+1}^L < x_\ell^L < \cdots < x_1^L \leq x_0^L = 0 = x_0^R < x_1^R < \cdots < x_r^R < x_{r+1}^R = +\infty
\end{equation}
on $\partial \h$ as well as $a_i^q \in \R$ for $q=L$, $0 \leq i \leq \ell+1$ and $q=R$, $0 \leq i \leq r+1$ where $a_0^L = -\lambda$ and $a_0^R = \lambda$.  Let $h$ be a GFF on $\h$ with boundary conditions given by $a_i^L$ in $(x_{i+1}^L,x_i^L]$ for $0 \leq i \leq \ell$ and by $a_i^R$ in $[x_i^R,x_{i+1}^R)$ for $0 \leq i \leq r$.  Let $\ul{\rho} = (\ul{\rho}^L;\ul{\rho}^R)$ where
\[ \rho_i^L = \frac{a_{i-1}^L - a_i^L}{\lambda} \quad\text{for}\quad 1 \leq i \leq \ell \quad\text{and}\quad \rho_i^R = \frac{a_i^R - a_{i-1}^R}{\lambda} \quad\text{for}\quad 1 \leq i \leq r.\] 
Then there exists a unique coupling of $h$ with an \slekr{\ul{\rho}} process $\eta$ in $\h$ from $0$ to $\infty$ with force points located at $x_\ell^L < \cdots < x_1^L$ and $x_1^R < \cdots < x_r^R$, defined up until it hits the continuation threshold, so that the following is true.  Let $(f_t)$ be the centered Loewner flow for $\eta$, $W$ the associated driving function, and let $\tau$ be an a.s.\ finite stopping time for $\eta$.  Then $\eta([0,\tau])$ is a local set for $h$ and $h \circ f_\tau^{-1} - \chi (\arg f_\tau^{-1})'$ is a GFF on $\h$ with boundary conditions given by $a_0^L = -\lambda$ in $(f_\tau(x_0^L), 0^-]$, $a_i^L$ in $(f_\tau(x_{i+1}^L),f_\tau(x_i^L)]$ for $1 \leq i \leq \ell$ and by $a_0^R = \lambda$ in $[0^+, f_\tau(x_0^R))$, $a_i^R$ in $[f_\tau(x_i^R),f_\tau(x_{i+1}^R))$ in $(x_i^R,x_{i+1}^R]$ for $1 \leq i \leq r$.  Moreover, in this coupling we have that $\eta$ is a.s.\ determined by $h$ and we refer to~$\eta$ as the \emph{flow line} of~$h$ from~$0$ to~$\infty$.

Using the change of coordinates formula for imaginary geometry, we can define the flow lines for GFFs on general simply connected domains $D \subseteq \C$ connecting $x,y \in \partial D$ distinct.  To this end, let $\varphi \colon D \to \h$ be a conformal transformation with $\varphi(x) = 0$ and $\varphi(y) = \infty$.  Set $\wt{h} = h \circ \varphi - \chi \arg \varphi'$.  Then $\wt{\eta} = \varphi^{-1}(\eta)$ is the flow line of $\wt{h}$ from $x$ to $y$.

Using the change of coordinates formula, it is possible to define the flow line of a GFF $h$ on $\h$ connecting any two boundary points (by conformally mapping them to $0$ and $\infty$).  For each $\theta \in \R$ we can also define the flow line of $h$ with angle $\theta$ starting from $x$ to be the flow line of $h + \theta \chi$ starting from $x$.  The manner in which the flow lines interact with each other is described in \cite{ms2016ig1}.  In particular, suppose that $x_1 < x_2$, $\theta_1,\theta_2 \in \R$, and $\eta_{x_i}^{\theta_i}$ is the flow line of $h$ from $x_i$ to $\infty$ of angle $\theta_i$.  If $\theta_1 > \theta_2$, then $\eta_{x_1}^{\theta_1}$ stays to the left of $\eta_{x_2}^{\theta_2}$.  If $\theta_1 = \theta_2$, then $\eta_{x_1}^{\theta_1}$ merges with $\eta_{x_2}^{\theta_2}$ upon intersecting and does not subsequently separate.  Finally, if $\theta_1 \in (\theta_2-\pi,\theta_2)$, then $\eta_{x_1}^{\theta_1}$ crosses $\eta_{x_2}^{\theta_2}$ from left to right upon intersecting.

\subsubsection{Counterflow lines}\label{subsubsec:counterflowlines} We can also couple \slekp{} curves with the the GFF.  In this case, they are referred to as \emph{counterflow lines} (rather than flow lines) because of the interpretation of the coupling. 

Suppose that $h$ is a GFF on $\h$ with boundary conditions given by $\lambda'$ (resp.\ $-\lambda'$) on $\R_-$ (resp.\ $\R_+$).  Then it is shown in \cite{s2016zipper,ms2016ig1} that there exists a unique coupling of $h$ with an \slekp{} curve $\eta'$ from $0$ to $\infty$ so that the following is true.  Let $(g_t)$ be the Loewner flow for $\eta'$, $W$ the associated driving function, and $f_t = g_t - W_t$ be the centered Loewner flow.  Then for every a.s.\ finite stopping time $\tau$ for $\eta'$ we have that $\eta'([0,\tau])$ is a local set for $h$ and
\[ h \circ f_\tau^{-1} - \chi \arg (f_\tau^{-1})' \stackrel{d}{=} h.\]
Moreover, in this coupling we have that $\eta'$ is a.s.\ determined by $h$ and we refer to $\eta'$ as the \emph{counterflow line} of $h$ from $0$ to $\infty$.

More generally, suppose that we have fixed boundary points as in~\eqref{eqn:boundary_points} as well as $a_i^q \in \R$ for $q=L$, $0 \leq i \leq \ell+1$ and $q=R$, $0 \leq i \leq r+1$ where $a_0^L = \lambda'$ and $a_0^R = -\lambda'$.  Let $h$ be a GFF on $\h$ with boundary conditions given by $a_i^L$ in $(x_{i+1}^L,x_i^L]$ for $0 \leq i \leq \ell$ and by $a_i^R$ in $[x_i^R,x_{i+1}^R)$ in $(x_i^R,x_{i+1}^R]$ for $0 \leq i \leq r$.  Let $\ul{\rho} = (\ul{\rho}^L;\ul{\rho}^R)$ where
\[ \rho_i^L = \frac{a_i^L - a_{i-1}^L}{\lambda'} \quad\text{for}\quad 1 \leq i \leq \ell \quad\text{and}\quad \rho_i^R = \frac{a_{i-1}^R - a_i^R}{\lambda'} \quad\text{for}\quad 1 \leq i \leq r.\] 
Then there exists a unique coupling of $h$ with an \slekpr{\ul{\rho}} process $\eta'$ in $\h$ from $0$ to $\infty$, defined up until it hits the continuation threshold, so that the following is true.  Let $(f_t)$ be the centered Loewner flow for $\eta'$, $W$ the associated driving function, and let $\tau$ be an a.s.\ finite stopping time for $\eta'$.  Then $\eta([0,\tau])$ is a local set for $h$ and $h \circ f_\tau^{-1} - \chi (\arg f_\tau^{-1})'$ is a GFF on $\h$ with boundary conditions given by $a_0^L = \lambda'$ in $(f_\tau(x_0^L), 0^-]$, $a_i^L$ in $(f_\tau(x_{i+1}^L),f_\tau(x_i^L)]$ for $1 \leq i \leq \ell$ and by $a_0^R = -\lambda'$ in $[0^+, f_\tau(x_0^R))$, $a_i^R$ in $[f_\tau(x_i^R),f_\tau(x_{i+1}^R))$ in $(x_i^R,x_{i+1}^R]$ for $1 \leq i \leq r$.  Moreover, in this coupling we have that $\eta'$ is a.s.\ determined by $h$ and we refer to~$\eta$ as the \emph{counterflow line} of~$h$ from~$0$ to~$\infty$.

Using the change of coordinates formula for imaginary geometry, we can define the counterflow lines for GFFs on general simply connected domains $D \subseteq \C$ connecting $x,y \in \partial D$ distinct.  To this end, let $\varphi \colon D \to \h$ be a conformal transformation with $\varphi(x) = 0$ and $\varphi(y) = \infty$.  Set $\wt{h} = h \circ \varphi - \chi \arg \varphi'$.  Then $\wt{\eta}' = \varphi^{-1}(\eta')$ is the counterflow line of $\wt{h}$ from $x$ to $y$.

Using the change of coordinates formula, it is possible to define the counterflow line of a GFF $h$ on $\h$ connecting any two boundary points (by conformally mapping them to $0$ and $\infty$).  The manner in which the counterflow lines and flow lines of a GFF interact is also described in \cite{ms2016ig1}.  Suppose that $h$ is a GFF on $\h$ with piecewise constant boundary data.  One convenient framework to describe this is in the case that the counterflow line $\eta'$ of $h$ under consideration travels from $\infty$ to $0$.  In this case, its left (resp.\ right) boundary (as viewed from $0$) is equal to the flow line of $h$ from $0$ to $\infty$ with angle $\pi/2$ (resp.\ $-\pi/2$).  More generally, $\eta'$ contains the range of any flow line with angle in $[-\pi/2,\pi/2]$ from $0$ to $\infty$.

\subsubsection{Coupling of \clekp{}}\label{subsubsec:cle_gff} It turns out that \clekp{} is naturally coupled with the GFF using the framework described just above.  In particular, suppose that $h$ is a GFF on $\h$ with boundary conditions given by $-\lambda' + \pi \chi$ on $\partial \h$.  For each $y \in \partial \h$ we let $\eta_y'$ be the counterflow line of $h$ from $\infty$ to $y$.  Then $\eta_y'$ is an \slekpr{\kappa'-6} in $\h$ from $\infty$ to $0$ with its force point located infinitesimally to the left of $\infty$ (when standing at $\infty$ and looking towards $0$).  Moreover, if $(y_n)$ is any countable dense subset of $\partial \h$ then we have that the counterflow lines $\eta_{y_n}'$ are coupled together in the same way as the branches of the exploration tree of a \clekp{} as described in Section~\ref{subsec:cle}.  This gives us the coupling of the boundary intersecting loops of the \clekp{} with $h$.  Iterating this construction in each of the complementary components gives the entire nested \clekp{}.  (We also note that if we were instead to take the boundary conditions to be given by $\lambda'-\pi \chi$, then the counterflow line from $\infty$ to any $y \in \partial \h$ is an \slekpr{\kappa'-6} in $\h$ from $\infty$ to $y$ with the force point infinitesimally to the right of $\infty$ when standing at $\infty$ and looking towards $0$.)

\subsubsection{Flow lines from interior points}

It is also possible to start flow lines of the GFF starting from interior points \cite{ms2017ig4}.  The easiest setting in which one can consider these flow lines is with a whole-plane GFF.  A whole-plane GFF does not have well-defined values and one must consider it modulo a global additive constant.  Since the flow lines of a GFF only depend on the values of the field modulo a global multiple of $2\pi \chi$, it is thus natural to consider the whole-plane GFF~$h$ modulo a global multiple of $2\pi \chi$.  In this case, the flow line of $h$ from $0$ to $\infty$ is a whole-plane \slekr{2-\kappa} process.  Just like for interior flow lines, these flow lines are local sets for $h$ and the boundary conditions of the field given the flow line up to a stopping time take the same form.  One can more generally define the flow line starting from an interior point and a given angle to be the flow line of the field plus $\theta \chi$ starting from that interior point.  These flow line interact with each other in the same way as flow lines starting from boundary points.  Flow lines of the GFF starting from interior points are also defined for the GFF on a domain other than $\C$.  In this case, their law is absolutely continuous with respect to whole-plane \slekr{2-\kappa}.

\subsubsection{Space-filling \slekp{}}
\label{subsubsec:space_filling_sle}

We can use the flow lines of the GFF starting from interior points to define an ordering of space.  Namely, suppose that $h$ is a whole-plane GFF with values modulo a global multiple of $2\pi \chi$.  For each $z \in \C$ with rational coordinates we let $\eta_z^L$ be the flow line of $h$ with angle $\pi/2$.  Then for such $z,w$ distinct, we have that $\eta_z^L$, $\eta_w^L$ a.s.\ merge.  We say that $z$ comes before (resp.\ after) $w$ if $\eta_z^L$ merges into $\eta_w^L$ on its right (resp.\ left) side.  Then it is shown in \cite{ms2017ig4} that there exists a space-filling curve $\eta'$ which visits these points according to this ordering.  This gives the definition of space-filling \slekp{}.  One can similarly order space using the flow lines $\eta_z^R$ of angle $-\pi/2$ and one gets the same ordering.  In fact, $\eta_z^L$ and $\eta_z^R$ together give the left and right boundaries of $\eta'$ stopped upon hitting $z$.

The ordering induced by space-filling \slekp{} also makes sense if one considers a GFF on a domain in $\C$ rather than all of $\C$ if the boundary conditions are appropriate.  If one targets it at any fixed boundary point then it agrees with the counterflow line targeted at that boundary point.  It also agrees with the space-filling path associated with \clekp{} and this is how the local finiteness of \clekp{} was proved in \cite{ms2017ig4}.

\subsubsection{Good scales for the $\mathrm{GFF}$}\label{subsec:gff_good_scales}
In this subsection we recall the definition of the $M$-good scales for a GFF from~\cite[\S4.1-4.2]{mq2020notsle}, along with some results from~\cite{mq2020notsle} regarding their properties.  
We will also introduce a version for scales around boundary points that will also be useful in the rest of the paper.

\textit{Interior case.} Let $h$ be a GFF in a domain $D\subseteq \C$ with some fixed boundary values. In the case where $h$ is a whole-plane GFF in $\C$, we consider $h$ as defined modulo $2\pi\chi$. 
For any $z \in \C$, $r > 0$ such that $B(z,r)\subset D$,  we let $\CF_{z,r}$ be the $\sigma$-algebra generated by the values of $h$ outside of $B(z,r)$. 
 By the Markov property of the GFF, we can write $h = h_{z,r} + \Fh_{z,r}$ where $h_{z,r}$ is a GFF on $B(z,r)$ with zero boundary conditions, $\Fh_{z,r}$ is a distribution on $D$ which is harmonic in $B(z,r)$, and $h_{z,r}$ and $\Fh_{z,r}$ are independent of each other.  Note that $\Fh_{z,r}$ is measurable with respect to $\CF_{z,r}$ and $h_{z,r}$ is independent of $\CF_{z,r}$.  For $M > 0$ fixed, we say that $ (z,r)$ is $M$-good for $h$ if:
\begin{equation}\label{eq:M_good_scale}\sup_{w \in B(z,15r/16)} | \Fh_{z,r}(w) - \Fh_{z,r}(z)| \leq M.\end{equation}
We note that since $\Fh_{z,r}$ is harmonic in $B(z,r)$ we have that $\Fh_{z,r}(z)$ is equal to the average~$h_r(z)$ of~$h$ on $\partial B(z,r)$.  Let $E_{z,r}^M$ be the event that $ (z,r)$ is $M$-good, then  $E_{z,r}^M$ is $\CF_{z,r}$-measurable.  As we will see in Section~\ref{subsec:separation_bounds},  the $M$-good scales are useful because then the Radon-Nikodym derivative which compares the law of $h$ in $B(z,r)$ away from $\partial B(z,r)$ to a field on $B(z,r)$ with boundary conditions comparable to $\Fh_{z,r}(z) = h_r(z)$ has finite moments of all orders.  
We will recall the proof of this shortly. Since flow lines of the GFF starting from interior points only depend on the field modulo $2\pi \chi$, we can in fact compare to a GFF with bounded boundary conditions.
  
We recall the following result from~\cite{mq2020notsle} regarding the density of good scales.

\begin{lemma}\label{lem:M_good_density}Suppose that $h$ is a $\mathrm{GFF}$ in $D$ with some bounded boundary conditions. For every $a\in(0,1)$ and $b>0$ there exist $M>0$ and $c>0$ so that the following holds. Let $z\in D$ and $r>0$ such that $B(z,r)\subseteq D$. Then for every $k\in\N$, off an event of probability $ce^{-bk}$, the number of $j=1,\ldots,k$ such that $E^{M}_{z,r2^{-j}}$ occurs is at least $(1-a)k$. 
\end{lemma}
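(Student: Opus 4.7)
The plan is to apply the Markov property of the GFF iteratively to decompose $h$ inside nested dyadic balls into independent random harmonic contributions across scales, bound the oscillation defining $E^M_{z,r2^{-j}}$ by a geometrically weighted sum of i.i.d.\ Gaussian-tailed quantities, and then count bad scales using standard sub-Gaussian concentration. First, write $h=h_0+u$ where $h_0$ is the zero-boundary GFF on $D$ and $u$ is the harmonic extension of the bounded boundary values; the standard gradient estimate for harmonic functions yields $\sup_{w\in B(z,r2^{-j}\cdot 15/16)}\abs{u(w)-u(z)}\leq C_0$ uniformly in $j$ (depending only on $\norm{u}_{L^\infty(D)}$), and this deterministic correction is absorbed into $M$. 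Hence it suffices to prove the statement for $h_0$; the whole-plane case (taken modulo $2\pi\chi$) is analogous and slightly simpler.

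By iterating the Markov property, on $B(z,r2^{-j})$ one obtains the decomposition $h_0=h^{(j)}+\sum_{i=0}^{j}\Fh^{(i)}$, where $h^{(i)}$ is a zero-boundary GFF on $B(z,r2^{-i})$, $\Fh^{(i)}$ is a random harmonic function on $B(z,r2^{-i})$, and the $\Fh^{(i)}$ are \emph{mutually independent}; moreover, for $i\geq 1$, rescaling $B(z,r2^{-i})$ to $\D$ maps $\Fh^{(i)}$ to a single fixed Gaussian harmonic field (the harmonic extension to $\D$ from $\partial B(0,1/2)$ of a zero-boundary GFF on $\D$). This gives the cumulative identity $\Fh_{z,r2^{-j}}=\sum_{i=0}^{j}\Fh^{(i)}$ on $B(z,r2^{-j})$. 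Setting $V_i\defeq\sup_{B(z,r2^{-i}/2)}\abs{\Fh^{(i)}-\Fh^{(i)}(z)}$ and $W_j\defeq\sup_{B(z,r2^{-j}\cdot 15/16)}\abs{\Fh^{(j)}-\Fh^{(j)}(z)}$, the gradient estimate for each $\Fh^{(i)}$ on its larger domain $B(z,r2^{-i})$ yields
\[ \sup_{w\in B(z,r2^{-j}\cdot 15/16)}\abs{\Fh_{z,r2^{-j}}(w)-\Fh_{z,r2^{-j}}(z)} \;\leq\; W_j \;+\; C\,V_0\cdot 2^{-j} \;+\; C\sum_{i=1}^{j-1}2^{i-j}V_i. \]
By Fernique's theorem, the pairs $(V_i,W_i)_{i\geq 1}$ are i.i.d.\ with Gaussian tails, and $V_0$ also has Gaussian tails.

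For $M$ large, the complement of $E^M_{z,r2^{-j}}$ forces at least one of $\{W_j>M/4\}$, $\{CV_0\cdot 2^{-j}>M/4\}$, and $\{S_j\defeq C\sum_{i=1}^{j-1}2^{i-j}V_i>M/4\}$ to occur. The events $\{W_j>M/4\}$ are i.i.d.\ with probability $\leq C_1 e^{-c_2 M^2}$, so Hoeffding's inequality bounds the fraction of $j\leq k$ on which they occur by $a/3$ off an event of probability $\leq e^{-b_1 k}$ once $M$ is large. The second event occurs only for $j\leq \log_2^+(4CV_0/M)$, an $O(\log V_0)=o(k)$ random contribution which is negligible with overwhelming probability. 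For the third, the key observation is that $S_j\leq C\max_{i<j}V_i$, so $\{S_j>M/4\}$ forces some $V_i>M/(4C)$ with $i<j$, and once the other terms are controlled such a large $V_i$ can cause $\{S_j>M/4\}$ only for $j\in[i+1,\,i+O(1+\log_2^+(V_i/M))]$; hence the total count of $S_j$-bad scales is dominated by $\sum_{i<k}\one_{V_i>M/(4C)}\bigl(1+\log_2^+(V_i/M)\bigr)$, which is a sum of i.i.d.\ random variables with sub-Gaussian tails. Bernstein's inequality then gives exponential concentration in $k$, and choosing $M=M(a,b)$ large enough balances the three contributions. \textbf{The main obstacle} is handling the AR(1)-type Markovian dependence of the sequence $(S_j)$; the crucial observation that each large value $V_i$ affects only a logarithmic window of $S_j$'s is what reduces the problem to standard i.i.d.\ sub-Gaussian concentration.
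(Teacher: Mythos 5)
Your proof is a genuinely different route from the paper's. The paper's proof simply cites \cite[Proposition~4.3]{mq2020notsle} for the whole-plane case and reduces the domain case to it by writing the whole-plane GFF as $h^D + h_D$ with $h^D$ harmonic in $D$, then conditioning on a positive-probability event that $h^D$ has small oscillation (so that $M$-good scales agree up to an $O(1)$ shift in $M$). Your proof is instead a from-scratch argument: iterate the domain Markov property to decompose the harmonic part $\Fh_{z,r2^{-j}}$ into independent contributions $\Fh^{(i)}$ across scales, each a fixed Gaussian field after rescaling, so that the oscillation is bounded by $W_j + C\sum_{i<j} 2^{i-j} V_i$ with $(V_i, W_i)$ i.i.d.\ with Gaussian tails, and then count bad scales via concentration. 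This is essentially re-deriving what presumably lives inside the proof in \cite{mq2020notsle}, and it is more self-contained (it also directly handles arbitrary bounded boundary data by subtracting the deterministic harmonic extension, with a uniform $O(2^{-j})$ oscillation bound).

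The decomposition and oscillation estimate are correct, and the split of the failure event into the three pieces $\{W_j > M/4\}$, $\{C V_0 2^{-j} > M/4\}$, $\{S_j > M/4\}$ is the right structure. The one place that needs tightening is the count of $S_j$-bad scales. Your ``key observation'' that each bad $j$ falls in a window $[i+1, i + O(1+\log_2^+(V_i/M))]$ for some $i$ with large $V_i$ is true, but the window length as written (with the exponent $1$ on $\log_2^+$) does not quite give it: to rule out an uncovered bad $j$ one must split the contribution from small $V_i$'s and big $V_i$'s, which requires a gap factor in the threshold (e.g.\ small means $V_i \le M/(8C)$) and a window length $c_1 + 2\log_2^+(V_i/M)$ (exponent $2$ on the log, with $c_1$ depending on the gradient-estimate constant $C$) so that the big-$V_i$ tail contribution $C\sum_{\rmin} 2^{i-j} V_i$ is summable in $i$ and bounded by $M/8$; with exponent $1$ the terms $C2^{i-j}V_i$ outside their windows are each only $O(M)$ and the sum over $i$ is not controlled. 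Alternatively the entire delicacy can be bypassed by the cruder Markov bound $\#\{j\le k : S_j > M/4\} \le (4/M)\sum_j S_j \le (8C/M)\sum_{i<k} V_i$, followed by concentration for the i.i.d.\ sub-Gaussian sum $\sum_i V_i$. Also, the final appeal to Bernstein's inequality is a bit loose: the summands $\one_{V_i > M/(4C)}(1 + \log_2^+(V_i/M))$ have a $\psi_1$ norm that does not vanish as $M\to\infty$, so the sub-exponential part of the Bernstein rate does not improve with $M$; a direct Chernoff bound using $\E[e^{\lambda X_i}] \le 1 + p(C_\lambda - 1)$ with $p = \p[V_i > M/(4C)] \to 0$ gives the rate $> b$ cleanly. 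These are all fixable, and the skeleton of your argument is sound, but as written the constants in the window length and the choice of concentration tool do not quite close the argument.
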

\begin{proof} In the case of a whole-plane GFF this is~\cite[Proposition~4.3]{mq2020notsle}. To see that this holds also for a GFF in a domain, let us write the whole-plane GFF as an independent sum $h^D+h_D$ where $h_D$ is a zero-boundary GFF on $D$ and $h^D$ is a random distribution that is harmonic in $D$. If we let $D_\varepsilon = \{z \in D \mid \dist(z,\partial D) \ge \varepsilon \}$ for some $\varepsilon>0$, then the probability that $\sup_{z,w \in D_\varepsilon} \abs{h^D(z)-h^D(w)} \le 1$ is positive. Therefore the statement of Lemma~\ref{lem:M_good_density} for $h_D$ follows from the whole-plane version. 
\end{proof}

As mentioned, at good scales $(z,r)$ we can compare the values of $h$ away from $\partial B(z,r)$ to those of a GFF with prescribed  boundary conditions, moreover the Radon-Nikodym derivatives between these laws are bounded in $L^p$ for every $p\in(1,\infty)$~\cite[\S4.1-4.2]{mq2020notsle}. We briefly recall the argument.
Let $ {\Fg}$ be a function which is harmonic in $\D$ with some prescribed boundary values.    Let $\psi_{z,r}(w) = r^{-1}(w -z)$ so that $\psi_{z,r}$ maps $B(z,r)$ to $\D$ and $\Fg_{z,r} = \Fg \circ \psi_{z,r}$.  Let $\phi$ be a fixed $C_0^\infty$ function such that $\phi|_{B(0, 7/8)} \equiv 1$, $\phi|_{B(0,15/16)^c} \equiv 0$, and let $\phi_{z,r} = \phi \circ  \psi_{z,r}$.  

In order to    interpolate between the field $h$ and the field $h-\Fh_{z,r}+\Fg_{z,r}$ (which has boundary values given by $\Fg_{z,r}$), we   will consider the following field $ \wt {h}_{z,r}$.
Let\[ C_{z,r}=2\pi\chi \lfloor \Fh_{z,r}(z)/(2\pi\chi)\rfloor,\]
and
\[ \wt {h}_{z,r} = h|_{B(z,r)}-C_{z,r}  +(\Fg_{z,r} - (\Fh_{z,r} - C_{z,r}))(1-\phi_{z,r}).\]
We note that $\wt h_{z,r} $ agrees with $h$ in $B(z,7r/8)$ up to a multiple of $2\pi \chi$, which means that the flow lines of $\wt {h}_{z,r} $ and $h$ in $B(z,7r/8)$ agree.
Moreover, in  $A(0,15r/16,r)$ we have that $\wt {h}_{z,r} =h-\Fh_{z,r}+\Fg_{z,r}$. 
If $(z,r)$ is $M$-good for $h$, we have that the restriction of $\Fg_{z,r}-\Fh_{z,r}$  to $B(z,15r/16)$ has Dirichlet energy bounded by a constant depending only on $M$. As a consequence, the Radon-Nikodym derivative of the conditional law of $\wt {h}_{z,r}$ given $\CF_{z,r}$ w.r.t.\ the law of $h-\Fh_{z,r}+\Fg_{z,r}$ has    finite moments of all orders which depend only on $M$.

\textit{Boundary case.}  We now describe the setting in the case of boundary points. We will use the same nomenclature regarding good scales, it will be clear from the context if we are referring  to an interior or a boundary $M$-good scale. 

Assume that $h$ is a GFF in $\h$ with some bounded boundary values. 
For $r\in(0,1)$, we let $\CF_{0,r}$ be the $\sigma$-algebra generated by the values of $h$ outside of $B(0,r) \cap\h$. 
Again, we can write $h=\wt  h_{0, r} + \wt\Fh_{0,  r}$ where $ \wt  h_{0,  r}$ is a zero boundary GFF in $B(0,  r)\cap \h$ and $\wt\Fh_{0,  r}$ is a distribution which  is independent of $\wt h_{0,  r}$, harmonic in $B(0,  r)\cap \h$, and coincides with $h$ in $\ol{\h}\setminus( B(0,  r)\cap\h)$. 
For $M > 0$ we    say that $( 0,   r)$ is   $M$-good   for~$h$ if  
\begin{equation}
\label{eq:M_good_boundary}
\sup_{w \in B(0,15   r/ 16)\cap \h} |\wt \Fh_{0,  r}(w) | \leq M.
\end{equation}
Similarly as above, letting $E_{0,r}^{\partial, M}$   the event that $ (0,r)$ is $M$-good we have that  $E_{0,r}^{\partial, M}$ is $\CF_{0,r}$-measurable.  
Again, on $M$-good scales we can compare the law of $h$ in $B(0,r) \cap \h$ away from $\partial B(0,r)$ to a field on $B(0,r)\cap\h$, only that now their boundary values need to agree on $\R$. Then the Radon-Nikodym derivative which compares the laws of the two fields has finite moments of all orders.  Moreover, the following density result also  holds.

\begin{lemma}\label{lem:M_good_bd_density}Suppose that $h$ is a $\mathrm{GFF}$ in $\h$ with some bounded boundary conditions. For every $a\in(0,1)$ and $b>0$ there exist $M>0$ and $c>0$ so that the following holds. For every $k\in\N$, off an event of probability $ce^{-bk}$, the number of $j=1,\ldots,k$ such that $E^{\partial, M}_{0,2^{-j}}$ occurs is at least $(1-a)k$. 
\end{lemma}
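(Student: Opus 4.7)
The plan is to adapt the proof of Lemma~\ref{lem:M_good_density} (i.e., the argument of \cite[Proposition~4.3]{mq2020notsle}) to the boundary setting. The key structural observation is that, since $h$ has bounded boundary conditions on $\R$, the harmonic part $\wt\Fh_{0,r}$ automatically has boundary values on $(-r,r)$ bounded by a constant depending only on those of $h$. In contrast to the interior case—where one must first subtract off the circle average to get a quantity with bounded variance—the pinning of $\wt\Fh_{0,r}$ at the real axis already provides a baseline, so we may directly bound $\abs{\wt\Fh_{0,r}}$ rather than its variation.

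First, I would set $r_j = 2^{-j}$ and perform the telescoping Markov decomposition at these dyadic scales. Writing $H^{(j)} = \wt\Fh_{0,r_j} - \wt\Fh_{0,r_{j-1}}$ on $B(0,r_j)\cap\h$, the function $H^{(j)}$ is harmonic there, vanishes on $(-r_j,r_j)$, and has boundary values on the semicircle $\partial B(0,r_j)\cap\h$ given by the restriction of the zero-boundary GFF $\wt h_{0,r_{j-1}}$. By the Markov property applied iteratively, $\wt h_{0,r_{j-1}}$ (and hence $H^{(j)}$) is independent of $\CF_{0,r_{j-1}}$, which contains $H^{(1)},\dots,H^{(j-1)}$; thus $(H^{(j)})_{j\geq 1}$ is a mutually independent sequence, identically distributed up to the Brownian scaling $w \mapsto H^{(j)}(r_j w)$.

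Next, I would set $X_k = \sup_{w\in B(0,1/2)\cap\h}\abs{H^{(k)}(r_k w)}$. By Gaussian concentration for the suprema of continuous Gaussian processes, the $X_k$ are i.i.d.\ with uniform Gaussian tails. For $w \in B(0, 15 r_j/16)\cap\h$ and $k \le j$, the harmonic function $H^{(k)}$ vanishes on $(-r_k,r_k) \supset (-r_j,r_j)$ and is controlled by $X_k$ on $B(0,r_k/2)\cap\h$; by a boundary Schwarz / Harnack argument (or direct Poisson kernel computation) this gives $\abs{H^{(k)}(w)} \lesssim (\im(w)/r_k) X_k \le 2^{-(j-k)} X_k$. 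Summing the telescoping identity yields
\begin{equation*}
\sup_{w \in B(0, 15 r_j/16) \cap \h} \abs{\wt\Fh_{0,r_j}(w)} \le \sup_{w \in B(0, 15/16) \cap \h} \abs{\wt\Fh_{0,1}(w)} + \sum_{k=1}^{j} 2^{-(j-k)} X_k ,
\end{equation*}
and the right-hand side has Gaussian tails uniformly in $j$, so $\p[(E_{0,r_j}^{\partial,M})^c] \le C e^{-c M^2}$.

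For the density statement, I would exploit that the value of the sum $\sum_{k=1}^j 2^{-(j-k)} X_k$ is determined up to error $O(2^{-L})$ by $(X_k)_{k \in [j-L,j]}$, so the events $E_{0,r_j}^{\partial,M}$ at scales $j,j'$ with $\abs{j-j'} \ge L$ are approximately independent. Grouping scales into blocks of size $L$ (chosen depending on $a,b$) and using the mutual independence of the $X_k$, a standard Bernstein / Hoeffding-type concentration argument—exactly as in the proof of \cite[Proposition~4.3]{mq2020notsle}—yields the exponential bound $c e^{-bk}$ on the probability of having fewer than $(1-a)k$ good scales among $j=1,\dots,k$. I expect the main technical step to be this last one, since the events are only approximately independent; however, the structure of i.i.d.\ Gaussian inputs with exponentially decaying weights is identical to that in the interior case, so the argument there transfers directly.
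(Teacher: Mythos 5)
Your proposal follows essentially the same strategy as the paper, which itself delegates the work to \cite[Lemma~4.4 and Proposition~4.3]{mq2020notsle}: you are reconstructing the boundary analogue of that argument (telescoping $H^{(j)}=\wt\Fh_{0,r_j}-\wt\Fh_{0,r_{j-1}}$, independence of increments from the Markov property, tail bounds per scale, block concentration) rather than citing it. The key structural point — that pinning $\wt\Fh_{0,r}$ on $\R$ replaces the circle-average recentering used in the interior case — is exactly right, and is the same observation the paper exploits via the maximum principle (bounding on the semicircle $\partial B(0,15r/16)\cap\h$ and using $|\wt\Fh_{0,r}|\le\textrm{const}$ on $\R$); your direct boundary Schwarz estimate is an equivalent route.

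There are, however, two places where the write-up as stated has a gap. First, the radius $1/2$ in $X_k=\sup_{B(0,1/2)\cap\h}|H^{(k)}(r_k\cdot)|$ does not cover the coarsest increment $k=j$: to bound $\sup_{B(0,15r_j/16)\cap\h}|H^{(j)}|$ you need the supremum for $X_j$ taken over a region containing $B(0,15r_j/16)\cap\h$, which $B(0,r_j/2)\cap\h$ does not. Replacing $1/2$ by, say, $15/16$ fixes this (the sup is still a.s.\ finite with Gaussian tails, since $B(0,15r_k/16)\cap\h$ stays away from $\partial B(0,r_k)$ where the GFF data lives). Second, and more significantly, your displayed inequality retains the term $\sup_{B(0,15/16)\cap\h}|\wt\Fh_{0,1}|$ with no decay in $j$. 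This single random variable then enters your bound at every scale $j$, so the events it defines are \emph{not} approximately independent across well-separated scales; the $O(2^{-L})$ localization you invoke applies only to $\sum_k 2^{-(j-k)}X_k$, not to the $\wt\Fh_{0,1}$ contribution. The fix is to apply the same pinning observation to $\wt\Fh_{0,1}$: split it into the harmonic extension of $h|_{(-1,1)}$ (deterministic, bounded by the sup norm of the boundary data) and the harmonic extension of the GFF data on $\partial B(0,1)\cap\h$ with zero boundary values on $(-1,1)$; the latter vanishes on $\R$, so for $w\in B(0,15r_j/16)\cap\h$ it is $\lesssim 2^{-j}X_0$ with $X_0$ tail-controlled. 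Then the bound becomes $C_0+\sum_{k=0}^j2^{-(j-k)}X_k$ with $C_0$ deterministic (absorbed into $M$), and the block-localization and concentration argument goes through as you describe.
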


\begin{proof} We see that we can reproduce the proof of~\cite[Lemma~4.4]{mq2020notsle}  in the boundary set up. Indeed 
the analogue of~\cite[Lemma~4.4]{mq2020notsle} holds when restricting to $w\in\partial B(0,15   r/ 16)\cap \h$; for all points  $w \in B(0,15   r/ 16)\cap \h$ it  then follows from the maximum principle. Hence, noting that whether $( 0,   r)$ is $M$-good is 
a measurable event w.r.t.\ the  $\sigma$-algebra  $\CF_{0,r}$ generated by $ h|_{\h \setminus B(0,   r)}$, we have that the statement of the lemma can be deduced in the same way as~\cite[Proposition~4.3]{mq2020notsle}. 
\end{proof}

We now deduce, in the case of boundary good scales, the $L^p$-bounds for the Radon-Nikodym derivatives between the law of $h$ (away from $\partial B(0,r) \cap \h$) and the law of a reference field, for every $p\in(1,\infty)$.

Let $ {\Fg}$ be a function which is harmonic in $\D\cap\h$ with some prescribed boundary values. 
Let $\psi^*_{  r}(w)=   r^{-1}w$ so that $\psi^*_{r}$ maps $B(0,r)\cap\h$ to $\D\cap\h$.
Let $\wt \phi $ be $C^\infty_0$ such that  $\wt \phi|_{B(0,7/8)} \equiv 1$, $\wt \phi|_{B(0,15/16)^c} \equiv 0$. Let  $\wt \phi_{  r}=\wt \phi \circ\psi^{*}_{  r}  $ and let $\wt\Fg_r$ be the harmonic function in $B(0,r) \cap\h$ whose boundary values agree with $\Fg \circ \psi^{*}_{  r}$ on $\partial B(0,r) \cap \h$ and with $h$ on $[-r,r]$. Consider the following field
\[\wt {h}_{ 0,  r} =h|_{B(0,  r)\cap\h}  + (\wt\Fg_{  r} - \wt \Fh_{0,  r})(1-\wt \phi_{  r}).\]
Note that $\wt {h}_{ 0,  r}$ coincides with $ h$ in  $ B(0,7  r/8)\cap\h$.
We also have that $\wt {h}_{ 0,  r}=h-\wt \Fh_{0,  r}+\wt\Fg_{  r}$ in  $A(0,15r/16,r) \cap\h$.

Note that the boundary values of $\wt\Fg_{r}-\wt\Fh_{0,r}$ are zero on $[-r,r]$. Therefore, if $(0,r)$ is $M$-good for $h$, we again have that the restriction of $\wt\Fg_{r}-\wt\Fh_{0,r}$  to $B(0,15r/16)\cap\h$ has Dirichlet energy bounded by a constant depending only on $M$. Thus, the Radon-Nikodym derivative of the conditional law of $\wt {h}_{0,r} $ given $\CF_{0,r}$ w.r.t.\ the law of $h-\wt \Fh_{0,r}+\wt \Fg_{r}$ has finite moments of all orders which depend only on $M$.

\subsection{Separation bounds}
\label{subsec:separation_bounds}

We will use the coupling of \clekp{} with the GFF described in Section~\ref{subsubsec:cle_gff} to gain control on the separation of the strands of the partially explored \clekp{}. The coupling is not local in the sense that if $U$ is an open set then one cannot determine the loops which intersect~$U$ by observing only the field values in $U$ (see Figure~\ref{fi:spf_ordering}). We will instead analyze a random variable that is a (local) function of the GFF on an annulus and bounds from above the number of crossings that \clekp{} loops make across the annulus.

\begin{definition}
\label{def:delta-sep-domain}
 Let $\delta > 0$ and $0 < r < R$. We say that (the marked points of) $\Gamma_\outside^{*,B(z,R),B(z,r)}$ are $\delta$-separated if $\abs{y_i-y_j} \ge \delta r$ for each distinct pair $y_i,y_j$ of marked points of $\Gamma_\outside^{*,B(z,R),B(z,r)}$.
\end{definition}

The main results of this subsection are Lemma~\ref{lem:separation_density} and its boundary version Lemma~\ref{lem:separation_density_bd}.

\begin{lemma}\label{lem:separation_density} 
Fix $u\in(0,1)$. For every $a\in(0,1)$, $b>0$, there exists $\delta>0$ so that the following holds. 
Let $D\subseteq\C$ be a simply connected domain and let $\Gamma$ be a nested \clekp{}  in $D$.  Let $j_0 \in\Z$ and $z\in D$ be such that $B(z,2^{-j_0})\subseteq D$.  For $j > j_0$, let $E^\separated_{z,j}$ be the event that the marked points of $\Gamma_\outside^{*,B(z,(1+u)2^{-j}),B(z,2^{-j})}$ are $\delta$-separated. For $k \in \N$ let $G_{z,j_0,k}$ be the event that the number of $j=j_0+1,\ldots,j_0+k$ such that $E^\separated_{z,j}$ occurs is at least $(1-a)k$. Then
\[ \p[(G_{z,j_0,k})^c] = O(e^{-bk}) \]
where the implicit constant does not depend on $z,j_0,k$.
\end{lemma}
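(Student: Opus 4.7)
The strategy will be to use the coupling of $\clekp{}$ with the Gaussian free field recalled in Section~\ref{subsubsec:cle_gff}, together with the machinery of $M$-good scales from Section~\ref{subsec:gff_good_scales}. The overall structure will mirror the proof of Lemma~\ref{lem:M_good_density}: I will first obtain a uniform lower bound for the conditional probability of $E^{\separated}_{z,j}$ at each scale on an $M$-good event for the coupling GFF, and then upgrade it to exponential concentration for the density of such scales.

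The first step will be to establish the following reference statement. Given $M$, for a GFF on $\D$ with bounded boundary values (coupled with a $\clekp{}$ as in Section~\ref{subsubsec:cle_gff}), the marked points of the analogous partial exploration between the balls $B(0,1+u)\cap\D$ and $B(0,1)$ are $\delta$-separated with probability at least $1-a/8$, provided $\delta$ is chosen small enough depending on $(M,a)$. By local finiteness of $\clekp{}$ (Section~\ref{subsec:cle}) together with Lemma~\ref{lem:finitely-marked-points-marginal}, the number of these marked points is a.s.\ finite and they are a.s.\ distinct; tightness in the number of marked points together with a standard continuity argument will yield the bound.

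The second step will be to transfer this to the actual $\clekp{}$ on the $M$-good event. The marked points of $\Gamma_\outside^{*,B(z,(1+u)2^{-j}),B(z,2^{-j})}$ are determined by the CLE strands inside $A(z,2^{-j},(1+u)2^{-j})$ together with ``entry data'' on $\partial B(z,(1+u)2^{-j})$ coming from outside. Conditioning on the GFF outside $B(z,(1+u)2^{-j})$ fixes this entry data, and on the event $E^{M}_{z,(1+u)2^{-j}}$ the conditional law of the GFF inside (away from $\partial B(z,(1+u)2^{-j})$) is absolutely continuous with respect to a reference field with bounded boundary values, with Radon-Nikodym derivative having $L^p$-moments bounded in terms of $M$. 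Combined with the reference bound and Hölder's inequality, this will give
\[
 \p\bigl[(E^{\separated}_{z,j})^c \bigm| \CF_{z,(1+u)2^{-j}}\bigr]\,\one_{E^{M}_{z,(1+u)2^{-j}}} \le a/4 .
\]
Finally, applying Lemma~\ref{lem:M_good_density} with parameter $a/4$ to pick $M$ so that $E^{M}_{z,(1+u)2^{-j}}$ holds on at least $(1-a/4)k$ of the scales off an event of exponentially small probability, and then using a Chernoff/Azuma type exponential concentration argument along the filtration $(\CF_{z,(1+u)2^{-j}})_{j}$ (as in the proof of Lemma~\ref{lem:M_good_density}), will yield that off an event of probability $O(e^{-bk})$ at least $(1-a)k$ of the scales $j\in\{j_0+1,\dots,j_0+k\}$ satisfy $E^{\separated}_{z,j}$.

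The hard part will be the second step. Since $\clekp{}$ is not a local function of the GFF, one cannot directly assert that the marked points on $\partial B(z,2^{-j})$ are a function of the GFF inside $B(z,(1+u)2^{-j})$ alone given the field outside, so the ``entry data'' interpretation above must be justified carefully. The likely way to sidestep this nonlocality, and the reason the paper introduces a random variable which is a local function of the GFF on the annulus and dominates the number of loop crossings of that annulus, is to reduce the geometric separation question to a purely field-theoretic quantity that genuinely depends only on the GFF restricted to $A(z,2^{-j},(1+u)2^{-j})$, and then control this quantity via the $M$-good scale Radon-Nikodym estimates.
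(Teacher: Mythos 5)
Your high-level architecture matches the paper's: reduce to a per-scale estimate using the CLE--GFF coupling, invoke $M$-good scales (Lemma~\ref{lem:M_good_density}) and the Radon-Nikodym bounds to transfer a reference-scale bound to each $M$-good scale, and then use concentration along the filtration $(\CF_{z,2^{-j}})_j$ to get the exponential density. You also correctly diagnose the obstruction that would kill the naive version of your Step~2 — the CLE--GFF coupling is not local, so the marked points of $\Gamma_\outside^{*,B(z,(1+u)2^{-j}),B(z,2^{-j})}$ are not a function of the field restricted to $B(z,(1+u)2^{-j})$ conditionally on the field outside — and you correctly state in spirit what the fix must be: replace the separation event by a proxy event that is measurable with respect to the GFF modulo $2\pi\chi$ restricted to the annulus and that implies the separation.

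The gap is that you never actually construct this proxy event, and that construction is the crux of the paper's argument (Lemma~\ref{le:separation_gff}). Concretely, the paper lays down a fine grid $\CD_n$ in the annulus $A(0,1-u/2,1+u/2)$, runs flow lines of angle $\pm\pi/2$ from each grid point until they exit the annulus, and forms ``pockets'' bounded by merged flow lines. On the event $F_n$ that every point of $\partial B(0,1)$ lies inside such a pocket, each marked point of the partial exploration is the first or last point of $\partial B(0,r)$ hit by the counterflow line run inside the pocket; both the pockets and the counterflow lines within them are measurable with respect to the GFF modulo $2\pi\chi$ on the annulus, which gives the required locality, and the a.s.\ positivity of pairwise distances of these hitting points (since non-space-filling SLE$_{\kappa'}$ does not hit fixed points) lets one take $E_n \subseteq F_n$ with $\p[E_n]\to 1$ so that on $E_n$ the marked points are $\delta_n$-separated. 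Your Step~1, as written, gives tightness of the number of marked points (via Lemma~\ref{lem:finitely-marked-points-marginal} and local finiteness) and an a.s.\ lower bound on separation for a fixed reference configuration, but because that separation statement is itself nonlocal it cannot be pushed through the Radon-Nikodym comparison; you need the local event $E_n$ as the object whose probability you control under the reference field. Until that event is produced, the proposal is an accurate outline but not a proof.
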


The proof of Lemma~\ref{lem:separation_density} is based on the following lemma.

\begin{lemma}
\label{le:separation_gff}
Fix $u \in (0,1)$. There is a sequence of $\delta_n > 0$ and events $(E_n)$ measurable with respect to the values of the GFF modulo $2\pi\chi$ on $A(0,1-u/2,1+u/2)$ with $\lim_{n\to\infty} \p[E_n] \to 1$ and such that the following holds.

Let $\Gamma$ be a \clekp{} in a simply connected domain $D$ coupled with $h$ as in Section~\ref{subsubsec:cle_gff}. Suppose that $z \in \D$ and $r>0$ are such that $B(z,(1+u)r) \subseteq D$. Then for each $n$, on the event that $h(z+r\cdot) \in E_n$ the marked points of $\Gamma_\outside^{*,B(z,(1+u)r),B(z,r)}$ are $\delta_n$-separated.
\end{lemma}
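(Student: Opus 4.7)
By translation and scaling invariance of the GFF--CLE coupling, I reduce to $z=0$, $r=1$, so the annulus in question is $A=A(0,1-u/2,1+u/2)$. The strategy is to construct $E_n$ as a local regularity event for $h|_A$ (modulo $2\pi\chi$) that forces the separation, exploiting the coupling of CLE$_{\kappa'}$ with $h$ from Section~\ref{subsubsec:cle_gff}: every CLE loop crossing $\partial B(0,1)$ has boundary traced by flow lines of $h$ (cf.\ Sections~\ref{subsubsec:flowlines}--\ref{subsubsec:counterflowlines}), so the marked points on $\partial B(0,1)$ are precisely places where such flow lines, after entering $A$ from its outer boundary, arrive at $\partial B(0,1)$.

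I would take $E_n$ to be an appropriate $M_n$-good-scale event in the sense of Section~\ref{subsec:gff_good_scales}, with $M_n\to\infty$ slowly; Lemma~\ref{lem:M_good_density} then gives $\p[E_n]\to 1$. On $E_n$, the good-scale framework produces a comparison between $h$ restricted to a slightly smaller annulus inside $A$ and a reference GFF $\tilde h$ with bounded boundary values on $\partial B(0,1+u/2)$, with Radon--Nikodym derivative bounded in every $L^p$ uniformly in $n$. For the reference coupling $(\tilde h,\tilde\Gamma)$, the local finiteness of CLE (Lemma~\ref{lem:finitely-marked-points-marginal}) ensures that the marked points of the partial exploration $\tilde\Gamma_\outside^{*,B(0,1+u),B(0,1)}$ are almost surely finite and distinct, so there is $\delta_n>0$ with $\tilde\p[\text{marked points are }\delta_n\text{-separated}]\ge 1-2^{-n}$. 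The Radon--Nikodym bound then transfers this to the actual configuration on $E_n$.

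\textbf{Main obstacle.} The lemma is an event-wise statement: on $\{h|_A\in E_n\}$, the separation must hold for every realization, yet the CLE depends \emph{non-locally} on $h$. The resolution is to refine $E_n$ into a robust regularity condition on $h|_A$ that, via the imaginary geometry machinery, controls the geometry of \emph{all} possible flow lines of $h$ traversing $A$ uniformly over admissible extensions of the field to the exterior. Concretely, a good-scale bound on $h|_A$ yields, via the absolute continuity with a drifted SLE$_\kappa$-type process, a deterministic modulus-of-continuity estimate on any flow line of $h$ inside $A$, and hence a quantitative lower bound on how close two distinct flow lines can come at $\partial B(0,1)$. Turning the GFF regularity condition on $A$ into such an event-wise separation implication, uniformly over the global GFF configuration, is the central technical step; the probabilistic control $\p[E_n]\to 1$ is then immediate from Lemma~\ref{lem:M_good_density}.
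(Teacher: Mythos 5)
Your proposal has a genuine gap at exactly the point you flag as the "central technical step," and that step would not in fact go through.

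You claim that an $M$-good-scale bound on $h|_A$ yields, via absolute continuity and a modulus-of-continuity estimate on flow lines, "a quantitative lower bound on how close two distinct flow lines can come at $\partial B(0,1)$." This is not true: no regularity condition on the field forces two distinct flow lines to stay quantitatively apart. Flow lines of a GFF are fractal SLE$_\kappa$-type curves that can approach one another arbitrarily closely without merging, and a modulus-of-continuity bound controls the speed of a single curve, not the mutual distance between two curves. So there is no event of the form "$h|_A$ is regular" that deterministically implies $\delta$-separation of marked points. The paper side-steps this entirely: it does not try to deduce separation from regularity, but instead bakes the separation explicitly into $E_n$ (it requires the minimal distance between the relevant first/last counterflow-line hitting points to be at least $\delta_n$), then observes this is a probability-one finite positive quantity, so choosing $\delta_n$ small makes $\p[E_n]$ close to $1$.

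There is a second, equally serious issue that your proposal does not resolve: why the separation event can be made \emph{measurable with respect to $h|_A$ modulo $2\pi\chi$} at all, given that the marked points of $\Gamma_\outside^{*,B(z,(1+u)r),B(z,r)}$ are determined by the global CLE exploration tree, which is not a local function of $h$. This is precisely what the paper's pocket construction accomplishes: it introduces the grid $\CD_n = 2^{-n}\Z^2 \cap A(0,1-u/2,1+u/2)$, runs the angle-$(\pm\pi/2)$ flow lines from grid points stopped at $\partial A$, forms the pockets, and defines $F_n$ as the event that every point of $\partial B(0,1)$ is enclosed in a pocket (with $\p[F_n]\to1$ by continuity of space-filling SLE$_{\kappa'}$). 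On $F_n$ the marked points are the first/last $\partial B(0,1)$-visit points of the counterflow lines within these pockets, and those pockets and counterflow lines \emph{are} local functions of $h|_A$ mod $2\pi\chi$. Your proposal neither constructs this localization nor offers a substitute for it; the Radon--Nikodym transfer you describe (comparing to a reference field and transferring a high-probability separation event) gives a probabilistic statement, not the event-wise implication the lemma requires, which is exactly the obstruction you identified but did not overcome. The Radon--Nikodym strategy you sketch is, in spirit, what is used one level up in the proof of Lemma~\ref{lem:separation_density}, which takes Lemma~\ref{le:separation_gff} as input.
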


\begin{proof}
 Let $\CD_n = 2^{-n}\Z^2 \cap A(0,1-u/2,1+u/2)$. For each $w \in \CD_n$, let $\eta^L_w$ (resp.\ $\eta^R_w$) be the flow line of $h$ with angle $\pi/2$ (resp.\ $-\pi/2$) starting from $w$ and stopped upon exiting $A(0,1-u/2,1+u/2)$. For each pair $w,w' \in \CD_n$ we consider the connected components bounded by the four flow lines $\eta^L_w$, $\eta^L_{w'}$, $\eta^R_w$, $\eta^R_{w'}$ in case they merge before exiting $A(0,1-u/2,1+u/2)$; we refer to them as pockets. Let $F_n$ be the event that every point on $\partial B(0,1)$ is contained in a pocket. We have $\p[F_n] \to 1$ as $n \to \infty$ due to the continuity of space-filling \slekp{}.
 
 Suppose that we are on the event that $h(z+r\cdot) \in F_n$. Then for each strand of $\Gamma_\outside^{*,B(z,(1+u)r),B(z,r)}$ that reaches $\partial B(z,r)$, the space-filling \slekp{} makes a crossing through the annulus $A(z,r,(1+u)r)$. Let $x$ be a marked point of $\Gamma_\outside^{*,B(z,(1+u)r),B(z,r)}$. Then on the event that $h(z+r\cdot) \in F_n$ the point $x$ is contained in (the image upon scaling and translating of) a pocket and is the first or last point on $\partial B(z,r)$ (depending on whether the exploration tree is crossing in or out of the annulus) visited by the counterflow line within the pocket.
 
 The pockets and the counterflow lines within the pockets are measurable functions of the values of the GFF modulo $2\pi\chi$ within the annulus $A(0,1-u/2,1+u/2)$. Since (non-space-filling) \slekp{} does not hit fixed points, the minimal distance between all the first (resp.\ last) points visited by the counterflow lines within the pockets is a.s.\ positive. Therefore the probability that the minimal distance is at least $\delta_n$ becomes arbitrarily close to $1$ when $\delta_n$ is chosen small enough. That is, we can set $E_n$ be the event that $F_n$ occurs and the minimal distance between each such pair of points is at least $\delta_n$.
\end{proof}

\begin{proof}[Proof of Lemma~\ref{lem:separation_density}]
 Let $\Gamma$ be coupled with a GFF $h$ as in Section~\ref{subsubsec:cle_gff}. Let $E_n$ be the events from Lemma~\ref{le:separation_gff}. It suffices to show for large enough $n$ the analogous stronger statement where we require that the number of $j=j_0+1,\ldots,j_0+k$ such that $h(z+2^{-j}\cdot) \in E_n$ is at least $(1-a)k$.
 
 Let $E_{z,r}^M$ be the event for $h$ as in~\eqref{eq:M_good_scale}. By Lemma~\ref{lem:M_good_density} we can let $M \in \N$ be large enough so that with probability $1-O(e^{-b k})$ at least $(1-a/2)k$ scales $j=j_0+1,\ldots,j_0+k$ are $M$-good. To finish the proof, we argue that the probability that $E_{z,2^{-j+1}}^M \cap \{h(z+2^{-j}\cdot) \notin E_n\}$ occurs at least $(a/2)k$ scales is at most $O(e^{-bk})$. Let $\CF_j = \CF_{z,2^{-j}}$ be the $\sigma$-algebra generated by the values of $h$ outside $B(z,2^{-j})$. Then the event $E_{z,2^{-j+1}}^M \cap \{h(z+2^{-j}\cdot) \notin E_n\}$ is $\CF_j$-measurable. Recall from Section~\ref{subsec:gff_good_scales} that on the event $E_{z,2^{-j+1}}^M$, the conditional law of $h|_{B(z,(1+u/2)2^{-j})}$ modulo $2\pi\chi$ is comparable to the law $\wt{\p}$ of the corresponding restriction of a zero-boundary GFF on $B(z,2^{-j+1})$, and the Radon-Nikodym derivative is $L^p$-bounded for every $p \in (1,\infty)$ with bound depending only on $M$. Hence, for each $j$,
 \[
  \p\left[ h(z+2^{-j}\cdot) \notin E_n \mmiddle| \CF_{j-1} \right] \one_{E_{z,2^{-j+1}}^M} \lesssim \wt{\p}[ E_n^c ]^{1/2} .
 \]
 By choosing $n$ large, the latter probability can be made arbitrarily small. The claim follows.
\end{proof}

We now give the boundary version of Lemma~\ref{lem:separation_density}. We define $\delta$-separation between the marked points of $\Gamma_\outside^{*,B(0,R)\cap\h,B(0,r)\cap\h}$ in the same way as in Definition~\ref{def:delta-sep-domain}.

\begin{lemma}\label{lem:separation_density_bd} 
Fix $u\in(0,1)$. For every $a\in(0,1)$, $b>0$, there exists $\delta>0$ so that the following holds. 
Let $\Gamma$ be a nested \clekp{}  in $\h$. For $j \in \N$, let $E^\separated_{j}$ be the event that the marked points of $\Gamma_\outside^{*,B(0,(1+u)2^{-j})\cap\h,B(0,2^{-j})\cap\h}$ are $\delta$-separated. For $k \in \N$ let $G_{k}$ be the event that the number of $j=1,\ldots,k$ such that $E^\separated_{j}$ occurs is at least $(1-a)k$. Then
\[ \p[(G_{k})^c] = O(e^{-bk}) . \]
\end{lemma}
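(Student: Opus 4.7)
The plan is to mirror the proof of Lemma~\ref{lem:separation_density} in the half-plane setting. The two ingredients are a boundary analogue of Lemma~\ref{le:separation_gff}, and the density of boundary $M$-good scales from Lemma~\ref{lem:M_good_bd_density} together with the $L^p$-bounded Radon-Nikodym comparison recalled at the end of Section~\ref{subsec:gff_good_scales}. Given these, the exponential bound on $\p[(G_k)^c]$ follows as in Lemma~\ref{lem:separation_density}: fix $n$ large, let $\CF_j$ be the $\sigma$-algebra generated by the values of $h$ outside $B(0,2^{-j})\cap\h$, and observe that the event $E^{\partial, M}_{0,2^{-j+1}}\cap\{h(2^{-j}\cdot)\notin E_n\}$ is $\CF_j$-measurable. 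On $E^{\partial, M}_{0,2^{-j+1}}$ the conditional law of $h|_{B(0,(1+u/2)2^{-j})\cap\h}$ modulo $2\pi\chi$ given $\CF_{0,2^{-j+1}}$ is comparable, with $L^p$-bounded density depending only on $M$, to a fixed reference law $\wt{\p}$ under which $\wt{\p}[E_n^c]$ can be made arbitrarily small by choosing $n$ large. Combining this with Lemma~\ref{lem:M_good_bd_density} and a standard large deviations argument yields the claim.

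For the boundary analogue of Lemma~\ref{le:separation_gff}, I couple $\Gamma$ with a GFF $h$ on $\h$ with boundary data $-\lambda'+\pi\chi$ as in Section~\ref{subsubsec:cle_gff}. I take a fine dyadic mesh $\CD_n \subseteq A(0,1-u/2,1+u/2)\cap\h$ and, for each $w\in \CD_n$, consider the flow lines $\eta^L_w,\eta^R_w$ of $h$ of angles $\pm\pi/2$ started at $w$ and stopped upon exiting the half-annulus. These flow lines carve the half-annulus into finitely many pockets bounded by pieces of flow lines and of $\R$. By continuity and local finiteness of space-filling \slekp{}, for $n$ large the event $F_n$ that every point of $\partial B(0,1)\cap\ol{\h}$ lies in some pocket has probability close to $1$. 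On $F_n$, the endpoints on $\partial B(0,1)\cap\ol{\h}$ of the strands of $\Gamma_\outside^{*,B(0,1+u/2)\cap\h,B(0,1)\cap\h}$ are exactly the first/last points of $\partial B(0,1)\cap\ol{\h}$ traced by the counterflow line inside each pocket, and these are measurable functions of $h|_{A(0,1-u/2,1+u/2)\cap\h}$ modulo $2\pi\chi$. Since within each pocket the counterflow line evolves as an \slekpr{\rho}-type process, which a.s.\ does not hit fixed points, the set of endpoints is a.s.\ finite with strictly positive mutual separation; shrinking the threshold then gives an event $E_n \subseteq F_n$ with $\p[E_n]\to 1$ on which all these endpoints are $\delta_n$-separated.

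The main obstacle is making sure the pocket construction really separates \emph{all} marked points of the partial exploration: those on the upper arc $\partial B(0,1)\cap\h$, those at the corners $\pm 1$, and those arising from strands that touch the free boundary $\R\cap(-1,1)$ before entering $B(0,1)\cap\h$. Here one has to combine the interior flow lines with the free-boundary structure of $h$ on $\R$ so that any strand crossing into $B(0,1)\cap\h$ does so inside some pocket (possibly one whose boundary contains a segment of $\R$), and then invoke the local nonhitting property of \slekpr{\rho}-type processes within each pocket to deduce that the full collection of entry/exit points on $\partial(B(0,1)\cap\h)$ is a.s.\ pairwise separated. Once this is set up, the reduction to the Radon-Nikodym estimate proceeds exactly as in the interior case.
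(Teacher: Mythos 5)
Your proposal is correct and follows the same approach as the paper, whose proof of this lemma is literally the one-line remark that the interior argument carries over with Lemma~\ref{lem:M_good_bd_density} substituted for Lemma~\ref{lem:M_good_density}. What you add — and what the paper leaves implicit — is the boundary analogue of Lemma~\ref{le:separation_gff}, which is genuinely needed since that lemma is stated only for balls $B(z,(1+u)r)$ compactly contained in the domain; your sketch of it (mesh of flow lines in the half-annulus, pockets possibly bordering $\R$, endpoints of $\Gamma_\outside^{*,\cdot,\cdot}$ identified with first/last hitting points of counterflow lines inside pockets) is the right way to do it. You are correct to flag the subtlety at the free boundary: the space-filling \slekp{} fills the half-annulus and the pockets touching $\R$ include segments of $\R$ in their boundary, so by local finiteness only finitely many entry points of the partial exploration arise on $\partial B(0,r)\cap\ol\h$, and these are a.s.\ pairwise separated by a quantity measurable with respect to the field in the half-annulus. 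One small inaccuracy: in the boundary good-scale comparison there is no modular reduction by $2\pi\chi$ — the reference field $\wt h_{0,r}$ agrees with $h$ exactly (not merely modulo $2\pi\chi$) on $[-r,r]$, so the Radon--Nikodym comparison is direct; the modular shift is needed only in the interior case to make the harmonic part bounded. This does not affect the validity of your argument.
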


\begin{proof}
The proof is   the same as   the proof of Lemma~\ref{lem:separation_density},  the only difference is that we use Lemma~\ref{lem:M_good_bd_density} in place of 
Lemma~\ref{lem:M_good_density}. 
\end{proof}

\subsection{Independence across scales}
\label{subsec:ind_across_scales}

In this section, we prove the independence across scales results for \clekp{}. We will again formulate a version in the interior and a version on the boundary. The proofs are based on the separation of strands result from Section~\ref{subsec:separation_bounds} and the continuity of multichordal \clekp{} in Theorem~\ref{th:continuity_mcle}.

We describe the setup of Proposition~\ref{pr:cle_ind_across_scales}. Suppose that $D\subseteq\C$ is a simply connected domain and let $\Gamma$ be a nested \clekp{} in $D$. Let $u\in(0,1/8)$ be fixed. Let $j_0 \in\Z$ and $z\in D$ be such that $B(z,2^{-j_0})\subseteq D$.

For $j>j_0$, let $U_j=B(z,2^{-j})$, $V_j= B(z,(1+u)2^{-j})$, and let $\CG_{j}$ be the filtration generated by the partially explored \clekp{} $\Gamma_\outside^{*,V_j,U_j}$. Let $\varphi_j$ be the unique conformal map from $V_j^{*,U_j}$ to $\D$ with $\varphi_j(z)=0$, $\varphi_j'(z)>0$. Let $(\D;\ul{s}^j;\beta_j)$ be the exterior link pattern decorated marked domain which arises under $\varphi_j$. Let $\Gamma_{\ul{s}^j,\beta_j}$ be the image of $\Gamma$ under $\varphi_j$. Conditionally on $\CG_j$, we have that $\Gamma_{\ul{s}^j,\beta_j}$ is a multichordal \clekp{} in $(\D;\ul{s}^j;\beta_j)$.

Let $W_0 = A(0,5/8,1)$. Let $\CE_{W_0}$ be the class of events $E$ that are open sets with respect to the topology \eqref{eq:topology_loop_ensemble} and are measurable with respect to $\Gamma_\outside^{*,B(0,5/8),B(0,1/2)}$. For $E \in \CE_{W_0}$, we let $E_{z,j}$ be the event that $E$ occurs for $\Gamma_{\ul{s}^j,\beta_j}$.

\begin{proposition}\label{pr:cle_ind_across_scales}
 Suppose that we have a non-decreasing sequence of events $E^n \in \CE_{W_0}$ such that
 \begin{equation}\label{eq:annulus_event_likely}
  \lim_{n\to\infty} \mcclelaw{\D;\ul{x};\beta}[E^n] = 1
 \end{equation}
 for every $(\D;\ul{x};\beta) \in \eldomain{2N}$ and $N\in\N_0$.
 
 Given any $a\in(0,1)$ and $b>0$, there exists $n \in \N$ such that the following holds. Let $D\subseteq\C$ be a simply connected domain and let $\Gamma$ be a nested \clekp{} in $D$. Let $j_0 \in\Z$ and $z\in D$ be such that $B(z,2^{-j_0})\subseteq D$. For $j > j_0$, let $E^n_{z,j}$ be as defined in the paragraph above. Let $\wt G_{z,j_0,k}$ be the event that the number of $j = j_0+1,\ldots,j_0+k$ such that $E^n_{z,j}$ occurs is at least $(1-a)k$. Then
 \[
  \p[(\wt G_{z,j_0,k})^c] = O(e^{-b k}) 
 \]
 where the implicit constant does not depend on $z,j_0,k$.
\end{proposition}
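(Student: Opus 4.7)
The plan is to combine the Markov property of Theorem~\ref{thm:cle_partially_explored} (which yields $\p[E^n_{z,j}\mid\CG_j]=\mcclelaw{\D;\ul{s}^j;\beta_j}[E^n]$), the separation-of-strands estimate Lemma~\ref{lem:separation_density}, the uniform continuity Corollary~\ref{co:unif_prob_marked_points}, and a martingale concentration argument. Fix small parameters $a_1,\varepsilon_0>0$ (eventually both of order $a/3$, with a further constraint on $\varepsilon_0$ from the concentration step). Lemma~\ref{lem:separation_density} produces $\delta>0$ and $b_1>0$ so that, off an event of probability $O(e^{-b_1 k})$, at least $(1-a_1)k$ of the scales $j\in\{j_0+1,\ldots,j_0+k\}$ satisfy the Euclidean separation event $E^\separated_{z,j}$. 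A conformal distortion argument using $B(z,2^{-j})\subseteq V_j^{*,U_j}\subseteq B(z,(1+u)2^{-j})$ translates this into $\delta'$-separation of the images $\ul{s}^j=\varphi_j(\ul{y}^j)$ on $\partial\D$ for some $\delta'=\delta'(\delta,u)>0$. Since $\ul{s}^j$ then has at most $2\pi/\delta'$ points, only finitely many $(N,\beta)$ occur, and Corollary~\ref{co:unif_prob_marked_points} provides $n$ such that $\mcclelaw{\D;\ul{s};\beta}[E^n]\ge 1-\varepsilon_0$ for every admissible $(N,\beta)$ and every $\delta'$-separated $\ul{s}\subseteq\partial\D$; hence $\p[E^n_{z,j}\mid\CG_j]\ge 1-\varepsilon_0$ on $E^\separated_{z,j}$.

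For concentration, the Schwarz lemma applied to the self-map $w\mapsto\varphi_j(z+2^{-j}w)$ of $\D$ (which fixes $0$) gives $|\varphi_j(y)|\le 2^j|y-z|$ for $y\in U_j$; choosing $m_0\in\N$ with $(1+u)2^{-m_0}\le 1/2$, a constant depending only on $u$, yields $V_{j+m_0}\subseteq\varphi_j^{-1}(B(0,1/2))$. Combined with $\CG_j\subseteq\CG_{j+m_0}$, the partial exploration underlying $E^n_{z,j}$ is a measurable function of $\CG_{j+m_0}$, so the increment $\Delta_j\defeq\one_{E^n_{z,j}}-\p[E^n_{z,j}\mid\CG_j]$ is $\CG_{j+m_0}$-measurable with $\E[\Delta_j\mid\CG_j]=0$, $|\Delta_j|\le 1$, and conditional variance at most $\varepsilon_0$ on $E^\separated_{z,j}$. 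Splitting $\{j_0+1,\ldots,j_0+k\}$ by residue class modulo $m_0$ produces $m_0$ genuine bounded martingale difference sequences of length $\sim k/m_0$, to each of which Freedman's (Chernoff-type) inequality applies; a union bound then gives $\p[\sum_j\Delta_j>(a/3)k]\le O(e^{-bk})$, provided $\varepsilon_0$ is chosen exponentially small in $bm_0/a$. On the intersection of this event with the one from the first paragraph, the number of bad scales is at most $a_1 k + \sum_j\Delta_j + \sum_j\p[(E^n_{z,j})^c\mid\CG_j]\le a_1 k+(a/3)k+\varepsilon_0 k\le ak$, completing the proof; uniformity in $(D,z,j_0)$ is inherited from all the inputs.

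The main obstacle will be the conformal distortion step in the first paragraph: Euclidean separation on $\partial U_j$ does not immediately rule out that two consecutive marked points are accessed through the same thin fjord of $V_j^{*,U_j}$, which under $\varphi_j$ would collapse their images on $\partial\D$. To handle this one strengthens $E^\separated_{z,j}$ slightly, using the GFF good event underlying Lemma~\ref{le:separation_gff}, so that the space-filling \slekp{} pockets separating consecutive marked points carry macroscopic extremal distance in $V_j^{*,U_j}$; harmonic measure from $z$ then separates the corresponding prime ends by a constant, and this lower bound is preserved by $\varphi_j$ via conformal invariance of harmonic measure.
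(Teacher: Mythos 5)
Your proposal follows essentially the same route as the paper: Lemma~\ref{lem:separation_density} to control the fraction of scales with separated strands, Corollary~\ref{co:unif_prob_marked_points} (via Theorem~\ref{thm:cle_partially_explored}) to make $\p[E^n_{z,j}\mid\CG_j]$ uniformly close to $1$ on those scales, and then a one-step-ahead martingale concentration argument. A few remarks on the differences and on one slip.

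First, your measurability bookkeeping is more conservative than necessary: you require $V_{j+m_0}\subseteq\varphi_j^{-1}(B(0,1/2))$, but what is actually needed is only $U_{j+m_0}\subseteq\varphi_j^{-1}(B(0,1/2))$ together with $V_{j+m_0}\subseteq\varphi_j^{-1}(B(0,5/8))$, and your own Schwarz estimate $|\varphi_j(y)|\le 2^{j}|y-z|$ gives both already for $m_0=1$ whenever $u<1/4$. Hence $E^n_{z,j}$ is $\CG_{j+1}$-measurable, the indicators $\one_{(E^n_{z,j})^c\cap E^\separated_{z,j}}$ form a sequence adapted to $(\CG_{j+1})$ with $\E[\cdot\mid\CG_j]\le 1-p$, and a direct Binomial Chernoff bound (which achieves arbitrarily large rate as $p\uparrow 1$) finishes the proof without any residue-class splitting. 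Your Freedman route also works, but only with the sharp form of the inequality (the Bernstein-type corollary caps the rate at $O(a)$), so the exponential-in-$b/a$ smallness of $\varepsilon_0$ you invoke is indeed needed; this is a fussier path to the same place.

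Second, your inclusion $V_j^{*,U_j}\subseteq B(z,(1+u)2^{-j})$ in the first paragraph is false: $V_j^{*,U_j}$ is the component of $D\setminus\Gamma_\outside^{*,V_j,U_j}$ containing $U_j$, and since the removed strands only stop upon hitting $\ol{U_j}$, this component routinely extends well outside $V_j$ through the gaps between strands. Your own final paragraph appears to recognize this (the ``thin fjord'' discussion), but the first paragraph should not rely on the sandwich. Your flagging of the conformal distortion step itself is a useful observation: the paper passes directly from Euclidean $\delta$-separation of the marked points on $\partial U_j$ (Definition~\ref{def:delta-sep-domain}) to the hypothesis of Corollary~\ref{co:unif_prob_marked_points}, which is about separation of $\ul{s}^j=\varphi_j(\ul{y}^j)$ on $\partial\D$, without spelling out the harmonic-measure estimate that links the two. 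Whether this requires strengthening $E^\separated_{z,j}$ (as you propose) or follows from $\delta$-separation alone via a Beurling-type argument at each marked point, some argument is needed here and the paper leaves it implicit.
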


\begin{proof}
 For $z \in D$, $j \in \Z$, and $\delta > 0$, let $E^\separated_{z,j}$ be the event that the strands of $\Gamma_\outside^{*,V_j,U_j}$ are $\delta$-separated. By Lemma~\ref{lem:separation_density} we can let $\delta>0$ be small enough so that with probability $1-O(e^{-b k})$ the events $E^\separated_{z,j}$ occur for at least $(1-a/2)k$ values of $j = j_0+1,\ldots, j_0 + k$. To conclude, we argue that for $n$ large enough the probability that $(E^n_{z,j})^c \cap E^\separated_{z,j}$ occurs for more than $(a/2)k$ scales $j = j_0+1,\ldots, j_0 + k$ is at most $O(e^{-bk})$.
 
 Given $\delta$, by \eqref{eq:annulus_event_likely} and Corollary~\ref{co:unif_prob_marked_points} we have $\mcclelaw{\D;\ul{x};\beta}[E^n] \to 1$ uniformly in all $\delta$-separated configurations $\ul{x}$. Letting $\CG_j$ denote the $\sigma$-algebra generated by $\Gamma_\outside^{*,V_j,U_j}$, we then have
 \[ \p[ E^n_{z,j} \mid \CG_j ] \,\one_{E^\separated_{z,j}} \ge p \one_{E^\separated_{z,j}} \]
 where $p$ can be made arbitrarily close to $1$ by choosing $n$ large. Since for each $j$ the event $E^n_{z,j}$ is measurable with respect to $\CG_{j+1}$, the result follows.
\end{proof}

We now describe the setup of Proposition~\ref{pr:cle_ind_across_scales_bd} which is a boundary version of Proposition~\ref{pr:cle_ind_across_scales}. Let $\Gamma$ be a nested \clek{} in $\h$. Let $u\in(0,1/8)$ be fixed and for $j\in\N$, let $U_j=B(0,2^{-j})\cap\h$, $V_j=B(0,(1+u)2^{-j})\cap\h$, and let $\CG_{j}$ be the filtration generated by the partially explored \clekp{} $\Gamma_\outside^{*,V_j,U_j}$. Let $\varphi_j$ be the conformal map from $V_j^{*, U_j}$ to $\D\cap\h$ such that $\varphi_j(0)=0$ and the rightmost  (resp.\ leftmost) intersection point of the exploration with $\R_-$ (resp.\ $\R_+$) is mapped to $-1$ (resp.\ $1$).

Let $(\D\cap\h;\ul{s}^j;\beta_j)$ be the exterior link pattern decorated marked domain arising from  $\varphi_j$. Let  $\Gamma_{\ul{s}^j,\beta_j}$ be the image of $\Gamma$ under $\varphi_j$. Conditionally on $\CG_{j}$, we have that $\Gamma_{\ul{s}^j,\beta_j}$ is a multichordal \clek{}  in  $(\D\cap\h; \ul{s}^j;\beta_j)$.

Let $\wt{W}_0 = A(0,5/8,1) \cap \h$. Let $\CE_{\wt{W}_0}$ be the class of events $E$ that are open sets with respect to the topology \eqref{eq:topology_loop_ensemble} and are measurable with respect to $\Gamma_\outside^{*,B(0,5/8)\cap\h,B(0,1/2)\cap\h}$. For $E \in \CE_{\wt{W}_0}$, we let $E_{j}$ be the event that $E$ occurs for $\Gamma_{\ul{s}^j,\beta_j}$.

\begin{proposition}\label{pr:cle_ind_across_scales_bd}
 Suppose that we have a non-decreasing sequence of events $E^n \in \CE_{\wt{W}_0}$ such that
 \begin{equation}\label{eq:annulus_event_likely_bd}
  \lim_{n\to\infty} \mcclelaw{\D\cap\h;\ul{s};\beta}[E^n] = 1
 \end{equation}
 for every $(\D\cap\h;\ul{s};\beta) \in \eldomain{2N}$ and $N\in\N_0$ where the marked points $\ul{s}$ lie on $\partial\D \cap \h$.
 
 Given any $a\in(0,1)$ and $b>0$, there exists $n \in \N$ such that the following holds. Let $\Gamma$ be a nested \clekp{} in $\h$. For $j \in \N$, let $E^n_{j}$ be as defined in the paragraph above. Let $\wt G_{k}$ be the event that the number of $j = 1,\ldots,k$ such that $E^n_{j}$ occurs is at least $(1-a)k$. Then
 \[
  \p[(\wt G_{k})^c] = O(e^{-b k}) .
 \]
\end{proposition}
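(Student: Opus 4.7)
The plan is to mirror the proof of Proposition~\ref{pr:cle_ind_across_scales}, substituting the boundary analogue of each ingredient. Fix $a\in(0,1)$ and $b>0$. For $j \in \N$ and a parameter $\delta > 0$ to be chosen, let $E^\separated_j$ be the event that the marked points of $\Gamma_\outside^{*,V_j,U_j}$ are $\delta$-separated. Applying Lemma~\ref{lem:separation_density_bd} (the boundary version of Lemma~\ref{lem:separation_density}), I would fix $\delta > 0$ small enough that $E^\separated_j$ fails on at most $(a/2)k$ of the scales $j=1,\dots,k$ off an event of probability $O(e^{-bk})$. It then suffices to show that the event $(E^n_j)^c \cap E^\separated_j$ occurs at more than $(a/2)k$ scales with probability $O(e^{-bk})$, provided $n$ is chosen large enough.

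Next, I would translate $\delta$-separation of the marked points on $\partial V_j \cap \h$ into uniform separation of their images $\ul{s}^j \subseteq \partial\D \cap \h$ under the conformal map $\varphi_j$. Since $\varphi_j$ is pinned down by its three-point normalisation (fixing $0$ and sending the two prescribed boundary points on $\R$ to $\pm 1$), and since $U_j \subseteq V_j^{*,U_j} \subseteq V_j$ with conformal radius at $0$ comparable to $2^{-j}$, standard Koebe-type distortion estimates yield that $\delta$-separation on $\partial V_j \cap \h$ implies $\delta'$-separation on $\partial\D \cap \h$ for some $\delta' = \delta'(\delta,u) > 0$.

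Combining the hypothesis \eqref{eq:annulus_event_likely_bd} with Corollary~\ref{co:unif_prob_marked_points}---whose proof applies verbatim to marked domains of the form $(\D\cap\h;\ul{s};\beta)$, since it relies only on Theorem~\ref{th:continuity_mcle} and conformal invariance---gives $\mcclelaw{\D\cap\h;\ul{s};\beta}[E^n] \to 1$ uniformly over all $\delta'$-separated $\ul{s} \subseteq \partial\D \cap \h$. By Theorem~\ref{thm:cle_partially_explored}, conditionally on $\CG_j$ the image $\Gamma_{\ul{s}^j,\beta_j}$ is a multichordal \clekp{} in $(\D\cap\h;\ul{s}^j;\beta_j)$, so on $E^\separated_j$ we obtain
\[
 \p[E^n_j \mid \CG_j] \, \one_{E^\separated_j} \;\ge\; p \, \one_{E^\separated_j},
\]
with $p$ arbitrarily close to $1$ by choosing $n$ large. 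Since the strands of $\Gamma$ determining $E^n_j$ lie in $V_j \setminus \varphi_j^{-1}(B(0,1/2))$, and the latter set compactly contains $U_{j+1}$ when $u \in (0,1/8)$, the event $E^n_j$ is $\CG_{j+1}$-measurable. A standard Chernoff estimate for a sequence of indicators that are conditionally Bernoulli with parameter $\ge p$ along the nested filtration $(\CG_j)$ then yields the required exponential tail bound.

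The only step that genuinely departs from a direct transcription of the interior proof is the distortion estimate for the boundary map $\varphi_j$; this is where the three-point normalisation (rather than the two-point normalisation used in the interior case) enters. I expect this to be the main---though mild---obstacle: one should verify that the normalisation choice in the setup pins $\varphi_j$ down sufficiently for uniform separation to be preserved, which can be done by composing with a Möbius transformation of $\D\cap\h$ and applying classical estimates for univalent functions with three boundary normalisations.
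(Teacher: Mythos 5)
Your proposal matches the paper's proof, which simply transcribes the argument for Proposition~\ref{pr:cle_ind_across_scales} to the boundary setting with Lemma~\ref{lem:separation_density_bd} replacing Lemma~\ref{lem:separation_density}: fix $\delta$ via the boundary separation estimate, then use Corollary~\ref{co:unif_prob_marked_points} and the Markov property of the partial explorations to get a uniform lower bound on $\p[E^n_j \mid \CG_j]$ on $E^\separated_j$, and conclude by the nested-filtration Chernoff argument. The distortion remark you flag as a ``genuine departure'' is in fact an implicit ingredient of the interior proof as well (both need to pass from $\delta$-separation on $\partial V_j^{*,U_j}$ to separation on $\partial\D$ under $\varphi_j$); you just make it explicit, which is a harmless elaboration rather than a different route.
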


\begin{proof}
 The proof is the same as for Proposition~\ref{pr:cle_ind_across_scales} where we use Lemma~\ref{lem:separation_density_bd} in place of Lemma~\ref{lem:separation_density}.
\end{proof}

We record another variant of the independence across scales result where the events are exact scalings of each other (instead of the image under the conformal map $\varphi_j$). Proving this variant requires the stronger continuity result from Section~\ref{sec:tv_convergence} below. We state them here for the reader's convenience. We note that the proofs in the remainder of this paper do not use the Propositions~\ref{pr:cle_ind_across_scales_scaling_int} and~\ref{pr:cle_ind_across_scales_scaling_bd}.

Let $W_0 = A(0,5/8,1)$, and for $\varepsilon > 0$ let $W_0^\varepsilon$ denote the $\varepsilon$-neighborhood of $W_0$. Let
\begin{equation}\label{eq:cle_local_sigma_alg}
 \CF_{W_0} = \bigcap_{\varepsilon>0} \sigma\left( \Gamma_\inside^{*,W_0^\varepsilon,W_0} \right)
\end{equation}
where $\Gamma_\inside^{*,W_0^\varepsilon,W_0}$ is defined in the same way as in Definition~\ref{def:partially-explored-cle}. For $E \in \CF_{W_0}$ and $z\in D$, $j\in\Z$ we let $E_{z,j}$ denote the event that $E$ occurs for $2^j(\Gamma+z)$ (so that $E_{z,j}$ is measurable with respect to the \clek{} configuration within $A(z,(5/8)2^{-j},2^{-j})$).

\begin{proposition}\label{pr:cle_ind_across_scales_scaling_int}
 Suppose that we have a sequence of events $E^n \in \CF_{W_0}$ such that
 \[
  \lim_{n\to\infty} \mcclelaw{D;\ul{x};\beta}[E^n] = 1
 \]
 for every $(D;\ul{x};\beta) \in \eldomain{2N}$ and $N\in\N_0$ with $B(0,9/8) \subseteq D$.
 
 Given any $a\in(0,1)$ and $b>0$, there exists $n \in \N$ such that the following holds. Let $D\subseteq\C$ be a simply connected domain and let $\Gamma$ be a nested \clekp{} in $D$. Let $j_0 \in\Z$ and $z\in D$ be such that $B(z,2^{-j_0})\subseteq D$. For $j > j_0$, let $E^n_{z,j}$ be as defined in the paragraph above. Let $\wt G_{z,j_0,k}$ be the event that the number of $j = j_0+1,\ldots,j_0+k$ such that $E^n_{z,j}$ occurs is at least $(1-a)k$. Then
 \[
  \p[(\wt G_{z,j_0,k})^c] = O(e^{-b k}) 
 \]
 where the implicit constant does not depend on $z,j_0,k$.
\end{proposition}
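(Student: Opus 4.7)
The plan is to parallel the proof of Proposition~\ref{pr:cle_ind_across_scales}. The only substantive change is that $E^n_{z,j}$ is evaluated on the scaled (rather than conformally uniformized) \clek{} configuration on $A(z,(5/8)2^{-j},2^{-j})$, so I will replace the uniform control of Corollary~\ref{co:unif_prob_marked_points} by the total variation continuity proved in Section~\ref{sec:tv_convergence}.

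First, fix $a \in (0,1)$, $b > 0$, and $u \in (0,1/8)$. By Lemma~\ref{lem:separation_density} I will choose $\delta > 0$ so that the $\delta$-separation event $E^\separated_{z,j}$ for $\Gamma_\outside^{*,V_j,U_j}$ (with $V_j = B(z,(1+u)2^{-j})$, $U_j = B(z,2^{-j})$) occurs for at least $(1-a/2)k$ scales $j \in \{j_0+1,\ldots,j_0+k\}$ with probability $1-O(e^{-bk})$. Setting $\CG_j = \sigma(\Gamma_\outside^{*,V_j,U_j})$, Theorem~\ref{thm:cle_partially_explored} identifies the conditional law of $\Gamma|_{V_j^{*,U_j}}$ given $\CG_j$ on $E^\separated_{z,j}$ as a multichordal \clek{} on $(V_j^{*,U_j}; \ul{x}_j^*; \beta_j^*)$ whose marked points lie on $\partial U_j$ and are $\delta 2^{-j}$-separated. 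Rescaling by $w \mapsto 2^j(w-z)$ yields a multichordal \clek{} on a triple $(\wt D_j; \wt{\ul y}_j; \beta_j^*)$ with $\wt D_j \supseteq B(0,1)$, $\delta$-separated marked points $\wt{\ul y}_j \subseteq \partial B(0,1)$, and components of $\partial \wt D_j$ adjacent to $\partial B(0,1)$ contained in $\overline{A(0,1,1+u)}$.

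The core step will be to show that $\mcclelaw{\wt D_j; \wt{\ul y}_j; \beta_j^*}[E^n] \to 1$ uniformly over the family of such rescaled triples. This family is precompact in the Carathéodory topology in any neighborhood of $\overline{W_0}$ since the number of marked points is at most $2\pi/\delta$ and the boundary near $B(0,1)$ is confined to a fixed annulus. The TV continuity from Section~\ref{sec:tv_convergence} makes the assignment $(\wt D; \wt{\ul y}; \beta) \mapsto \mcclelaw{\wt D; \wt{\ul y}; \beta}[E^n]$ continuous on this family, so Dini's theorem (using monotonicity of $E^n$ in $n$) reduces uniform convergence to pointwise convergence. For the pointwise statement, I will realize any target triple, with positive conditional probability, as the partial exploration of an auxiliary multichordal \clek{} on some $D^+ \supseteq B(0,9/8)$ between a suitable pair of concentric balls; Theorem~\ref{thm:cle_partially_explored} then matches the conditional law of the remainder with the multichordal \clek{} on the target triple. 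Since the hypothesis gives $\mcclelaw{D^+}[E^n] \to 1$ and $E^n$ depends only on the configuration in $W_0 \subseteq B(0,1)$, Fubini yields pointwise convergence on a dense subset of the family, which the TV continuity extends to the full family.

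Given this uniform lower bound, the remainder of the proof repeats that of Proposition~\ref{pr:cle_ind_across_scales}: for $n$ large enough and any $q > 0$, $\p[E^n_{z,j} \mid \CG_j]\,\one_{E^\separated_{z,j}} \ge (1-q)\,\one_{E^\separated_{z,j}}$ for every $j$, so combining this with Lemma~\ref{lem:separation_density} and using the filtration $(\CG_j)$ gives $\p[(\wt G_{z,j_0,k})^c] = O(e^{-bk})$. The hard part will be bridging the gap between the hypothesis (which only covers $D \supseteq B(0,9/8)$) and the rescaled triples $\wt D_j$ (which only satisfy $\wt D_j \supseteq B(0,1)$); this bridging will be done by combining the partial-exploration realization from Theorem~\ref{thm:cle_partially_explored}, the precompactness of $\delta$-separated configurations, and the total variation continuity proved in Section~\ref{sec:tv_convergence}.
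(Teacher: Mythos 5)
Your plan matches the paper's high-level argument: repeat the proof of Proposition~\ref{pr:cle_ind_across_scales}, replacing the uniform weak-continuity input (Corollary~\ref{co:unif_prob_marked_points}) by the total-variation continuity of Proposition~\ref{prop:mccle_tv_convergence_int} together with local compactness of the Carathéodory topology. Two details are off, one of which is a genuine flaw.

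The appeal to Dini's theorem is a misstep. Dini requires $(E^n)$ to be non-decreasing, but this proposition --- unlike Proposition~\ref{pr:cle_ind_across_scales} --- deliberately drops that hypothesis, and the paper's proof notes this explicitly. You do not get to assume monotonicity, and you do not need it: total-variation continuity gives you something stronger than continuity of each map $(\wt D;\wt{\ul y};\beta)\mapsto\mcclelaw{\wt D;\wt{\ul y};\beta}[E^n]$ individually. It gives a single modulus of continuity for the measures themselves, i.e.\ a bound on $\sup_A\,\bigl|\mcclelaw{D_1;\ul{x}_1;\beta}[A]-\mcclelaw{D_2;\ul{x}_2;\beta}[A]\bigr|$ that is uniform over all events~$A$ and hence over all~$n$. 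Cover the precompact family of $\delta$-separated rescaled triples by finitely many balls of small total-variation radius, apply the hypothesis of pointwise convergence at the centers, and you get the uniform lower bound directly; no monotonicity, no Dini.

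Your realization step (bridging the hypothesis on $D^+\supseteq B(0,9/8)$ to the rescaled conditional domains which only contain $B(0,1)$) is a reasonable instinct --- the paper's one-sentence proof passes over this silently, so it is right to flag it --- but as written it is not a proof. Partial-exploration domains are bounded by random fractal CLE strands, so ``realize any target triple with positive conditional probability'' cannot be taken literally; and ``Fubini yields pointwise convergence on a dense subset'' does not follow from $\E\bigl[\mcclelaw{\wt D^{+,*};\cdot}[E^n]\bigr]\to 1$, which only gives convergence in probability, not pointwise convergence along a specified dense family. The correct structure of the bridge is a contradiction argument: a putative triple at which $\mcclelaw{\cdot}[E^n]$ fails to tend to~$1$ determines, via total-variation continuity, a bad open Carathéodory neighborhood, and one must then show that the law of the partial-exploration triple from some $D^+\supseteq B(0,9/8)$ assigns positive mass to that neighborhood --- contradicting the hypothesis. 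The positive-mass support statement is the actual missing content, and your outline does not supply it.
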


\begin{proof}
 The proof is exactly the same as for Proposition~\ref{pr:cle_ind_across_scales} except that instead of Theorem~\ref{th:continuity_mcle} we use Proposition~\ref{prop:mccle_tv_convergence_int} together with the local compactness of the Carathéodory topology. (Note that here we do not need to assume the sequence $(E^n)$ to be non-decreasing because the continuity in total variation implies the continuity of probabilities uniformly for all events.)
\end{proof}

Last but not least, we state the boundary version of Proposition~\ref{pr:cle_ind_across_scales_scaling_int}. Let $\wt{W}_0 = A(0,5/8,1) \cap \h$, and let $\CF_{\wt{W}_0}$ be defined as in \eqref{eq:cle_local_sigma_alg}. For $E \in \CF_{\wt{W}_0}$ and $j\in\N$ we let $E_{j}$ denote the event that $E$ occurs for $2^j\Gamma$ (so that $E_{j}$ is measurable with respect to the \clek{} configuration within $A(0,(5/8)2^{-j},2^{-j}) \cap \h$).

\begin{proposition}\label{pr:cle_ind_across_scales_scaling_bd}
 Suppose that we have a sequence of events $E^n \in \CF_{\wt{W}_0}$ such that
 \[
  \lim_{n\to\infty} \mcclelaw{D;\ul{x};\beta}[E^n] = 1
 \]
 for every $(D;\ul{x};\beta) \in \eldomain{2N}$ and $N\in\N_0$ with $B(0,9/8)\cap\h \subseteq D \subseteq \h$ where the marked points $\ul{x}$ lie on $\partial D \cap \h$.
 
 Given any $a\in(0,1)$ and $b>0$, there exists $n \in \N$ such that the following holds. Let $\Gamma$ be a nested \clekp{} in $\h$. For $j \in \N$, let $E^n_{j}$ be as defined in the paragraph above. Let $\wt G_{k}$ be the event that the number of $j = 1,\ldots,k$ such that $E^n_{j}$ occurs is at least $(1-a)k$. Then
 \[
  \p[(\wt G_{k})^c] = O(e^{-b k}) .
 \]
\end{proposition}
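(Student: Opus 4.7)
The plan is to mirror the proof of Proposition~\ref{pr:cle_ind_across_scales_scaling_int} in the boundary setting, swapping each interior ingredient for its half-plane analogue. Fix $u \in (0,1/8)$, and for $j \in \N$ set $U_j = B(0,2^{-j}) \cap \h$ and $V_j = B(0,(1+u)2^{-j}) \cap \h$. Let $\CG_j$ be the $\sigma$-algebra generated by $\Gamma_\outside^{*,V_j,U_j}$. First, apply Lemma~\ref{lem:separation_density_bd} to choose $\delta > 0$ small enough that, letting $E^\separated_j$ be the event that the marked points of $\Gamma_\outside^{*,V_j,U_j}$ are $\delta$-separated, one has
\[
\p\bigl[\#\{1 \le j \le k : E^\separated_j \text{ occurs}\} \ge (1-a/2)k\bigr] \ge 1 - O(e^{-bk}).
\]

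Second, invoke the half-plane analogue of Proposition~\ref{prop:mccle_tv_convergence_int} from Section~\ref{sec:tv_convergence}, which gives continuity in total variation of the multichordal \clekp{} law, restricted to regions bounded away from the non-constant boundary segments, with respect to Carathéodory convergence of the marked domain. Because $E^n \in \CF_{\wt{W}_0}$ is measurable with respect to the \clekp{} inside a set compactly contained away from the free/wired boundary, and because the collection of $\delta$-separated half-plane configurations $(D;\ul{x};\beta)$ with $B(0,9/8) \cap \h \subseteq D \subseteq \h$ and marked points on $\partial D \cap \h$ is compact in the Carathéodory topology, the total-variation continuity yields a uniform lower bound
\[
p_n \defeq \inf \mcclelaw{D;\ul{x};\beta}[E^n] \longrightarrow 1 \quad \text{as } n \to \infty,
\]
where the infimum ranges over all such $\delta$-separated configurations.

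Third, Theorem~\ref{thm:cle_partially_explored} identifies the unexplored part of $\Gamma$ inside $V_j^{*,U_j}$, given $\CG_j$, as a multichordal \clekp{}. The event $E^n_j$ depends only on the \clekp{} configuration inside $A(0,(5/8)2^{-j},2^{-j}) \cap \h \subseteq V_j^{*,U_j}$, so after rescaling by $2^j$ the uniform bound above applies on $E^\separated_j$, giving
\[
\p[E^n_j \mid \CG_j]\,\one_{E^\separated_j} \ge p_n \,\one_{E^\separated_j}.
\]
Since $E^n_j$ is $\CG_{j+1}$-measurable, the sequence of indicators $\one_{(E^n_j)^c \cap E^\separated_j}$ is stochastically dominated along $(\CG_j)$ by an i.i.d.\ $\mathrm{Bernoulli}(1-p_n)$ sequence, and a standard Chernoff bound yields, for $n$ large enough,
\[
\p\bigl[\#\{1 \le j \le k : E^n_j \text{ fails and } E^\separated_j \text{ holds}\} > (a/2)k\bigr] = O(e^{-bk}).
\]
Combining this with the separation estimate from the first step completes the proof.

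The main delicacy is the uniform bound $p_n \to 1$: ordinary weak convergence (Theorem~\ref{th:continuity_mcle}) would only give uniform control for open events, whereas here $E^n$ is a general measurable event in $\CF_{\wt{W}_0}$, so one genuinely needs the stronger Carathéodory-type total-variation continuity from Section~\ref{sec:tv_convergence}. This is precisely the reason the assumption $E^n \in \CF_{\wt{W}_0}$ (rather than merely $E^n \in \CE_{\wt{W}_0}$) is imposed, so that $E^n$ is determined by the \clekp{} in a compact set staying away from any boundary segment whose type can vary.
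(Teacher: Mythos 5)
Your proposal matches the paper's proof essentially step for step: Lemma~\ref{lem:separation_density_bd} provides the density of $\delta$-separated scales, Proposition~\ref{prop:mccle_tv_convergence_bd} combined with local compactness of the Carathéodory topology gives the uniform lower bound on $\p[E^n_j \mid \CG_j]$ over $\delta$-separated configurations, and the $\CG_{j+1}$-measurability of $E^n_j$ yields the exponential concentration. You have also correctly identified the key technical point — that the strengthening from $\CE_{\wt{W}_0}$ to $\CF_{\wt{W}_0}$ is what necessitates the total-variation continuity in place of Theorem~\ref{th:continuity_mcle} — which is exactly the remark the paper makes in the proof of the interior version.
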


The proof of Proposition~\ref{pr:cle_ind_across_scales_scaling_bd} is the same as for the previous propositions where we now use Proposition~\ref{prop:mccle_tv_convergence_bd} as an input.

\section{Resampling target pivotals}
\label{sec:resampling_tools}

In this section we develop a framework to resample the strands of \clek{}'s across small regions with the aim of changing how the loops are linked together. The main result of this section is Proposition~\ref{prop:resampling} (and its boundary version Proposition~\ref{prop:resampling_H}). These  roughly state that for  any \emph{suitable resampling procedure}, on a large probability event, and at an arbitrarily large density of scales, the geometry of the \clek{} is sufficiently good so that we have uniformly positive probability of the resampling being successful.

We then describe in Section~\ref{se:resampling_results} how we can apply the resampling procedure to create loops with certain target shapes. We will describe events that give us a positive probability of creating a loop that disconnects a region, and events that give us a positive probability of breaking all loops that cross a given annulus. Our results can be seen as a local independence of \clek{} which says that the law of the local configuration remains approximately the same when we change it at a different location.

These results will be useful in other works. In \cite{amy2025tightness}, we use this to compare the law of the \clek{} locally to the law of a region that is disconnected by a single loop.

\subsection{Setup}
\label{subsec:resampling_setup}

In this subsection we will describe a procedure for resampling connections in a multichordal \clek{}. This will be the  \emph{suitable resampling procedure} we referred to above that will be used in  Proposition~\ref{prop:resampling} and Proposition~\ref{prop:resampling_H}.

We start by clarifying what we mean by \emph{resampling procedure}: Let $\Gamma$ be a multichordal \clek{} in $(D;\ul{x};\beta)$, and let $W \subseteq D$. We construct another multichordal \clek{} in $(D;\ul{x};\beta)$ coupled with $\Gamma$ by repeatedly applying the following procedure. Select regions $(U,V) \in \domainpair{D}$ with $V \subseteq W$ randomly in a way that is independent of the CLE configuration within $W$, and let $\wt{\Gamma}$ be such that $\wt{\Gamma}_\outside^{*,V,U} = \Gamma_\outside^{*,V,U}$ and the remainder of $\wt{\Gamma}$ is sampled from its conditional law given $\wt{\Gamma}_\outside^{*,V,U}$ (independently of the remainder of $\Gamma$). We say that $\Gamma^\resampled$ is a \emph{resampling of $\Gamma$ within $W$} if it is obtained by applying such a procedure a (random) number of times.

Whenever two strands of loops intersect, their intersection points are called pivotal points because changing the connections between the strands hitting the point affects the macroscopic behavior of the loops. We now formally describe what we mean by ``changing the links at a given collection of pivotal points''.

\begin{definition}[Target pivotals]
\label{def:resampling_target}   
Let $\Gamma$ be a multichordal \clek{} in $(D;\ul{x};\beta) \in \eldomain{2N}$.  Let $W\subseteq D$ be open and $r > 0$. Suppose that we are given the following random variables.
\begin{enumerate}[(i)]
\item\label{it:n_r_def} $M^*<\infty$.
\item  A set $\{(\ell_i^1,\ell^2_i)\}_{i=1,\ldots,M^*}$ of pairs of arcs in $\Gamma$ with $\ell_i^1,\ell^2_i \subseteq W$ and such that none of them overlap with each other.
\item A set of $M^*$ points $\ul{y}$ such that
\begin{itemize}
 \item $y_i\in  \ell_i^1\cap \ell^2_i$ and $B(y_i,4r)\subset W$ for $i=1,\ldots,M^*$,
 \item the endpoints of $\ell^1_i, \ell^2_i$ lie outside $B(y_i,4r)$,
 \item $B(y_i,4r)\cap B(y_j,4r)=\varnothing$ and $(\ell_i^1 \cup \ell_i^2) \cap B(y_j,4r) = \varnothing$ for all $i\neq j$.
\end{itemize}
\end{enumerate}
We denote these target pivotals by $\mathfrak F_{W,r} = \{(\ell^1_i,\ell^2_i,y_i)\}_{i=1,\ldots,M^*}$.
\end{definition}

We want to resample $\Gamma$ in small regions of $W$ so that the linking pattern between each pair $(\ell_i^1,\ell_i^2)$ is switched. This is the subject of the following definition.

\begin{definition}
\label{def:successful_resampling}
Let $\Gamma$ be a multichordal \clek{} in $(D;\ul{x};\beta)$. Let $W \subseteq D$, $r>0$, and let $\mathfrak F_{W,r} = \{(\ell^1_i,\ell^2_i,y_i)\}$ be target pivotals. Let $\Gamma^\resampled$ be a resampling of $\Gamma$ within $W$.  We say that  $\Gamma^\resampled$ is $\mathfrak F_{W,r}$-successful if the following hold.
\begin{enumerate}[(i)]
 \item The loops and strands of $\Gamma$ and $\Gamma^\resampled$ are identical when restricted to the set $D \setminus \bigcup_{i=1}^{M^*} B(y_i,4r)$.
 \item Let $\tau_i^j$ (resp.\ $\ol{\tau}_i^j$) be the first (resp.\ last) time that $\ell_i^j$ is in $\ol{B}(y_i,4 r)$, for $i=1,\ldots,M^*$, $j=1,2$. Then in $\Gamma^\resampled$ the connections between the four strands $\ell_i^1|_{[0,\tau_i^1]}$, $\ell_i^1|_{[\ol{\tau}_i^1,1]}$, $\ell_i^2|_{[0,\tau_i^2]}$, $\ell_i^2|_{[\ol{\tau}_i^2,1]}$ (assuming that they are parameterized on $[0,1]$) are flipped, i.e.\ the strand $\ell^{\resampled,1}_i$ (resp.\ $\ell^{\resampled,2}_i$) in $\Gamma^\resampled$ starting with $\ell_i^1|_{[0,\tau_i^1]}$ (resp.\ finishing with $\ell_i^1|_{[\ol{\tau}_i^1,1]}$) is linked to $\ell_i^2|_{[0,\tau_i^2]}$ or $\ell_i^2|_{[\ol{\tau}_i^2,1]}$. Moreover, it holds again that $\ell^{\resampled,1}_i \cap \ell^{\resampled,2}_i \cap B(y_i,r) \neq \varnothing$.
\end{enumerate}
\end{definition}

\subsection{Resampling in the interior} \label{subsec:interior_resampling}

We now state and prove the main result of this section, in the case of resampling around points in the interior of the domain. Proposition~\ref{prop:resampling} gives that with large probability, we have uniformly positive probability for the resampling being successful at a large density of scales around interior points, for a suitable resampling procedure. 

\begin{proposition}
\label{prop:resampling}
Suppose that $\Gamma$ is a nested \clek{} in a simply connected domain $D$. Given any $M_0 \in \N$, $r > 0$, $a\in(0,1)$, and $b>0$, there exists $p \in (0,1)$ such that the following holds.

Let $j_0 \in\Z$ and $z\in D$ be such that $B(z,2^{-j_0})\subseteq D$. There exists a resampling $\Gamma^\resampled_{z,j}$ of $\Gamma$ within $W_{z,j} \defeq A(z,(5/8)2^{-j},(7/8)2^{-j})$ for each $j>j_0$ with the following property. Let $G^p_{z,j}$ be the event for $\Gamma$ that for any choice of target pivotals $\mathfrak F_{W_{z,j},r2^{-j}} = \{(\ell^1_i,\ell^2_i,y_i)\}_{i=1,\ldots,M^*}$ with $M^* \le M_0$, the conditional probability given $\Gamma$ that $\Gamma^\resampled_{z,j}$ is $\mathfrak F_{W_{z,j},r2^{-j}}$-successful is at least $p$. For $k\in\N$ let $\wt G_{z,j_0,k}$ be the event that the number of $j = j_0+1,\ldots,j_0+k$ such that $G^p_{z,j}$ occurs is at least $(1-a)k$. Then
\[
 \p[(\wt G_{z,j_0,k})^c] = O(e^{-b k}) 
\]
where the implicit constant does not depend on $z,j_0,k$.
\end{proposition}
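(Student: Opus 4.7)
The plan is to apply Proposition~\ref{pr:cle_ind_across_scales} with a ``good geometry'' event in the annulus that suffices for successful resampling at each scale. Given target pivotals $\mathfrak F_{W_{z,j},r} = \{(\ell_i^1, \ell_i^2, y_i)\}_{i \le M^*}$, I define $\Gamma^\resampled_{z,j}$ by iteratively applying Theorem~\ref{thm:cle_partially_explored} with $(U_i, V_i) = (B(y_i, 2r), B(y_i, 3r))$ for $i = 1, \dots, M^*$: partially explore the strands of $\Gamma$ in each annulus $A(y_i, 2r, 3r)$, then resample the conditional law in $B(y_i, 2r)$ using independent randomness. The disjointness $B(y_i, 4r) \cap B(y_j, 4r) = \varnothing$ from Definition~\ref{def:resampling_target} makes the iteration well-defined and ensures the resamplings in distinct balls are conditionally independent multichordal \clek{}'s.

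For $\Gamma^\resampled_{z,j}$ to be $\mathfrak F_{W_{z,j},r}$-successful, it suffices that in each $B(y_i, 2r)$ the resampled interior link pattern flips the pivotal at $y_i$; an additional inner resampling in $B(y_i, r)$ using Theorem~\ref{thm:cle_partially_explored} once more ensures the condition $\ell^{\resampled,1}_i \cap \ell^{\resampled,2}_i \cap B(y_i,r) \neq \varnothing$. By Corollary~\ref{co:hookup_positive_continuous}, the probability of any specific interior link pattern is a strictly positive continuous function of the marked point configuration on $\partial B(y_i, 2r)$; by compactness, whenever these marked points are $\delta$-separated it is bounded below by some $p_1(\delta) > 0$, and $\delta$-separation on the circle automatically caps their number. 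Conditional independence across the $M^* \le M_0$ balls then gives total success probability at least $p_1(\delta)^{M_0}$.

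Define the good event $E^n$ for a multichordal \clek{} $\Gamma$ in $(\D; \ul s; \beta)$ to be: for every $y \in W_0 = A(0, 5/8, 1)$ and every rescaled radius $r'$ (in the compact range corresponding to $r$ under the conformal maps $\varphi_j$ for the scales of interest) with $B(y, 4r') \subseteq W_0$, the marked points on $\partial B(y, 2r')$ arising from $\Gamma_\outside^{*, B(y, 3r'), B(y, 2r')}$ are $(1/n)$-separated. This event is measurable with respect to $\Gamma_\outside^{*,B(0,5/8),B(0,1/2)}$ (since all relevant strands lie in $W_0 \subseteq \D \setminus B(0, 1/2)$) and open in the topology \eqref{eq:topology_loop_ensemble}, so $E^n \in \CE_{W_0}$. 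By the almost-sure distinctness of the marked points combined with Corollary~\ref{co:unif_prob_marked_points} for uniformity in the underlying configuration $(\ul s, \beta)$, plus a union bound over a countable dense set of $(y, r')$ with continuity of \clek{} strands to extend to all such $y, r'$, we have $\p[E^n] \to 1$ as $n \to \infty$ uniformly over all multichordal \clek{} configurations.

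Applying Proposition~\ref{pr:cle_ind_across_scales} to $(E^n)$ yields $n$ large enough that the density of scales $j$ with $E^n_{z,j}$ holding is at least $1-a$, off an event of probability $O(e^{-bk})$. Setting $p = p_1(1/n)^{M_0}$, the inclusion $E^n_{z,j} \subseteq G^p_{z,j}$ completes the proof. The main obstacle is upgrading the separation of strands (Lemma~\ref{lem:separation_density} or its single-scale inputs) from a bound at a single fixed center to the universal statement over all centers $y$ and radii $r'$ needed to handle adversarially chosen target pivotals; this is handled by the countable dense grid together with the continuity of strands under the topology \eqref{eq:topology_loop_ensemble}.
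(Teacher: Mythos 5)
There is a genuine gap in how you define the resampling $\Gamma^\resampled_{z,j}$. You write ``Given target pivotals $\mathfrak F_{W_{z,j},r}$, I define $\Gamma^\resampled_{z,j}$ by iteratively applying Theorem~\ref{thm:cle_partially_explored} with $(U_i,V_i)=(B(y_i,2r),B(y_i,3r))$.'' This makes the choice of regions depend on the pivotal locations $y_i$, which are functions of the CLE configuration inside $W_{z,j}$. This contradicts the definition of a resampling given at the start of Section~\ref{subsec:resampling_setup}: the regions $(U,V)$ must be selected \emph{independently of the CLE configuration within $W$}. Moreover, the proposition requires a \emph{single} resampling $\Gamma^\resampled_{z,j}$ such that on the event $G^p_{z,j}$ it is $\mathfrak F_{W_{z,j},r}$-successful with probability at least $p$ simultaneously \emph{for every admissible choice} of target pivotals; a resampling that is tailored to one particular $\mathfrak F_{W_{z,j},r}$ cannot do this. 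The paper avoids this by defining the resampling over a fixed finite grid $\CD_{W,r}$ of centers: one draws a geometric random number of grid points uniformly (independently of $\Gamma$) and resamples in balls around them. Given any target pivotals, there is then a positive probability (depending only on $M_0$ and $r$) that the random grid points land within $r/2$ of the $y_i$'s, which is enough for Lemma~\ref{le:resampling_success_positive}.

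Two secondary issues. First, your ``additional inner resampling in $B(y_i,r)$'' to ensure $\ell_i^{\resampled,1}\cap\ell_i^{\resampled,2}\cap B(y_i,r)\neq\varnothing$ is not justified: after the outer resampling the new strands need not even enter $B(y_i,r)$, so a further resampling there gives you nothing. The paper instead obtains this via the continuity argument (Theorem~\ref{th:continuity_mcle}) inside the proof of Lemma~\ref{le:resampling_success_positive}. Second, your good event $E^n$ demands $\delta$-separation for all centers $y$ and all radii $r'$ in a range, and you argue ``union bound over a countable dense set plus continuity.'' But the marked points of $\Gamma_\outside^{*,B(y,3r'),B(y,2r')}$ are a discontinuous function of $(y,r')$ (the set of crossing strands can change when a loop barely grazes $\partial B(y,2r')$), so the extension from a dense set to all $(y,r')$ does not go through, and it is also unclear that such an event is open as required for membership in $\CE_{W_0}$. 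The paper only requires $\delta$-separation at the finitely many points of the fixed grid $\CD_{W,r}$ (Lemma~\ref{le:event_everywhere_separated}), which sidesteps both problems and suffices because the resampling only ever looks at grid points.
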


For the proof of Proposition~\ref{prop:resampling} we will partly use the same setup and notation as in Section~\ref{subsec:ind_across_scales}. We will deduce Proposition~\ref{prop:resampling} from the independence across scales Proposition~\ref{pr:cle_ind_across_scales}.

We describe a concrete resampling procedure. Consider a multichordal \clek{} process $\Gamma$ in $(D;\ul{x};\beta) \in \eldomain{2N}$.  Assume $r>0$ is fixed and $W\subseteq D$ is open. Let
\begin{equation}\label{eq:grid_annulus}
 \CD_{W,r} = \{ w \in r\Z^2 \mid B(w,2r) \subseteq W \} .
\end{equation}
We pick $\wt{M} \in \N_0$ randomly according to a geometric distribution, independently of $\Gamma$ (the exact choice of the distribution of $\wt{M}$ is not relevant as long as its support is $\N_0$). Then we repeat the following procedure $\wt{M}$ times:
\begin{enumerate}[1.]
\item Select a point $w \in \CD_{W,r}$ uniformly at random and consider the partially explored \clek{} process $\Gamma_\outside^{*,B(w,2r),B(w,r)}$. By Theorem~\ref{thm:cle_partially_explored}, the conditional law of the remainder $\Gamma_\inside^{*,B(w,2r),B(w,r)}$ given $\Gamma_\outside^{*,B(w,2r),B(w,r)}$ is given by a multichordal \clek{} in the unexplored domain.
\item Resample $\Gamma$ according to its conditional law given $\Gamma_\outside^{*,B(w,2r),B(w,r)}$.
\end{enumerate}
Let $\Gamma^\resampled_{W,r}$ denote the multichordal \clek{} that arises after performing the resampling step described above on $\Gamma$.

Recall Corollary~\ref{co:hookup_positive_continuous} that in a $\delta$-separated marked domain the probability that the resampling procedure creates any given interior link pattern is uniformly bounded from below where the bound depends on the separation $\delta$. Therefore, if for each $w \in \CD_{W,r}$ the marked points of $\Gamma_\outside^{*,B(w,2r),B(w,r)}$ are $\delta$-separated, then the conditional probability that the resampling is $\mathfrak{F}_{W,r}$-successful is positive and uniformly bounded from below.

\begin{lemma}\label{le:resampling_success_positive}
 Fix $W \subseteq D$ and $r>0$. For any $M_0 \in \N$, $\delta > 0$, there exists $p>0$ such that the following is true. Suppose that $\Gamma$ is a multichordal \clek{} in $(D;\ul{x};\beta) \in \eldomain{2N}$. Apply the resampling procedure described just above to $\Gamma$. Let $E_\delta$ be the event that for every $w \in \CD_{W,r}$ we have that $\Gamma_\outside^{*,B(w,2r),B(w,r)}$ is $\delta$-separated. Then for any choice of target pivotals $\mathfrak F_{W,r} = \{(\ell^1_i,\ell^2_i,y_i)\}_{i=1,\ldots,M^*}$ with $M^* \le M_0$ we have
 \[ \p[ G^\resampled_{W,r} \mid \Gamma ] \,\one_{E_\delta} \ge p\,\one_{E_\delta} \]
 where $G^\resampled_{W,r}$ is the event that the resampling is $\mathfrak F_{W,r}$-successful.
\end{lemma}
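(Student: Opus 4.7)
The plan is to identify a positive-probability event on which exactly $M^*$ resampling balls are used, each one centered near a distinct pivotal, and each of which flips the pivotal linking and realizes the required strand intersection in $B(y_i,r)$ with uniformly positive probability. For each $i$ pick $w_i\in\CD_{W,r}$ with $|w_i-y_i|\le r/\sqrt{2}$, which is possible because $B(y_i,4r)\subseteq W$ ensures $B(w_i,2r)\subseteq B(y_i,3r)\subseteq W$. Then $y_i\in B(w_i,r)$ and the balls $B(w_i,2r)$ are pairwise disjoint, sitting inside the disjoint balls $B(y_i,4r)$.

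Let $A$ be the event that $\wt M=M^*$ and the first $M^*$ centers chosen are $w_1,\ldots,w_{M^*}$ in this order. Since $\wt M$ is geometric and independent of $\Gamma$ and $\CD_{W,r}$ is finite, $\p[A\mid\Gamma]\ge c_1$ for a constant $c_1=c_1(M_0,|\CD_{W,r}|)>0$. Write $\Gamma^{(i)}$ for the configuration after the first $i$ steps and set $V_i=B(w_i,2r)$, $U_i=B(w_i,r)$. Disjointness of the balls $B(w_j,r)$ for $j<i$ from $V_i$ implies that $\Gamma^{(i-1)}$ agrees with $\Gamma$ on all of $V_i$, so the partial exploration around $U_i$ carried out at step $i$ has the same set of marked points on $\partial U_i$ as the partial exploration around $U_i$ in the original $\Gamma$. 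In particular, $E_\delta$ ensures $\delta$-separation of those marked points at every step.

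At the $i$-th step, Theorem~\ref{thm:cle_partially_explored} gives that, conditionally on the $\sigma$-algebra $\CF_i$ generated by the preceding steps together with this partial exploration, the resampled CLE inside $V_i^{*,U_i}$ is a multichordal \clek{}. Let $E_i$ be the open event in the topology \eqref{eq:topology_loop_ensemble} that (i) the resampled interior link pattern swaps the two strands at $y_i$ in the sense required by Definition~\ref{def:successful_resampling}, and (ii) the two new strands meet in $B(y_i,r)$. Positivity of $\p[E_i\mid\CF_i]$ at a fixed $\delta$-separated configuration is obtained by combining Corollary~\ref{co:hookup_positive_continuous} (positive probability for the swapped link pattern) with Lemma~\ref{lem:cle_strands_in_regions} applied to prescribed simple curves that come close together inside $B(y_i,r)$; the non-simplicity of SLE strands for $\kappa\in(4,8)$ then forces two strands confined to a sufficiently narrow common tubular neighborhood to intersect there with positive probability. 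To promote this to a lower bound $p_0>0$ uniform in the configuration, one uses Theorem~\ref{th:continuity_mcle} (lower semicontinuity of open-event probabilities) and compactness of the space of $\delta$-separated marked-point configurations on $\partial\D$ modulo the conformal automorphisms of $\D$, after uniformizing $V_i^{*,U_i}$ to $\D$; distortion of the uniformizing map near $\partial U_i$ is controlled using $B(w_i,r)\subseteq V_i^{*,U_i}$ and Koebe-type estimates.

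Iterating the conditional lower bound over $i=1,\ldots,M^*\le M_0$ yields $\p[G^\resampled_{W,r}\mid\Gamma]\,\one_{E_\delta}\ge c_1 p_0^{M_0}\,\one_{E_\delta}$, giving the desired bound with $p=c_1 p_0^{M_0}$. The main obstacle is the uniformity argument in the previous paragraph: one must translate the Euclidean $\delta$-separation of marked points on $\partial U_i$ into a quantitative separation of their images on $\partial\D$ after the conformal map, and then combine continuity and compactness with the positivity of strand intersection for $\kappa\in(4,8)$ to obtain a lower bound $p_0$ depending only on $\delta$.
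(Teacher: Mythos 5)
Your plan matches the paper's proof closely: with uniformly positive probability the geometric variable and center choices realize $\wt M=M^*$ and one ball near each $y_i$; at each step the conditional law is a multichordal \clek{} (Theorem~\ref{thm:cle_partially_explored}); the swap of the link pattern has uniformly positive probability on $E_\delta$ via Corollary~\ref{co:hookup_positive_continuous}; and uniformity over $\delta$-separated configurations is obtained from Theorem~\ref{th:continuity_mcle} together with compactness (the paper's implicit route is Corollary~\ref{co:unif_prob_marked_points}). The product over $M^*\le M_0$ steps then gives $p$.

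The step you need to repair is the justification that the two resampled strands intersect inside $B(y_i,r)$ with positive probability. Lemma~\ref{lem:cle_strands_in_regions} only says the strands can be confined to any open neighborhood of a collection of \emph{disjoint} simple curves; confining two strands to a narrow common tubular neighborhood does not by itself force them to meet, and ``non-simplicity of SLE'' refers to a single strand touching itself, not to two distinct strands touching each other. So ``forces \dots to intersect there'' does not follow from what you cite. The paper's proof does not go this route; it appeals to Theorem~\ref{th:continuity_mcle}, together with the observation (stated as a parenthetical in the paper's proof) that the continuity argument also controls the intersection sets of the chords, and then invokes positivity plus compactness. A correct way to fill in the positivity is, for example, to confine one resampled strand to a tube as in Lemma~\ref{lem:cle_strands_in_regions}, and then, conditionally on it, to use that for $\kappa\in(4,8)$ the other strand is an \slek{} in the complementary component which hits its domain boundary (which includes the first strand) with positive probability, and that this hit can be localized to $B(y_i,r)$; this requires an additional argument beyond what you wrote. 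A second, smaller, point to be careful about is the claim that ``$\Gamma^{(i-1)}$ agrees with $\Gamma$ on all of $V_i$'': the resampled region $V_j^{*,U_j}$ is not a priori contained in $B(w_j,2r)$, so what you really want (and what suffices) is that $\ell^1_i,\ell^2_i$ and the partial exploration around $w_i$ are unchanged by the earlier steps; that follows because $\ell^1_i,\ell^2_i$ are disjoint from $B(y_j,4r)\supseteq B(w_j,2r)$ and hence lie in the explored part $\Gamma_\outside^{*,B(w_j,2r),B(w_j,r)}$, which is preserved, but it is worth stating that argument rather than the stronger agreement claim.
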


\begin{proof}
 Suppose we are given any target pivotals $\mathfrak F_{W,r} = \{(\ell^1_i,\ell^2_i,y_i)\}_{i=1,\ldots,M^*}$ with $M^* \le M_0$. If we perform the resampling operation, then with uniformly positive probability (depending on $M_0,r$) we sample $\wt{M} = M^*$ and the points $w_i \in \CD_{W,r/2}$ such that $w_i \in B(y_i,r/2)$ for every $i=1,\ldots,M^*$. In particular, the strands $\ell^1_i$, $\ell^2_i$ cross the annulus $A(w_i,r,2r)$. On the event $E_{\delta}$ the strands of $\Gamma_\outside^{*,B(w_i,2r),B(w_i,r)}$ are $\delta$-separated. Therefore, by Corollary~\ref{co:hookup_positive_continuous}, the probability that $\ell^1_i,\ell^2_i$ switch their linking pattern after performing the resampling is bounded from below uniformly in $\delta$. Moreover, by continuity (Theorem~\ref{th:continuity_mcle}) we see that with positive probability bounded from below the new strands $\ell^{\resampled,1}_i,\ell^{\resampled,2}_i$ intersect again inside $B(w_i,r/2)$. (Although we have not phrased the topology \eqref{eq:topology_loop_ensemble} to keep track of the intersection points between the chords, the proof of Theorem~\ref{th:continuity_mcle} shows that continuity also holds when we require the intersection sets to converge.) Repeating this argument $M^*$ times for each target pivotal, we conclude.
\end{proof}

Recall the setup of Section~\ref{subsec:ind_across_scales}. Suppose that $D\subseteq\C$ is a simply connected domain and let $\Gamma$ be a nested  \clek{} in $D$. Let $u\in(0,1/8)$ be fixed. Let $j_0 \in\Z$ and $z\in D$ be such that $B(z,2^{-j_0})\subseteq D$.

For $j>j_0$, let $U_j=B(z,2^{-j})$, $V_j= B(z,(1+u)2^{-j})$, and let $\CG_{j}$ be the filtration generated by the partially explored \clek{} $\Gamma_\outside^{*,V_j,U_j}$. Let $\varphi_j$ be the unique conformal map from $V_j^{*,U_j}$ to $\D$ with $\varphi_j(z)=0$, $\varphi_j'(z)>0$. Let $(\D;\ul{s}^j;\beta_j)$ be the exterior link pattern decorated marked domain which arises under $\varphi_j$. Let $\Gamma_{\ul{s}^j,\beta_j}$ be the image of $\Gamma$ under $\varphi_j$. Conditionally on $\CG_j$, we have that $\Gamma_{\ul{s}^j,\beta_j}$ is a multichordal \clek{} in $(\D;\ul{s}^j;\beta_j)$.

Fix $5/8<r_1<r_2<7/8$, and let $W = A(0,r_1,r_2)$. We apply the resampling procedure described above to $\Gamma_{\ul{s}^j,\beta_j}$ with $W$ and $r/100$. Let $(\Gamma_{\ul{s}^j,\beta_j})^\resampled_{W,r/100}$ be the multichordal \clek{} arising from the resampling, and let $\Gamma^\resampled_{z,j}$ be the \clek{} obtained from mapping back under $\varphi_j^{-1}$. Note that any target pivotals $\mathfrak F_{W_{z,j},r2^{-j}}$ for $\Gamma$ induce some target pivotals $\mathfrak F_{W,r/100}$ for $\Gamma_{\ul{s}^j,\beta_j}$ via the map $\varphi_j$. If $(\Gamma_{\ul{s}^j,\beta_j})^\resampled_{W,r/100}$ is $\mathfrak{F}_{W,r/100}$-successful, then $\Gamma^\resampled_{z,j}$ is $\mathfrak F_{W_{z,j},r2^{-j}}$-successful, according to Definition~\ref{def:successful_resampling}.

Recalling Lemma~\ref{le:resampling_success_positive}, we now see that Proposition~\ref{prop:resampling} is a consequence of the independence across scales Proposition~\ref{pr:cle_ind_across_scales}.

\begin{lemma}
\label{le:event_everywhere_separated}
Fix $\delta_0 > 0$, $r>0$, and $W = A(0,r_1,r_2)$.  Suppose that $(\D;\ul{x};\beta) \in \eldomain{2N}$ is $\delta_0$-separated as in Definition~\ref{def:delta-sep-domain} and let $\Gamma$ be a multichordal \clek{} in $(\D;\ul{x};\beta)$.  Let $\CD_{W,r}$ be as in \eqref{eq:grid_annulus}.  Let $E_\delta$ be the event that for every $w \in \CD_{W,r}$ we have that $\Gamma_\outside^{*,B(w,2r),B(w,r)}$ is $\delta$-separated.  Then
\[ 
\mcclelaw{\D;\ul{x};\beta}[E_\delta] \to 1 
\quad \text{as } \delta \to 0
\]
at a rate that depends only on $\delta_0$ and $r$.
\end{lemma}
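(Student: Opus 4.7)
First, observe that $\CD_{W,r}$ is finite with cardinality bounded by $C r^{-2}$ for a universal constant $C$, since $W \subseteq A(0,5/8,7/8)$ has bounded area. For each $w \in \CD_{W,r}$, let $E_\delta^w$ denote the event that the marked points of $\Gamma_\outside^{*,B(w,2r),B(w,r)}$ are $\delta$-separated, so that $E_\delta = \bigcap_{w\in\CD_{W,r}} E_\delta^w$. By a union bound, the lemma reduces to showing that, for every fixed $w$, the probability $\mcclelaw{\D;\ul{x};\beta}(E_\delta^w)$ converges to $1$ as $\delta \to 0$, uniformly over all $\delta_0$-separated $(\D;\ul{x};\beta) \in \eldomain{2N}$. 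Note that the $\delta_0$-separation of $\ul{x}$ on $\partial\D$ automatically bounds $N \lesssim \delta_0^{-1}$, so only finitely many values of $N$ are relevant.

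Next, for each fixed configuration $(\D;\ul{x};\beta)$ and each $w \in \CD_{W,r}$, the pair $(B(w,r), B(w,2r))$ belongs to $\domainpair{\D}$ (as $\ol{B(w,r)} \subset B(w,2r) \subset \D$), so Lemma~\ref{lem:finitely-marked-points-marginal} applies and guarantees that the marked points of $\Gamma_\outside^{*,B(w,2r),B(w,r)}$ are a.s.\ finite in number and a.s.\ distinct. Consequently, the events $(E_\delta^w)_{\delta>0}$ increase as $\delta \searrow 0$ to the full-measure event that all marked points are distinct, and hence $\mcclelaw{\D;\ul{x};\beta}(E_\delta^w) \to 1$ pointwise in $\ul{x}$.

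Finally, uniform convergence is obtained by applying Corollary~\ref{co:unif_prob_marked_points} to the non-decreasing sequence $(E_{1/n}^w)_{n\in\N}$: for every $p<1$ this gives $\inf \mcclelaw{\D;\ul{x};\beta}(E_{1/n}^w) > p$ for all large $n$, where the infimum is over $\delta_0$-separated configurations. The delicate point — and the main obstacle — is verifying the open-set hypothesis of Corollary~\ref{co:unif_prob_marked_points} in the topology~\eqref{eq:topology_loop_ensemble}, since small Hausdorff perturbations could in principle create or destroy crossings from loops tangent to $\partial B(w,r)$. I would handle this by enlarging $E_\delta^w$ to an open event $\wt{E}_\delta^w$ which demands the $\delta$-separation for all crossings within a thin annular neighborhood of $\partial B(w,r)$; since no strand of a multichordal \clek{} is tangent to a fixed circle almost surely (each individual strand is an $\slek{}$ given the others, which has this property), $\wt{E}_\delta^w$ and $E_\delta^w$ differ by a null set, which suffices to invoke Corollary~\ref{co:unif_prob_marked_points}. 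Summing the $\lesssim r^{-2}$ probabilities $\mcclelaw{\D;\ul{x};\beta}((E_\delta^w)^c)$ then completes the proof.
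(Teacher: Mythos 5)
Your proof takes essentially the same route as the paper: reduce via the finite grid $\CD_{W,r}$ to a per-$w$ statement, invoke Lemma~\ref{lem:finitely-marked-points-marginal} for a.s.\ distinctness of the marked points (so that $\mcclelaw{\D;\ul{x};\beta}[E_\delta^w]\to 1$ pointwise), and then upgrade to uniformity over $\delta_0$-separated configurations via Corollary~\ref{co:unif_prob_marked_points}. The paper does the same thing, without even bothering with a union bound (it simply notes that the minimum over the finitely many $w$ of the minimum separation is a.s.\ positive).

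The genuine value you add is the observation that Corollary~\ref{co:unif_prob_marked_points} requires each $E_n$ to be \emph{open} in the topology~\eqref{eq:topology_loop_ensemble}, and that $E_\delta^w$ is not literally open (tangencies of a loop to $\partial B(w,r)$ or $\partial B(w,2r)$ can create new crossings, hence new marked points, under arbitrarily small perturbations). The paper's proof silently applies Corollary~\ref{co:unif_prob_marked_points} without addressing this. Your fix — replace $E_\delta^w$ by an open event $\wt{E}_\delta^w$ agreeing with it up to a null set, justified by the fact that for a fixed circle no strand of a multichordal \clek{} is tangent to it a.s.\ (since each chord is an \slek{} given the others, and the a.s.\ absence of tangency then transfers) — is morally correct. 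To make it airtight one should check (i) that the $\wt{E}_{1/n}^w$ can be chosen nested so the hypothesis of Corollary~\ref{co:unif_prob_marked_points} applies, and (ii) that the null-set claim holds simultaneously for all marked-domain laws $\mcclelaw{\D;\ul{u};\beta}$, not only the fixed one. Both are true and follow from the argument you sketch, but they deserve a sentence. In short: correct, same approach, and slightly more rigorous than the paper's own proof.
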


\begin{proof}
 By Lemma~\ref{lem:finitely-marked-points-marginal}, the minimum distance between distinct strands of $\Gamma_\outside^{*,B(w,2r),B(w,r)}$ is a.s.\ positive. Therefore we have $\mcclelaw{\D;\ul{x};\beta}[E_\delta] \to 1$ as $\delta \to 0$ for any fixed $(\D;\ul{x};\beta) \in \eldomain{2N}$. The uniform convergence for all $\delta_0$-separated configurations $\ul{x}$ then follows from Corollary~\ref{co:unif_prob_marked_points}.
\end{proof}

\begin{proof}[Proof of Proposition~\ref{prop:resampling}]
This follows from Proposition~\ref{pr:cle_ind_across_scales}, Lemma~\ref{le:event_everywhere_separated}, and Lemma~\ref{le:resampling_success_positive}.
\end{proof}

\subsection{Resampling at the boundary} \label{subsec:boundary_resampling}

We now provide a version of the resampling result, analogous to statement of Proposition~\ref{prop:resampling},  in the case the resampling occurs is performed in annuli around a point $z$ on the boundary of the domain $D$. 

\begin{proposition}
\label{prop:resampling_H}
Suppose that $\Gamma$ is a nested \clek{} in $\h$. Given any $M_0 \in \N$, $r > 0$, $a\in(0,1)$, and $b>0$, there exists $p \in (0,1)$ such that the following holds.

There exists a resampling $\Gamma^\resampled_{j}$ of $\Gamma$ within $W_{j} \defeq A(0,(5/8)2^{-j},(7/8)2^{-j}) \cap \h$ for each $j\in\N$ with the following property. Let $G^p_{j}$ be the event for $\Gamma$ that for any choice of target pivotals $\mathfrak F_{W_{j},r2^{-j}} = \{(\ell^1_i,\ell^2_i,y_i)\}_{i=1,\ldots,M^*}$ with $M^* \le M_0$, the conditional probability given $\Gamma$ that $\Gamma^\resampled_{j}$ is $\mathfrak F_{W_{j},r2^{-j}}$-successful is at least $p$. For $k\in\N$ let $\wt G_k$ be the event that the number of $j = 1,\ldots,k$ such that $G^p_{j}$ occurs is at least $(1-a)k$. Then
\[
 \p[(\wt G_k)^c] = O(e^{-b k}) .
\]
\end{proposition}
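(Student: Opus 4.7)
The plan is to mirror the proof of Proposition~\ref{prop:resampling} in the interior case, replacing each interior tool by its boundary analogue that has already been established in the paper.

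First I would set up the boundary scales. For $u \in (0,1/8)$ fixed and each $j \in \N$, let $U_j = B(0,2^{-j}) \cap \h$, $V_j = B(0,(1+u)2^{-j}) \cap \h$, and let $\varphi_j \colon V_j^{*,U_j} \to \D \cap \h$ be the normalized conformal map described above Proposition~\ref{pr:cle_ind_across_scales_bd} (sending $0$ to $0$ and the extreme intersection points on $\R_\pm$ to $\mp 1$). Let $\CG_j$ be the $\sigma$-algebra generated by $\Gamma_\outside^{*,V_j,U_j}$. By Theorem~\ref{thm:cle_partially_explored}, conditionally on $\CG_j$, the image $\Gamma_{\ul s^j,\beta_j}$ of $\Gamma$ under $\varphi_j$ is a multichordal \clek{} in the marked domain $(\D\cap\h;\ul{s}^j;\beta_j)$.

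Next I would fix $5/8 < r_1 < r_2 < 7/8$, set $W = A(0,r_1,r_2) \cap \h$, and run the resampling procedure from Section~\ref{subsec:interior_resampling} (based on sampling points of $\CD_{W,r/100}$ and a geometrically distributed number of resampling steps) on $\Gamma_{\ul s^j,\beta_j}$ within $W$ at scale $r/100$. Let $\Gamma^\resampled_j$ be the preimage under $\varphi_j^{-1}$. Any collection of target pivotals $\mathfrak{F}_{W_j,r}$ for $\Gamma$ pushes forward under $\varphi_j$ to a collection of target pivotals $\mathfrak{F}_{W,r/100}$ for $\Gamma_{\ul s^j,\beta_j}$ (using the Koebe distortion theorem to control how balls of radius $r$ transform, shrinking $u$ if needed), and $\mathfrak{F}_{W,r/100}$-success of the resampled image implies $\mathfrak{F}_{W_j,r}$-success of $\Gamma^\resampled_j$.

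For the per-scale positive probability, Lemma~\ref{le:resampling_success_positive} applies verbatim to multichordal \clek{} in the half-disc marked domain: whenever the marked points of $\Gamma_\outside^{*,B(w,2r/100),B(w,r/100)}$ are $\delta$-separated for every $w \in \CD_{W,r/100}$, the conditional probability of $\mathfrak{F}_{W,r/100}$-success is at least some $p(M_0,\delta,r) > 0$. Let $\CE^{n}$ be the event (depending on a separation parameter $\delta_n \to 0$) that for every $w \in \CD_{W,r/100}$ the strands of $\Gamma_\outside^{*,B(w,2r/100),B(w,r/100)}$ are $\delta_n$-separated and, additionally, the resampling success event has (conditional) probability at least $p_n$. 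The boundary analogue of Lemma~\ref{le:event_everywhere_separated}, obtained by combining Lemma~\ref{lem:finitely-marked-points-marginal} (strand separation a.s.\ positive) with Corollary~\ref{co:unif_prob_marked_points}, shows
\[
\lim_{n\to\infty}\mcclelaw{\D\cap\h;\ul{s};\beta}[\CE^n] = 1
\quad \text{uniformly in } (\D\cap\h;\ul s;\beta) \in \eldomain{2N}.
\]
Each $\CE^n$ lies in the class $\CE_{\wt W_0}$ described before Proposition~\ref{pr:cle_ind_across_scales_bd} (it depends only on the partially explored CLE inside the annulus and is open in the topology \eqref{eq:topology_loop_ensemble}).

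Finally I would feed $\CE^n$ into Proposition~\ref{pr:cle_ind_across_scales_bd}: for any prescribed $a\in(0,1)$ and $b>0$, choose $n$ large enough that with probability $1 - O(e^{-bk})$ the event $\CE^n_j$ holds for at least $(1-a)k$ of the scales $j=1,\dots,k$. On each such scale, the per-scale positive-probability bound from Lemma~\ref{le:resampling_success_positive} yields $G^p_j$ with $p = p_n$, uniformly over the choice of target pivotals with $M^* \le M_0$. The main technical point to be careful about is the pushforward of target pivotals under $\varphi_j$: one must check, using distortion estimates on $\varphi_j$ available once one conditions on the separation of the exterior marked points, that balls $B(y_i,4r)$ in $W_j$ map to regions contained in balls of comparable radius in $W$, so that target pivotals at scale $r$ downstairs yield legitimate target pivotals at scale comparable to $r/100$ upstairs. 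This is a routine Koebe-type argument once the $\delta$-separation is in hand, and completes the reduction to Proposition~\ref{pr:cle_ind_across_scales_bd}.
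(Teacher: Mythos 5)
Your proposal is correct and follows essentially the same route as the paper's proof: it replaces the interior ingredients of Proposition~\ref{prop:resampling} with their boundary counterparts (the boundary conformal uniformization $\varphi_j$ to $\D\cap\h$, the boundary independence-across-scales result Proposition~\ref{pr:cle_ind_across_scales_bd}, and the boundary separation bound Lemma~\ref{lem:separation_density_bd}), reusing Lemma~\ref{le:resampling_success_positive} and the push-forward of target pivotals verbatim. The only small difference is that you spell out the Koebe distortion step for transporting target pivotals and the boundary version of Lemma~\ref{le:event_everywhere_separated}, which the paper leaves implicit by declaring the proof identical to the interior case.
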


The proof of Proposition~\ref{prop:resampling_H} follows along the same lines as the proof of Proposition~\ref{prop:resampling} where we now consider the boundary setup in Section~\ref{subsec:ind_across_scales}.

Let $\Gamma$ be a nested \clek{} in $\h$. Let $u\in(0,1/8)$ be fixed and for $j\in\N$, let $U_j=B(0,2^{-j})\cap\h$, $V_j=B(0,(1+u)2^{-j})\cap\h$, and let $\CG_{j}$ be the filtration generated by the partially explored \clek{} $\Gamma_\outside^{*,V_j,U_j}$. Let $\varphi_j$ be the conformal map from $V_j^{*, U_j}$ to $\D\cap\h$ such that $\varphi_j(0)=0$ and the rightmost  (resp.\ leftmost) intersection point of the exploration with $\R_-$ (resp.\ $\R_+$) is mapped to $-1$ (resp.\ $1$).

Let $(\D\cap\h;\ul{s}^j;\beta_j)$ be the exterior link pattern decorated marked domain arising from  $\varphi_j$. Let  $\Gamma_{\ul{s}^j,\beta_j}$ be the image of $\Gamma$ under $\varphi_j$. Conditionally on $\CG_{j}$, we have that $\Gamma_{\ul{s}^j,\beta_j}$ is a multichordal \clek{}  in  $(\D\cap\h; \ul{s}^j;\beta_j)$.

Fix $5/8<r_1<r_2<7/8$, and let $W =A(0,r_1,r_2)\cap\h$. We apply the resampling procedure described above Lemma~\ref{le:resampling_success_positive} to $\Gamma_{\ul{s}^j,\beta_j}$ with $W$ and $r/100$. Let $(\Gamma_{\ul{s}^j,\beta_j})^\resampled_{W,r/100}$ be the multichordal \clek{} arising from the resampling, and let $\Gamma^\resampled_{j}$ be the \clek{} obtained from mapping back under $\varphi_j^{-1}$. Note that any target pivotals $\mathfrak F_{W_{j},r2^{-j}}$ for $\Gamma$ induce some target pivotals $\mathfrak F_{W,r/100}$ for $\Gamma_{\ul{s}^j,\beta_j}$ via the map $\varphi_j$. If $(\Gamma_{\ul{s}^j,\beta_j})^\resampled_{W,r/100}$ is $\mathfrak{F}_{W,r/100}$-successful, then $\Gamma^\resampled_{j}$ is $\mathfrak F_{W_{j},r2^{-j}}$-successful, according to Definition~\ref{def:successful_resampling}.

With this setup, the proof of Proposition~\ref{prop:resampling_H} is exactly the same as for Proposition~\ref{prop:resampling} where instead of Proposition~\ref{pr:cle_ind_across_scales} we use the boundary version Proposition~\ref{pr:cle_ind_across_scales_bd}.

\subsection{Linking and breaking loops}
\label{se:resampling_results}

In this subsection we provide some resampling procedures with the target of linking or breaking loops in a specific way. In Lemma~\ref{lem:disconnect_interior} and~\ref{lem:disconnect_boundary} we will consider the success probability of linking loops together into one loop that disconnects a region from the outside. In Lemma~\ref{lem:break_loops} and~\ref{lem:break_loops_bd} we will consider the success probability of breaking all loops that cross a given annulus. The result is that on an extremely likely event, at a large ratio of scales around a point $z$ the probability that the resampling in the annulus around $z$ is successful is bounded from below. We will consider variants of these results when $z$ is in the interior and when $z$ is on the boundary of the domain.

Recall the definition of a resampling procedure given at the beginning of Section~\ref{subsec:resampling_setup}. In particular, if $\Gamma^\resampled$ is a resampling of $\Gamma$ within a set $W \subseteq D$, then the configuration of loops and strands in $D \setminus W$ is unchanged.

\begin{lemma}
\label{lem:disconnect_interior}
Suppose that $\Gamma$ is a nested \clek{} in a simply connected domain $D$, and let $\Upsilon_\Gamma$ be its gasket. For each $a\in(0,1)$, $b>0$ there exists $p > 0$ such that the following holds. 

Let $z\in D$ and $j_0\in\Z$ be such that $B(z,2^{-j_0})\subseteq D$. There exists a resampling $\Gamma^\resampled_{z,j}$ of $\Gamma$ within $A(z,2^{-j},2^{-j+1})$ for each $j>j_0$ with the following property. Let $G^\resampled_{z,j}$ be the event that $\Gamma^\resampled_{z,j}$ has a loop $\wt\CL$ that
\begin{itemize}
 \item disconnects every point in $\Upsilon_{\Gamma^\resampled_{z,j}} \cap B(z,2^{-j})$ from $\partial B(z,2^{-j+1})$,
 \item the gasket and the collection of loops of $\Gamma$ and $\Gamma^\resampled_{z,j}$ remain the same in each connected component of $\C \setminus \wt\CL$ that intersects $B(z,2^{-j})$.
\end{itemize}
For $k\in\N$, let $\wt G_{z,j_0,k}$ be the event that the number of $j=j_0+1,\ldots,j_0+k$ so that
\[ \p[ G^\resampled_{z,j} \giv \Gamma ] \geq p\]
is at least $(1-a)k$.  Then
\[ \p[(\wt G_{z,j_0,k})^c] = O(e^{-b k}) \]
where the implicit constant does not depend on $z,j_0,k$.
\end{lemma}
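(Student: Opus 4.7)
The plan is to combine Proposition~\ref{prop:resampling} with a uniform lower bound on the existence of ``good'' target pivotals. Apply Proposition~\ref{prop:resampling} at the scale index shifted by one, so that the resampling annulus $W_{z,j-1} = A(z,(5/8)2^{-j+1},(7/8)2^{-j+1})$ is contained in $A(z,2^{-j},2^{-j+1})$ and is disjoint from $B(z,2^{-j})$. For parameters $M_0 \in \N$ and $r > 0$ to be determined, this produces a resampling $\Gamma^\resampled_{z,j}$ together with an event on which, at density $(1-a/2)k$ of scales with probability $1-O(e^{-bk})$, any target pivotals $\mathfrak F \subseteq W_{z,j-1}$ with $M^* \le M_0$ give rise to an $\mathfrak F$-successful resampling with conditional probability at least $p_0$ given $\Gamma$.

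Next I would introduce an auxiliary event $H_{z,j}$: there exist target pivotals $\mathfrak F \subseteq W_{z,j-1}$ with $M^* \le M_0$ whose flipping combines the crossings of $\Gamma$ in $A(z,2^{-j},2^{-j+1})$ into a single loop $\wt\CL$ of nonzero winding number around $B(z,2^{-j})$. On $H_{z,j}$ the required target pivotals exist, and since the resampling is contained in $W_{z,j-1}$, the loops and gasket of $\Gamma$ inside $B(z,2^{-j})$ are untouched by the resampling, so the second bullet in the definition of $G^\resampled_{z,j}$ is automatic. I would then show that $\p[H_{z,j}]$ can be made arbitrarily close to $1$ by taking $M_0$ large, uniformly over the relevant multichordal CLE configurations, recast $H_{z,j}$ as an event of the type considered in Proposition~\ref{pr:cle_ind_across_scales}, and deduce that $H_{z,j}$ holds at density $(1-a/2)k$ of scales with probability $1-O(e^{-bk})$. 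Intersecting the two good-scales events yields density at least $(1-a)k$ of scales on which $\p[G^\resampled_{z,j} \giv \Gamma] \ge p_0$, completing the proof with $p = p_0$.

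The principal obstacle is proving the uniform lower bound $\p[H_{z,j}] \to 1$. This splits into two substatements. (a) By local finiteness of CLE$_{\kappa'}$ combined with Corollary~\ref{co:unif_prob_marked_points}, the number of loops of $\Gamma$ that make a crossing of the annulus $A(z,2^{-j},2^{-j+1})$ is bounded by some $M$ with probability close to $1$, uniformly in the multichordal data. (b) Given such bounded complexity, one must construct $O(M)$ target pivotals inside $W_{z,j-1}$ whose flipping merges the crossing loops into a single loop of nonzero winding number around $B(z,2^{-j})$. Substatement (b) relies on the fact that for $\kappa' \in (4,8)$ any two CLE$_{\kappa'}$ loops which both cross the annulus intersect densely within any subannulus such as $W_{z,j-1}$, providing the required pivotals; it also requires a topological verification that the sequence of flips leaves exactly one loop and that this loop has nontrivial winding around $B(z,2^{-j})$. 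Making the combinatorial portion of (b) entirely precise is the delicate piece of the proof.
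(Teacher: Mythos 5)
The high-level strategy (combine Proposition~\ref{prop:resampling} with a high-probability event that suitable target pivotals exist, then feed that event into Proposition~\ref{pr:cle_ind_across_scales}) is the same as in the paper, and the shift in scale index so that $W_{z,j-1}$ sits inside $A(z,2^{-j},2^{-j+1})$ is a reasonable bookkeeping choice. However there are two genuine gaps in substatement~(b) and in the handling of the second bullet of $G^\resampled_{z,j}$.

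First, the claim that ``any two CLE$_{\kappa'}$ loops which both cross the annulus intersect densely within any subannulus such as $W_{z,j-1}$'' is false. Two loops that both cross the annulus need not touch at all; in general they are only linked through a finite \emph{chain} of pairwise-intersecting loops (Lemma~\ref{le:cle_finite_chain}). The paper's proof therefore does not try to argue that crossing loops intersect directly; instead it uses Lemma~\ref{lem:finite_chain_int}, which produces an a.s.\ finite chain of intersecting loops in the annulus whose inner strand already separates $\Upsilon_\Gamma \cap B(0,r_1)$ from $\partial B(0,r_2)$. The intersection points of that chain supply the target pivotals directly, and the bounded-complexity, well-separated version of this event (which is what gets plugged into the independence-across-scales machinery) has probability close to $1$ simply because the a.s.\ existence can be truncated. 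Your version has to reprove something of comparable strength without the chain lemma, and the combinatorial claim you are leaning on is not true.

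Second, the assertion that ``the second bullet in the definition of $G^\resampled_{z,j}$ is automatic'' because the resampling is disjoint from $B(z,2^{-j})$ does not hold. The resampling does not touch $B(z,2^{-j})$ as a set, but relinking pivotals in the annulus splits and merges loops, which changes which loops a given point is surrounded by and hence can change the nesting depth and the gasket of the \clek{} inside $B(z,2^{-j})$. The paper addresses this explicitly: one does not flip every pivotal $\varphi_j^{-1}(y_i)$ in the chain, but only as many as needed to merge the chain into a single loop, precisely so that the nesting level of points in the gasket is preserved. Without controlling the number of flips, the second bullet can fail.
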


\begin{lemma}
\label{lem:disconnect_boundary}
Suppose that $\Gamma$ is a nested \clek{} in $\h$. For each $a\in(0,1)$, $b>0$ there exists $p > 0$ such that the following holds. 

There exists a resampling $\Gamma^\resampled_{j}$ of $\Gamma$ within $A(0,2^{-j},2^{-j+1}) \cap \h$ for each $j\in\N$ with the following property. Let $G^\resampled_{j}$ be the event that $\Gamma^\resampled_{j}$ has a loop $\wt\CL$ that
\begin{itemize}
 \item disconnects $[-2^{-j},2^{-j}]$ from $\partial B(0,2^{-j+1})$,
 \item the collection of loops of $\Gamma$ and $\Gamma^\resampled_{j}$ remain the same in each connected component of $\h \setminus \wt\CL$ adjacent to $[-2^{-j},2^{-j}]$.
\end{itemize}
For $k\in\N$, let $\wt G_{k}$ be the event that the number of $j=1,\ldots,k$ so that
\[ \p[ G^\resampled_{j} \giv \Gamma ] \geq p\]
is at least $(1-a)k$.  Then
\[ \p[(\wt G_{k})^c] = O(e^{-b k}) . \]
\end{lemma}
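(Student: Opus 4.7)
The plan is to deduce Lemma~\ref{lem:disconnect_boundary} from the boundary resampling result Proposition~\ref{prop:resampling_H}, along the same scheme as for the interior analogue Lemma~\ref{lem:disconnect_interior}, the remaining task being the construction of suitable target pivotals whose flipping creates the disconnecting loop.

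First, I will apply Proposition~\ref{prop:resampling_H} at scale $j-1$ with parameters $M_0 \in \N$ and $r > 0$ to be chosen later. This produces a resampling $\Gamma^\resampled_j$ of $\Gamma$ within $W_{j-1} = A(0, (5/8)2^{-j+1}, (7/8)2^{-j+1}) \cap \h$, which is compactly contained in $A(0, 2^{-j}, 2^{-j+1}) \cap \h$, together with an event of failure probability $O(e^{-bk})$ outside of which, at a density at least $1-a$ of scales, any target pivotals $\mathfrak F_{W_{j-1}, r}$ of cardinality at most $M_0$ are flipped successfully with conditional probability at least $p_0 > 0$ given $\Gamma$. It thus suffices to exhibit, on an event of $\Gamma$-conditional probability bounded uniformly below, target pivotals whose successful flipping yields a loop $\wt\CL$ satisfying both conclusions of the lemma.

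For the existence of such pivotals, I will condition on the partial exploration $\Gamma_\outside^{*, V_{j-1}, U_{j-1}}$ as in the setup of Section~\ref{subsec:ind_across_scales}, which by Theorem~\ref{thm:cle_partially_explored} leaves a multichordal \clek{} in a half-disc configuration $(\D \cap \h; \ul s; \beta)$. Combining Lemma~\ref{lem:separation_density_bd} with Corollary~\ref{co:unif_prob_marked_points}, I may restrict to $\delta$-separated configurations for a small fixed $\delta > 0$ and seek a lower bound uniform in $\ul s$. The desired target pivotals are obtained from a chain of CLE loops $\CL_1, \ldots, \CL_m$ in the sub-annulus $W \defeq A(0, r_1, r_2) \cap \h$ (for some fixed $5/8 < r_1 < r_2 < 7/8$) such that $\CL_1$ touches $\R \cap (-r_2, -r_1)$, $\CL_m$ touches $\R \cap (r_1, r_2)$, consecutive loops meet at a single intersection point $y_i \in W$, the balls $B(y_i, 4r)$ are pairwise disjoint and disjoint from $\partial W$, and the union $\bigcup_i \CL_i$ contains a continuous topological arc in $W$ going from a point in $\R \cap (-r_2, -r_1)$ to a point in $\R \cap (r_1, r_2)$ above the inner boundary arc. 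Flipping the $m-1$ pivotals $y_i$ merges these loops into a single loop $\wt\CL$ whose trace coincides with $\bigcup_i \CL_i$ outside $\bigcup_i B(y_i, 4r)$, and the merged arc from the left to the right side of $\R$ then disconnects $[-2^{-j}, 2^{-j}]$ from $\partial B(0, 2^{-j+1})$ in $\h$.

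The main obstacle is verifying the existence of such a chain with probability bounded below uniformly in $\delta$-separated configurations, together with the geometric bookkeeping needed to guarantee the invariance of the loop collection in the components of $\h \setminus \wt\CL$ adjacent to $[-2^{-j}, 2^{-j}]$. The uniform positivity will follow by combining the local finiteness of \clek{} (which bounds $M_0$ uniformly), Lemma~\ref{le:cle_finite_chain} applied to $W$ (ensuring that loops intersecting $W$ form a connected chain via mutual intersections), Lemma~\ref{lem:cle_strands_in_regions} (giving a positive probability of placing chords near any specified configuration), and Theorem~\ref{th:continuity_mcle} together with the compactness argument of Corollary~\ref{co:unif_prob_marked_points} to upgrade pointwise positivity to uniform positivity. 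The adjacency invariance is ensured by arranging the chain so that the pivotal balls $B(y_i, 4r)$ lie strictly above a fixed arc separating $[-2^{-j}, 2^{-j}]$ from the region containing the pivotals, so that the modifications within $\bigcup_i B(y_i, 4r)$ cannot touch loops enclosed by $\wt\CL$ on the inside.
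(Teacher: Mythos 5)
Your overall strategy matches the paper's: apply the boundary resampling machinery (Proposition~\ref{prop:resampling_H}), identify suitable target pivotals coming from a chain of mutually intersecting loops connecting the two real boundary segments of a fixed sub-annulus $W=A(0,r_1,r_2)\cap\h$, and use the independence-across-scales bounds together with Corollary~\ref{co:unif_prob_marked_points} to control the density of scales at which the chain has bounded length and well-separated pivotals. The reindexing (using scale $j-1$ of the resampling window to stay inside $A(0,2^{-j},2^{-j+1})$) is also handled correctly.

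However, there is a genuine gap in the central geometric step: you do not specify the inner/outer strand structure of the chain (Definition~\ref{def:disconnecting-chain}), and without it the claim that flipping the pivotals ``merges these loops into a single loop $\wt\CL$ whose trace coincides with $\bigcup_i \CL_i$ outside $\bigcup_i B(y_i,4r)$'' is simply false. When one relinks strands at a pivotal, the outcome (whether one loop or two, and which arcs end up on which loop) depends entirely on the orientations and side-of-intersection conventions. The resulting loop after a successful flip has trace (away from the pivotal balls) given by the concatenation $\ell^I\cup\ell^O$, \emph{not} by the full union of the original loops; portions of $\CL_1,\dots,\CL_m$ that are not part of the inner or outer strand become separate loops. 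Without the parameterization requirement in Definition~\ref{def:disconnecting-chain} (the right side of $\ell^L_i$ intersecting the left side of $\ell^R_i$, and the induced ordering of the points $x_i,y_i$), one cannot conclude that the flips create a single disconnecting loop rather than, say, splitting existing loops into smaller ones. Related imprecisions: CLE loops that touch intersect at infinitely many points, so ``consecutive loops meet at a single intersection point $y_i$'' is not the right statement; one has to \emph{choose} distinct points $x_i,y_i$ on the intersection and keep track of both. Moreover, Lemma~\ref{le:cle_finite_chain} only provides some finite connecting chain; it does not give the crucial property that the inner strand $\ell^I$ connects the two segments of $\partial W\cap\R$ while staying off $\partial_\rmout W$, which is what Lemma~\ref{lem:finite_chain_bd} (and its multichordal version Lemma~\ref{lem:finite_chain_bd_mcl}) are specifically designed to produce, and which is precisely what makes both bullet points of the conclusion hold: the first bullet because $\ell^I$ separates $[-2^{-j},2^{-j}]$ from $\partial B(0,2^{-j+1})$, and the second because the event $F_j$ keeps all pivotal balls at distance $\ge 4r$ from $\ell^I$, so the resampling never touches the region enclosed by the new loop. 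Your ``fixed arc'' argument for adjacency invariance does not work literally since the separating arc must be the (random) inner strand $\ell^I$, not a deterministic curve. Finally, the reference to Lemma~\ref{lem:cle_strands_in_regions} here is a red herring: it is used in the proof of Lemma~\ref{lem:finite_chain_bd_mcl} (to pass from \clek{} to multichordal \clek{}), not directly in the proof of the present lemma.
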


\begin{lemma}
\label{lem:break_loops}
Suppose that $\Gamma$ is a nested \clek{} in a simply connected domain $D$. For each $a\in(0,1)$, $b>0$ there exists $p > 0$ such that the following holds. 

Let $z\in D$ and $j_0\in\Z$ be such that $B(z,2^{-j_0})\subseteq D$. There exists a resampling $\Gamma^\resampled_{z,j}$ of $\Gamma$ within $A(z,2^{-j},2^{-j+1})$ for each $j>j_0$ with the following property. Let $G^\resampled_{z,j}$ be the event that
\begin{itemize}
 \item no loop in $\Gamma^\resampled_{z,j}$ crosses the annulus $A(z,2^{-j},2^{-j+1})$,
 \item denoting $\CC$ the loops in $\Gamma$ that cross $A(z,2^{-j},2^{-j+1})$, the collection of loops of $\Gamma$ and $\Gamma^\resampled_{z,j}$ remain the same in each connected component of $\C \setminus \bigcup\CC$ that is not surrounded by a loop in $\CC$ and intersects $B(z,2^{-j})$, and the gasket of $\Gamma$ in these components is contained in the gasket of $\Gamma^\resampled_{z,j}$.
\end{itemize}
For $k\in\N$, let $\wt G_{z,j_0,k}$ be the event that the number of $j=j_0+1,\ldots,j_0+k$ so that
\[ \p[ G^\resampled_{z,j} \giv \Gamma ] \geq p\]
is at least $(1-a)k$.  Then
\[ \p[(\wt G_{z,j_0,k})^c] = O(e^{-b k}) \]
where the implicit constant does not depend on $z,j_0,k$.
\end{lemma}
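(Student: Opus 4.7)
The plan is to apply Proposition~\ref{prop:resampling} at the shifted scale index $j-1$, so that its sub-annulus $W_{z,j-1} = A(z, (5/8)2^{-j+1}, (7/8)2^{-j+1})$ is contained in $A(z, 2^{-j}, 2^{-j+1})$. This yields a resampling $\Gamma^{\resampled}_{z,j}$ of $\Gamma$ within the required annulus and guarantees, off an event of probability $O(e^{-bk})$, that on at least $(1 - a/2)k$ of the scales $j = j_0+1, \ldots, j_0+k$ the conditional probability of $\mathfrak F$-success given $\Gamma$ is at least $p_0 > 0$ for every target pivotal set $\mathfrak F_{W_{z,j-1}, r}$ of cardinality $\leq M_0$. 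It then suffices to construct, as a function of $\Gamma$, target pivotals of cardinality $\leq M_0$ whose $\mathfrak F$-successful flipping forces $G^{\resampled}_{z,j}$.

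Given $\Gamma$, let $\CC$ be the loops of $\Gamma$ crossing $A(z, 2^{-j}, 2^{-j+1})$, and for $\CL \in \CC$ let $m_\CL$ denote the number of arcs of $\CL$ crossing $W_{z,j-1}$. By local finiteness of CLE$_\kappa$ and standard exponential tail bounds on crossing counts, on an event $E_j$ with $\p[E_j] \geq 1 - \varepsilon$ uniformly in $z, j$ we have $|\CC| \leq N_0$ and $m_\CL \leq m_0$ for every $\CL \in \CC$, where $N_0, m_0$ depend only on $\varepsilon$; set $M_0 = N_0 m_0 / 2$. Since distinct CLE$_\kappa$ loops are disjoint for $\kappa \in (4,8)$, admissible pivotals are self-intersections of single loops. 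For each $\CL \in \CC$ I plan to locate $n_\CL := m_\CL/2$ self-intersection points $y_{\CL, 1}, \ldots, y_{\CL, n_\CL} \in W_{z,j-1}$ of $\CL$ pairing consecutive crossing arcs of $\CL$ in the loop's parameterization, so that simultaneous flipping at all of them splits $\CL$ into $n_\CL + 1$ pieces, each contained in either $\ol{B(z, 2^{-j})}$ or in $\C \setminus B(z, 2^{-j+1})$. In addition, each ball $B(y_{\CL,k}, 4r)$, for an appropriate $r \lesssim 2^{-j}$, must be locally shielded from $B(z, 2^{-j})$ by the four arcs of $\CL$ at $y_{\CL,k}$, so that it does not meet any component of $\C \setminus \bigcup\CC$ reaching $B(z, 2^{-j})$. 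Using that CLE$_\kappa$ loops for $\kappa \in (4,8)$ have a double-point set of positive Hausdorff dimension together with the imaginary-geometry description of loops (and the separation estimates of Section~\ref{subsec:separation_bounds}), I expect such self-intersections to exist on an event $E'_j \subseteq E_j$ of probability $\geq 1 - 2\varepsilon$ uniformly in $z, j$.

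Setting $\mathfrak F_j(\Gamma) = \bigcup_{\CL \in \CC} \{(\ell^1_{\CL,k}, \ell^2_{\CL,k}, y_{\CL,k})\}_{k=1}^{n_\CL}$, on the intersection of $E'_j$ with the good event of Proposition~\ref{prop:resampling}, $\mathfrak F_j(\Gamma)$-success holds with conditional probability $\geq p_0$ given $\Gamma$. Under $\mathfrak F$-success, the pairing property splits each $\CL \in \CC$ into non-crossing pieces, while property~(i) of Definition~\ref{def:successful_resampling} combined with the shielding property guarantees that loops of $\Gamma$ lying in any component of $\C \setminus \bigcup\CC$ meeting $B(z, 2^{-j})$ are left unchanged in $\Gamma^{\resampled}_{z,j}$. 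Hence $G^{\resampled}_{z,j}$ occurs with conditional probability $\geq p := p_0/2$ on the combined good event; combining Proposition~\ref{prop:resampling}'s density bound with the uniform estimate $\p[E'_j] \geq 1 - 2\varepsilon$ via a standard density-of-bad-scales argument then yields the desired $O(e^{-bk})$ tail.

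The main obstacle is verifying the shielding property of target pivotals: a generic self-intersection of a crossing loop in the sub-annulus need not have the required four-wedge geometry relative to the inner ball, and most self-intersections fail it. To engineer self-intersections that do satisfy it with uniform positive probability, I would use the imaginary-geometry coupling at $M$-good scales (Lemma~\ref{lem:M_good_density}) to locally compare the crossing loop to a flow-line configuration of a bounded-boundary GFF, in which self-intersections of the required shielded type can be produced with uniformly positive probability; continuity of multichordal CLE (Theorem~\ref{th:continuity_mcle}) together with Corollary~\ref{co:unif_prob_marked_points} then transfers this uniform lower bound to the CLE$_\kappa$ setting and ensures uniformity of $\p[E'_j]$ across $z, j$.
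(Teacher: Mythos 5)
Your plan shares the correct framing—applying Proposition~\ref{prop:resampling} with judiciously chosen target pivotals and estimating densities of good scales—but the pivotal construction you propose rests on a false premise and would fail.

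First, the claim ``Since distinct CLE$_\kappa$ loops are disjoint for $\kappa \in (4,8)$, admissible pivotals are self-intersections of single loops'' is incorrect. For $\kappa\in(4,8)$ the loops of a \clek{} are self-intersecting \emph{and intersect one another} (and the domain boundary); this is stated explicitly in the introduction of the paper. Disjointness only holds in the simple regime $\kappa\in(8/3,4]$. So pivotals between distinct loops are not only allowed but essential.

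Second, and more seriously, even accepting the restriction to self-intersections of a crossing loop $\CL$, there is no reason $\CL$ should have self-intersection points inside $W_{z,j-1}$ pairing its crossing arcs. A loop that crosses the annulus exactly twice typically enters on one side and exits on a different side; its two crossing arcs need not intersect one another anywhere near the sub-annulus. With positive probability (not just on an exceptional event) no candidate pivotal $y_{\CL,k}\in W_{z,j-1}$ exists, and the whole construction collapses. No amount of imaginary-geometry tuning at good scales rescues this: you would be requiring a geometric configuration of $\CL$ that simply does not occur deterministically, whereas the point of the lemma is to give a deterministic construction of target pivotals as a function of $\Gamma$ on a high-probability set of scales.

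The paper's route avoids this entirely. Rather than relying on self-intersections of $\CL$, it uses the nested structure of \clek{}: for each crossing loop $\CL$ and each component $U$ of the annulus minus $\CL$ bounded by two crossings $\gamma_1,\gamma_2$ and reaching both boundary circles, Lemma~\ref{le:break_loops_int} produces (a.s.) a finite chain of loops at the next nesting level inside $\CL$ that links $\gamma_1$ to $\gamma_2$, with inner strand $\ell^I$ inside $B(0,r_2)$ and outer strand $\ell^O$ outside $B(0,r_1)$. The target pivotals are intersection points of consecutive loops in that chain (and between the chain endpoints and $\CL$)—these always exist because the lower-level loops touch $\CL$ and each other. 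Flipping them rewires $\CL$ to follow $\ell^O$ and turn back without crossing, while the fact that $\ell^I\subseteq B(0,r_2)$ and $\ell^O\subseteq\C\setminus B(0,r_1)$ confines all changes to the annulus and preserves the loops in the components of $\C\setminus\bigcup\CC$ touching $B(z,2^{-j})$. This is an iterative procedure because breaking one loop promotes lower-level loops to the outermost layer. You will want to replace your self-intersection construction with this chain-of-loops mechanism; after that substitution the density-of-scales bookkeeping you outlined matches the paper's.
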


\begin{lemma}\label{lem:break_loops_bd}
Suppose that $\Gamma$ is a nested \clek{} in $\h$. For each $a\in(0,1)$, $b>0$ there exists $p > 0$ such that the following holds. 

There exists a resampling $\Gamma^\resampled_{j}$ of $\Gamma$ within $A(0,2^{-j},2^{-j+1}) \cap \h$ for each $j\in\N$ with the following property. Let $G^\resampled_{j}$ be the event that
\begin{itemize}
 \item no loop in $\Gamma^\resampled_{j}$ crosses the annulus $A(0,2^{-j},2^{-j+1}) \cap \h$,
 \item denoting $\CC$ the loops in $\Gamma$ that cross $A(0,2^{-j},2^{-j+1}) \cap \h$, the collection of loops of $\Gamma$ and $\Gamma^\resampled_{j}$ remain the same in each connected component of $\C \setminus \bigcup\CC$ that is not surrounded by a loop in $\CC$ and is adjacent to $[-2^{-j},2^{-j}]$, and the gasket of $\Gamma$ in these components is contained in the gasket of $\Gamma^\resampled_{z,j}$.
\end{itemize}
For $k\in\N$, let $\wt G_{k}$ be the event that the number of $j=1,\ldots,k$ so that
\[ \p[ G^\resampled_{j} \giv \Gamma ] \geq p\]
is at least $(1-a)k$.  Then
\[ \p[(\wt G_{k})^c] = O(e^{-b k}) . \]
\end{lemma}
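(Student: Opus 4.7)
The plan is to mirror the proof of the interior version, Lemma~\ref{lem:break_loops}, invoking the boundary resampling result Proposition~\ref{prop:resampling_H} in place of Proposition~\ref{prop:resampling}. I shall apply Proposition~\ref{prop:resampling_H} at a suitably shifted index so that its resampling region $W$ sits inside $A(0, 2^{-j}, 2^{-j+1}) \cap \h$, and identify target pivotals whose switching breaks every loop crossing the half-annulus into closed curves that no longer cross it.

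To control the number of target pivotals, I will first fix $M_0$ and use the local finiteness of \clek{} (recalled in Section~\ref{subsec:cle}) together with Corollary~\ref{co:unif_prob_marked_points} to see that the number $|\CC|$ of loops in $\Gamma$ crossing $A(0, 2^{-j}, 2^{-j+1}) \cap \h$ is at most $M_0$ with probability arbitrarily close to $1$, uniformly in $\delta$-separated marked configurations. Applying Proposition~\ref{pr:cle_ind_across_scales_bd} to this event then yields that $\{|\CC| \le M_0\}$ holds for a density at least $(1 - a/2) k$ of scales $j = 1, \dots, k$, off an event of probability $O(e^{-bk})$.

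On this event, I will identify for each crossing loop $\CL$ a collection of self-intersection points within $W$ (which exist densely since loops of \clek{} for $\kappa \in (4,8)$ are non-simple) paired with appropriate sub-arcs so that, pairing consecutive excursions of $\CL$ across $W$, switching the linking patterns at these pivotals splits $\CL$ into closed curves no longer crossing the annulus. This produces at most $M^* \le 2 M_0$ target pivotals $\mathfrak F_{W, r}$ with $r$ small enough for $2 M_0$ disjoint balls of radius $4r$ to fit in $W$. Proposition~\ref{prop:resampling_H} then provides $p > 0$ such that, off an event of probability $O(e^{-bk})$, at a density at least $(1 - a/2) k$ of scales the resampling $\Gamma^\resampled_j$ is $\mathfrak F_{W, r}$-successful with conditional probability at least $p$. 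Intersecting the two density events delivers $\wt G_k$ with the required properties. The consistency condition that loops of $\Gamma$ in the components of $\C \setminus \bigcup \CC$ adjacent to $[-2^{-j}, 2^{-j}]$ are preserved follows since the resampling of Section~\ref{subsec:resampling_setup} is local to $W$, and hence does not affect loops disjoint from $W$.

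The main obstacle is the topological content just described: verifying that for each crossing loop one can select self-intersection points within $W$ whose switching genuinely removes every crossing, and that all these pivotals across different loops can be chosen jointly so as to meet the separation requirements of Definition~\ref{def:resampling_target}. This will rely on the density of self-intersections of \clek{} loops for $\kappa \in (4,8)$ inside regions they visit, together with a careful bookkeeping along the cyclic sequence of excursions of each loop across $W$.
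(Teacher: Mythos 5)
Your proposal has a genuine gap at the step you yourself flag as "the main obstacle," and the gap is not merely a matter of careful bookkeeping: the claim that one can break every crossing of a crossing loop $\CL$ by switching at self-intersection points of $\CL$ within $W$ is false in general. While \clek{} loops for $\kappa \in (4,8)$ are indeed non-simple and have dense self-intersections \emph{along} the curve, there is no reason for two distinct crossing arcs of the same loop $\CL$ across $A(0,2^{-j},2^{-j+1}) \cap \h$ to intersect \emph{each other} inside $W$. For instance, a loop may make exactly two crossings on opposite sides of the half-annulus, with the connecting arcs running entirely outside the annulus; no amount of switching at self-touch points along a single excursion will disconnect those two crossings. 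The notion of target pivotal in Definition~\ref{def:resampling_target} requires a point $y_i \in \ell^1_i \cap \ell^2_i$, so you cannot even define a pivotal for a pair of consecutive excursions that do not meet.

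The paper's proof resolves exactly this by distinguishing two cases for the connected components $U$ of $W \setminus \CL$: when $U$ is not adjacent to both boundary circles of the (conformal image of the) annulus, the two adjacent crossings are forced to intersect and can be resampled directly; but when $U$ is adjacent to both circles — the problematic case — it invokes the auxiliary Lemma~\ref{le:break_loops_bd} (the chain-of-loops lemma proved just before, not the lemma being proved) to find a chain of \emph{other} \clek{} loops in $U$ connecting the two crossings, and the pivotals are placed between $\CL$ and the loops in the chain so that the resampled $\CL$ is rerouted along the chain's inner/outer strand. Your proposal never uses this chain-of-loops device, and there is no elementary substitute; moreover, breaking a loop via a chain promotes previously inner loops to the outermost layer, which forces an iterative scheme that your outline also omits. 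The rest of your scaffolding — controlling the number of crossings via local finiteness, Proposition~\ref{pr:cle_ind_across_scales_bd}, and feeding the pivotals into Proposition~\ref{prop:resampling_H} at a shifted scale — matches the paper's architecture and would be fine once the pivotal construction is repaired along the lines above.
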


The main input in showing these results is making use of the resampling procedure developed in Sections~\ref{subsec:interior_resampling} and~\ref{subsec:boundary_resampling}, along with the corresponding estimates for the success probabilities from Proposition~\ref{prop:resampling} and Proposition~\ref{prop:resampling_H}. To complete the proofs, we need to describe suitable target pivotals that occur with high probability in a large fraction of scales and such that, if successfully resampled, create the desired behavior.

\begin{figure}[ht]
\centering
\includegraphics[width=0.45\textwidth]{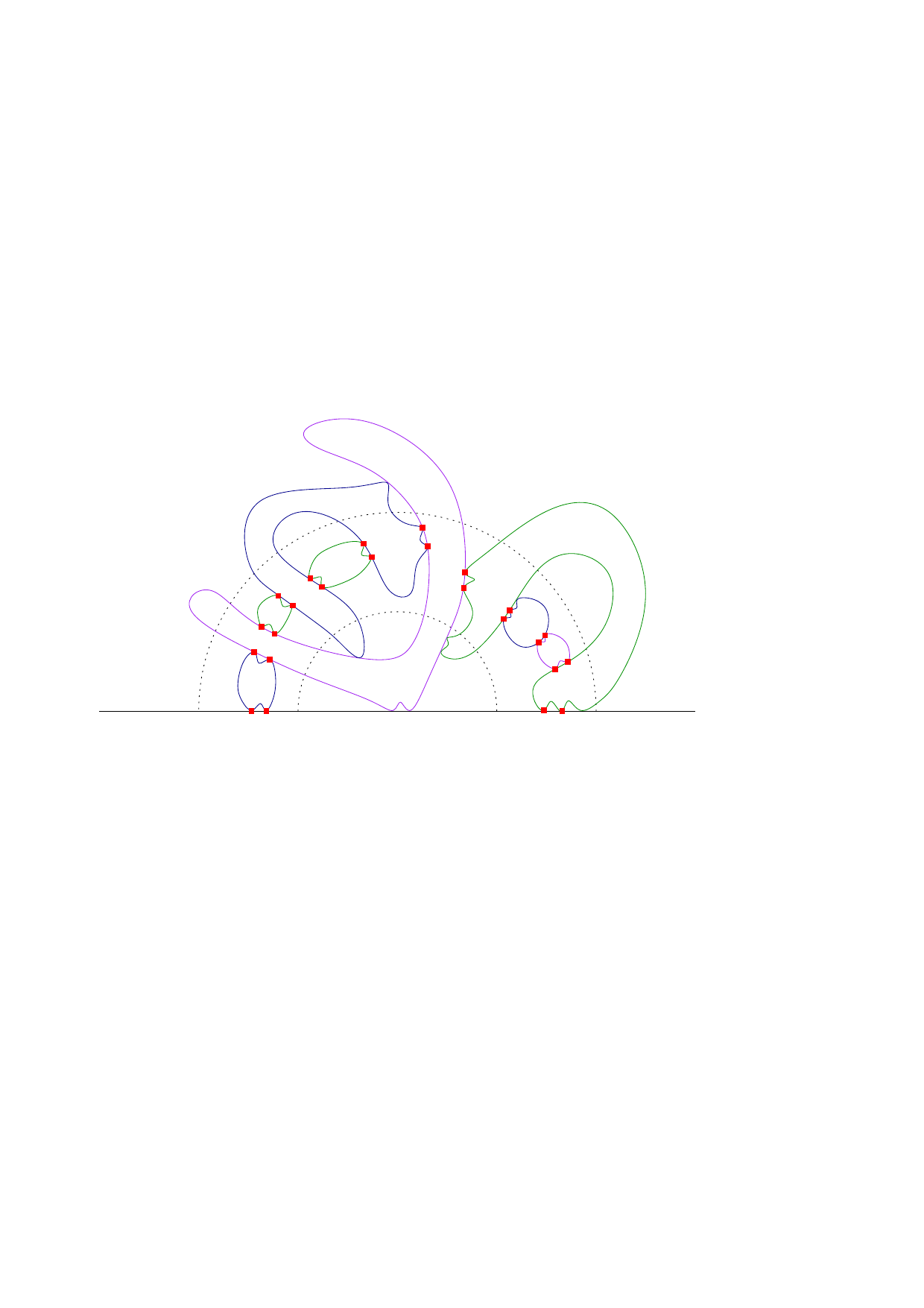}\hspace{0.05\textwidth}\includegraphics[width=0.45\textwidth]{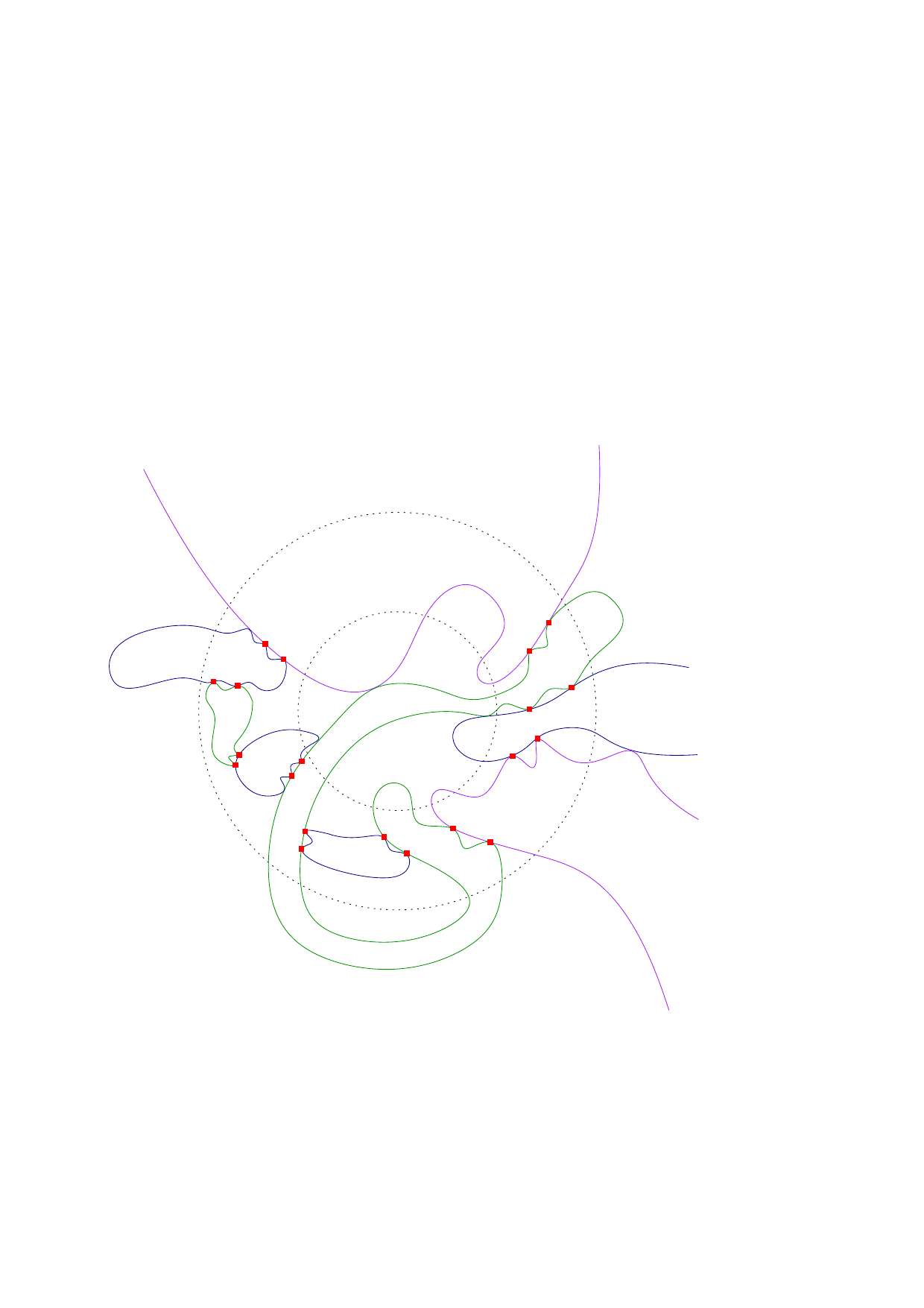}
\caption{Illustration of the events from Lemma~\ref{lem:finite_chain_bd} (left) and Lemma~\ref{lem:finite_chain_int} (right). Here only the outer boundaries of the loops are shown.}\label{fi:cle_link}
\end{figure}

We begin by describing the event that allows linking loops as in the Lemmas~\ref{lem:disconnect_interior},~\ref{lem:disconnect_boundary}. See the Figure~\ref{fi:cle_link} for an illustration. We prove in Lemma~\ref{lem:finite_chain_bd} (resp.~\ref{lem:finite_chain_int}) that there a.s.\ is a finite chain of loops intersecting in an annulus and disconnecting the inner and outer boundary of the annulus. By linking the loops together, we create a single loop $\wt\CL$ as in the statement of Lemma~\ref{lem:disconnect_boundary} (resp.~\ref{lem:disconnect_interior}).

We define what we mean by a \emph{chain of intersecting loops} and their \emph{inner and outer strands}.
\begin{definition}
\label{def:disconnecting-chain}
 Let $\Gamma$ be a \clek{} in $D$. A chain of loops that intersect in $U \subseteq D$ consists of the following:
 \begin{enumerate}[(i)]
  \item A finite collection $(\ell^L_i,\ell^R_i)_{i=1,\ldots,m}$ of segments of loops of $\Gamma$ so that $\ell^L_i \subseteq U$, $\ell^R_i \subseteq U$ for each $i=1,\ldots,m$ and none of them overlap with each other.
  \item We assume that if we parameterize $\ell^L_i$ (resp.\ $\ell^R_i$) in the counterclockwise (resp.\ clockwise) direction of the loop it belongs to, then the right side of $\ell^L_i$ intersects the left side of $\ell^R_i$ for each $i=1,\ldots,m$. We let $x_i,y_i$ be distinct points on their intersection and assume that $y_i$ comes after $x_i$ in the parameterization above.
  \item\label{it:chain_bdry_pts} Optionally additional points $x_0,y_0,x_{m+1},y_{m+1} \in \partial D$.
  \item For each $i=1,\ldots,m$, we let $\ell^I_i$ (resp.\ $\ell^O_i$) be segment of the loop containing $\ell^L_i$ starting at $x_i$ (resp.\ $y_i$), going clockwise (resp.\ counterclockwise), and terminating at the next $x_{i'}$ (resp.\ $y_{i'}$) where $i' \in \{0,\ldots,m+1\}\setminus\{i\}$.
  \item We call the \emph{inner strand} (resp.\ \emph{outer strand}) of the chain the union $\ell^I = \bigcup_{i=1}^m \ell^I_i$ (resp.\ $\ell^O = \bigcup_{i=1}^m \ell^O_i$), and assume that $\ell^I \cap \ell^O = \varnothing$. We remark that there can be several connected components of $\ell^I$ or $\ell^O$; see the Figure~\ref{fi:cle_link}.
 \end{enumerate}
 If $\Gamma$ is a multichordal \clek{}, then we define a chain of loops in the same way where we also allow the strands to be segments of chords, and we say that the chords $\eta_k$ of $\Gamma$ emanating from odd (resp.\ even) $k$ are parameterized in the counterclockwise (resp.\ clockwise) direction.
\end{definition}

The additional boundary points in \eqref{it:chain_bdry_pts} are needed in the case we want a chain of loops that disconnect a boundary interval (as in the left image of Figure~\ref{fi:cle_link}). We do not need them when the inner strand is contained in the interior of the domain (as in the right image of Figure~\ref{fi:cle_link}).

The following lemma is the key property that we use for the proofs of this subsection.

\begin{lemma}\label{lem:finite_chain_bd}
Let $\Gamma$ be a \clek{} in $\h$. Let $U \subseteq \h$ be a Jordan domain such that $\partial U \setminus \R$ consists of exactly two connected components $\partial_\rmin U$, $\partial_\rmout U$. We assume that $\partial_\rmout U$ separates $\partial_\rmin U$ from $\infty$. Then almost surely there is a chain of loops of $\Gamma$ intersecting in $U$ such that, with the notation from Definition~\ref{def:disconnecting-chain},
\begin{itemize}
 \item $\ell^I$ connects the two segments of $\partial U \cap \R$,
 \item $\ell^I$ does not intersect $\partial_\rmout U$, and $\ell^O$ does not intersect $\partial_\rmin U$.
\end{itemize}
\end{lemma}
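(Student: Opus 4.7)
The plan is to combine conformal invariance with Lemma~\ref{le:cle_finite_chain} and a planarity argument inside the Jordan domain $U$. First I would fix a conformal map $\psi \colon \h \to \D$ and reduce to the analogous statement for a \clek{} in $\D$, letting $U' = \psi(U)$, $I_L' = \psi(I_L)$, $I_R' = \psi(I_R)$, and $\partial_\rmin U' = \psi(\partial_\rmin U)$, $\partial_\rmout U' = \psi(\partial_\rmout U)$.

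The main step is to produce a finite chain of loops $\CL_1, \dots, \CL_m \in \Gamma$ with consecutive loops intersecting inside $U'$, $\CL_1 \cap I_L' \neq \varnothing$, and $\CL_m \cap I_R' \neq \varnothing$. Two ingredients are used: first, since $\kappa \in (4,8)$ the \clek{} loops a.s.\ hit every nondegenerate boundary arc in a dense uncountable set and are locally finite, so there almost surely exist loops $\CL_L, \CL_R \in \Gamma$ hitting $I_L'$ and $I_R'$ respectively and each containing a nontrivial arc inside $U'$. Second, applying Lemma~\ref{le:cle_finite_chain} to an open connected set $O \subseteq \C$ that is a thin enlargement of $\ol{U'}$ meeting $\partial\D$ in a small open neighborhood of $I_L' \cup I_R'$, every loop in $\Gamma(O)$ is chain-connected in $O$ to $\partial\D \cap O$. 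Combining the two, and using that inside any fixed neighborhood of a point in $I_L'$ there exist loops of $\Gamma$ hitting $I_L'$ at all smaller scales (chain-connected among themselves by repeated application of Lemma~\ref{le:cle_finite_chain}), one splices together a single chain through $U'$ joining a loop touching $I_L'$ to one touching $I_R'$, all within a thin neighborhood of $\ol{U'}$.

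The final step is to extract from this chain the data $\{(\ell^L_i, \ell^R_i, x_i, y_i)\}$ together with the boundary points of Definition~\ref{def:disconnecting-chain}: take $\ell^L_i, \ell^R_i$ to be segments of $\CL_i, \CL_{i+1}$ inside $U'$ whose intersection contains two distinct points, and designate these points as $x_i$ and $y_i$ according to which side faces $\partial_\rmin U'$ versus $\partial_\rmout U'$. The main obstacle is verifying the three target conditions, namely that $\ell^I$ connects the two segments of $\partial U' \cap \partial\D$ while avoiding $\partial_\rmout U'$ and that $\ell^O$ avoids $\partial_\rmin U'$. This is a purely topological statement inside the Jordan domain $U'$: once the marked intersection points are chosen consistently along the chain, the inner strand $\ell^I$ (the concatenation of the clockwise arcs of the loops from each $x_i$ to the next) and the outer strand $\ell^O$ are disjoint by planarity, and the ordering of the loops between $\partial_\rmin U'$ and $\partial_\rmout U'$ forces $\ell^I$ to terminate on $I_L' \cup I_R'$ and to stay on the $\partial_\rmin U'$-side, and symmetrically for $\ell^O$. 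Mapping back by $\psi^{-1}$ yields the desired chain in $U$.
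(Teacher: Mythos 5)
Your first step---producing a finite chain of loops intersecting in $U$ whose endpoints lie on the two real boundary intervals, via Lemma~\ref{le:cle_finite_chain}---matches what the paper does. But the assertion that the avoidance conditions (that $\ell^I$ stays off $\partial_\rmout U$ and $\ell^O$ stays off $\partial_\rmin U$) follow ``by planarity'' and ``the ordering of the loops between $\partial_\rmin U'$ and $\partial_\rmout U'$'' is where the argument breaks. There is no a priori ordering of \clek{} loops into layers between the two arcs: a single loop in the chain can touch $\partial_\rmin U$, wander out to $\partial_\rmout U$, and come back several times. After you pick the marked intersection points $x_i, y_i$, the clockwise arc from $x_i$ to the next marked point (a piece of $\ell^I$) is forced to traverse whatever that loop does in between, and nothing in the construction so far prevents it from crossing $\partial_\rmout U$. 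The two strands $\ell^I$ and $\ell^O$ being disjoint is automatic (they are pieces of the same set of simple arcs, split by the marked points), but that tells you nothing about which of them gets close to which boundary arc.

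The paper's proof treats this as a genuine obstacle, not a topological formality: it first builds a chain connecting the two intervals (your step), then explicitly says ``It might be that $\wt{\ell}^I$ intersects $\partial_\rmout U$ or $\wt{\ell}^O$ intersects $\partial_\rmin U$'' and runs an iterative repair. It introduces a nested Jordan domain $U_1 \subseteq U$ with $\partial_\rmin U_1, \partial_\rmout U_1 \subseteq U$, identifies the ``failing'' loops whose inner segment hits $\partial_\rmout U$, observes that each such loop makes at least four crossings of the annular region between $\partial U_1 \cap \h$ and $\partial U \cap \h$, and uses local finiteness of \clek{} to guarantee that inserting additional sub-chains (obtained by reapplying Lemma~\ref{le:cle_finite_chain} in the subregions cut out by $\wt{\ell}^I$) terminates after finitely many iterations. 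The symmetric repair for $\ell^O$ crossing $\partial_\rmin U$ is reduced to the same argument via the inversion $z \mapsto -1/z$. This quantitative-plus-iterative step is where the real work is, and your proposal omits it entirely by declaring the conclusion topological. To fix the proposal you would need to (i) acknowledge the failure mode and (ii) supply a termination argument, which is exactly what local finiteness provides.
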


\begin{proof}
 It follows from Lemma~\ref{le:cle_finite_chain} that there is almost surely a chain of loops intersecting in $U$ that connects the two segments of $\partial U \cap \R$ (i.e.\ without the additional requirement that $\ell^I$ does not intersect $\partial_\rmout U$). One can see this e.g.\ by fixing a point $z \in U$ and considering the loop surrounding $z$. Then applying Lemma~\ref{le:cle_finite_chain} twice for each interval of $\partial U \cap \R$ we see that it can be connected to both through a chain of loops. (Note that whenever two loop segments intersect, they a.s.\ intersect at infinitely many points, so that we can always find distinct points $x_i,y_i$ on their intersection.)
 
 Fix another Jordan domain $U_1 \subseteq U$ as in the lemma statement with $\partial_\rmin U_1, \partial_\rmout U_1 \subseteq U$. Let $\wt{\ell}^I$ be the inner strand of a chain of loops intersecting in $U_1$ and connecting the two segments of $\partial U_1 \cap \R$. It might be that $\wt{\ell}^I$ intersects $\partial_\rmout U$ or $\wt{\ell}^O$ intersects $\partial_\rmin U$. We will add more loops to the chain in order to fulfill the second requirement.
 
 Consider the loops $\wt{\ell}_i$ of the chain such that $\wt{\ell}^I_i \cap \partial_\rmout U \neq \varnothing$. We say that such loops $\wt{\ell}_i$ \emph{contribute to the failing}. Note that each loop that contributes to the failing makes at least $4$ crossings between $\partial U_1 \cap \h$ and $\partial U \cap \h$. By the local finiteness of CLE, the number of such crossings is a.s.\ finite.
 
 Let $\wt{U}_1$ be a connected component of $U_1 \setminus \wt{\ell}^I$ that is adjacent to a loop that contributes to the failing. We can repeat the first step of the proof to find a chain of loops that intersect in $\wt{U}_1$ and connect two boundary segments of $\wt{U}_1$ incident to the failing loop. (In the case the two crossings of $\wt{\ell}^I_i$ intersect, we could alternatively just add the intersecting strands to the chain.) If the inner strand of this new chain intersects $\partial_\rmout U$ again, then there is another loop that contributes to the failing, hence makes at least $4$ crossings between $\partial U_1 \cap \h$ and $\partial U \cap \h$. Iterating this procedure for every loop that contributes to the failing, we see that the iteration must successfully terminate after a finite number of steps, since the number of such crossings is a.s.\ finite.
 
 We repeat the same procedure when $\wt{\ell}^O$ intersects $\partial_\rmin U$. By viewing this as the image under the conformal map $z \mapsto -1/z$, this is exactly the same as the previous case. By combining all the chains together, we obtain a new chain as desired.
\end{proof}

\begin{lemma}\label{lem:finite_chain_bd_mcl}
Let $\Gamma$ be a multichordal \clek{} in $(\D\cap\h; \ul{s}; \beta)$ where all the marked points are on $\partial\D \cap \h$. Let $\wt{r}<1$ and let $U \subseteq B(0,\wt{r}) \cap \h$ be a Jordan domain such that $\partial U \setminus \R$ consists of exactly two connected components $\partial_\rmin U$, $\partial_\rmout U$. We assume that $\partial_\rmout U$ separates $\partial_\rmin U$ from $\partial\D \cap \h$. Then almost surely there is a chain of loops of $\Gamma$ intersecting in $U$ such that, with the notation from Definition~\ref{def:disconnecting-chain},
\begin{itemize}
 \item $\ell^I$ connects the two segments of $\partial U \cap \R$,
 \item $\ell^I$ does not intersect $\partial_\rmout U$.
\end{itemize}
\end{lemma}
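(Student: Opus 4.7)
The plan is to reduce to Lemma~\ref{lem:finite_chain_bd} by conditioning on the chords $\ul\eta$ of the multichordal \clek{} and exploiting the fact that conditionally on $\ul\eta$, the loops of $\Gamma$ form independent \clek{}'s in each component of $(\D \cap \h) \setminus \bigcup \ul\eta$. Recall that by Definition~\ref{def:disconnecting-chain}, when $\Gamma$ is a multichordal \clek{}, chord segments are also allowed as the strands $\ell^L_i,\ell^R_i$ in the chain.

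First, I would establish a multichordal analog of Lemma~\ref{le:cle_finite_chain}: for every open connected $O \subseteq \D \cap \h$ intersecting $\partial(\D \cap \h)$, every loop or chord of $\Gamma$ intersecting $O$ can be connected to $\partial(\D \cap \h) \cap O$ through a finite sequence of loops and chord segments lying in $O$, with consecutive elements intersecting in $O$. Chord segments trivially reach $\partial\D \cap O$ since each chord $\eta_i$ has its endpoints on $\partial\D$. For loops, we condition on $\ul\eta$ and work in the component $D'$ of $(\D \cap \h) \setminus \bigcup\ul\eta$ containing the given loop. Applying Lemma~\ref{le:cle_finite_chain} to the \clek{} in $D'$ with the open set $O \cap D'$ yields a finite chain of loops joining it to $\partial D' \cap (O \cap D')$, which is either on $\partial(\D \cap \h)$ or on a chord of $\ul\eta$; in the latter case the chord is then chained directly to $\partial(\D \cap \h) \cap O$.

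Next, I would mimic the proof of Lemma~\ref{lem:finite_chain_bd}. Pick a Jordan domain $U_1 \Subset U$ with $\partial_\rmout U_1 \subseteq U$ such that $\partial U_1 \cap \R$ consists of two intervals, each contained in one of the two components of $\partial U \cap \R$. Using the chain property established above applied to $O = U_1$ (starting from the loop/chord surrounding a generic point $z \in U_1$ and running the chain to each of the two components of $\partial U_1 \cap \R$), we obtain an initial chain of loops and chord segments intersecting in $U_1$ whose inner strand $\wt\ell^I$ connects the two segments of $\partial U_1 \cap \R$. If $\wt\ell^I$ intersects $\partial_\rmout U$, we extend the chain as in the second half of the proof of Lemma~\ref{lem:finite_chain_bd}: any loop or chord contributing to the failure must make at least a bounded positive number of crossings between $\partial U_1 \cap \h$ and $\partial U \cap \h$, and only finitely many loops/chords do so, by the local finiteness of \clek{} in each component of $(\D \cap \h) \setminus \bigcup\ul\eta$ together with the finiteness of $\ul\eta$. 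For each component of $U_1 \setminus \wt\ell^I$ adjacent to a failing loop/chord we apply the chain property once more in a slightly larger region and glue, and the procedure terminates after finitely many steps.

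The main obstacle is the first step, namely verifying the multichordal analog of Lemma~\ref{le:cle_finite_chain} with care about how chord segments interleave with loops in the chain. This is essentially bookkeeping, and the termination argument from Lemma~\ref{lem:finite_chain_bd} carries over verbatim because we only need the weaker conclusion here (the analogous condition on $\ell^O$ involving $\partial_\rmin U$ is not required).
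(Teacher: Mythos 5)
Your plan takes a genuinely different route from the paper, but it has a concrete gap in exactly the step you flag as ``essentially bookkeeping.'' The assertion that chord segments ``trivially reach $\partial\D\cap O$ since each chord $\eta_i$ has its endpoints on $\partial\D$'' is not correct: the endpoints of $\eta_i$ lie on the circular arc $\partial\D\cap\h$, which is disjoint from $O=U_1\subseteq B(0,\wt r)\cap\h$, and a maximal segment of $\eta_i$ contained in $O$ enters and exits $O$ through $\partial O\cap(\D\cap\h)$, not through $\partial(\D\cap\h)\cap O$. Nor need $\eta_i$ touch $\R\cap O$ at all --- the boundary-hitting set of an \slek{} is nowhere dense and can avoid any fixed sub-arc of $\R$. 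Consequently, when the chain obtained from Lemma~\ref{le:cle_finite_chain} inside a complementary component $D'$ of $\ul\eta$ dead-ends at a chord, you cannot simply ``chain the chord directly to $\partial(\D\cap\h)\cap O$''; you would have to hop across $\eta_i$ into the adjacent component, restart Lemma~\ref{le:cle_finite_chain} there, and then argue that this hopping terminates. That is a new phenomenon which the ``each failing loop makes $\geq 4$ crossings'' counting in the proof of Lemma~\ref{lem:finite_chain_bd} does not address, so the termination argument does not carry over verbatim.

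The paper sidesteps all of this by a realization argument rather than conditioning on $\ul\eta$. It starts from a plain \clek{} $\wt\Gamma$ in $\D\cap\h$ and, using the explorations in Lemma~\ref{lem:confclass_nonempty} together with Lemma~\ref{lem:cle_strands_in_regions} and Proposition~\ref{prop:continuity_in_separated_points}, shows that with positive probability one can explore strands of $\wt\Gamma$ confined to a thin outer annulus $A(0,1-2\varepsilon,1)\cap\h$ so that the remainder is a multichordal \clek{} in a marked domain conformally equivalent to $(\D\cap\h;\ul s;\beta)$. The chain event from Lemma~\ref{lem:finite_chain_bd} for $\wt\Gamma$, applied to a slightly smaller $U_1$ with $\partial_\rmin U_1,\partial_\rmout U_1\subseteq U$, holds a.s.\ and lives inside $\ol{U_1}$, hence entirely in the unexplored region; for $\varepsilon$ small the uniformizing conformal map is close to the identity on $\ol{U_1}$, so the chain maps into $U$. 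Since an almost sure event intersected with a positive-probability exploration event remains almost sure for the conditional law, the multichordal \clek{} inherits the chain with probability one. If you wish to pursue your direct conditioning approach you should expect to rebuild a version of this realization argument anyway to handle the chord-hopping, at which point it is cleaner to use the paper's route.
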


\begin{proof}
Let $\wt{\Gamma}$ be a \clek{} in $\D \cap \h$, and let $\wt \p$ be the law of $\wt{\Gamma}$. By the same argument as in the proof of Lemma~\ref{lem:confclass_nonempty} and Proposition~\ref{prop:multichordal-well-def}, there exists an exploration of $\wt{\Gamma}$ that stops before any of the exploration paths hit $\partial B(0,1-\varepsilon) \cap \h$ and realizes a random marked domain of the type $(D_N;\ul{u}_N;\beta)$. By Lemma~\ref{lem:cle_strands_in_regions} and Proposition~\ref{prop:continuity_in_separated_points}, we can follow it by a Markovian exploration that stops before any of the strands hit $\partial B(0,1-2\varepsilon) \cap \h$ and realizes a marked domain conformally equivalent to $(\D\cap\h; \ul{s}; \beta)$ with positive probability.

Applying Lemma~\ref{lem:finite_chain_bd} to (the conformal images to $\h$ of) $\wt{\Gamma}$ and some $U_1$ with $\partial_\rmin U_1, \partial_\rmout U_1 \subseteq U$, we see that if we let $\varphi$ be the conformal map to $(\D\cap\h; \ul{s}; \beta)$, then the image of the chain for $\wt{\Gamma}$ intersecting in $U_1$ fulfills the requirement for $\Gamma$ in $U$ for $\varepsilon>0$ small enough.
\end{proof}

\begin{proof}[Proof of Lemma~\ref{lem:disconnect_boundary}]
We use the notation introduced in Subsection~\ref{subsec:boundary_resampling}. Let $5/8<r_1<r_2<7/8$, and let $W =A(0,r_1,r_2)\cap\h$. Fix $u>0$ small. Recall the definitions of $\varphi_j$ and $\Gamma_{\ul{s}^j,\beta_j}$.

We will provide target pivotals given by the intersecting chain of loops in Lemma~\ref{lem:finite_chain_bd_mcl}, applied with $U=W$. More precisely, for each $j\in\N$, let $E_j$ be the event that the event from Lemma~\ref{lem:finite_chain_bd_mcl} occurs for the multichordal \clek{} process $\Gamma_{\ul{s}^j,\beta_j}$. Consider the target pivotals given by the image under $\varphi_j^{-1}$ of $\{(\ell^L_i,\ell^R_i,y_i)\}$ from Definition~\ref{def:disconnecting-chain}. For fixed $r>0$ and $M_0 \in \N$, let $F_j$ be the event that $E_j$ occurs and
\begin{itemize}
 \item $m \le M_0$ where $m$ is the number of strands in the intersecting chain,
 \item the points $y_i$ are distance at least $8r$ away from each other, from the endpoints of $\ell^L_i,\ell^R_i$, and from the inner strand $\ell^I$.
\end{itemize}
By Lemma~\ref{lem:finite_chain_bd_mcl}, for each marked domain of the type $(\D\cap\h;\ul{s};\beta)$, the probability that $F_j$ occurs can be made arbitrarily close to $1$ by choosing $M_0$ large and $r$ small. Therefore, by the independence of scales Proposition~\ref{pr:cle_ind_across_scales_bd} we can find $M_0,r$ such that with probability $1-O(e^{-bk})$ this occurs at at least $(1-a/2)k$ of the scales $j=1,\ldots,k$.

Choosing $u>0$ sufficiently small, we also see that $\varphi^{-1}_j(W)\subseteq A(0,2^{-j-1},2^{-j}) \cap \h$ for every $j$. Given $M_0,r$ as above, we let $p>0$ be as in Proposition~\ref{prop:resampling_H}. Therefore, if both $F_j$ and the event $G^p_j$ from Proposition~\ref{prop:resampling_H} occur, then $\p[ G^\resampled_j \mid \Gamma ] \ge p$ as desired.
\end{proof}

We now turn to the proof of Lemma~\ref{lem:disconnect_interior} which is the interior version of Lemma~\ref{lem:disconnect_boundary}. The proof is entirely analogous, so we will be brief. We state the interior analogue of Lemma~\ref{lem:finite_chain_bd_mcl}. See the right image of Figure~\ref{fi:cle_link} for an illustration.

\begin{lemma}\label{lem:finite_chain_int}
 Let $\Gamma$ be a nested multichordal \clek{} in $(\D;\ul x;\beta)\in\eldomain{2N}$. Let $0<r_1<r_2<1$, the following event occurs almost surely. If the gasket $\Upsilon_\Gamma$ of $\Gamma$ intersects $B(0,r_1)$, then there is a chain of loops of $\Gamma$ intersecting in $A(0,r_1,r_2)$ such that, with the notation from Definition~\ref{def:disconnecting-chain},
\begin{itemize}
 \item every point in $\Upsilon_\Gamma \cap B(0,r_1)$ is separated from $\partial B(0,r_2)$ by the inner strand $\ell^I$,\footnote{We recall that $\ell^I$ might be formed by several connected components when some loop splits $B(0,r_1)$ into several components as in Figure~\ref{fi:cle_link}.}
 \item $\ell^I$ does not intersect $\partial B(0,r_2)$.
\end{itemize}
\end{lemma}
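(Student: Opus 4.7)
I plan to follow the strategy of Lemma~\ref{lem:finite_chain_bd_mcl}. First, I will reduce the multichordal statement to an analogous statement about a standard nested CLE in $\D$ via an exploration-based absolute continuity argument; then I will prove the interior chain property for standard CLE by adapting the proof of Lemma~\ref{lem:finite_chain_bd}.

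For the reduction, I take a nested CLE $\wt{\Gamma}$ in $\D$ and carry out a Markovian exploration as in the proofs of Lemma~\ref{lem:confclass_nonempty} and Proposition~\ref{prop:continuity_in_separated_points} to realize, with positive probability, a marked domain conformally equivalent to $(\D;\ul x;\beta)$. By Lemma~\ref{lem:cle_strands_in_regions} one can arrange that the explored strands remain in $A(0,r_2+\varepsilon,1)$ for small $\varepsilon>0$, so that the corresponding conformal map $\varphi$ sends $\varphi^{-1}(A(0,r_1,r_2))$ into the unexplored region. The image under $\varphi$ of a chain for $\wt{\Gamma}$ in $\varphi^{-1}(A(0,r_1,r_2))$ then provides the desired chain for $\Gamma$.

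For the interior chain property for standard CLE, I fix $r_1 < r_1' < r_2' < r_2$ and build an initial chain inside $A(0,r_1',r_2')$ whose inner strand loosely surrounds $B(0,r_1')$. Since CLE$_\kappa$ loops for $\kappa \in (4,8)$ are self-intersecting and every point in $\ol{B(0,r_1')}$ is surrounded by infinitely many loops (by nesting), local finiteness allows one to select finitely many loops of $\Gamma$ that collectively cover a neighborhood of $\partial B(0,r_1')$ and stay within $A(0,r_1',r_2')$. I then connect these loops via finite chains of intersections using Lemma~\ref{le:cle_finite_chain}: to invoke that lemma (which requires a boundary-touching open set), I apply it in the larger annular region $A(0,r_1',1)\subseteq\D$, which does meet $\partial\D$, and afterwards restrict the resulting chains to the part lying in $A(0,r_1',r_2')$. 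After assembling this initial chain, I follow the iterative correction argument from the proof of Lemma~\ref{lem:finite_chain_bd}: each time the current inner strand strays into $\ol{B(0,r_1)}$ or the outer strand exits $\ol{B(0,r_2)}$, I augment the chain with loops correcting the excursion in that region. Since each such offending loop must make at least four crossings across $A(0,r_1',r_2')$ and the total number of such crossings is a.s.\ finite by local finiteness of CLE, the iteration terminates after finitely many steps.

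The main obstacle is constructing the initial wrapping chain. The boundary version uses a boundary anchor to connect disparate loop segments, and in our interior setup the annulus itself lacks such an anchor. My proposed resolution is to work in the enlarged annular region $A(0,r_1',1)$ (equivalently, one may cut along a radial segment reaching $\partial\D$), so that Lemma~\ref{le:cle_finite_chain} applies directly, and then obtain the wrapping chain by concatenating finite paths between finitely many ``base'' loops that cover $\partial B(0,r_1')$. If this turns out to be technically delicate, an alternative is to use the space-filling SLE description of CLE$_\kappa$ recalled in Section~\ref{subsubsec:space_filling_sle} to track loop crossings across the annulus directly and produce the chain via a topological argument.
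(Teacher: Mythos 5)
Your reduction from multichordal to standard \clekp{} via the exploration/absolute-continuity argument matches the paper's (which simply refers back to the proof of Lemma~\ref{lem:finite_chain_bd_mcl}), and your plan to then run the iterative correction from Lemma~\ref{lem:finite_chain_bd} is also what one eventually does. The gap is in your construction of the initial chain for standard \clekp{}, which you correctly flag as the crux but do not resolve.

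Specifically, your step ``select finitely many loops of $\Gamma$ that collectively cover a neighborhood of $\partial B(0,r_1')$ and stay within $A(0,r_1',r_2')$'' cannot work as stated: \clekp{} loops are curves of Hausdorff dimension $1+\kappa/8<2$, so finitely many of them cannot cover a two-dimensional neighborhood; and if ``cover'' is read as ``separate $\partial B(0,r_1')$ from $\partial B(0,r_2')$,'' then this is the very chain you are trying to construct, making the step circular. Moreover, a loop surrounding a point $z\in\partial B(0,r_1')$ necessarily has points on both sides of $\partial B(0,r_1')$, so such loops cannot stay inside $A(0,r_1',r_2')$. Your fallback of applying Lemma~\ref{le:cle_finite_chain} in the enlarged region $A(0,r_1',1)$ (or along a radial cut) does not repair this: that lemma connects each loop to $\partial\D$, but two loops so connected may meet only near $\partial\D$, outside $A(0,r_1',r_2')$, so restricting those chains to the annulus can leave disconnected fragments rather than a wrapping chain.

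The paper's resolution of exactly this obstacle is to manufacture a boundary anchor inside the domain before building the chain: explore the outermost loops of the \clekp{} until a loop $\CL_0$ intersecting $B(0,r_2)$ is found (if none exists, $\Upsilon_\Gamma\cap B(0,r_2)=\varnothing$ and there is nothing to prove). Given $\CL_0$, the remaining configuration is an independent \clekp{} in each complementary component, and each component $U$ of $A(0,r_1,r_2)\setminus\CL_0$ that meets both $\partial B(0,r_1)$ and $\partial B(0,r_2)$ is a simply connected domain with $\CL_0$ on its boundary. After a conformal map this is precisely the setup of Lemma~\ref{lem:finite_chain_bd}, with $\CL_0$ playing the role of $\R$, and local finiteness guarantees there are only finitely many such components. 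Stitching together $\CL_0$ and the chains obtained in each such $U$ then gives the desired wrapping chain. This single step — exploring for $\CL_0$ so that the annular region decomposes into boundary-anchored pieces — is the missing idea in your proposal.
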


\begin{proof}
 We first argue that the event occurs a.s.\ for a \clek{}. We can explore the outermost loops of $\Gamma$ until we find one that intersects $B(0,r_2)$. If no outermost loop of $\Gamma$ intersects $B(0,r_2)$, then $\Upsilon_\Gamma \cap B(0,r_2) = \varnothing$, and the event in the lemma statement is satisfied. Suppose we find a loop $\CL_0$ that intersects $B(0,r_2)$. The remainder is a \clek{} in each connected component of the unexplored region. We can apply Lemma~\ref{lem:finite_chain_bd} for each connected component $U$ of $A(0,r_1,r_2) \setminus \CL_0$ that is adjacent to both $\partial B(0,r_1)$ and $\partial B(0,r_2)$. By the local finiteness of CLE, there are only finitely many of such components. We obtain a chain of loops as desired by combining $\CL_0$ and the chains we find for these components.
 
 The statement for multichordal \clek{} follows from the statement for \clek{} by arguing exactly the same as in the proof of Lemma~\ref{lem:finite_chain_bd_mcl}.
\end{proof}

\begin{proof}[Proof of Lemma~\ref{lem:disconnect_interior}]
 The proof is completely analogous to the proof of Lemma~\ref{lem:disconnect_boundary} where we use Lemma~\ref{lem:finite_chain_int}, Proposition~\ref{pr:cle_ind_across_scales}, and~Proposition~\ref{prop:resampling} instead of Lemma~\ref{lem:finite_chain_bd_mcl}, Proposition~\ref{pr:cle_ind_across_scales_bd}, and~Proposition~\ref{prop:resampling_H}. The only difference is that we do not aim to resample at all the points $\varphi_j^{-1}(y_i)$ but only at so many until all the loops in the chain are linked to one. This is in order to not change the nesting level of the points in the gasket.
\end{proof}

\begin{figure}[ht]
\centering
\includegraphics[width=0.45\textwidth]{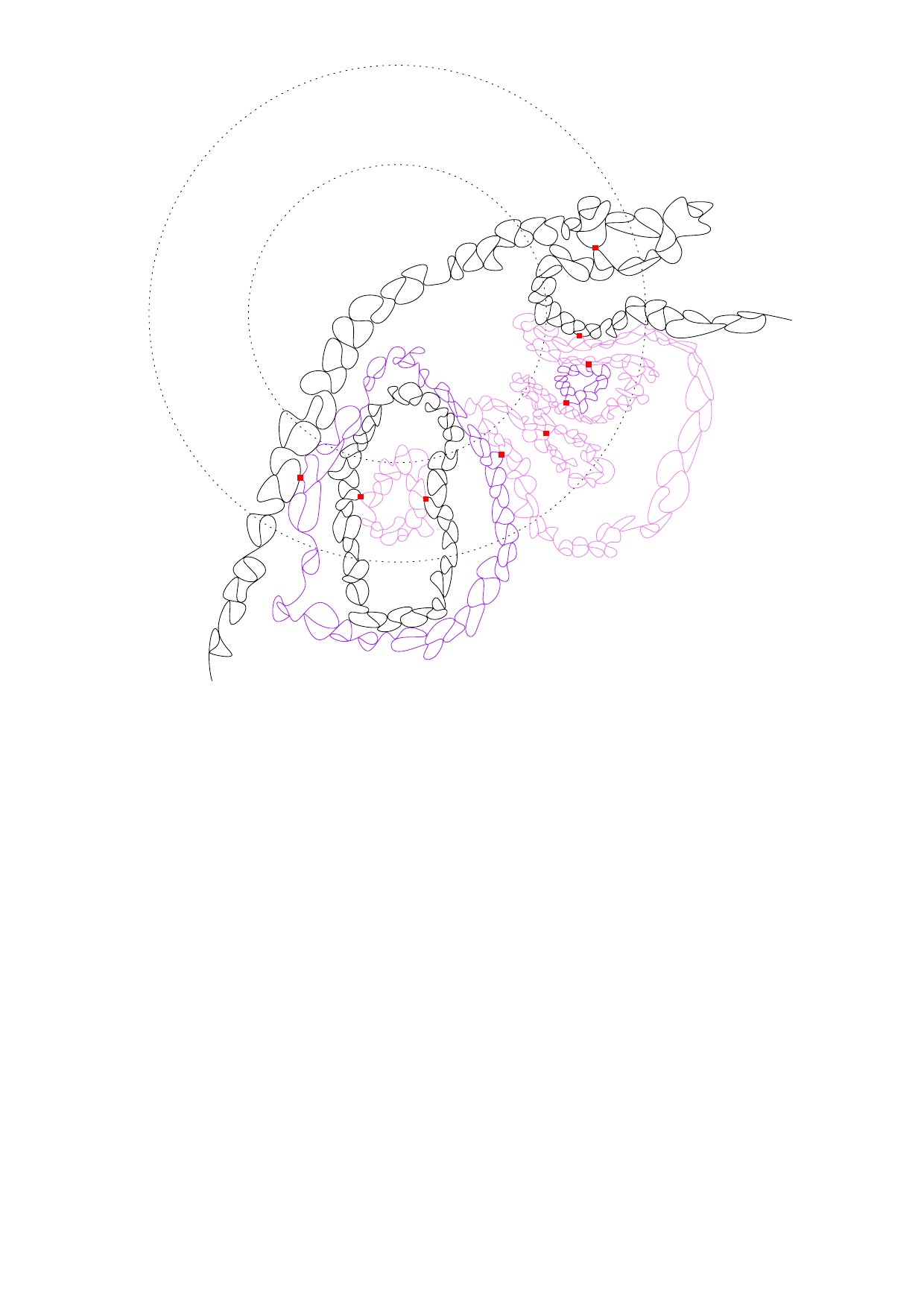}
\caption{Illustration of the event from Lemma~\ref{le:break_loops_int}.}\label{fi:cle_break}
\end{figure}

We now explain how we can break loops as requested in the Lemmas~\ref{lem:break_loops} and~\ref{lem:break_loops_bd}. For each loop $\CL$ crossing the annulus $A(z,r_1,r_2)$ we will find a chain of loops in the next level of nesting inside $\CL$, and link the strand of $\CL$ to these loops instead. See Figure~\ref{fi:cle_break} for an illustration.

\begin{lemma}\label{le:break_loops_int}
 Let $\Gamma$ be a nested multichordal \clek{} in $(\D;\ul x;\beta)\in\eldomain{2N}$. Let $0<r_1<r_2<1$, the following event occurs almost surely.
 \begin{itemize}
  \item There are finitely many crossings of $A(0,r_1,r_2)$ of loops or strands of $\Gamma$.
  \item Let $\CL$ be any loop or chord that crosses $A(0,r_1,r_2)$, let $U$ be any connected component of $A(0,r_1,r_2) \setminus \CL$ that is adjacent to both $\partial B(0,r_1)$ and $\partial B(0,r_2)$, and let $\gamma_1,\gamma_2$ be the crossings of $\CL$ adjacent to $U$. There is a chain of loops that intersect in $U$, connect $\gamma_1$ to $\gamma_2$, and such that, with the notation from Definition~\ref{def:disconnecting-chain}, its inner strand $\ell^I$ (resp.\ outer strand $\ell^O$) is contained inside $B(0,r_2)$ (resp.\ outside $B(0,r_1)$).
 \end{itemize}
\end{lemma}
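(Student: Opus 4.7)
For the first bullet, the local finiteness of CLE (Section~\ref{subsec:cle}) implies that only finitely many loops of $\Gamma$ have diameter at least $r_2 - r_1$ and hence only finitely many loops cross $A(0,r_1,r_2)$; likewise, each of the finitely many chords of $\Gamma$ is an SLE and thus a continuous curve, so each makes only finitely many crossings of the annulus. By the first bullet there are only finitely many candidate pairs $(\CL,U)$ to consider, so it suffices to verify the second bullet a.s.\ for a given choice of $\CL$ and component $U$.

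Fix such a $\CL$ crossing $A(0,r_1,r_2)$ and a component $U$ of $A(0,r_1,r_2)\setminus\CL$ adjacent to both circles, with adjacent crossings $\gamma_1,\gamma_2$ of $\CL$. Let $V$ be the connected component of $\D \setminus \CL$ that contains $U$. We apply Theorem~\ref{thm:cle_partially_explored} to a partial exploration of $\Gamma$ that fully reveals $\CL$ (and any other marked strands of $\ul{\eta}$ needed to isolate $V$): the conditional law of the loops and chords of $\Gamma$ sitting inside $V$ is that of a multichordal $\CLE_\kappa$ in $V$ for the exterior link pattern induced by the exploration. By construction, $\gamma_1$ and $\gamma_2$ are two arcs of $\partial V$ and $U \subseteq V$ is a Jordan subdomain whose boundary consists of sub-arcs of $\gamma_1,\gamma_2$ together with arcs of $\partial B(0,r_1)\cap U$ and $\partial B(0,r_2)\cap U$.

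Next I conformally map $V$ onto $\D \cap \h$ so that the arcs $\gamma_1,\gamma_2$ are sent to two disjoint sub-intervals of $(-1,1)$ lying on either side of $0$, with $\partial B(0,r_2) \cap U$ mapped to an arc in $\h$ separating the image of $\partial B(0,r_1) \cap U$ from $\partial \D \cap \h$. The image of $U$ is then a Jordan domain in $\D \cap \h$ of exactly the shape assumed in Lemma~\ref{lem:finite_chain_bd_mcl}, with $\partial_\rmout U$ identified with the image of $\partial B(0,r_2)\cap U$ and $\partial_\rmin U$ with the image of $\partial B(0,r_1)\cap U$. Applying Lemma~\ref{lem:finite_chain_bd_mcl} produces almost surely a chain of loops of the multichordal $\CLE_\kappa$ in $V$ that intersect in $U$, whose inner strand $\ell^I$ connects the images of $\gamma_1,\gamma_2$ (i.e.\ $\gamma_1$ and $\gamma_2$ themselves after pulling back) and avoids $\partial B(0,r_2)$. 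The symmetric statement that the outer strand $\ell^O$ avoids $\partial B(0,r_1)$ is obtained by the same argument applied to the reflected configuration, as in Lemma~\ref{lem:finite_chain_bd} which already produces chains whose inner and outer strands avoid the respective boundary arcs; this enhancement carries through the proof of Lemma~\ref{lem:finite_chain_bd_mcl} verbatim. Pulling the chain back to $V$ gives the desired chain in $U$.

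\textbf{Main obstacle.} The nontrivial point is the topological bookkeeping: one must choose the conformal map so that the combinatorial labels ``inner'' and ``outer'' attached to the two sides of the chain by the points $x_i,y_i$ in Definition~\ref{def:disconnecting-chain} match the geometric inside/outside with respect to $B(0,r_1)$ and $B(0,r_2)$. Once the conformal map is set up with the correct orientation (e.g.\ by arranging $\gamma_1$ to be mapped to the left segment in $\R$ so that the clockwise orientation of $\CL$ around $V$ agrees with the standard orientation of $\partial\h$), the planar ordering of boundary components of $U$ forces the identification, and the conclusion follows.
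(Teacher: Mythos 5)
Your overall strategy (first bullet by local finiteness; second bullet by conditioning on $\CL$ and applying a finite-chain lemma in the complementary component) matches the paper's, but two of the steps have genuine problems.

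First, the inference ``each chord is an SLE and thus a continuous curve, so each makes only finitely many crossings of the annulus'' is false as stated: a continuous curve can cross an annulus infinitely often (a space-filling curve being the extreme case), and for $\kappa\in(4,8)$ the SLE$_\kappa$ chord is far from smooth. The correct justification is the same one used for the loops: the chords of a multichordal \clek{} sit inside an exploration tree / monochordal \clek{} and inherit the same local finiteness property, or one can appeal directly to the nontrivial fact that \slek{} has a.s.\ finitely many crossings of any fixed annulus. Continuity alone is not enough.

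Second, Theorem~\ref{thm:cle_partially_explored} is not the right device to ``condition on $\CL$''. The exploration $\Gamma_\outside^{*,V',U'}$ in that theorem stops loops upon hitting $\ol{U'}$ and yields a conditional law only in the single component $V'^{*,U'}$ containing $U'$; there is no choice of $(U',V')\in\domainpair{\D}$ that reveals exactly one full loop $\CL$ and then describes the conditional law in an arbitrary complementary component $V$ of $\D\setminus\CL$. Moreover, if the marked points of the conditional multichordal \clek{} produced by such an exploration sit on $\gamma_1\cup\gamma_2$ (as they would if the exploration stops at $\CL$), then after your conformal map they land on $(-1,1)$, violating the hypothesis of Lemma~\ref{lem:finite_chain_bd_mcl} that all marked points lie on $\partial\D\cap\h$. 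The paper sidesteps this by using the nested \clek{} construction directly: conditioning on all loops up to the level of nesting of $\CL$, the loops in the next level inside each surrounded component are independent plain \clek{}'s, so Lemma~\ref{lem:finite_chain_bd} (which, unlike Lemma~\ref{lem:finite_chain_bd_mcl}, already gives both the $\ell^I$- and $\ell^O$-avoidance conditions) applies with no marked points to manage; the multichordal case is then obtained via the absolute-continuity exploration argument as in the proof of Lemma~\ref{lem:finite_chain_bd_mcl}. Your ``verbatim'' claim about recovering the $\ell^O$-condition in the multichordal setting is also not automatic, since the proof of Lemma~\ref{lem:finite_chain_bd_mcl} uses an approximating sub-domain $U_1$ and a conformal map, both of which need to be checked against the $\ell^O$-constraint; this is precisely why the paper routes through the plain \clek{} version of the lemma.
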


\begin{lemma}\label{le:break_loops_bd}
 Let $\Gamma$ be a multichordal \clek{} in $(\D\cap\h; \ul{s}; \beta)$ where all the marked points are on $\partial\D \cap \h$. Let $0<r_1<r_2<1$, the following event occurs almost surely.
 \begin{itemize}
  \item There are finitely many crossings of $A(0,r_1,r_2) \cap \h$ of loops or strands of $\Gamma$.
  \item Let $\CL$ be any loop or chord that crosses $A(0,r_1,r_2) \cap \h$, let $U$ be any connected component of $A(0,r_1,r_2) \cap \h \setminus \CL$ that is adjacent to both $\partial B(0,r_1)$ and $\partial B(0,r_2)$, and let $\gamma_1,\gamma_2$ be the crossings of $\CL$ or the boundary interval adjacent to $U$. There is a chain of loops that intersect in $U$, connect $\gamma_1$ to $\gamma_2$, and such that, with the notation from Definition~\ref{def:disconnecting-chain}, its inner strand $\ell^I$ (resp.\ outer strand $\ell^O$) is contained inside $B(0,r_2) \cap \ol{\h}$ (resp.\ outside $B(0,r_1) \cap \h$).
 \end{itemize}
\end{lemma}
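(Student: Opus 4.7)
The plan is to follow the same strategy as the proof of the interior version Lemma~\ref{le:break_loops_int}, invoking the boundary chain lemma Lemma~\ref{lem:finite_chain_bd_mcl} whenever the component $U$ is adjacent to $\R$.

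First, I would argue finiteness. Each crossing of $A(0,r_1,r_2)\cap\h$ has diameter at least $r_2-r_1$, so local finiteness of \clek{} bounds the number of loops making such crossings. Conditionally on the other chords, each chord of the multichordal \clek{} is a chordal \slek{} and is thus a.s.\ continuous; consequently each chord makes only finitely many crossings of the annulus. The collection $\CC$ of loops and chords of $\Gamma$ crossing $A(0,r_1,r_2)\cap\h$ is therefore a.s.\ finite. Next I would condition on $\CC$. Since $\CC$ consists of entire loops and chords rather than just their restrictions to the annulus, Theorem~\ref{thm:cle_partially_explored} does not apply directly; instead, one thickens $\bigcup\CC$ slightly and passes to the limit using the continuity Theorem~\ref{th:continuity_mcle}. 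The limiting conditional law of the remaining loops of $\Gamma$ inside each connected component $D'$ of the complement of $\bigcup\CC$ is a multichordal \clek{} with the exterior link pattern induced on the endpoints of the strands of $\CC$ on $\partial D'$.

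Fixing $\CL\in\CC$ and a component $U$ of $A(0,r_1,r_2)\cap\h\setminus\CL$ adjacent to both $\partial B(0,r_1)$ and $\partial B(0,r_2)$, with bounding arcs $\gamma_1,\gamma_2$, I would pick a Jordan subdomain $U_1\Subset U$ whose two crosscut arcs $\partial_{\rmin}U_1,\partial_{\rmout}U_1$ sit just inside the arcs of $\partial U$ on $\partial B(0,r_1),\partial B(0,r_2)$ respectively and separate $\gamma_1$ from $\gamma_2$ inside the complementary component $D_U$ containing $U$. A conformal map $\varphi\colon D_U\to\D\cap\h$ (or $\to\D$ if $\overline{D_U}\cap\R=\varnothing$) is chosen so that $\gamma_1,\gamma_2$ are mapped to arcs playing the role of the two $\partial U\cap\R$ segments in Lemma~\ref{lem:finite_chain_bd_mcl}, and $\varphi(U_1)$ plays the role of $U$ there. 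Lemma~\ref{lem:finite_chain_bd_mcl} applied to the image yields a chain of loops of $\Gamma$ intersecting in $\varphi(U_1)$ connecting $\varphi(\gamma_1)$ to $\varphi(\gamma_2)$ whose inner strand avoids $\varphi(\partial_{\rmout}U_1)$; a symmetric application, swapping the roles of inner and outer exactly as in the proof of Lemma~\ref{lem:finite_chain_bd}, enforces the analogous property for the outer strand. Pulling back via $\varphi^{-1}$ produces the required chain in $U$ with $\ell^I\subseteq B(0,r_2)\cap\overline{\h}$ and $\ell^O$ outside $B(0,r_1)\cap\h$.

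The main obstacle is the conditioning step, since $\CC$ contains entire loops and chords whereas Theorem~\ref{thm:cle_partially_explored} is stated only for partial explorations up to a subdomain; the thickening-and-limiting argument using Theorem~\ref{th:continuity_mcle} resolves this, but some care is needed to check that the conditional laws indeed converge to the claimed multichordal \clek{} in each complementary component. A smaller point is that Lemma~\ref{lem:finite_chain_bd_mcl} is stated for a standard domain shape, so a minor reformulation via the conformal map $\varphi$ and the auxiliary subdomain $U_1$ is needed to apply it to $U$; in the case $\overline{D_U}\cap\R=\varnothing$ one instead appeals to Lemma~\ref{le:cle_finite_chain} combined with the same exploration-based reduction from multichordal \clek{} to plain \clek{} that appears in the proof of Lemma~\ref{lem:finite_chain_bd_mcl}.
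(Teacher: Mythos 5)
Your finiteness argument and the intended application of Lemma~\ref{lem:finite_chain_bd_mcl} are in the right spirit, but the conditioning step has a genuine gap. You propose conditioning on the entire collection $\CC$ of crossing loops and chords at once and identifying the conditional law of the remainder in each component of the complement of $\bigcup\CC$ as a multichordal \clek{}, justified by ``thickening $\bigcup\CC$ and passing to the limit via Theorem~\ref{th:continuity_mcle}.'' This does not work as stated. Theorem~\ref{thm:cle_partially_explored} conditions on a partial exploration $\Gamma_\outside^{*,V,U}$, i.e.\ maximal segments explored from outside inward toward a single simply connected target $U$, and its conclusion concerns only the one complementary component $V^{*,U}$ containing $U$. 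The collection $\CC$ may contain nested loops $\CL_2 \subset \CL_1$; a thickened exploration toward a simply connected $U$ cannot simultaneously discover both $\CL_1$ and the full loop $\CL_2$ while leaving $U$ unexplored, so the thickening does not converge to a conditioning on all of $\CC$. More fundamentally, the conditional law of the \clek{} in the annular region between $\CL_1$ and a nested $\CL_2$ given both loops in full is not a (multichordal) \clek{} in that region, and no result in the paper identifies it as such; the continuity Theorem~\ref{th:continuity_mcle} cannot manufacture this identification.

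The paper's proof sidesteps this entirely: it does not condition on the random set $\CC$ but, for each fixed $m$, on all loops up to nesting level $m$. By the iterated definition of nested \clek{} (not by Theorem~\ref{thm:cle_partially_explored}), the loops at level $m+1$ are then exactly independent \clek{}'s in each complementary component, so Lemma~\ref{lem:finite_chain_bd} applies directly inside the component $D_U$ containing $U$; since a.s.\ only finitely many loops cross the annulus, a union over $m$ gives the statement for every crossing loop. The multichordal case is then handled by the same exploration/reduction argument as in the proof of Lemma~\ref{lem:finite_chain_bd_mcl}, which transports the plain-\clek{} event to the multichordal setting, rather than by conditioning on the chords of $\Gamma$ as full curves. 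If you replace your conditioning on $\CC$ by this nesting-level conditioning (and keep your correct observation that each chord, being an \slek{} given the others, makes finitely many crossings), the rest of your argument goes through.
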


\begin{proof}[Proof of Lemma~\ref{le:break_loops_int} and~\ref{le:break_loops_bd}]
 We first argue in the case of a \clek{}. The number of crossings is a.s.\ finite due to the local finiteness of CLE. Conditionally on the loops of $\Gamma$ up to a given level of nesting, the loops in the next level are independent \clek{} in each connected component surrounded by the previous loops. Therefore the statement follows from Lemma~\ref{lem:finite_chain_bd}.
 
 To generalize the statements to multichordal \clek{} we can apply the exact same argument as in the proof of Lemma~\ref{lem:finite_chain_bd_mcl}.
\end{proof}

\begin{proof}[Proof of Lemma~\ref{lem:break_loops} and~\ref{lem:break_loops_bd}]
The proof is similar to the proofs of Lemmas~\ref{lem:disconnect_interior} and~\ref{lem:disconnect_boundary}, using the event from Lemmas~\ref{le:break_loops_int} and~\ref{le:break_loops_bd} in place of Lemmas~\ref{lem:finite_chain_int} and~\ref{lem:finite_chain_bd_mcl}. We choose the target pivotals as follows. Starting with an outermost loop $\CL$ that crosses the annulus $\varphi_j^{-1}(A(0,r_1,r_2))$, we consider a connected component $U$ of $\varphi_j^{-1}(A(0,r_1,r_2)) \setminus \CL$ that is surrounded by $\CL$. In case $U$ is adjacent to two crossings of $\CL$ but is not adjacent to both $\varphi_j^{-1}(\partial B(0,r_1))$ and $\varphi_j^{-1}(\partial B(0,r_2))$, then the two crossings must intersect, and we can resample the links between them. In case $U$ is adjacent to both $\varphi_j^{-1}(\partial B(0,r_1))$ and $\varphi_j^{-1}(\partial B(0,r_2))$, we use the chain of loops from Lemma~\ref{le:break_loops_int} (resp.~\ref{le:break_loops_bd}), and resample so that instead of crossing the annulus, the loop follows the outer (resp.\ inner) strand before crossing back. Note that when we break up a loop like this, some loops that were previously not in the outermost layer may become loops in the outermost layer. We repeat this procedure with every loop that crosses the annulus until we have broken up all crossings.
\end{proof}

\section{Continuity in total variation}
\label{sec:tv_convergence}

In this section we show that multichordal  \clek{} is continuous in  total variation w.r.t.\ the shape of its domain when we restrict to the law in regions that are away from the boundary. As in the previous sections, we will prove an interior version and a boundary version of this result. Recall that we have already established in Theorem~\ref{th:continuity_mcle} the continuity in the weak topology, and this will be used in our proof for the continuity in total variation. The proofs in this section are based only on the results of Section~\ref{sec:multichordal_ex_uniq}; they do not require Sections~\ref{sec:strands} and~\ref{sec:resampling_tools}.

\subsection{Main statements}
\label{subsec:tv_setup}

\newcommand{\eldomainsym}[1]{{\mathfrak D}_{#1}^{\mathrm{ext,sym}}}

We start by describing the topology on $\eldomain{2N}$ for which we prove the continuity result. It will be a variant of the Carathéodory topology where we add the distance between the marked point configurations.

We first describe the interior variant. Let $z\in\C$, $N\in\N_0$. We consider the following metric $\mathrm{d}^{\eldomain{2N},z}(\cdot,\cdot)$ on the subset of domains in $\eldomain{2N}$ with exterior link pattern $\beta$ between $2N$ points: Let $(D;\ul{x};\beta), (\wt D;\ul{\wt x};\beta)\in\eldomain{2N}$, we let the distance be $\infty$ if either $D$ or $\wt D$ does not contain $z$, otherwise we let $\varphi\colon \D \to D$, $\wt\varphi\colon \D \to \wt D$ be the unique conformal maps with $\varphi(0) = z$, $\varphi'(0) > 0$ (resp.\ $\wt \varphi(0) = z$, $\wt\varphi'(0) > 0$). Let $\mathrm{d}_{\textrm{loc}}$ be a metric inducing the local uniform convergence for continuous functions from $\D$ to $\C$. We let
\[
 \mathrm{d}^{\eldomain{2N},z}((D;\ul{x}), (\wt D;\ul{\wt x})):=\mathrm{d}_{\textrm{loc}}(\varphi,\wt\varphi)+\mathrm{d}_\infty(\varphi^{-1}(\ul{x}),\wt \varphi^{-1}(\ul{\wt x})) ,
\]
where  $\mathrm{d}_\infty$ is the sup norm. We recall the definition of $\domainpair{D}$ and the partially explored \clek{} from Definition~\ref{def:partially-explored-cle}.

\begin{proposition}
\label{prop:mccle_tv_convergence_int}
Let $(D;\ul{x};\beta) \in \eldomain{2N}$ and $z \in D$. Suppose that $((D_n;\ul{x}_n;\beta))$ is a sequence in $\eldomain{2N}$ such that $\mathrm{d}^{\eldomain{2N},z}((D;\ul{x}),(D_n;\ul{x}_n)) \to 0$. For each $n$ let $\Gamma_n$ have law $\mcclelaw{D_n;\ul{x}_n;\beta}$, and let $\Gamma$ have law $\mcclelaw{D;\ul{x};\beta}$. Let $(U,V)\in\domainpair{D}$ with $V \Subset D$, then the law of $(\Gamma_n)_\inside^{*,V,U}$ converges to the law of $\Gamma_\inside^{*,V,U}$ in total variation as $n \to \infty$.
\end{proposition}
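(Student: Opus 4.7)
\emph{Overview.} We construct an explicit coupling of $\Gamma$ and $\Gamma_n$ via the Markovian exploration of Proposition~\ref{prop:continuity_in_separated_points}, under which the two CLE configurations agree on a neighborhood of $\ol V$ with probability tending to $1$. Since $V \Subset D$, this Markovian exploration can be confined to small boundary neighborhoods disjoint from $V$, and total variation convergence of the inside partial explorations then reduces to an almost-sure local stability analysis.

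\emph{Reduction and coupling.} By conformal invariance (Theorem~\ref{thm:multichordal}(ii)), let $\varphi, \varphi_n \colon \D \to D, D_n$ be the uniformizing maps with $\varphi(0) = \varphi_n(0) = z$ and $\varphi'(0), \varphi_n'(0) > 0$. The convergence in $\mathrm{d}^{\eldomain{2N},z}$ gives $\varphi_n \to \varphi$ locally uniformly on $\D$ and $\varphi_n^{-1}(\ul{x}_n) \to \varphi^{-1}(\ul{x})$; writing $\wt V_n := \varphi_n^{-1}(V) \Subset \D$ uniformly in $n$, choose $\delta_n \to 0$ slowly enough that $B(\varphi^{-1}(x_i), 2\delta_n)$ is disjoint from each $\wt V_n$ and $|\varphi_n^{-1}(\ul{x}_n) - \varphi^{-1}(\ul{x})|_\infty < \delta_1(\delta_n, 1/n, N)$, the constant provided by Proposition~\ref{prop:continuity_in_separated_points}. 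That proposition yields a Markovian exploration of $\wt\Gamma := \varphi^{-1}(\Gamma)$ confined to $\bigcup_i B(\varphi^{-1}(x_i), \delta_n)$ such that, on an event $W_n$ with $\p[W_n] \geq 1 - 1/n$, the conformal uniformization $\Psi_n \colon \mathcal R_n \to \D$ of the unexplored region $\mathcal R_n$ sends the exploration tips to $\varphi_n^{-1}(\ul x_n)$. By Lemma~\ref{lem:limit_exploration}, $\wt\Gamma_n^{\mathrm{coup}} := \Psi_n(\wt\Gamma|_{\mathcal R_n})$ then has the correct law $\mcclelaw{\D;\varphi_n^{-1}(\ul x_n);\beta}$ on $W_n$, which furnishes our coupling of $\Gamma$ and $\Gamma_n$ (extended arbitrarily on $W_n^c$).

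\emph{Near-identity distortion and stability.} Since the exploration is contained in $\bigcup_i B(\varphi^{-1}(x_i), \delta_n)$ with $\delta_n \to 0$, the map $\Psi_n$ extends conformally to a neighborhood of $\ol{\wt V \cup \wt U}$; Koebe's distortion theorem combined with a normal-families argument gives $\Psi_n \to \mathrm{id}$ uniformly on $\ol{\wt V \cup \wt U}$. Consequently $F_n := \varphi_n \circ \Psi_n \circ \varphi^{-1} \to \mathrm{id}$ uniformly on $\ol{V \cup U}$. On $W_n$ we have the identity $(\Gamma_n^{\mathrm{coup}})_\inside^{*, V, U} = F_n\bigl(\Gamma_\inside^{*, F_n^{-1}(V), F_n^{-1}(U)}\bigr)$, and the proof reduces to establishing an a.s.\ local constancy property of the partial exploration: for $(V', U')$ sufficiently close to $(V, U)$ in Hausdorff distance and $\Phi$ sufficiently close to $\mathrm{id}$ in the conformal sense, one has $\Phi(\Gamma_\inside^{*, V', U'}) = \Gamma_\inside^{*, V, U}$. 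This will follow from the local finiteness of CLE together with the a.s.\ fact that the finitely many loops and chords of $\Gamma$ intersecting a neighborhood of $\partial V \cup \partial U$ do not tangentially touch these fixed curves, so that their combinatorial classification as \emph{explored} or \emph{unexplored} under the partial exploration is preserved under the perturbation.

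\emph{Main obstacle.} The principal technical difficulty is the last step: upgrading the uniform convergence $F_n \to \mathrm{id}$ on $\ol{V \cup U}$ to an \emph{exact} equality of the two inside partial explorations on an event of probability tending to $1$---as required for total variation convergence, by its coupling characterization---demands a careful combinatorial analysis of how CLE loops and chords cross the perturbed boundaries $F_n^{-1}(\partial V), F_n^{-1}(\partial U)$, leveraging the local finiteness of CLE and the a.s.\ absence of tangential intersections with $\partial V \cup \partial U$.
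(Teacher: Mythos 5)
Your plan---couple $\Gamma$ and $\Gamma_n$ via the Markovian exploration of Proposition~\ref{prop:continuity_in_separated_points} and argue that on a high-probability event the local configurations agree exactly---cannot yield total variation convergence, and the ``main obstacle'' you flag is in fact fatal rather than a gap to be filled. The coupling you construct expresses $\Gamma_n^{\mathrm{coup}}$ as the pushforward of (the remainder of) $\Gamma$ under a random conformal map $F_n$ that converges to the identity but is a.s.\ \emph{not} the identity. The claimed a.s.\ ``local constancy property'' $\Phi(\Gamma_\inside^{*, V', U'}) = \Gamma_\inside^{*, V, U}$ is therefore false for any conformal $\Phi \neq \mathrm{id}$: it would require $\Phi$ to send loops and chords of $\Gamma$ onto loops and chords of $\Gamma$, i.e.\ to be a conformal symmetry of $\Gamma$ near $\ol{V}$, and a random \clek{} configuration a.s.\ has no such nontrivial symmetry. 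Preservation of the combinatorial explored/unexplored classification is not what is at stake; the individual curves are literally moved by $F_n$, so $(\Gamma_n^{\mathrm{coup}})_\inside^{*,V,U}$ and $\Gamma_\inside^{*,V,U}$ are a.s.\ distinct configurations under your coupling. What you obtain is weak convergence, which is already Theorem~\ref{th:continuity_mcle}; the infimum of $\p[X\neq Y]$ over couplings is not controlled.

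The paper's proof avoids this by constructing exact equality in a coupling using the imaginary-geometry coupling of \clek{} with the GFF. Concretely: Lemma~\ref{le:cle_tv_convergence} handles the $N=0$ case by noting that the restrictions of the coupled GFFs to a region compactly contained in $D$ converge in total variation, so the fields can be coupled to coincide there; Lemma~\ref{le:spf_order_cle} then shows that the inside partial exploration is a deterministic function of the space-filling \slek{} strands in a slightly larger region together with their ordering, which is again determined by the field on a compactly contained set up to a small error event. The boundary version (Proposition~\ref{prop:mccle_tv_convergence_bd}) is obtained by conditioning on a partial exploration $\Gamma_\outside^{*,\wt V,\wt U}$ near the boundary, discretizing a conditional probability into bins, and invoking both Theorem~\ref{th:continuity_mcle} and the $N=0$ case; the interior version is deduced by approximating $D$ and $D_n$ from inside by Jordan domains and replacing boundary arcs one at a time. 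The GFF coupling is the mechanism that produces exact coincidence; no amount of tightening the Markovian-exploration coupling can substitute for it.
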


We now state the boundary variant of the result. Let $\eldomainsym{2N} \subseteq \eldomain{2N}$ be the subset of marked domains $(D;\ul{x};\beta)$ such that $D$ is symmetric with respect to the real axis, $0 \in D$, and such that the points $\ul{x}$ are on the upper half of $\partial D$.

\begin{proposition}
\label{prop:mccle_tv_convergence_bd}
Let $(D;\ul{x};\beta) \in \eldomainsym{2N}$. Suppose that $((D_n;\ul{x}_n;\beta))$ is a sequence in $\eldomainsym{2N}$ such that $\mathrm{d}^{\eldomain{2N},0}((D;\ul{x}),(D_n;\ul{x}_n)) \to 0$. For each $n$ let $\Gamma_n$ have law $\mcclelaw{D_n\cap\h;\ul{x}_n;\beta}$, and let $\Gamma$ have law $\mcclelaw{D\cap\h;\ul{x};\beta}$. Let $(U,V)\in\domainpair{D\cap\h}$ with $V \Subset D$, then the law of $(\Gamma_n)_\inside^{*,V,U}$ converges to the law of $\Gamma_\inside^{*,V,U}$ in total variation as $n \to \infty$.
\end{proposition}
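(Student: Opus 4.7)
The plan is to follow the same strategy as in the proof of Proposition~\ref{prop:mccle_tv_convergence_int}, exploiting the reflection symmetry of $D$ and $D_n$ to keep all the conformal maps compatible with the upper half plane. First, by the symmetry assumption, I would choose the normalized conformal maps $\varphi\colon \D \to D$ and $\varphi_n\colon \D \to D_n$ (from the definition of $\mathrm{d}^{\eldomain{2N},0}$) to send the real axis to itself. By the Carathéodory convergence, $\psi_n \defeq \varphi \circ \varphi_n^{-1}\colon D_n \to D$ converges to the identity uniformly on compact subsets of $D$, and by symmetry $\psi_n$ restricts to a conformal map $D_n \cap \h \to D \cap \h$. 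Setting $\wt{\Gamma}_n \defeq \psi_n(\Gamma_n)$, Theorem~\ref{thm:multichordal}(ii) implies that $\wt{\Gamma}_n$ is a multichordal \clek{} in $(D \cap \h;\psi_n(\ul{x}_n);\beta)$ with $\psi_n(\ul{x}_n) \to \ul{x}$. This reduces the problem to comparing $\wt{\Gamma}_n$ and $\Gamma$, both living on $D \cap \h$ but with marked point configurations that converge.

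Second, I would couple $\wt{\Gamma}_n$ with $\Gamma$ by applying a Markovian exploration to $\wt{\Gamma}_n$ (in the spirit of Lemma~\ref{lem:continuity_in_points_ind} and Proposition~\ref{prop:continuity_in_separated_points}) that moves its marked points from $\psi_n(\ul{x}_n)$ to exactly $\ul{x}$, with each strand stopped before exiting a small ball around its initial marked point. Since $V \Subset D$, the points $\ul{x}$ lie at positive distance from $\ol V$, and these exploration balls can be chosen disjoint from a fixed region $W$ with $V \Subset W \Subset D$ for all sufficiently large $n$. By Proposition~\ref{prop:continuity_in_separated_points}, this exploration succeeds with probability at least $1-\varepsilon$ for arbitrary $\varepsilon>0$ and large $n$. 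On the success event, Lemma~\ref{lem:limit_exploration} shows that the remainder of $\wt{\Gamma}_n$ has the law of a multichordal \clek{} in $(D\cap\h;\ul{x};\beta)$, which coincides with the law of $\Gamma$; couple them to be literally equal. Combined with the untouched parts of $\wt{\Gamma}_n$ outside the exploration balls, this produces a coupling under which $\wt{\Gamma}_n|_W = \Gamma|_W$ with probability at least $1-\varepsilon$.

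The hardest part will be to transfer this coupling back through $\Gamma_n = \psi_n^{-1}(\wt{\Gamma}_n)$ and conclude literal equality $(\Gamma_n)_\inside^{*,V,U} = \Gamma_\inside^{*,V,U}$ with probability tending to $1$, in view of the fact that $\psi_n^{-1}$ is only close to, not equal to, the identity. I would address this by inserting, before the Markovian exploration of Step~2, an additional partial exploration of both $\wt{\Gamma}_n$ and $\Gamma$ that reveals the outer boundary of the unexplored region $V^{*,U}$ together with the exterior link pattern $\beta^*$; by the local finiteness of \clek{} (Section~\ref{subsec:cle}) this involves only finitely many loops, each of which is a.s.\ transversal to $\partial V$. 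Using the continuity of bichordal \clek{} (Lemma~\ref{lem:bichordal_continuous}) together with the Carathéodory convergence, these partial explorations for $\Gamma_n$ and $\Gamma$ can be coupled so that their marked-domain data agree conformally with probability tending to $1$; then running the Markovian exploration of Step~2 within this conformal class produces the inside multichordal \clek{}'s in a common marked domain, allowing for a coupling with literal equality of $(\Gamma_n)_\inside^{*,V,U}$ and $\Gamma_\inside^{*,V,U}$ and hence the claimed TV convergence.
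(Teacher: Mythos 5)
Your proposal takes a genuinely different route from the paper's. The paper never constructs a direct coupling: it bounds $\abs{\p[A] - \p_n[A]}$ over events $A \in \sigma(\Gamma_\outside^{*,\wt V,\wt U})$ for a collar pair $(\wt U,\wt V)$ hugging $\partial D$, by conditioning on the event $E$ (resp.\ $E_n$) that all chords stay in a thin annulus near $\partial D$ (resp.\ $\partial D_n$). On $E$, given the chords, the interior restriction is a pure \clek{} in a domain determined by the chords, so the weak convergence of the chord law (Theorem~\ref{th:continuity_mcle}) can be combined with the genuine total-variation convergence for pure \clek{} (Lemma~\ref{le:cle_tv_convergence}, proved via the GFF/space-filling \slek{} coupling). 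Your proposal does not invoke the GFF at all, and relies only on Markovian exploration estimates, which by themselves give weak but not total-variation control.

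There is a concrete gap in your Step~2, and it propagates into Step~3. After a Markovian exploration $\ul\tau$ of $\wt\Gamma_n$, Lemma~\ref{lem:limit_exploration} says the conditional law of the remainder is a multichordal \clek{} in the \emph{carved-out} domain $D_\tau = (D\cap\h)\setminus\bigcup_k \eta_k[0,\tau_k] \subsetneq D\cap\h$ with marked points at the exploration tips $\eta_k(\tau_k)$; the event $W_{\ul x}$ from Proposition~\ref{prop:continuity_in_separated_points} only ensures that $(D_\tau;\ul x_\tau;\beta_\tau)$ is \emph{conformally equivalent} to $(D\cap\h;\ul x;\beta)$, not literally equal to it. Coupling therefore yields $\phi(\wt\Gamma_n^{\mathrm{rem}}) = \Gamma$ for the uniformizing map $\phi\colon D_\tau \to D\cap\h$, not $\wt\Gamma_n^{\mathrm{rem}} = \Gamma$; since $\phi$ is close to but not equal to the identity, $\wt\Gamma_n|_W = \phi^{-1}(\Gamma)|_W \neq \Gamma|_W$. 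This is the same type of error as the $\psi_n^{-1}$ discrepancy you flag in Step~3, so the two do not cancel. The proposed remedy in Step~3 has the same defect: a coupling in which the marked-domain data ``agree conformally'' does not make $(\Gamma_n)_\inside^{*,V,U}$ and $\Gamma_\inside^{*,V,U}$ equal as random subsets of the plane, and running a further Markovian exploration inside a ``common conformal class'' again only transports things by conformal maps. Any argument built solely from Markovian explorations and the weak continuity of Theorem~\ref{th:continuity_mcle}/Proposition~\ref{prop:continuity_in_separated_points} produces agreement modulo conformal distortion. A separate total-variation input is required to close the gap, and this is precisely what the GFF-based Lemma~\ref{le:cle_tv_convergence} supplies in the paper's proof.
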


Note that by the local compactness of the Carathéodory topology, the results say that the function that maps $(D;\ul{x})$ to the law of $\Gamma_\inside^{*,V,U}$ is uniformly continuous on compact sets for $\mathrm{d}^{\eldomain{2N},z}$ of marked domains containing a neighborhood of $V$.

We also prove a variant of the continuity result for the law of the $\CLE_\kappa$ gasket within $U$. Suppose that $\CI \subseteq \{1,\ldots,2N\}$, and let $\Gamma$ be a multichordal \clek{} in $(D;\ul{x};\beta)$. Let $\Upsilon_{\Gamma;\CI}$ denote the set of points that can be connected to the boundary arc from $x_{i}$ to $x_{i+1 \bmod 2N}$ for some $i \in \CI$ without crossing any loop or chord of $\Gamma$. In the case $N=0$, we let $\Upsilon_{\Gamma;\CI} = \Upsilon_\Gamma$ to be just the gasket of $\Gamma$.

\begin{proposition}\label{pr:mccle_gasket_tv_convergence}
 Let $N \in \N_0$, $\CI \subseteq \{1,\ldots,2N\}$. Suppose that we have the setup of Proposition~\ref{prop:mccle_tv_convergence_int} or Proposition~\ref{prop:mccle_tv_convergence_bd}. Let $\beta^*$ (resp.\ $\beta^*_n$) denote the link pattern induced by the strands of $\Gamma_\outside^{*,V,U}$ (resp.\ $(\Gamma_n)_\outside^{*,V,U}$) where we consider the connections between the marked points in both $\ul{x}$ and $\Gamma_\inside^{*,V,U}$ (resp.\ $(\Gamma_n)_\inside^{*,V,U}$). Then the law of $((\Gamma_n)_\inside^{*,V,U}, \Upsilon_{\Gamma_n;\CI} \cap U, \beta^*_n)$ converges to the law of $(\Gamma_\inside^{*,V,U}, \Upsilon_{\Gamma;\CI} \cap U, \beta^*)$ in total variation as $n \to \infty$.
\end{proposition}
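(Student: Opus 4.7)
The plan is to reduce the claim to the total-variation convergence of $\Gamma_\inside^{*,V,U}$ furnished by Propositions~\ref{prop:mccle_tv_convergence_int} and~\ref{prop:mccle_tv_convergence_bd}, augmented by a finite amount of discrete combinatorial data. The key observation is that the pair $(\beta^*, \Upsilon_{\Gamma;\CI} \cap U)$ is a deterministic function of $\Gamma_\inside^{*,V,U}$ together with a discrete random variable $\mathcal{L}$ recording (i)~the exterior link pattern $\beta^*$ itself, and (ii)~for each connected component of $D \setminus V^{*,U}$, a label indicating whether it is connected to some arc $(x_i,x_{i+1})$ with $i \in \CI$ inside $D \setminus \bigcup \Gamma_\outside^{*,V,U}$. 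Since Lemma~\ref{lem:finitely-marked-points-marginal} gives that the number of marked points on $\partial V^{*,U}$ is a.s.\ finite, $\mathcal{L}$ takes values in a discrete set, so it suffices to prove TV convergence of the pair $(\Gamma_\inside^{*,V,U}, \mathcal{L})$.

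To obtain this, I would fix $V'$ with $V \Subset V' \Subset D$, which also satisfies $V' \Subset D_n$ for $n$ large by Carathéodory convergence, and apply Proposition~\ref{prop:mccle_tv_convergence_int} (or~\ref{prop:mccle_tv_convergence_bd}) with $V'$ in place of $V$ to get TV convergence of $\Gamma_\inside^{*,V',U}$. The larger object $\Gamma_\inside^{*,V',U}$ contains the loops and strand segments lying in the annular region $V'^{*,U} \setminus V^{*,U}$, which determine exactly which marked points on $\partial V^{*,U}$ are paired by external strands that stay within $V'^{*,U}$; what remains to reconstruct $\mathcal{L}$ is a residual discrete variable $\mathcal{L}^{V'}$, measurable with respect to $\Gamma_\outside^{*,V',U}$, encoding the exterior link pattern on $\partial V'^{*,U}$ and the $\CI$-accessibility of the components of $D \setminus V'^{*,U}$. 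Thus $\mathcal{L}$ is a deterministic function of $(\Gamma_\inside^{*,V',U}, \mathcal{L}^{V'})$, and the problem reduces to joint TV convergence of this pair.

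For the joint convergence I would decompose according to the finitely many values of $\mathcal{L}^{V'}$. Convergence of the marginals $\p[\mathcal{L}^{V'} = \ell]$ follows from the weak convergence of the full multichordal \clek{} (Theorem~\ref{th:continuity_mcle}), since $\mathcal{L}^{V'}$ is a discrete functional of the configuration which is a.s.\ continuous under the topology in \eqref{eq:topology_loop_ensemble}: no small perturbation changes the combinatorial connectivity of the strands nor the accessibility of the components, thanks to the non-degeneracy of the multichordal \clek{} strands (they do not touch fixed points and meet only on a fractal set). The conditional TV convergence of $\Gamma_\inside^{*,V',U}$ on the positive-probability event $\{\mathcal{L}^{V'} = \ell\}$ is then obtained from Theorem~\ref{thm:cle_partially_explored}: conditionally on $\Gamma_\outside^{*,V',U}$, the law of $\Gamma_\inside^{*,V',U}$ is $\mcclelaw{V'^{*,U};\ul{x}^{*,V'};\beta^{*,V'}}$, and applying Propositions~\ref{prop:mccle_tv_convergence_int}/\ref{prop:mccle_tv_convergence_bd} to this conditional multichordal \clek{} provides the required TV continuity.

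The main technical obstacle I foresee is cleanly combining the marginal TV convergence of $\mathcal{L}^{V'}$ with the conditional TV convergence of $\Gamma_\inside^{*,V',U}$ into joint TV convergence of the pair, which requires a disintegration argument: after conditioning on a specific value of $\mathcal{L}^{V'}$, the residual random marked domain on $\partial V'^{*,U}$ must depend continuously on the outer exploration, so that the continuity of the multichordal \clek{} law in its marked domain (Theorem~\ref{th:continuity_mcle} together with Propositions~\ref{prop:mccle_tv_convergence_int}/\ref{prop:mccle_tv_convergence_bd}) can be invoked uniformly across the outer randomness.
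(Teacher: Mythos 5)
Your high-level idea — reduce the statement to the TV convergence already established, plus a controllable residual piece of discrete combinatorics about how strands are linked behind the explored region — is the same as the paper's. But your choice of decomposition does not close the argument, and you yourself flag the unresolved step at the end of your write-up. Let me spell out exactly where the gap is and how the paper avoids it.

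You split the information into $(\Gamma_\inside^{*,V',U},\mathcal{L}^{V'})$ and propose to get joint TV convergence by combining (a)~marginal convergence of $\mathcal{L}^{V'}$ (via weak convergence of the configuration) with (b)~conditional TV convergence of $\Gamma_\inside^{*,V',U}$ given $\{\mathcal{L}^{V'}=\ell\}$. Step~(b) is where the argument breaks down. Conditioning on the discrete variable $\mathcal{L}^{V'}$ does not give you access to Theorem~\ref{thm:cle_partially_explored}: that theorem describes the conditional law given the full $\Gamma_\outside^{*,V',U}$, not given a coarse combinatorial summary of it, and the conditional marked domains $(V'^{*,U};\ul{x}^{*,V'};\beta^{*,V'})$ for $\Gamma$ and $\Gamma_n$ are distinct random objects that you cannot compare under Propositions~\ref{prop:mccle_tv_convergence_int}/\ref{prop:mccle_tv_convergence_bd} unless you have already coupled the outer explorations to agree with high probability. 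But TV control of $\Gamma_\outside^{*,V',U}$ is precisely what the stated propositions do \emph{not} give you — they only control the compactly contained $\Gamma_\inside^{*,V',U}$. So your step~(b) is circular: you would need the joint TV convergence to get the conditional TV convergence. Step~(a) also has a technical wrinkle you don't address: after the conformal uniformization that makes Theorem~\ref{th:continuity_mcle} applicable, the reference set $V'$ becomes an $n$-dependent subset of $\D$, so $\mathcal{L}^{V'}$ is not a fixed a.s.-continuous functional of a weakly convergent sequence of configurations in a fixed space.

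The paper sidesteps both problems by reusing the stronger fact that is actually proved inside Proposition~\ref{prop:mccle_tv_convergence_bd}: with the \emph{reversed} pair $(\wt{U},\wt{V})$ there (where $\wt{U}$ is a large region covering everything except a thin strip near $\partial D$, and the "unexplored" part $\wt{V}^{*,\wt{U}}$ sits near the boundary), the proof establishes TV convergence over the whole $\sigma$-algebra $\sigma(\Gamma_\outside^{*,\wt{V},\wt{U}})$, not merely over $\sigma(\Gamma_\inside^{*,V,U})$. In that geometry $\Gamma_\outside^{*,\wt{V},\wt{U}}$ is compactly contained in $D$ and already determines $\Gamma_\inside^{*,V,U}$. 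The only missing information for $\Upsilon_{\Gamma;\CI}\cap U$ and $\beta^*$ is the interior link pattern of the near-boundary remainder $\Gamma_\inside^{*,\wt{V},\wt{U}}$, and its conditional law given $\Gamma_\outside^{*,\wt{V},\wt{U}}$ is exactly the linking probability of a multichordal \clek{}, which is a continuous function of the resulting marked domain by Theorem~\ref{th:continuity_mcle}. So the residual discrete datum is genuinely controlled by the Markov property (Theorem~\ref{thm:cle_partially_explored}) and continuity, with no circularity. The interior version is then deduced from the boundary version by the same boundary-replacement argument as in the proof of Proposition~\ref{prop:mccle_tv_convergence_int}, a reduction your write-up does not address.
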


In Section~\ref{subsec:cle_kappa_tv} we will prove the total variation convergence in the case of \clek{} (without any marked points) and then in Section~\ref{subsec:mccle_kappa_tv} we will extend the result to the case with marked boundary points.

\subsection{The case of \clek{}}
\label{subsec:cle_kappa_tv}

We start by considering the case of no marked points. In this setting we can deduce the total variation convergence for  $(\Gamma_n)_\inside^{*,V,U} $ from the convergence in total variation of the GFFs in the coupling. In  Subsection~\ref{subsec:mccle_kappa_tv} we will be able to reduce the general case to this case by using a resampling argument.

\begin{lemma}
\label{le:cle_tv_convergence}
The statements of Propositions~\ref{prop:mccle_tv_convergence_int} and~\ref{prop:mccle_tv_convergence_bd} hold in the case $N=0$.
\end{lemma}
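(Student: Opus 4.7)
The plan is to transfer total variation convergence from the GFFs in the $\CLE_\kappa$--GFF coupling of Section~\ref{subsubsec:cle_gff} to the partially explored loop ensembles. Couple $\Gamma$ (resp.\ $\Gamma_n$) with a GFF $h$ on $D$ (resp.\ $h_n$ on $D_n$) with boundary conditions $-\lambda'+\pi\chi$. Fix intermediate domains $V \Subset V'' \Subset V' \Subset D$; by Carathéodory convergence $V' \Subset D_n$ for $n$ large, and the domain Markov decomposition $h_n = h_n^{V'} + \Fh_n^{V'}$ (resp.\ $h = h^{V'} + \Fh^{V'}$), with $h_n^{V'}$ a zero-boundary GFF on $V'$ of the same law for all $n$, reduces the comparison to the harmonic parts. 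Under Carathéodory convergence the harmonic parts $\Fh_n^{V'}$ converge uniformly to $\Fh^{V'}$ on $V''$, and a Girsanov-type argument analogous to the one sketched in Section~\ref{subsec:gff_good_scales} yields $h_n|_{V''}\to h|_{V''}$ in total variation.

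Next, introduce the event $A_{V''} = \{V^{*,U} \subseteq V''\}$. On $A_{V''}$ the boundary of $V^{*,U}$ consists of loops and strands of $\Gamma_\outside^{*,V,U}$ contained in $V''$, and one can check directly that whether each loop touching $V''$ contributes to $\Gamma_\outside^{*,V,U}$ is determined by the restriction $\Gamma \cap V''$ alone (either via its intersection with $V'' \setminus V$ or by how it exits $\partial V''$). Hence on $A_{V''}$ the pair $(V^{*,U},\Gamma_\inside^{*,V,U})$ is a measurable function of $\Gamma\cap V''$, and the event $A_{V''}$ itself is similarly determined. Combined with the fact that $\Gamma\cap V''$ is a local measurable function of $h|_{V''}$ through the imaginary geometry construction of $\CLE_\kappa$ loops as flow lines, both $A_{V''}$ and $\Gamma_\inside^{*,V,U}\one_{A_{V''}}$ are measurable functions of $h|_{V''}$. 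I also need $\p[A_{V''}]\to 1$ as $V''$ exhausts $D$, i.e.\ that $V^{*,U}$ is a.s.\ relatively compact in $D$; this can be seen by exploring the branches of the $\CLE_\kappa$ exploration tree targeted at points of $U$, since before any such branch reaches $\overline{U}$ the loops it has traced jointly form a compact separator of $U$ from $\partial D$ in $D \setminus \bigcup\Gamma_\outside^{*,V,U}$.

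Combining these ingredients: given $\varepsilon>0$, pick $V''$ with $\p[A_{V''}]>1-\varepsilon$. For $n$ large, the total variation convergence of the GFFs on $V''$ yields a coupling under which $h_n|_{V''}=h|_{V''}$ off an event of probability $o(1)$; on this coupled event $A_{V''}$ holds simultaneously for $\Gamma$ and $\Gamma_n$, and then $(\Gamma_n)_\inside^{*,V,U}=\Gamma_\inside^{*,V,U}$. The resulting total variation distance is at most $2\varepsilon+o(1)$, and sending $\varepsilon\to 0$ completes the argument. The main technical obstacle is rigorously verifying $\p[A_{V''}]\to 1$: while intuitively this is forced by the density of boundary-touching $\CLE_\kappa$ loops for $\kappa\in(4,8)$, the cleanest justification likely goes through the exploration-tree argument sketched above and sidesteps delicate questions about the gasket reaching $\partial D$.
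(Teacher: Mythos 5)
The main gap in your proposal is the claim that ``$\Gamma\cap V''$ is a local measurable function of $h|_{V''}$ through the imaginary geometry construction of $\CLE_\kappa$ loops as flow lines.'' This is false, and the paper explicitly highlights this at the start of Section~\ref{subsec:separation_bounds}: ``The coupling is not local in the sense that if $U$ is an open set then one cannot determine the loops which intersect $U$ by observing only the field values in $U$.'' The field values in $V''$ determine the flow/counterflow line \emph{segments} lying in $V''$, but not how those segments are concatenated into loops. Concatenation is governed by the order in which the space-filling $\SLE_{\kappa}$ visits those segments, and the ordering is genuinely non-local (Figure~\ref{fi:spf_ordering} shows identical collections of local strands paired into different loop configurations). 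This is exactly the content of Lemma~\ref{le:spf_order_cle}: you need the strands \emph{and their ordering}, and the ordering cannot be read off from $h|_{V''}$.

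Your first step (couple with GFFs, reduce to total variation convergence of $h_n|_{V''}$ to $h|_{V''}$) is the same as the paper's, and the final coupling argument is also in the right spirit. What you are missing is the bridge between ``GFF restricted to $V''$'' and ``CLE restricted to a slightly smaller region.'' The paper supplies this bridge by observing that the ordering of the strands is obtained by extending the angle-$\pm\pi/2$ flow lines until they reach $\partial D$, and that with probability tending to $1$ (as intermediate domains are chosen appropriately) these extensions hit $\partial D$ before re-entering the inner region. Thus the ordering is determined, up to an $\varepsilon$-total-variation error, by $h$ on a larger but still compactly contained $U_4 \Subset D$, for which the GFF total variation convergence applies. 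Your auxiliary event $A_{V''}=\{V^{*,U}\subseteq V''\}$ cannot substitute for this: even conditionally on $A_{V''}$, the pairing of the boundary segments of $V^{*,U}$ into loops is not determined by $h|_{V''}$, for the same non-locality reason. You would need to argue, as the paper does, that one can determine the ordering by looking at the field in a compactly contained superset and that the probability of needing to look further is small.
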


\begin{figure}[ht]
 \centering
 \includegraphics[width=0.3\textwidth]{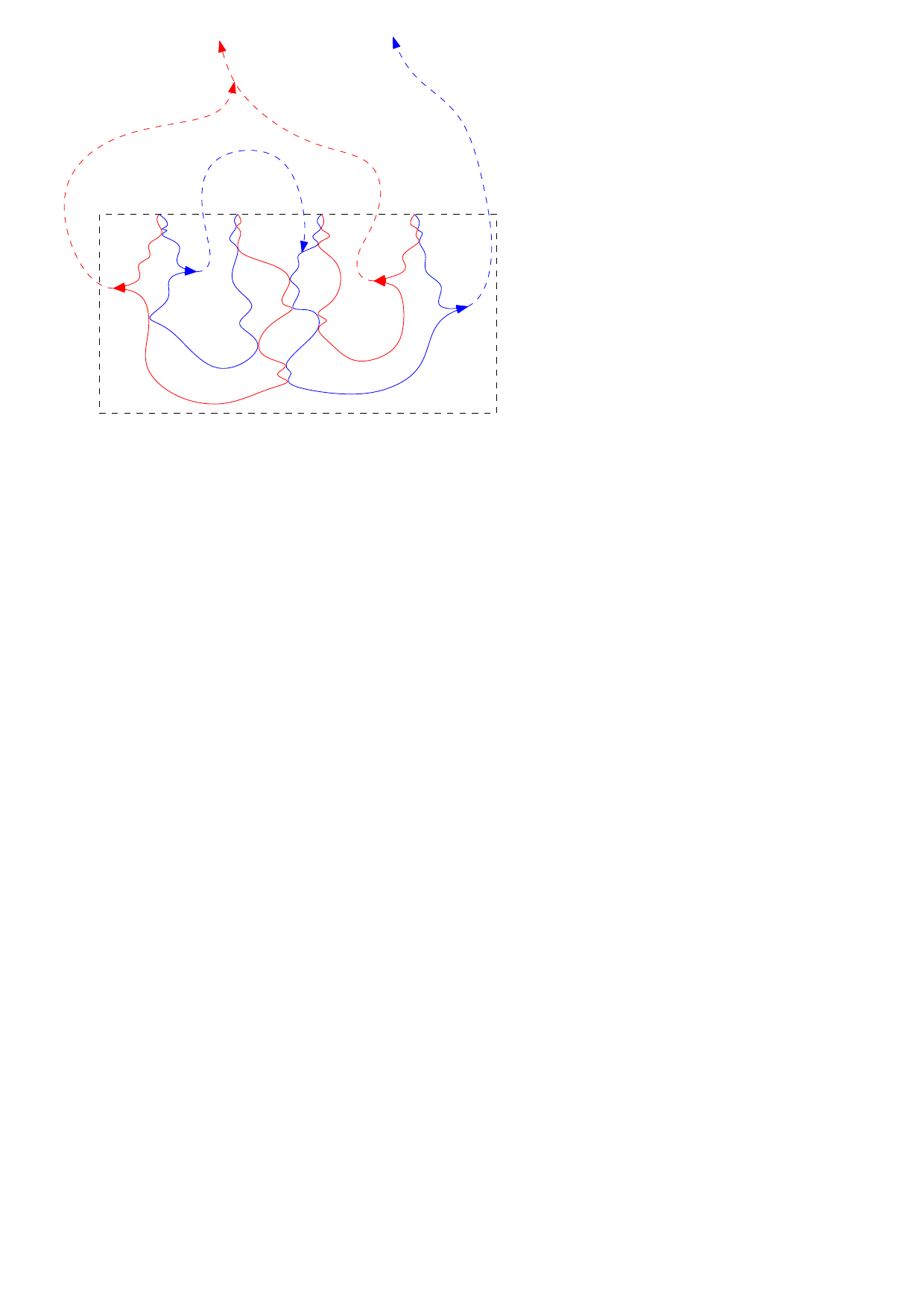}\includegraphics[width=0.3\textwidth]{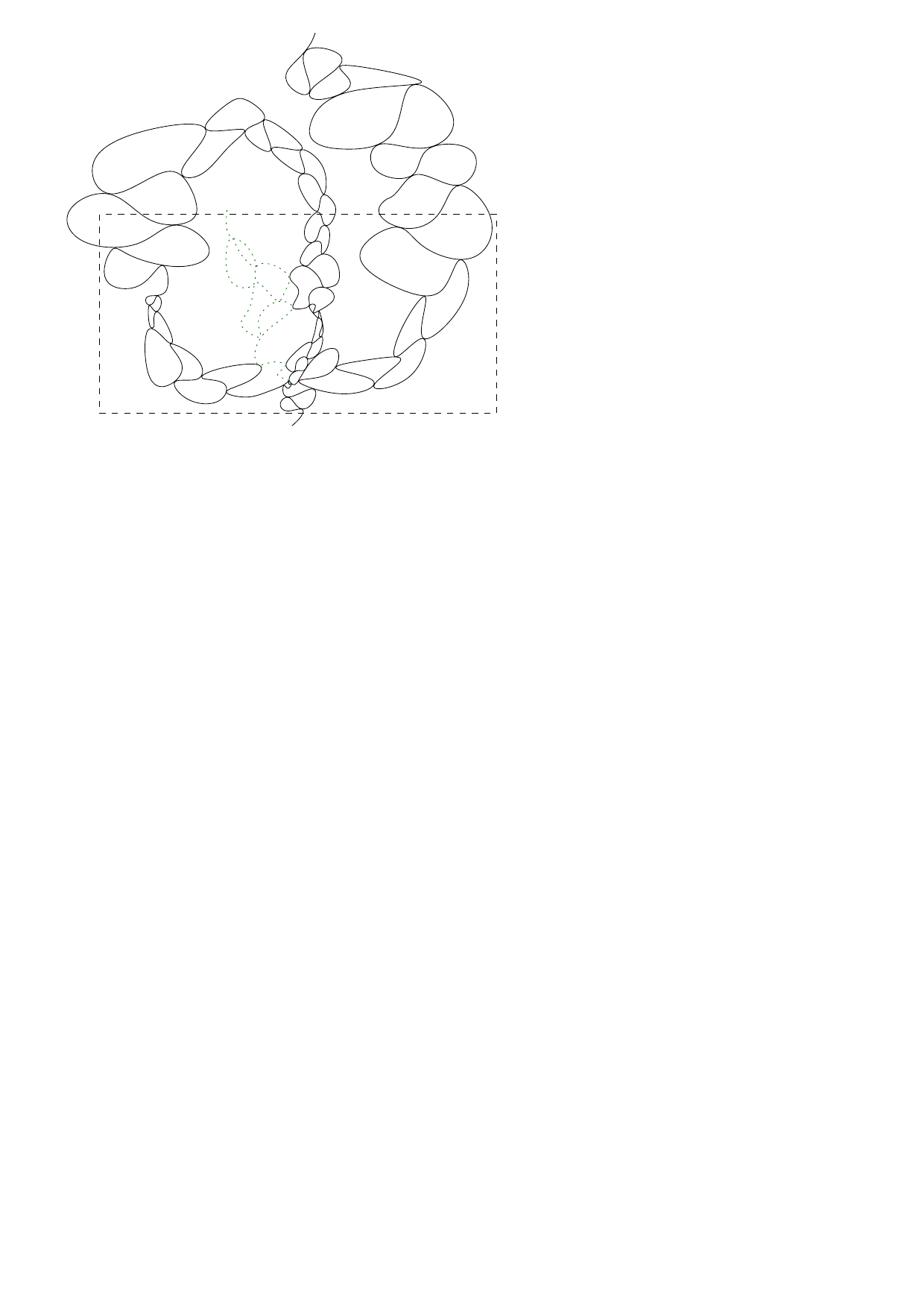}\\
 \includegraphics[width=0.3\textwidth]{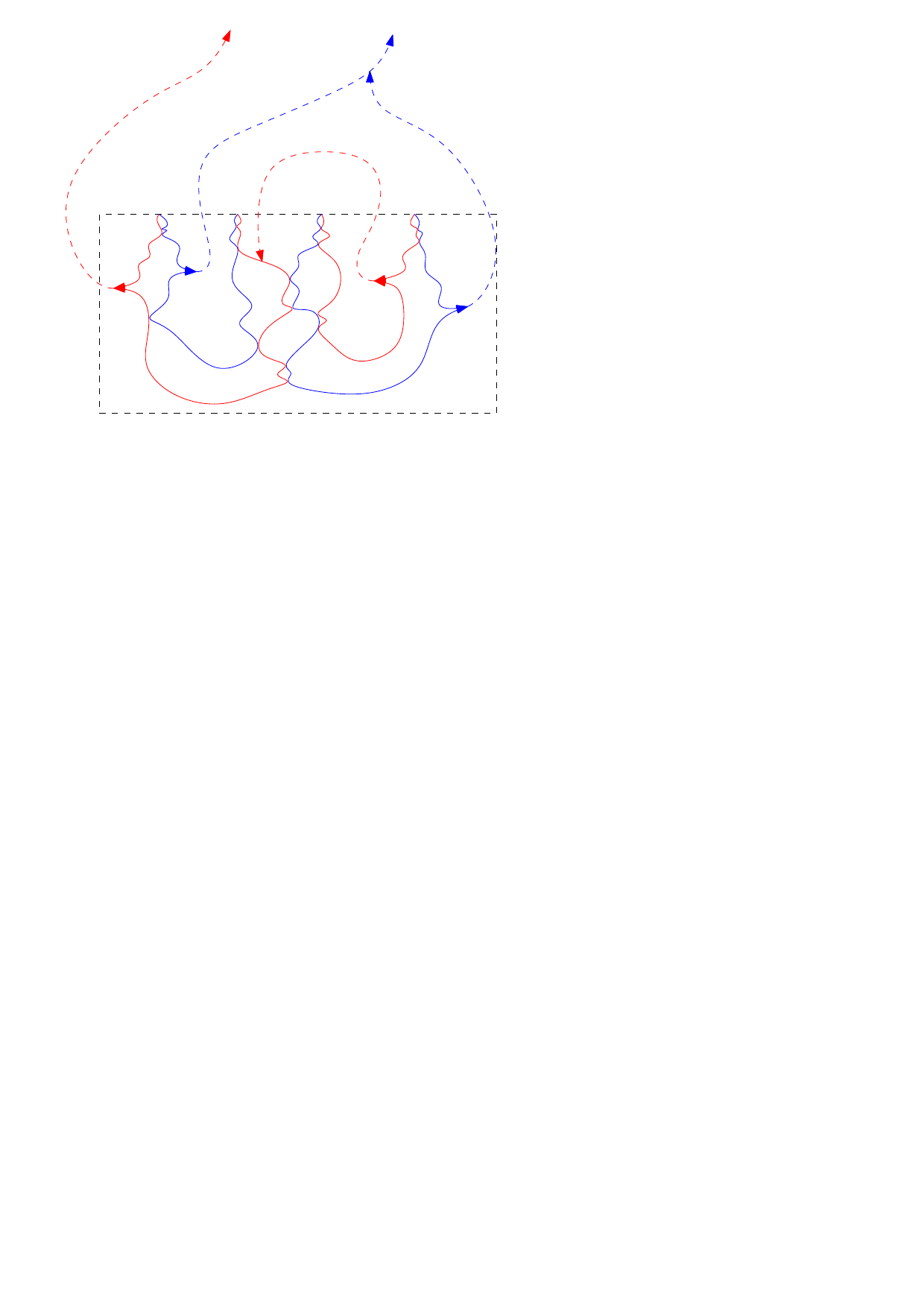}\includegraphics[width=0.3\textwidth]{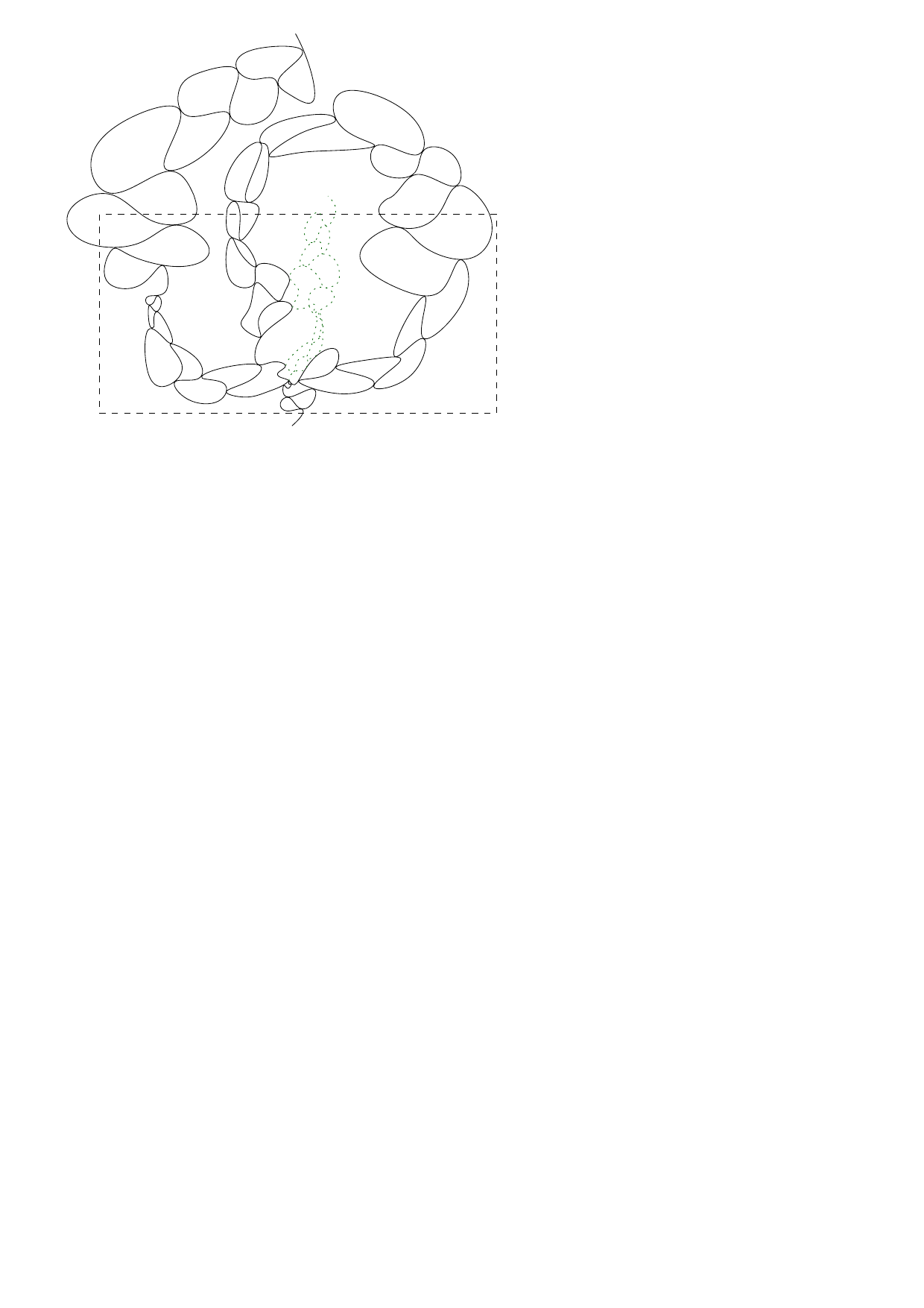}
 \caption{Illustration of why the order of the strands of the space-filling \slek{} is relevant for determining the \clek{} strands. Shown are two different orderings of the identical strands, and the resulting \clek{} configurations. The red (resp.\ blue) flow lines have angle $\pi/2$ (resp.\ $-\pi/2$).}
 \label{fi:spf_ordering}
\end{figure}

\begin{lemma}\label{le:spf_order_cle}
 Let $\Gamma$ be a \clek{} in a simply connected domain $D$. Let $U_1,U_2,U_3$ be such that $(U_1,U_2) \in \domainpair{D}$, $(U_2,U_3) \in \domainpair{D}$. Let $\eta$ be the space-filling \slek{} associated with an exploration tree for~$\Gamma$. Let $\CH$ be the collection of (oriented) maximal segments of $\eta$ that intersect $U_2$ and are contained in $\ol{U_3}$. Then $\CH$ together with the ordering of the strands in $\CH$ determine $\Gamma_\inside^{*,U_2,U_1}$.
\end{lemma}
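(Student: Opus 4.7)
The approach is to exploit the imaginary geometry coupling of CLE with the GFF recalled in Sections~\ref{subsubsec:cle_gff}--\ref{subsubsec:space_filling_sle}. Under this coupling, the nested CLE $\Gamma$ is a.s.\ a measurable function of the space-filling SLE $\eta$: the CLE loop at any point $z \in D$ is encoded by the $\pm \pi/2$-angle flow lines from $z$ of the associated GFF, which coincide with the left and right boundaries of $\eta$ stopped at the first time it reaches $z$. Consequently, $\Gamma_\inside^{*,U_2,U_1}$ is a deterministic function of $\eta$, and it suffices to show that $\CH$ together with its ordering determines $\eta$ on a sufficiently rich set of times.

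The first step is to show that $\CH$ with its ordering determines the partial exploration $\Gamma_\outside^{*,U_2,U_1}$ and hence the subdomain $V^{*,U_1}$ together with the exterior link pattern on its boundary. Two consecutive elements of $\CH$ in the time ordering are joined by an excursion of $\eta$ out of $\ol{U_3}$; by planarity of $\eta$ and the local finiteness of $\Gamma$ (Section~\ref{subsec:cle}), the way such excursions can link the endpoints on $\partial U_3$ of the elements of $\CH$ is forced by this combinatorial adjacency alone. The strands of $\Gamma$ that lie in $D \setminus U_2$ and are disjoint from $U_1$ (the strands of $\Gamma_\outside^{*,U_2,U_1}$ that bound $V^{*,U_1}$) are recovered as the outer boundaries associated with consecutive pairs in $\CH$ whose joining excursion remains disjoint from $U_1$.

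Once $V^{*,U_1}$ is identified, every loop or chord of $\Gamma_\inside^{*,U_2,U_1}$ lies in $V^{*,U_1} \subseteq U_2$ and is therefore traced by $\eta$ entirely during time intervals contained in the time intervals of elements of $\CH$. Hence the parameterized restriction of $\eta$ to $\{t : \eta(t) \in V^{*,U_1}\}$ can be read directly from $\CH$. Applying the imaginary geometry reconstruction inside $V^{*,U_1}$ (using the conformal Markov property of $\eta$ together with the multichordal version coming from Theorem~\ref{thm:cle_partially_explored}) then yields each loop and chord of $\Gamma_\inside^{*,U_2,U_1}$ as a measurable function of $\CH$ and its ordering.

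The main obstacle is the first step, where one must make precise that the ordering of $\CH$ carries enough topological information to identify $\Gamma_\outside^{*,U_2,U_1}$ without knowing the actual trajectories of $\eta$ in $D \setminus \ol{U_3}$. The key point is a planarity argument: the excursions of $\eta$ between consecutive elements of $\CH$ live in simply connected pockets of $D \setminus \ol{U_3}$ whose boundary data is fixed by the elements of $\CH$, so the planar link pattern they induce on the endpoints of those elements is determined uniquely, and this is precisely the combinatorial data needed to reconstruct $\Gamma_\outside^{*,U_2,U_1}$.
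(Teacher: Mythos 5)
Your proposal takes a different route from the paper, but it has a genuine gap in the last step. You first try to determine $\Gamma_\outside^{*,U_2,U_1}$ and $V^{*,U_1}$ from $\CH$ and its ordering (via the planarity argument for the excursions outside $\ol{U_3}$), and then close by invoking ``imaginary geometry reconstruction inside $V^{*,U_1}$'' to recover the loops and chords of $\Gamma_\inside^{*,U_2,U_1}$. This last step is precisely the crux of the lemma and is not a standard black box. The content of the lemma is that the \clek{} strands are recoverable from the \emph{local} data $\CH$ and its order, without access to the GFF or to $\eta$ outside $\ol{U_3}$; appealing to the GFF/flow-line coupling at this point is essentially circular, because $\CH$ and its ordering do not directly produce the GFF on $V^{*,U_1}$, nor do they directly produce the flow-line boundaries $\eta_z^L,\eta_z^R$ used to define loops (those boundaries a priori reference $\eta([0,t_z])$, which extends well beyond $\ol{U_3}$). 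What is missing is a concrete rule that extracts the \clek{} loop boundaries from the oriented, time-ordered strands. That rule is the content of the paper's proof: when a later strand $\gamma_2$ first hits an earlier strand $\gamma_1$ at a point $z$, the loop being traced by $\gamma_2$ branches at $z$ in the direction opposite to $\gamma_1$ (so $\gamma_2$ begins tracing new loops), and the loop is continued by whichever strand next passes through $z$; consolidating this gives explicit ``loop begins / loop terminates / loop continues'' rules read entirely from $\CH$, which is exactly what the lemma asserts.

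There is also a secondary inaccuracy: $\Gamma_\outside^{*,U_2,U_1}$ is not fully determined by $\CH$ and its ordering, since its strands are \emph{maximal} segments of \clek{} loops that may extend well outside $\ol{U_3}$. At best one can determine the portions of those strands in $\ol{U_3}$ together with their exterior link pattern. The paper sidesteps this by not trying to reconstruct $\Gamma_\outside^{*,U_2,U_1}$ or $V^{*,U_1}$ at all; it directly shows that the \clek{} strands contained in $U_2$ are determined by $\CH$ and its ordering, which (given the annular margin between $U_1$ and $U_2$) is enough to identify $\Gamma_\inside^{*,U_2,U_1}$. I would encourage you to drop the detour through $V^{*,U_1}$ and instead supply the combinatorial reconstruction rule directly.
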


We remark that different orderings of the strands in $\CH$ may lead to different configurations of the \clek{} strands, see Figure~\ref{fi:spf_ordering} for an example.

\begin{proof}[Proof of Lemma~\ref{le:spf_order_cle}]
 The \clek{} loops inside the regions disconnected by a single strand in $\CH$ are clearly determined by the strand. Therefore we only need to consider the loops traced by the boundaries of the strands.
 
 Suppose $\gamma_1,\gamma_2 \in \CH$ are two intersecting strands and $\gamma_1$ is visited first by $\eta$. Let $z$ be an intersection point. Consider the first time $\gamma_2$ intersects $\gamma_1$ at $z$. Let $\CL$ be the loop that $\gamma_2$ is tracing before hitting $z$. Upon intersecting, $\gamma_2$ continues in the component toward the same direction as $\gamma_1$. On the other hand, $\CL$ branches toward the opposite direction of $\gamma_1$. See Figure~\ref{fi:spf_ordering}. That is, $\gamma_2$ begins tracing new loops after visiting $z$. Let $\gamma_3 \in \CH$ be the strand corresponding to when $\eta$ comes back to $z$ again (note that $\gamma_3$ necessarily intersects $U_2$ because $\gamma_1,\gamma_2$ do; it is possible that $\gamma_3 = \gamma_2$ if it does not exit $U_3$ in between). The continuation of $\CL$ is given by (a subset of) $\gamma_3$ starting from $z$.
 
 In summary, we see that
 \begin{itemize}
  \item a loop begins when a strand in $\CH$ hits a previous strand,
  \item a loop terminates when a strand in $\CH$ passes through the intersection of two previous strands (in the opposite direction),
  \item a loop continues at an intersection point of two strands when the or another strand passes through the intersection again (in the opposite direction).
 \end{itemize}
 This shows that all the \clek{} strands contained in $U_2$ are determined by the strands of the space-filling \slek{} in $\CH$ together with their orientations and ordering.
\end{proof}

\begin{proof}[Proof of Lemma~\ref{le:cle_tv_convergence}]
 The proof is exactly the same for the interior and for the boundary case, only the notation is slightly different. We write out the proof for the interior case (Proposition~\ref{prop:mccle_tv_convergence_int}).
 
 Let $\Gamma$ (resp.\ $\Gamma_n$) be coupled with a GFF $h$ (resp.\ $h_n$) as in Section~\ref{subsubsec:cle_gff}. Let $V \Subset U_2 \Subset U_3 \Subset D$. The law of the restriction $h_n\big|_{U_3}$ converges in total variation to the law of $h\big|_{U_3}$.
 
 Let $\CH$ be the collection of strands of the space-filling \slek{} that intersect $U_2$, extended until exiting~$U_3$. Then $\CH$ is determined by the values of the GFF in $U_3$. By Lemma~\ref{le:spf_order_cle}, the proposition follows if we can also show that the ordering of the strands in $\CH$ converges in total variation as $n\to\infty$. In order to determine the ordering of the strands, we need to extend the flow lines on the boundaries of the strands in $\CH$ until they hit $D$ (resp.\ $D_n$).
 
 Let $z \in U$. Let $\varphi\colon \D \to D$ (resp.\ $\varphi_n\colon \D \to D_n$) be the conformal map with $\varphi(0) = 0$ and $\varphi'(0) > 0$ (resp.\ $\varphi_n(0) = 0$ and $\varphi_n'(0) > 0$).
 
 Let $\varepsilon > 0$ be given. Let $h_\D$ be a GFF in $\D$ compatible with a \clek{}. Let $\delta > 0$ be small enough so that $\varphi_n(B(0,1-\delta)) \supseteq U_3$ for all sufficiently large $n$. Given $\delta$, let $\delta_1 > 0$ be small enough so that with probability at least $1-\varepsilon$ every flow line with angle $\pi/2$ or $-\pi/2$ starting in $B(0,1-\delta)$ and exiting $B(0,1-\delta_1)$ hits $\partial\D$ without entering $B(0,1-\delta)$ again.
 
 Let $U_4 \Subset D$ be such that $\varphi_n(B(0,1-\delta_1)) \subseteq U_4$ for all sufficiently large $n$. The law of the restriction $h_n\big|_{U_4}$ converges in total variation to the law of $h\big|_{U_4}$. By the choice of $\delta_1$, the total variation distance between the true ordering of the strands in $\CH$ and the ordering obtained by extending the flow lines just to $\partial U_4$ is at most $\varepsilon$. This by Lemma~\ref{le:spf_order_cle} implies the result.
\end{proof}

\subsection{The case of multichordal \clek{}}
\label{subsec:mccle_kappa_tv}

We now move to the case of general multichordal \clek{}. The main step for the proof in the case of marked domains consists of using  a resampling argument which allows us to deduce the general result from the case of  \clek{} proved in the previous subsection. One technical challenge for the interior variant (Proposition~\ref{prop:mccle_tv_convergence_int}) is that if we condition on the \clek{} configuration in $U \Subset D$, the remainder lives in an annular region for which we have not developed the theory. To overcome this, we will first prove the boundary variant (Proposition~\ref{prop:mccle_tv_convergence_bd}) and then deduce the interior variant by successively replacing parts of the boundary.

\begin{figure}[ht]
 \centering
 \includegraphics[width=0.6\textwidth]{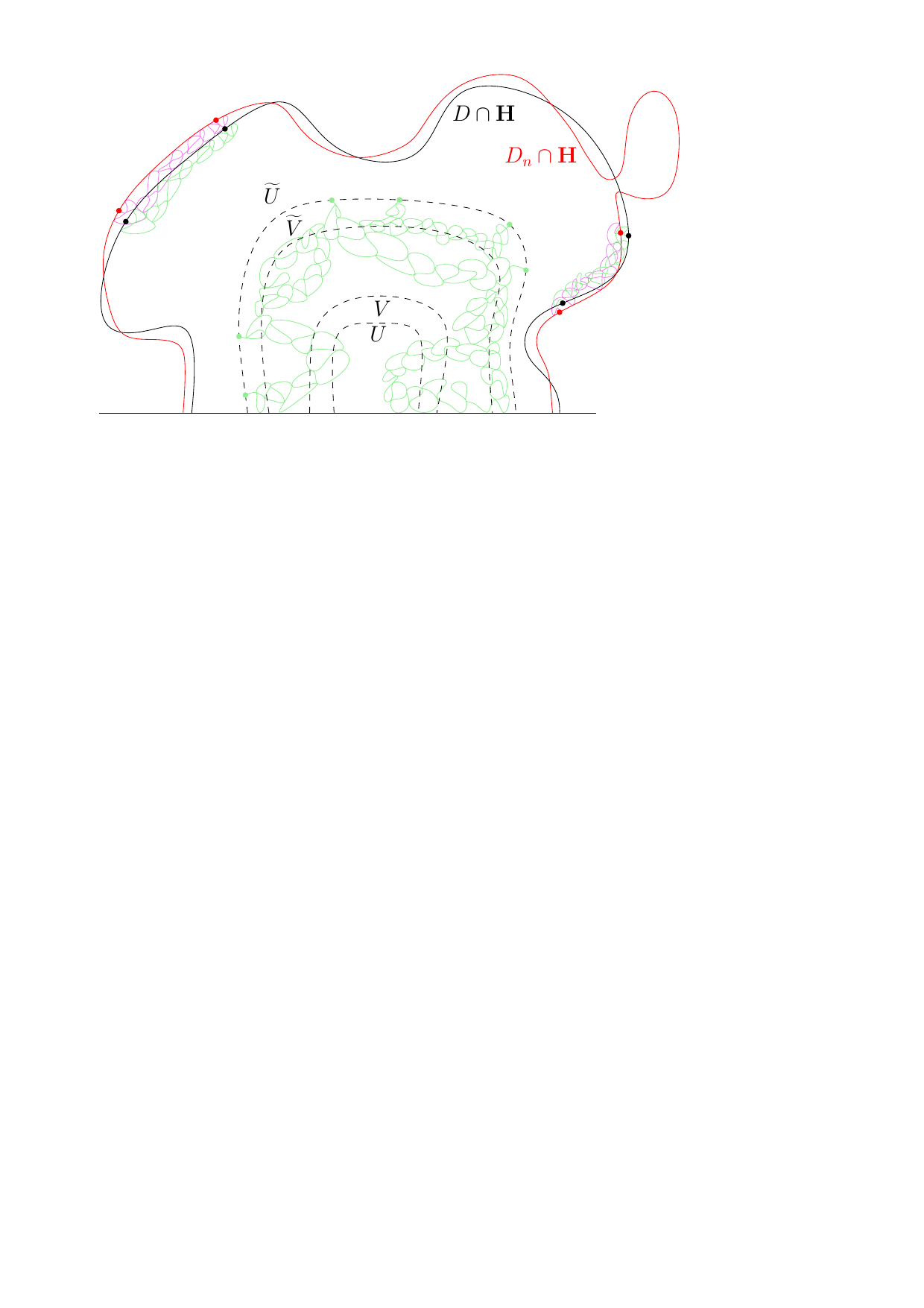}
 \caption{Illustration of the proof of Proposition~\ref{prop:mccle_tv_convergence_bd}.}
 \label{fi:mccle_tv_bd_pf}
\end{figure}

\begin{proof}[Proof of Proposition~\ref{prop:mccle_tv_convergence_bd}]
 Let us write $\p = \mcclelaw{D\cap\h;\ul{x};\beta}$, $\p_n = \mcclelaw{D_n\cap\h;\ul{x}_n;\beta}$. Let $\varphi\colon \D \to D$ (resp.\ $\varphi_n\colon \D \to D_n$) be the conformal map with $\varphi(0) = 0$ and $\varphi'(0) > 0$ (resp.\ $\varphi_n(0) = 0$ and $\varphi_n'(0) > 0$). Note that $\varphi$ (resp.\ $\varphi_n$) maps $\D\cap\h$ conformally to $D\cap\h$ (resp.\ $D_n\cap\h$).
 
 Let $\varepsilon > 0$ be given. Let $A$ be any event for $\Gamma_\inside^{*,V,U}$. Our goal is to show that
 \[ \sup_{A \in \sigma(\Gamma_\inside^{*,V,U})} \abs{\p[A]-\p_n[A]} < \varepsilon \]
 for sufficiently large $n$.
 
 Let $(\wt{U},\wt{V}) \in \domainpair{D\cap\h}$ be such that $\wt{V} \cap V = \varnothing$ and $\dist(D\cap\h\setminus \wt{U}, \partial D) > 0$. Let $\wt{U}_n = D_n \cap \h \setminus (D \setminus \wt{U})$ and $\wt{V}_n = D_n \cap \h \setminus (D \setminus \wt{V})$. See Figure~\ref{fi:mccle_tv_bd_pf}. Note that by the convergence $\mathrm{d}^{\eldomain{2N},0}((D;\ul{x}),(D_n;\ul{x}_n)) \to 0$ we have $\partial \wt{U} \subseteq D_n$ for all sufficiently large $n$.
 
 Note that $\Gamma_\inside^{*,V,U}$ (resp.\ $(\Gamma_n)_\inside^{*,V,U}$) is determined by $\Gamma_\outside^{*,\wt{V},\wt{U}}$ (resp.\ $(\Gamma_n)_\outside^{*,\wt{V}_n,\wt{U}_n}$).
 
 For $\delta_1 > 0$, let $G$ be the event that the marked points of $\Gamma_\outside^{*,\wt{V},\wt{U}}$ have distance at least $\delta_1$ from each other. Theorem~\ref{th:continuity_mcle} implies that we can find $\delta_1 > 0$ such that $\sup_n \p_n[G^c] < \varepsilon$.
 
 For $\delta > 0$, let $E$ (resp.\ $E_n$) be the event that all chords emanating from the marked points on $\partial D$ (resp.\ $\partial D_n$) are entirely contained in $\varphi(A(0,1-\delta,1) \cap \h)$ (resp.\ $\varphi_n(A(0,1-\delta,1) \cap \h)$). 
 Fix $M \in \N$ large. For $k=1,\ldots,M$ we let $F_k$ be the event that $\p[ E \mid \Gamma_\outside^{*,\wt{V},\wt{U}} ] \in (\frac{k-1}{M},\frac{k}{M}]$.
 
 By Theorem~\ref{thm:cle_partially_explored}, the conditional probability $\p[ E \mid \Gamma_\outside^{*,\wt{V},\wt{U}} ]$ (resp.\ $\p_n[ E_n \mid (\Gamma_n)_\outside^{*,\wt{V}_n,\wt{U}_n} ]$) is a.s.\ a function $f$ (resp.\ $f_n$) of the marked domain $(\wt{V}^{*,\wt{U}};\ul{x}^*;\beta^*)$ corresponding to $\Gamma_\outside^{*,\wt{V},\wt{U}}$ (resp.\ $(\Gamma_n)_\outside^{*,\wt{V}_n,\wt{U}_n}$). By Theorem~\ref{th:continuity_mcle} we have that $f_n \to f$ pointwise for each $\delta > 0$ (except possibly for countably many). By a compactness argument, the convergence rate is uniform for all marked domains $(\wt{V}^{*,\wt{U}};\ul{x}^*;\beta^*)$ where the marked points on $\partial \wt{U}$ have distance at least $\delta_1$ from each other. Consequently, for each $k=1,\ldots,M$
 \[
  \limsup_{n\to\infty} \sup_{A \in \sigma(\Gamma_\outside^{*,\wt{V},\wt{U}})} \abs*{\frac{\p_n[A \cap F_k \cap G \cap E_n]}{\p_n[A \cap F_k \cap G]} - \frac{k}{M}} \le \frac{1}{M} .
 \]
 Hence, for $n$ sufficiently large, we have for all $A \in \sigma(\Gamma_\outside^{*,\wt{V},\wt{U}})$,
 \begin{multline*}
  \abs{\p[A \cap F_k \cap G] - \p_n[A \cap F_k \cap G]} \\
  \le \frac{M}{k} \abs{\p[A \cap F_k \cap G \cap E] - \p_n[A \cap F_k \cap G \cap E_n]} + \frac{1}{k}\p[A \cap F_k \cap G] + \frac{1+\varepsilon}{k}\p_n[A \cap F_k \cap G] .
 \end{multline*}
 We can alternatively first sample the chords, and then (on the event $E$ resp.\ $E_n$) sample the \clek{} in the complementary components. Therefore Theorem~\ref{th:continuity_mcle} and Lemma~\ref{le:cle_tv_convergence} imply that
 \[
  \lim_{n\to\infty} \sup_{A \in \sigma(\Gamma_\outside^{*,\wt{V},\wt{U}})} \abs{\p[A \cap F_k \cap G \cap E] - \p_n[A \cap F_k \cap G \cap E_n]} = 0 .
 \]
 Combining everything, we see that
 \[ \begin{split}
  &\limsup_{n\to\infty} \sup_{A \in \sigma(\Gamma_\outside^{*,\wt{V},\wt{U}})} \sum_{k \ge M^{1/2}} \abs{\p[A \cap F_k] - \p_n[A \cap F_k]} \\
  &\quad\le \varepsilon + \limsup_{n\to\infty} \sup_{A \in \sigma(\Gamma_\outside^{*,\wt{V},\wt{U}})} \sum_{k \ge M^{1/2}} \abs{\p[A \cap F_k \cap G] - \p_n[A \cap F_k \cap G]} \\
  &\quad\le \varepsilon + \frac{2}{M^{1/2}} + \limsup_{n\to\infty} \sup_{A \in \sigma(\Gamma_\outside^{*,\wt{V},\wt{U}})} \sum_{k \ge M^{1/2}} \frac{M}{k} \abs{\p[A \cap F_k \cap G \cap E] - \p_n[A \cap F_k \cap G \cap E_n]} \\
  &\quad\le \varepsilon + \frac{2}{M^{1/2}} .
 \end{split} \]
 This bounds the error for all $k \gg 1$. To deal with the small $k$, we apply again Theorem~\ref{th:continuity_mcle} to see that
 \[
  \lim_{M\to\infty} \sup_n \p_n\left[\bigcup_{k \le M^{1/2}} F_k\right] = 0 .
 \]
 Therefore, if $M$ is chosen sufficiently large, we conclude that
 \[\begin{split}
  &\limsup_{n\to\infty} \sup_{A \in \sigma(\Gamma_\outside^{*,\wt{V},\wt{U}})} \abs{\p[A]-\p_n[A]} \\
  &\quad \le \sup_n \p_n\left[\bigcup_{k \le M^{1/2}} F_k\right] + \limsup_{n\to\infty} \sup_{A \in \sigma(\Gamma_\outside^{*,\wt{V},\wt{U}})} \sum_{k \ge M^{1/2}} \abs{\p[A \cap F_k] - \p_n[A \cap F_k]} \\
  &\quad \le 2\varepsilon + \frac{2}{M^{1/2}} .
 \end{split}\]
 This completes the proof.
\end{proof}

\begin{figure}[ht]
 \centering
 \includegraphics[width=0.6\textwidth]{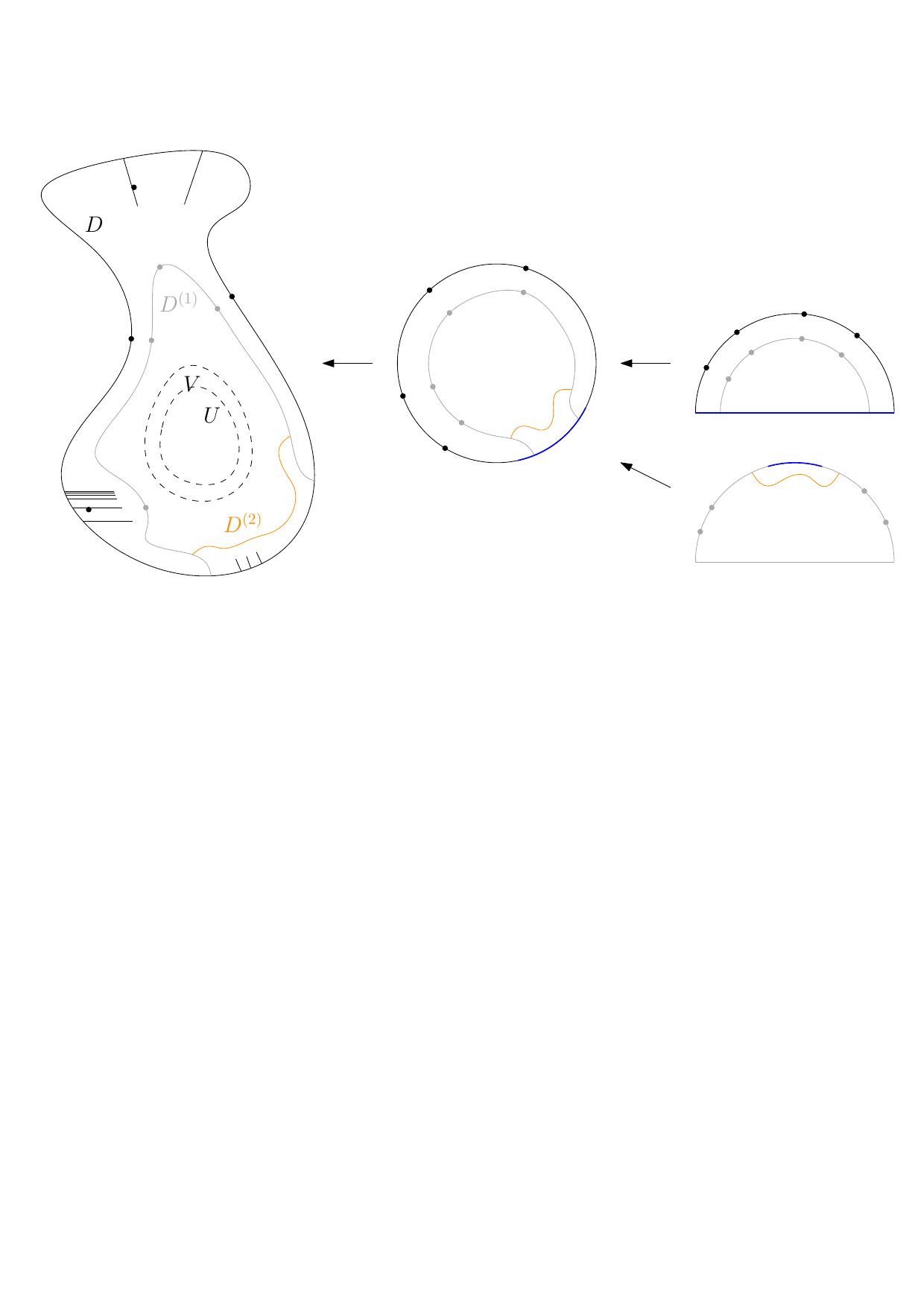}
 \caption{Illustration of Step~1 of the proof of Proposition~\ref{prop:mccle_tv_convergence_int}.}
 \label{fi:mccle_tv_int_pf1}
\end{figure}

\begin{figure}[ht]
 \centering
 \includegraphics[width=0.6\textwidth]{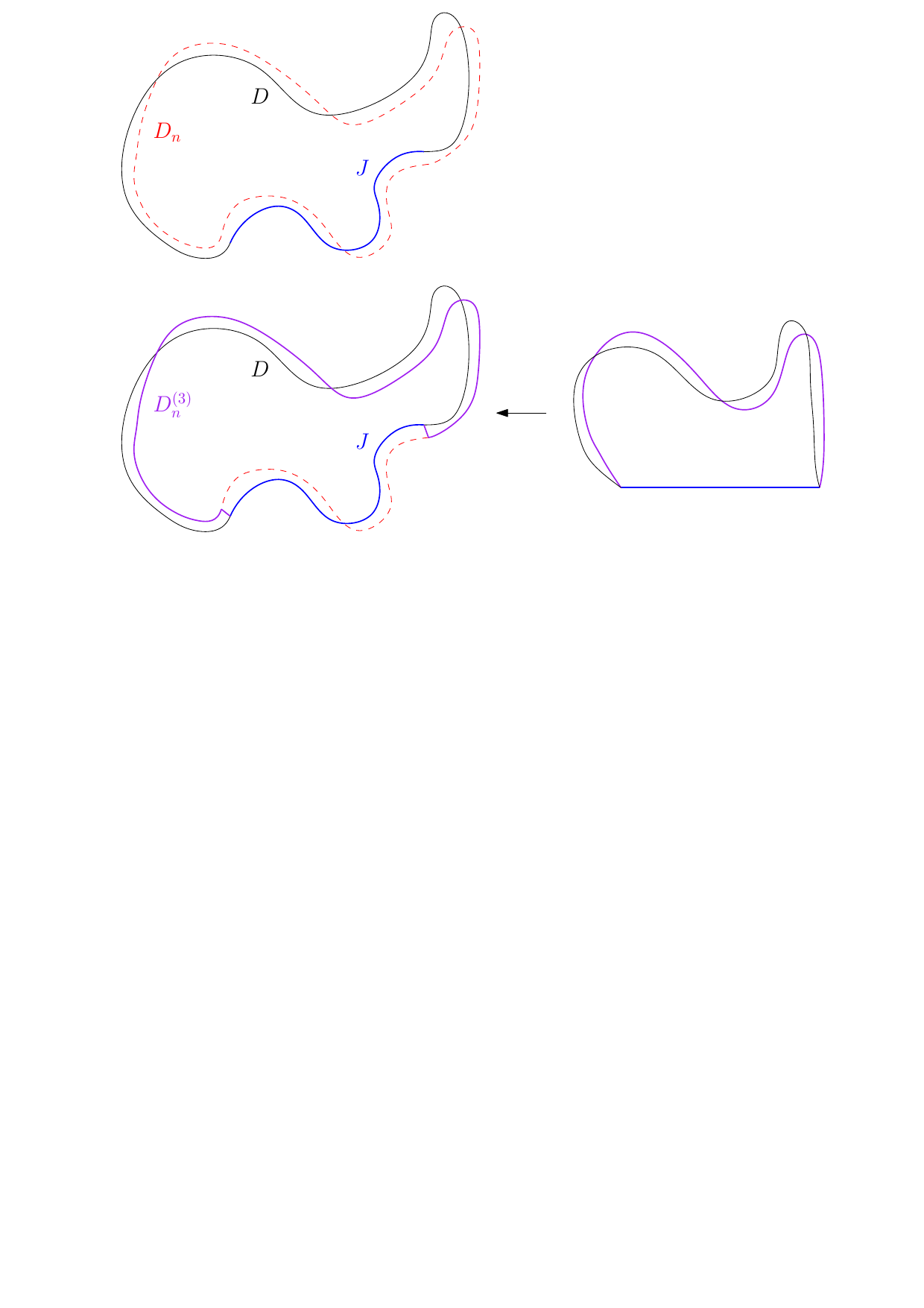}
 \caption{Illustration of Step~2 of the proof of Proposition~\ref{prop:mccle_tv_convergence_int}.}
 \label{fi:mccle_tv_int_pf2}
\end{figure}

\begin{proof}[Proof of Proposition~\ref{prop:mccle_tv_convergence_int}]
 We deduce this result from the boundary variant by subsequently replacing pieces of the domain boundaries; see Figures~\ref{fi:mccle_tv_int_pf1} and~\ref{fi:mccle_tv_int_pf2} for an illustration.
 
 \textbf{Step 1.} 
 We first approximate the domains $D$ resp.\ $D_n$ from the interior by Jordan domains. Let $\varphi$ (resp.\ $\varphi_n$) be the corresponding conformal maps from $\D$ to $D$ (resp.\ $D_n$). Let $I \subseteq \partial\D$ be a boundary arc disjoint from $\varphi^{-1}(\ul{x})$. Consider a conformal map $f\colon \D\cap\h \to \D$ with $f([-1,1]) = I$. Let $\delta > 0$, and let $D^{(1)} = \varphi(f(B(0,1-\delta)\cap\h))$ and $\ul{x}^{(1)} = \varphi \circ f((1-\delta)(\varphi \circ f)^{-1}(\ul{x}))$. Let $D_n^{(1)}$, $\ul{x}_n^{(1)}$ be defined analogously with $\varphi_n$ in place of $\varphi$. Let $\Gamma^{(1)}$ (resp.\ $\Gamma_n^{(1)}$) be have law $\mcclelaw{D^{(1)};\ul{x}^{(1)};\beta}$ (resp.\ $\mcclelaw{D_n^{(1)};\ul{x}_n^{(1)};\beta}$). For $\delta > 0$ small enough, by Proposition~\ref{prop:mccle_tv_convergence_bd} (applied to the image of the \clek{} under $(\varphi \circ f)^{-1}$ resp.\ $(\varphi_n \circ f)^{-1}$) and the local compactness of the Carathéodory topology, the total variation between $(\Gamma_n)_\inside^{*,V,U}$ and $(\Gamma_n^{(1)})_\inside^{*,V,U}$ is at most $\varepsilon$ uniformly in $n$, and the total variation between $\Gamma_\inside^{*,V,U}$ and $(\Gamma^{(1)})_\inside^{*,V,U}$ is at most $\varepsilon$.
 
 Repeating this procedure once more, we find $(D^{(2)};\ul{x}^{(2)};\beta) \in \eldomain{2N}$ and $(D_n^{(2)};\ul{x}_n^{(2)};\beta) \in \eldomain{2N}$ so that
 \begin{itemize}
  \item $D^{(2)} \subseteq D$, $D_n^{(2)} \subseteq D_n$ are Jordan domains, and $V \Subset D^{(2)}$,
  \item $\partial D_n^{(2)} \to \partial D^{(2)}$ in the uniform topology of curves, and $\ul{x}_n^{(2)} \to \ul{x}^{(2)}$,
  \item if $\Gamma^{(2)}$, $\Gamma_n^{(2)}$ are the multichordal \clek{} in these marked domains, the total variation between $(\Gamma_n)_\inside^{*,V,U}$ and $(\Gamma_n^{(2)})_\inside^{*,V,U}$ is at most $\varepsilon$ uniformly in $n$, and the total variation between $\Gamma_\inside^{*,V,U}$ and $(\Gamma^{(2)})_\inside^{*,V,U}$ is at most $\varepsilon$.
 \end{itemize}
 
 Therefore in order to prove Proposition~\ref{prop:mccle_tv_convergence_int}, we only need to prove it for Jordan domains whose boundaries converge uniformly.
 
 \textbf{Step 2.} 
 Suppose that $D$, $D_n$ are Jordan domains, $\partial D_n \to \partial D$ in the uniform topology, and $\ul{x}_n \to \ul{x}$. Suppose $(U,V)\in\domainpair{D}$ and $V \Subset D$.
 
 Let $J \subseteq \partial D$ be a boundary arc disjoint from $\ul{x}$. We construct a sequence of marked Jordan domains $(D_n^{(3)};\ul{x}^{(3)};\beta) \in \eldomain{2N}$ so that $\partial D_n^{(3)}$ contains $J$, agrees with $\partial D_n$ on a large part of its boundary, and $\mathrm{d}_\infty(\partial D_n^{(3)}, \partial D_n) \to 0$. Since $D$, $D_n$ are Jordan domains, we can apply a common conformal map that takes us to the setup of Proposition~\ref{prop:mccle_tv_convergence_bd} where $J$ is mapped to a real interval. See Figure~\ref{fi:mccle_tv_int_pf2}. Hence, $(\Gamma_n^{(3)})_\inside^{*,V,U}$ converges to $\Gamma_\inside^{*,V,U}$ in total variation. Finally, applying Proposition~\ref{prop:mccle_tv_convergence_bd} another time, together with the local compactness of the Carathéodory topology, shows that the total variation between $(\Gamma_n^{(3)})_\inside^{*,V,U}$ and $(\Gamma_n)_\inside^{*,V,U}$ converges to $0$ as $n \to \infty$. This concludes the proof.
\end{proof}

\begin{proof}[Proof of Proposition~\ref{pr:mccle_gasket_tv_convergence}]
 Again, we first show the boundary version. Let $(\wt{U},\wt{V}) \in \domainpair{D\cap\h}$ be as in the proof of Proposition~\ref{prop:mccle_tv_convergence_bd}. Note that $\Upsilon_{\Gamma;\CI} \cap U$ is determined by $\Gamma_\outside^{*,\wt{V},\wt{U}}$ and the interior linking pattern of $\Gamma_\inside^{*,\wt{V},\wt{U}}$. By Theorem~\ref{th:continuity_mcle}, the conditional law of the interior linking pattern given $\Gamma_\outside^{*,\wt{V},\wt{U}}$ converges as $n\to\infty$. Together with Proposition~\ref{prop:mccle_tv_convergence_bd}, this implies the result.
 
 The interior version can be deduced from the boundary version by the same argument as in the proof of Proposition~\ref{prop:mccle_tv_convergence_int}.
\end{proof}

\bibliographystyle{alpha}

\end{document}